\numberwithin{equation}{section}
\title[The Hasse principle for random Fano hypersurfaces]{The Hasse principle for random Fano hypersurfaces}
\author{Tim Browning}
\address{IST Austria \\ Am Campus 1 \\ 3400 Klosterneuburg \\ Austria}
\email{tdb@ist.ac.at}
\author{Pierre Le Boudec}
\address{Departement Mathematik und Informatik \\ Fachbereich Mathematik \\ Spiegelgasse 1 \\ 4051 Basel \\ Switzerland}
\email{pierre.leboudec@unibas.ch}
\author{Will Sawin}
\address{Columbia University \\ Department of Mathematics \\ 2990 Broadway \\ New York \\ NY $10027$ \\ USA}
\email{sawin@math.columbia.edu}
\subjclass[2010]{11D45 (11G50, 11P21, 14G05)}
\keywords{Hasse principle, Fano hypersurfaces, rational points, heights}
\begin{document}

\newtheorem{theorem}{Theorem}[section]
\newtheorem{lemma}[theorem]{Lemma}
\newtheorem{proposition}[theorem]{Proposition}
\newtheorem{corollary}[theorem]{Corollary}
\newtheorem{conjecture}[theorem]{Conjecture}
\theoremstyle{definition}
\newtheorem{definition}[theorem]{Definition}

\newcommand{\Gal}{\operatorname{Gal}}
\newcommand{\rad}{\operatorname{rad}}
\newcommand{\Span}{\operatorname{Span}}
\newcommand{\vol}{\operatorname{vol}}
\newcommand{\GL}{\operatorname{GL}}
\newcommand{\sign}{\operatorname{sgn}}

\begin{abstract}
It is known that the Brauer--Manin obstruction to the Hasse principle is vacuous for smooth Fano hypersurfaces of dimension at least $3$ over any number field. Moreover, for such varieties it follows from a general conjecture of
Colliot-Th\'{e}l\`{e}ne that the Brauer--Manin obstruction to the Hasse principle should be the only one, so that the Hasse principle is expected to hold. Working over the field of rational numbers and ordering Fano hypersurfaces of fixed degree and dimension by height, we prove that almost every such hypersurface satisfies the Hasse principle provided that the dimension is at least $3$. This proves a conjecture of Poonen and Voloch in every case except for cubic surfaces.
\end{abstract}

\maketitle

\thispagestyle{empty}
\setcounter{tocdepth}{1}
\tableofcontents

\section{Introduction}

Let $d, n \geq 2$ be such that $n \geq d$ and let $N_{d,n} = \binom{n+d}{d}$ be the number of monomials of degree
$d$ in $n+1$ variables. Ordering monomials lexicographically, degree $d$ hypersurfaces in $\mathbb{P}^n$ that are defined over $\mathbb{Q}$ are parametrized by $\mathbb{V}_{d,n} = \mathbb{P}^{N_{d,n}-1}(\mathbb{Q})$. It follows from the assumption $n \geq d$ that a generic element of $\mathbb{V}_{d,n}$ is a smooth Fano hypersurface.

We shall order elements of $\mathbb{V}_{d,n}$ using the usual exponential height on projective space. With this in mind, for any $N \geq 1$, let $\mathbb{Z}_{\mathrm{prim}}^N$ be the set of $(c_1, \dots, c_N) \in \mathbb{Z}^N$ such that
$\gcd(c_1, \dots, c_N) =1$ and let $|| \cdot ||$ be the Euclidean norm in $\mathbb{R}^N$. The height of
$V \in \mathbb{V}_{d,n}$ is then defined to be $||\mathbf{a}_V||$ where
$\mathbf{a}_V \in \mathbb{Z}_{\mathrm{prim}}^{N_{d,n}}$ denotes any of the two primitive coefficient vectors associated to $V$. Moreover, for any $A \geq 1$, we let
\begin{equation*}
\mathbb{V}_{d,n}(A) = \left\{ V \in \mathbb{V}_{d,n} : ||\mathbf{a}_V|| \leq A \right\}.
\end{equation*}
The primary goal of this article is to investigate the asymptotic behaviour of the quantity
\begin{equation*}
\varrho_{d,n}(A) = \frac{\# \{ V \in \mathbb{V}_{d,n}(A) : V(\mathbb{Q}) \neq \emptyset \}}{\#\mathbb{V}_{d,n}(A)},
\end{equation*}
as $A \to \infty$. The ratio $\varrho_{d,n}(A)$ is the proportion of degree $d$ hypersurfaces in $\mathbb{P}^n$ which are defined over $\mathbb{Q}$, have height at most $A$, and admit a rational point.

For any $V \in \mathbb{V}_{d,n}$, we let $V(\mathbf{A}_{\mathbb{Q}})$ denote the set of ad\`{e}les of $V$. We introduce the set $\mathbb{V}_{d,n}^{\mathrm{loc}}$ of elements of $\mathbb{V}_{d,n}$ that are everywhere locally soluble, that is
\begin{equation*}
\mathbb{V}_{d,n}^{\mathrm{loc}} = \left\{ V \in \mathbb{V}_{d,n} : V(\mathbf{A}_{\mathbb{Q}}) \neq \emptyset \right\}.
\end{equation*}
We also let
\begin{equation}
\label{Definition Vdnloc}
\mathbb{V}_{d,n}^{\mathrm{loc}}(A) = \mathbb{V}_{d,n}^{\mathrm{loc}} \cap \mathbb{V}_{d,n}(A),
\end{equation}
and we denote the density of the set $\mathbb{V}_{d,n}^{\mathrm{loc}}$ by
\begin{equation*}
\varrho_{d,n}^{\mathrm{loc}} = \lim_{A \to \infty} \frac{\# \mathbb{V}_{d,n}^{\mathrm{loc}}(A)}{\# \mathbb{V}_{d,n}(A)},
\end{equation*}
whenever this limit exists. In the case $(d,n)=(2,2)$, work of Serre \cite[Exemple~$4$]{MR1075658} shows that a typical rational plane conic is not everywhere locally soluble, that is
\begin{equation}
\label{Equality Serre}
\varrho_{2,2}^{\mathrm{loc}}=0.
\end{equation}
Note that a far-reaching interpretation of this phenomenon can be found in recent work of Loughran \cite{MR3852186}. If $(d,n) \neq (2,2)$, Poonen and Voloch prove \cite[Theorem~3.6]{MR2029869} that $\varrho_{d,n}^{\mathrm{loc}}$ exists, is equal to a product of local densities and moreover
\begin{equation}
\label{Lower bound loc}
\varrho_{d,n}^{\mathrm{loc}} > 0.
\end{equation}
Put another way, the proportion of degree $d$ hypersurfaces in $\mathbb{P}^n$ defined over $\mathbb{Q}$, which are everywhere locally soluble, exists and is positive. Furthermore, Poonen and Voloch conjecture
\cite[Conjecture~$2.2$.(ii)]{MR2029869} that $\varrho_{d,n}(A)$ tends to a limit as $A \to \infty$ and
\begin{equation}
\label{Conjecture PV}
\lim_{A \to \infty} \varrho_{d,n}(A) = \varrho_{d,n}^{\mathrm{loc}}.
\end{equation}
They check \cite[Proposition~$3$.$4$]{MR2029869} that their prediction follows from Colliot-Th\'{e}l\`{e}ne's conjecture \cite{MR2011747} that for smooth, proper, geometrically integral and rationally connected varieties, the Brauer--Manin obstruction to the Hasse principle is the only one. Indeed, Colliot-Th\'{e}l\`{e}ne shows in an appendix to their work \cite[Corollary~A.$2$]{MR2029869} that there is no Brauer--Manin obstruction when $n \geq 4$. The remaining case $(d,n)=(3,3)$ of cubic surfaces relies on a result of Swinnerton-Dyer \cite{MR1207510} asserting that the
Brauer--Manin obstruction is vacuous when the action of the Galois group
$\Gal(\overline{\mathbb{Q}}/\mathbb{Q})$ on the $27$ lines is the full Weyl group $W(\mathbf{E}_6)$. The equality \eqref{Conjecture PV} then follows from Hilbert's irreducibility theorem.

We remark that in the case of general type hypersurfaces, that is when $d>n+1$, Poonen and Voloch
\cite[Conjecture~$2$.$2$.(i)]{MR2029869} conjecture that the ratio $\varrho_{d,n}(A)$ should approach $0$ as
$A \to \infty$, but this range of variables lies outside the scope of the present investigation.

The expectation \eqref{Conjecture PV} holds if either $d=2$ or $n \geq (d-1) 2^d$, as it follows respectively from the Hasse--Minkowski theorem and the celebrated work of Birch \cite{MR150129}. In addition, we also note that in the setting of diagonal hypersurfaces, Br\"{u}dern and Dietmann have confirmed in \cite[Theorem~$1$.$3$]{MR3177289} that the analogue of the equality \eqref{Conjecture PV} holds under the assumption $n>3d$.

The following is our main result and only leaves open the case of cubic surfaces in the Poonen--Voloch conjecture for Fano hypersurfaces.

\begin{theorem}
\label{Theorem 1}
Let $d \geq 2$ and $n \geq d$ with $(d,n) \neq (3,3)$. Then we have
\begin{equation*}
\lim_{A \to \infty} \varrho_{d,n}(A) = \varrho_{d,n}^{\mathrm{loc}}.
\end{equation*}
\end{theorem}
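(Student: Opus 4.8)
The plan is to prove that the $V \in \mathbb{V}_{d,n}^{\mathrm{loc}}(A)$ which fail the Hasse principle form a set of density zero, which combined with the Poonen--Voloch density computation \eqref{Lower bound loc} and the obvious inclusion $\{V : V(\mathbb{Q}) \neq \emptyset\} \subseteq \mathbb{V}_{d,n}^{\mathrm{loc}}$ yields the theorem. Since for $n \geq 4$ (and for $(d,n)=(3,3)$, which is excluded) the Brauer--Manin obstruction is vacuous, conjecturally \emph{every} everywhere locally soluble $V$ with $n \geq 4$ has a rational point; but unconditionally we cannot produce rational points one variety at a time, so instead we count. The strategy is to show that the number of $V \in \mathbb{V}_{d,n}^{\mathrm{loc}}(A)$ \emph{with} a rational point is asymptotically $\#\mathbb{V}_{d,n}^{\mathrm{loc}}(A) \sim \varrho_{d,n}^{\mathrm{loc}} \cdot \#\mathbb{V}_{d,n}(A)$, by producing, for almost all such $V$, a rational point of controlled height and counting pairs $(V, x)$ where $x \in \mathbb{P}^n(\mathbb{Q})$ lies on $V$.

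Concretely, I would fix a box of primitive integer points $x \in \mathbb{Z}_{\mathrm{prim}}^{n+1}$ with $\|x\| \leq B$ for a suitable $B = B(A)$ (a small power of $A$), and for each such $x$ count the coefficient vectors $\mathbf{a} \in \mathbb{Z}_{\mathrm{prim}}^{N_{d,n}}$ with $\|\mathbf{a}\| \leq A$ satisfying the single linear condition $F_{\mathbf{a}}(x) = 0$. This is a lattice-point count in the hyperplane $\{ \mathbf{a} : \sum_{|\alpha|=d} a_\alpha x^\alpha = 0\}$ intersected with a ball, and the main term is of order $A^{N_{d,n}-1}/\|x\|^{?}$ with an explicit local density factor at each prime dividing the relevant gcd's. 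Summing over $x$ in the box and subtracting, via inclusion--exclusion, the overcounting from $V$'s possessing two or more rational points of small height, one expects the pairs to reconstruct almost all of $\mathbb{V}_{d,n}^{\mathrm{loc}}(A)$: the key heuristic input is that a positive proportion of everywhere locally soluble $V$ should carry a rational point of height $\leq B$ and, more to the point, the $V$'s carrying \emph{no} small point are rare. The reduction to this counting is essentially the circle-method-free analogue of Birch's theorem, trading the many-variables input of \cite{MR150129} for the many-coefficients flexibility: we have $N_{d,n} - 1$ free parameters in $\mathbf{a}$ and only one constraint, so the equation $F_{\mathbf{a}}(x)=0$ is vastly underdetermined.

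The principal technical steps, in order, are: (i) a uniform asymptotic for $\#\{\mathbf{a} \in \mathbb{Z}_{\mathrm{prim}}^{N_{d,n}} : \|\mathbf{a}\| \leq A,\ F_{\mathbf{a}}(x) = 0\}$ with explicit dependence on $x$, including the $p$-adic densities, proved by geometry of numbers / Poisson summation on the hyperplane and a Möbius inversion to handle primitivity; (ii) summation of this over primitive $x$ with $\|x\| \leq B$, identifying the main term and bounding the error, where the archimedean and $p$-adic volumes must be assembled into exactly the product defining $\varrho_{d,n}^{\mathrm{loc}}$ (this is where the Poonen--Voloch local density formula must be matched); (iii) an upper bound, via a second-moment or inclusion--exclusion argument, for the number of pairs $(V, x) \neq (V, x')$ with both $x, x'$ on $V$ of small height, showing this is $o$ of the main term so that pairs count varieties without serious multiplicity; and (iv) the crucial lower-bound step: showing the $V \in \mathbb{V}_{d,n}^{\mathrm{loc}}(A)$ admitting \emph{no} rational point of height $\leq B$ are $o(\#\mathbb{V}_{d,n}(A))$ in number. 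I expect step (iv) to be the main obstacle, since it is essentially equivalent to the theorem itself and cannot be separated cleanly from the counting; the resolution presumably comes from a careful choice of $B$ as a function of $A$ together with an averaged local-to-global argument — one shows that for the count in (ii) to match $\varrho_{d,n}^{\mathrm{loc}}\#\mathbb{V}_{d,n}(A)$, the varieties missed must be accounted for, and a compactness/equidistribution argument on the space of local solubility data forces their density to vanish. Handling the dependence on the (possibly large) height of the smallest point, and making the $B \to \infty$, $A \to \infty$ limits interchange correctly, is the delicate heart of the argument.
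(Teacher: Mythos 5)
Your broad strategy --- reduce to showing that the everywhere locally soluble $V$ without a rational point of small height are rare, and attack this by counting incidences $(V,\mathbf{x})$ with first and second moments --- is the right one, and your steps (i)--(iii) correspond to genuine components of the argument (Lemma~\ref{Lemma first moment} is essentially your step (i)--(ii), and the second moment of $N_V(B)$ is computed in Lemmas~\ref{Lemma D} and \ref{Lemma D44}). But your step (iv) is not a technicality to be absorbed into the counting: it is the heart of the proof, and the ``compactness/equidistribution argument on the space of local solubility data'' you gesture at does not exist in the form you describe. The obstruction is that a first moment of $N_V(B)$ over the whole family, matched against $\varrho_{d,n}^{\mathrm{loc}}\#\mathbb{V}_{d,n}(A)$, carries no information about \emph{which} $V$ receive the points: the count could be concentrated on a sparse set of $V$ with many points while a positive proportion of locally soluble $V$ have none. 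A variance bound is needed, but the variance must be taken about a \emph{$V$-dependent} centre that encodes the local densities of $V$ --- centring about a constant is useless because the correct expectation genuinely varies with $V$ (it vanishes on the non-soluble locus and fluctuates on the soluble one).

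The missing idea is the localised counting function $N_V^{\mathrm{loc}}(B)$ of \eqref{Definition local counting}: a weighted point count imposing a congruence $\mathbf{a}_V\in\Lambda_{\nu_{d,n}(\mathbf{x})}^{(W)}$ modulo a slowly growing, highly divisible modulus $W$ and an Archimedean localisation $\mathcal{C}_{\nu_{d,n}(\mathbf{x})}^{(\alpha)}$, designed so that (a) the variance $\sum_V(N_V(B)-N_V^{\mathrm{loc}}(B))^2$ can be bounded by geometry of numbers (Proposition~\ref{Proposition variance}, which requires the delicate successive-minima estimates of Lemmas~\ref{Key lemma one vector} and \ref{Key lemma two vectors}, and is exactly where $(3,3)$ fails and $(4,4)$ barely survives), and (b) $N_V^{\mathrm{loc}}(B)$ factors, up to error, through a truncated singular series $\mathfrak{S}_V(B)$ and singular integral $\mathfrak{J}_V(B)$ which are then shown to be rarely small on $\mathbb{V}_{d,n}^{\mathrm{loc}}(A)$ by moment bounds plus Hensel-type lower bounds (Propositions~\ref{Proposition non-Archimedean} and \ref{Proposition Archimedean}). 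Without this two-step decomposition --- variance about the local proxy, then a pointwise-in-$V$ lower bound for the proxy on the locally soluble locus --- your step (iv) remains circular, as you yourself observe. A secondary inaccuracy: the exclusion of $(3,3)$ here has nothing to do with the Brauer--Manin obstruction (which plays no role in the unconditional argument); it is a failure of the lattice geometry in the variance bound.
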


In other words, in the Fano range $n \geq d$ and when degree $d$ hypersurfaces in $\mathbb{P}^n$ that are defined over $\mathbb{Q}$ are ordered by height, $100 \%$ of these hypersurfaces satisfy the Hasse principle provided that
$(d,n) \neq (3,3)$. In fact, it transpires from Propositions~\ref{Proposition 1} and \ref{Proposition 2} that
\begin{equation*}
\frac{\# \{V \in \mathbb{V}_{d,n}^{\mathrm{loc}}(A) : V(\mathbb{Q})=\emptyset\}}{\# \mathbb{V}_{d,n}(A)} \ll
\frac1{(\log A)^{1/48n}}.
\end{equation*}
We have not made any effort to optimise the exponent of $\log A$ in this upper bound.

Unfortunately, in the case $(d,n)=(3,3)$ our understanding of the geometry of the lattices involved in our work does not allow us to establish the equality \eqref{Conjecture PV}. However we can still show that a positive proportion of cubic surfaces have a rational point. Indeed, it suffices to consider the more stringent constraint in which the rational points are restricted to lie in one of the coordinate hyperplanes. This reduces the analysis to the case of plane cubic curves and we can therefore appeal to the work of Bhargava~\cite[Theorem~$2$]{Bhargava} to conclude. Recalling the lower bound \eqref{Lower bound loc}, we see that we have the following corollary of Theorem~\ref{Theorem 1}.

\begin{corollary}
\label{Corollary 1}
Let $d \geq 2$ and $n \geq d$ with $(d,n) \neq (2,2)$. Then we have
\begin{equation*}
\liminf_{A \to \infty} \varrho_{d,n}(A) > 0.
\end{equation*}
\end{corollary}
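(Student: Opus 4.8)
The plan is to separate the case $(d,n) \neq (3,3)$, where Theorem~\ref{Theorem 1} is available, from the case $(d,n) = (3,3)$ of cubic surfaces, where it is not. Suppose first that $(d,n) \neq (3,3)$. Since $(d,n) \neq (2,2)$ by hypothesis, the lower bound \eqref{Lower bound loc} gives $\varrho_{d,n}^{\mathrm{loc}} > 0$, while Theorem~\ref{Theorem 1} gives $\lim_{A \to \infty} \varrho_{d,n}(A) = \varrho_{d,n}^{\mathrm{loc}}$; combining the two yields $\liminf_{A \to \infty} \varrho_{d,n}(A) = \varrho_{d,n}^{\mathrm{loc}} > 0$, as required.

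It remains to handle $(d,n) = (3,3)$, and here the idea is to force rational points to lie in the coordinate hyperplane $\{x_3 = 0\}$. Writing a quaternary cubic form as $F = f(x_0, x_1, x_2) + x_3 g(x_0, x_1, x_2, x_3)$, its primitive coefficient vector $\mathbf{a}_F \in \mathbb{Z}_{\mathrm{prim}}^{20}$ decomposes as a pair $(\mathbf{b}, \mathbf{c})$ with $\mathbf{b} \in \mathbb{Z}^{10}$ the coefficients of the ternary cubic $f$ --- those attached to the ten monomials not involving $x_3$ --- and $\mathbf{c} \in \mathbb{Z}^{10}$ the coefficients attached to the ten remaining monomials. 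Any rational point of the plane curve $\{f = 0\} \subseteq \mathbb{P}^2$ is a rational point of the surface $V = \{F = 0\} \subseteq \mathbb{P}^3$, since the former lies on the latter. Hence it suffices to bound $\#\{V \in \mathbb{V}_{3,3}(A) : \{f = 0\}(\mathbb{Q}) \neq \emptyset\}$ from below by a positive multiple of $\#\mathbb{V}_{3,3}(A) \asymp A^{20}$.

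To do so, I would fix $\mathbf{b} \in \mathbb{Z}_{\mathrm{prim}}^{10}$ with $\|\mathbf{b}\| \leq A/2$ whose associated plane cubic carries a rational point, and count its admissible extensions. For each such $\mathbf{b}$ the vectors $\mathbf{c} \in \mathbb{Z}^{10}$ with $\|\mathbf{c}\|^2 \leq A^2 - \|\mathbf{b}\|^2$ number $\gg A^{10}$ (since $A^2 - \|\mathbf{b}\|^2 \geq 3A^2/4$), the pair $(\mathbf{b}, \mathbf{c})$ is automatically primitive because $\mathbf{b}$ is, and each resulting element of $\mathbb{V}_{3,3}(A)$ arises from exactly two such integer vectors. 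Summing over $\mathbf{b}$ and invoking the theorem of Bhargava~\cite[Theorem~$2$]{Bhargava}, to the effect that a positive proportion of plane cubic curves over $\mathbb{Q}$, ordered by height, possess a rational point, one obtains $\#\{ \mathbf{b} \in \mathbb{Z}_{\mathrm{prim}}^{10} : \|\mathbf{b}\| \leq A/2, \ \{f_{\mathbf{b}} = 0\}(\mathbb{Q}) \neq \emptyset \} \gg A^{10}$, and therefore
\begin{equation*}
\#\{V \in \mathbb{V}_{3,3}(A) : V(\mathbb{Q}) \neq \emptyset\} \gg A^{10} \cdot A^{10} = A^{20} \asymp \#\mathbb{V}_{3,3}(A),
\end{equation*}
which gives $\liminf_{A \to \infty} \varrho_{3,3}(A) > 0$.

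The one point that calls for care --- and the main, if modest, obstacle --- is to match the height used by Bhargava to order plane cubics with the Euclidean height $\|\mathbf{b}\|$ appearing above: since any two norms on the ten-dimensional space of ternary cubic forms are comparable up to absolute constants, and the property of possessing a rational point does not depend on the normalisation of the height, Bhargava's positive-proportion statement transfers to the present setting at the cost of adjusting the implied constants. One should also set aside the negligible loci on which $f$ vanishes identically or $\mathbf{b}$ fails to be primitive, neither of which affects the orders of magnitude asserted above.
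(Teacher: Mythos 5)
Your proposal is correct and follows essentially the same route as the paper: for $(d,n)\neq(3,3)$ one combines Theorem~\ref{Theorem 1} with the lower bound \eqref{Lower bound loc}, and for cubic surfaces one restricts rational points to a coordinate hyperplane and invokes Bhargava's positive-proportion result for plane cubic curves. Your fibering of the coefficient vector as $(\mathbf{b},\mathbf{c})$ and the remarks on norm comparability and primitivity simply flesh out the details that the paper leaves implicit.
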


Corollary~\ref{Corollary 1} states that, putting aside the particular case of plane conics, a positive proportion of Fano hypersurfaces of fixed degree and dimension admit a rational point.

Our methods actually allow us to prove a much stronger result than Theorem~\ref{Theorem 1}. Indeed, as stated in Theorem~\ref{Theorem height}, we are able to estimate in an optimal way the smallest height of a rational point on a hypersurface for $100 \%$ of everywhere locally soluble hypersurfaces.
 
The proof of Theorem~\ref{Theorem height} relies upon various arguments coming from the geometry of numbers, together with a careful study of local densities. To establish results such as Theorem~\ref{Theorem height}, it is customary to prove that the number of rational points of bounded height on a hypersurface is on average well-approximated by an adequate quantity, which is traditionally taken to be the main term in the asymptotic formula predicted by the Hardy--Littlewood circle method. Unfortunately in our setting this object is rather complicated to analyse and we shall replace it by a carefully chosen localised counting function, which is designed to approximate the Hardy--Littlewood expectation and yet remain amenable to analysis via the geometry of numbers.

\subsection*{Acknowledgements}

During the preparation of this article the first-named author was supported by EPSRC grant EP/P$026710/1$
and FWF grant P $32428$-N$35$. The research of the second-named author is integrally funded by the Swiss National Science Foundation through the SNSF Professorship number $170565$ awarded to the project
\textit{Height of rational points on algebraic varieties}. Both the financial support of the SNSF and the perfect working conditions provided by the University of Basel are gratefully acknowledged. The research was conducted during the period that the third-named author served as a Clay Research Fellow, and partially conducted during the period he was supported by Dr. Max R\"{o}ssler, the Walter Haefner Foundation and the ETH Zurich Foundation. Finally, the authors are grateful to Menny Aka, Manfred Einsiedler, Roger Heath-Brown, David Masser, Peter Sarnak and Andreas Wieser for interesting conversations, and the organisers of the $2019$ trimester programme
\textit{\`{A} la red\'{e}couverte des points rationnels} at the Institut Henri Poincar\'{e} in Paris, where the project was drawn to a close.

\section{Roadmap of the proof}

Our purpose in this section is to describe our strategy for proving Theorem~\ref{Theorem 1}. Associated to any Fano hypersurface $V \in \mathbb{V}_{d,n}$ is the anticanonical height function $H : V(\mathbb{Q}) \to \mathbb{R}_{> 0}$ metrised by the Euclidean norm $|| \cdot ||$ in $\mathbb{R}^{n+1}$. Thus, for $x \in V(\mathbb{Q})$ we choose
$\mathbf{x}=(x_0, \dots, x_n) \in \mathbb{Z}_{\mathrm{prim}}^{n+1}$ such that $x=(x_0 : \dots : x_n)$ and we set
\begin{equation}
\label{Definition height}
H(x) = ||\mathbf{x}||^{n+1-d}.
\end{equation}
This allows us to define the counting function
\begin{equation}
\label{Definition counting}
N_V(B) = \# \{ x \in V(\mathbb{Q}) : H(x) \leq B \}.
\end{equation}
To tackle Theorem~\ref{Theorem 1} we will first show that $N_V(B)$ is on average well-approximated by a certain localised counting function, and we will then prove that the localised counting function is only rarely smaller than its expected value.

Given $N \geq 1$ and $\mathbf{c} \in \mathbb{Z}^N$, of special importance in our work is the integral lattice
\begin{equation}
\label{Definition lattice}
\Lambda_{\mathbf{c}} = \{ \mathbf{y} \in \mathbb{Z}^N : \langle \mathbf{c}, \mathbf{y} \rangle = 0 \},
\end{equation}
where $\langle \cdot, \cdot \rangle$ denotes the usual Euclidean inner product in $\mathbb{R}^N$. In addition, it will be very convenient to introduce the following notation.

\begin{definition}
\label{Definition Veronese}
Given $d,n \geq 1$, we let $\nu_{d,n} : \mathbb{R}^{n+1} \to \mathbb{R}^{N_{d,n}}$ denote the Veronese embedding, defined by listing all the monomials of degree $d$ in $n+1$ variables using the lexicographical ordering.
\end{definition}

We see that
\begin{equation}
\label{Equality counting}
N_V(B) = \frac1{2} \sum_{\substack{\mathbf{x} \in \Xi_{d,n}(B) \\ \mathbf{a}_V \in \Lambda_{\nu_{d,n}(\mathbf{x})}}} 1,
\end{equation}
where $\mathbf{a}_V \in \mathbb{Z}_{\mathrm{prim}}^{N_{d,n}}$ denotes any of the two primitive coefficient vectors associated to $V$, and where we have set
\begin{equation}
\label{Definition Xi}
\Xi_{d,n}(B) = \left\{ \mathbf{x} \in \mathbb{Z}^{n+1}_{\mathrm{prim}} : ||\mathbf{x}|| \leq B^{1/(n+1-d)} \right\}.
\end{equation}

Manin's conjecture \cite{MR974910} gives a precise prediction for the asymptotic behaviour of $N_V(B)$ as $B \to \infty$ for Fano hypersurfaces $V \in \mathbb{V}_{d,n}$. We remark that most $V \in \mathbb{V}_{d,n}$ do not possess accumulating thin subsets and have Picard group isomorphic to $\mathbb{Z}$. Thus $N_V(B)$ is expected to grow linearly in terms of $B$ whenever $V(\mathbb{Q})$ is Zariski dense in $V$. The localised counting function we work with is chosen to mimic the main term in this expected asymptotic formula.

For any $N \geq 1$, any real $\gamma > 0$ and $\mathbf{v} \in \mathbb{R}^{N}$, we introduce the region
\begin{equation}
\label{Definition Cgamma}
\mathcal{C}_{\mathbf{v}}^{(\gamma)} = \left\{\mathbf{t} \in \mathbb{R}^N :
|\langle \mathbf{v}, \mathbf{t} \rangle |\leq \frac{||\mathbf{v}|| \cdot ||\mathbf{t}||}{2\gamma}\right\},
\end{equation}
and for any $Q \geq 1$ and $\mathbf{c} \in \mathbb{Z}^N$, we define the lattice
\begin{equation}
\label{Definition local lattice}
\Lambda_{\mathbf{c}}^{(Q)} = \left\{\mathbf{y}\in \mathbb{Z}^N : 
\langle \mathbf{c}, \mathbf{y} \rangle \equiv 0 \bmod{Q} \right\}.
\end{equation}
Furthermore, we set
\begin{equation}
\label{Definition alpha}
\alpha = \log B,
\end{equation}
and
\begin{equation}
\label{Definition W}
W = \prod_{p \leq w} p^{\lceil \log w/ \log p \rceil+1},
\end{equation}
where
\begin{equation}
\label{Definition w}
w = \frac{\log B}{\log \log B}.
\end{equation}
Our localised counting function is then defined as
\begin{equation}
\label{Definition local counting}
N_V^{\mathrm{loc}}(B) = \frac1{2} \cdot \frac{\alpha W}{||\mathbf{a}_V||}
\sum_{\substack{\mathbf{x} \in \Xi_{d,n}(B) \\
\mathbf{a}_V \in \Lambda_{\nu_{d,n}(\mathbf{x})}^{(W)} \cap \mathcal{C}_{\nu_{d,n}(\mathbf{x})}^{(\alpha)}}}
\frac1{||\nu_{d,n}(\mathbf{x})||}.
\end{equation}

The main contribution to $\log W$ comes from the primes $p \in (w^{1/2},w)$. Therefore, an application of the prime number theorem reveals that $\log W \sim 3 w$, which implies in particular
\begin{equation}
\label{Upper bound W}
W \ll B^{4/ \log \log B}.
\end{equation}
We thus see that $\alpha$ and $W$ both tend to infinity rather slowly with respect to $B$. This fact together with the observation that $W$ becomes more and more divisible as $B$ grows will turn out to be crucial in our argument.

Our methods not only allow us to prove that $100 \%$ of the everywhere locally soluble Fano hypersurfaces
$V \in \mathbb{V}_{d,n}$ admit a rational point, but we actually obtain an upper bound for the smallest height of a rational point on $V$. Recall the definition \eqref{Definition height} of the anticanonical height $H$. For any
$V \in \mathbb{V}_{d,n}$, it is convenient to define
\begin{equation*}
\mathfrak{M}(V) =
\begin{cases}
\displaystyle{\min_{x \in V(\mathbb{Q})}} H(x), & \textrm{if } V(\mathbb{Q}) \neq \emptyset, \\
\infty, & \textrm{if } V(\mathbb{Q}) = \emptyset.
\end{cases}
\end{equation*}
The interested reader is invited to refer to the introduction of \cite{RandomFano} for a survey of works on the quantity
$\mathfrak{M}(V)$ in the setting of Fano hypersurfaces.

Theorem~\ref{Theorem 1} is an immediate consequence of the following result.

\begin{theorem}
\label{Theorem height}
Let $d \geq 2$ and $n \geq d$ with $(d,n) \neq (3,3)$. Let $\psi : \mathbb{R}_{>0} \to \mathbb{R}_{>0}$ be such that
$\psi(u)/u \to \infty$ as $u \to \infty$. Then we have
\begin{equation*}
\lim_{A \to \infty}
\frac{\# \left\{V \in \mathbb{V}_{d,n}(A) : \mathfrak{M}(V) \leq \psi(||\mathbf{a}_V||) \right\}}{\#\mathbb{V}_{d,n}(A)} = \varrho_{d,n}^{\mathrm{loc}}.
\end{equation*}
\end{theorem}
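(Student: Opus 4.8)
The plan is to derive Theorem~\ref{Theorem height} from two averaged comparisons between the counting function $N_V(B)$ of \eqref{Definition counting} and the localised counting function $N_V^{\mathrm{loc}}(B)$ of \eqref{Definition local counting}, which I will record as Propositions~\ref{Proposition 1} and~\ref{Proposition 2}. The \emph{upper bound} in Theorem~\ref{Theorem height} is immediate: if $\mathfrak{M}(V)\le\psi(\|\mathbf{a}_V\|)$ then $V(\mathbb{Q})\neq\emptyset$, so $V(\mathbf{A}_{\mathbb{Q}})\neq\emptyset$ and $V\in\mathbb{V}_{d,n}^{\mathrm{loc}}$; thus the set whose density we compute lies in $\mathbb{V}_{d,n}^{\mathrm{loc}}(A)$, and since $\#\mathbb{V}_{d,n}^{\mathrm{loc}}(A)/\#\mathbb{V}_{d,n}(A)\to\varrho_{d,n}^{\mathrm{loc}}$ by Poonen and Voloch (see around \eqref{Lower bound loc}; the case $(d,n)=(2,2)$ being trivial by \eqref{Equality Serre}), everything reduces to the matching \emph{lower bound}: for all but a $o(1)$-proportion of $V\in\mathbb{V}_{d,n}^{\mathrm{loc}}(A)$ there is some $B\le\psi(\|\mathbf{a}_V\|)$ with $N_V(B)>0$. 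To manage the $V$-dependence of the threshold I would split $\mathbb{V}_{d,n}(A)$ into dyadic ranges according to the size of $\|\mathbf{a}_V\|$; on the range $\|\mathbf{a}_V\|\in(A'/2,A']$ (with $A'$ beyond a slowly growing threshold, the smaller $V$ being negligible in number) I fix $B=B(A')$ with $B\le\psi(\|\mathbf{a}_V\|)$ throughout the range, $B\le (A')^{2}$, and $B/A'\to\infty$ as $A'\to\infty$; this is possible because $\psi(u)/u\to\infty$. It then suffices to bound, on each such range, the number of $V\in\mathbb{V}_{d,n}^{\mathrm{loc}}$ with $N_V(B)=0$ by $\ll (A')^{N_{d,n}}(\log A')^{-c}$ for a fixed $c=c(d,n)>0$: summing over the ranges, the geometric sum being dominated by its last term, bounds the total exceptional set by $\ll\#\mathbb{V}_{d,n}(A)(\log A)^{-c}$, which with the upper bound gives the asserted limit and the quantitative bound on Hasse-principle failures recorded in the introduction.

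\emph{The first proposition} asserts that $N_V(B)$ is close to $N_V^{\mathrm{loc}}(B)$ on average, in the form
\[
\sum_{V\in\mathbb{V}_{d,n}(A)}\bigl|N_V(B)-N_V^{\mathrm{loc}}(B)\bigr|\ll\frac{A^{N_{d,n}}}{(\log A)^{c}}\cdot\frac{B}{A},
\]
valid for $B$ in the relevant range. To prove it I would expand the difference via \eqref{Equality counting} and \eqref{Definition local counting}: it is a sum over $\mathbf{x}\in\Xi_{d,n}(B)$ with $\langle\mathbf{a}_V,\nu_{d,n}(\mathbf{x})\rangle=0$, now weighted by $1-\alpha W/(\|\mathbf{a}_V\|\,\|\nu_{d,n}(\mathbf{x})\|)$, minus a sum over $\mathbf{x}$ for which $\langle\mathbf{a}_V,\nu_{d,n}(\mathbf{x})\rangle$ is a nonzero multiple of $W$ lying in the thin region $\mathcal{C}_{\nu_{d,n}(\mathbf{x})}^{(\alpha)}$. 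Summing the absolute value over $V$ and exchanging the order of summation, for each fixed $\mathbf{x}$ one is left with a count of primitive $\mathbf{a}_V$ in a Euclidean ball of radius $\asymp A$ lying on the affine hyperplanes $\langle\mathbf{a}_V,\nu_{d,n}(\mathbf{x})\rangle=m$, as $m$ runs over the multiples of $W$ with $|m|\le\|\mathbf{a}_V\|\,\|\nu_{d,n}(\mathbf{x})\|/(2\alpha)$; since the weight $\alpha W/(\|\mathbf{a}_V\|\,\|\nu_{d,n}(\mathbf{x})\|)$ is essentially the reciprocal of the number of admissible $m$, the diagonal ($m=0$) and off-diagonal ($m\neq 0$) contributions cancel to leading order, and a geometry-of-numbers estimate for these lattice-point counts — together with the slow growth \eqref{Upper bound W} of $W$ and of $\alpha$ — leaves a power-of-$\log$ saving. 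A genuine subtlety here, and more acute in the next step, is that for exceptional $V$ — singular hypersurfaces, cones, or hypersurfaces containing a rational linear subspace — the count $N_V(B)$ can far exceed $B/\|\mathbf{a}_V\|$; such $V$ form a sparse subset of $\mathbb{V}_{d,n}$ which must be excised beforehand and shown to be negligible, and this is what has to be handled carefully to keep the analogous second moment under control.

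\emph{The second proposition} asserts that $N_V^{\mathrm{loc}}(B)$ is rarely much smaller than its expected value, in the form
\[
\#\Bigl\{V\in\mathbb{V}_{d,n}^{\mathrm{loc}}(A):N_V^{\mathrm{loc}}(B)<\kappa\,\frac{B}{\|\mathbf{a}_V\|}\Bigr\}\ll\frac{A^{N_{d,n}}}{(\log A)^{c}}
\]
for a suitable $\kappa=\kappa(d,n)>0$. Here I would read the inner condition $\mathbf{a}_V\in\Lambda_{\nu_{d,n}(\mathbf{x})}^{(W)}\cap\mathcal{C}_{\nu_{d,n}(\mathbf{x})}^{(\alpha)}$ in \eqref{Definition local counting} as membership of $\mathbf{a}_V$ in a lattice cut out by a cone, and evaluate the first moment $\sum_V N_V^{\mathrm{loc}}(B)$ and the second moment $\sum_V N_V^{\mathrm{loc}}(B)^2$ by counting $\mathbf{a}_V$ in balls intersected with $\Lambda_{\nu_{d,n}(\mathbf{x})}^{(W)}$, respectively $\Lambda_{\nu_{d,n}(\mathbf{x})}^{(W)}\cap\Lambda_{\nu_{d,n}(\mathbf{x}')}^{(W)}$, and the relevant cones. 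These counts match the corresponding volumes as long as the successive minima of the lattices $\Lambda_{\nu_{d,n}(\mathbf{x})}^{(W)}$ are all of comparable size; the $\mathbf{x}\in\Xi_{d,n}(B)$ for which this balance fails are essentially those for which $\nu_{d,n}(\mathbf{x})$ comes close to a coordinate subspace — that is, $\mathbf{x}$ near the coordinate hyperplanes and similar loci — and these are sparse in $\Xi_{d,n}(B)$; isolating them and bounding their contribution is where the geometry of numbers does the real work. A careful study of the ensuing local densities — feasible precisely because $W$ of \eqref{Definition W} is so highly divisible, so that $\langle\mathbf{a}_V,\nu_{d,n}(\mathbf{x})\rangle\equiv 0\bmod W$ encodes all the $p$-adic solubility conditions at primes $p\le w$ while the cone $\mathcal{C}_{\nu_{d,n}(\mathbf{x})}^{(\alpha)}$ encodes the archimedean one — identifies the common leading term of the two moments, which is of size $\asymp B/\|\mathbf{a}_V\|$ per $V$ and, by \eqref{Lower bound loc}, genuinely positive for $V\in\mathbb{V}_{d,n}^{\mathrm{loc}}$; since the second moment then agrees with the square of the first up to the same type of $\log$ saving, Chebyshev's inequality yields the claimed bound.

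Granting the two propositions, the deduction is short. On a dyadic range $\|\mathbf{a}_V\|\in(A'/2,A']$ with $B=B(A')$ as above, Markov's inequality applied to the first proposition (with $A'$ in place of $A$) gives $|N_V(B)-N_V^{\mathrm{loc}}(B)|\le\tfrac14\kappa B/A'$ for all but $\ll(A')^{N_{d,n}}(\log A')^{-c}$ of the $V$ in the range, while the second proposition gives $N_V^{\mathrm{loc}}(B)\ge\kappa B/\|\mathbf{a}_V\|\ge\tfrac12\kappa B/A'$ for all but $\ll(A')^{N_{d,n}}(\log A')^{-c}$ of the locally soluble $V$ in the range; hence for every locally soluble $V$ in the range outside the union of these two exceptional sets one has $N_V(B)\ge\tfrac14\kappa B/A'>0$, so that $\mathfrak{M}(V)\le B\le\psi(\|\mathbf{a}_V\|)$, which is exactly the input needed. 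The principal obstacle is the geometry-of-numbers ingredient behind the second proposition — controlling, uniformly over $\mathbf{x}\in\Xi_{d,n}(B)$, how often the lattice $\Lambda_{\nu_{d,n}(\mathbf{x})}^{(W)}$ degenerates, and meshing this with the local-density computation — and it is precisely here that our control is not strong enough to include the case $(d,n)=(3,3)$ of cubic surfaces.
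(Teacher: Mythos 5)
Your reduction of the theorem to two auxiliary propositions is exactly the paper's: the set whose density is computed lies in $\mathbb{V}_{d,n}^{\mathrm{loc}}(A)$, so only the lower bound needs work, and one shows that locally soluble $V$ with $N_V(B)=0$ for a suitable $B\leq\psi(\|\mathbf{a}_V\|)$ are rare by combining a comparison of $N_V$ with $N_V^{\mathrm{loc}}$ and a lower bound for $N_V^{\mathrm{loc}}$ (the paper replaces your dyadic decomposition by a single parameter $\eta$, taking $B=A/\eta$ for the $V$ with $\|\mathbf{a}_V\|>\eta A$ and discarding the rest, which is equivalent). The gaps are in the two propositions themselves. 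Your Proposition 2 is false as stated with a fixed $\kappa=\kappa(d,n)>0$: the normalised count $\tfrac{\|\mathbf{a}_V\|}{B}N_V^{\mathrm{loc}}(B)$ is governed by the product of local densities $\mathfrak{S}_V(B)\mathfrak{J}_V(B)$, and for every fixed $\kappa$ a \emph{positive} proportion of locally soluble $V$ have this product below $\kappa$ (for instance quadrics whose real zero locus on $\mathbb{S}^n$ is confined to two tiny caps). The threshold must decay with $B/A$, which is why the paper compares $N_V^{\mathrm{loc}}$ with $\phi(A)^{2/3}$ rather than with a constant multiple of its expected size $\phi(A)$. For the same reason your proposed proof of Proposition 2 by Chebyshev cannot work: the second moment of $N_V^{\mathrm{loc}}$ is \emph{not} asymptotic to the square of the first moment, since the limiting distribution of $\tfrac{\|\mathbf{a}_V\|}{B}N_V^{\mathrm{loc}}$ is the nondegenerate random variable $\mathfrak{S}\mathfrak{J}$, which vanishes on the positive-density locally insoluble locus; the variance is therefore comparable to the square of the mean and Chebyshev yields nothing. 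What is needed, and what the paper does, is a pointwise lower bound for $N_V^{\mathrm{loc}}(B)$ in terms of $\mathfrak{S}_V(B)\mathfrak{J}_V(B)$ (Lemma~\ref{Lemma product factors}) together with the genuinely arithmetic input that local solubility forces the local factors to be rarely small — a quantitative Hensel argument for the non-Archimedean factor and an implicit-function argument for the Archimedean one (Propositions~\ref{Proposition non-Archimedean} and~\ref{Proposition Archimedean}).

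Your sketch of Proposition 1 also does not go through. One cannot "sum the absolute value over $V$ and exchange the order of summation'': the absolute value blocks the interchange, and bounding termwise first destroys precisely the diagonal/off-diagonal cancellation you then invoke. That cancellation is visible for an individual $V$ only if one can prove an asymptotic for $N_V(B)$ itself, which is the whole difficulty; it is accessible only on average in an $L^2$ sense. This is why the paper proves the variance bound $\sum_V(N_V(B)-N_V^{\mathrm{loc}}(B))^2\ll\#\mathbb{V}_{d,n}(A)\cdot B/A$ (Proposition~\ref{Proposition variance}) — expanding the square into three second moments, each of which \emph{can} be evaluated after interchanging summation — and then deduces Proposition~\ref{Proposition 1} by Chebyshev. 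Note that this variance bound is consistent with $|N_V-N_V^{\mathrm{loc}}|$ being typically of size $(B/A)^{1/2}$, so your claimed first-moment bound $\ll\#\mathbb{V}_{d,n}(A)\,(B/A)(\log A)^{-c}$ is probably not even true for slowly growing $B/A$; fortunately the deduction only requires the error to be $o(B/A)$ for most $V$, which the variance route does deliver. Finally, no excision of singular hypersurfaces or of those containing rational linear subspaces is performed in the paper: the possible largeness of $N_V(B)$ for degenerate $V$ is absorbed uniformly by the control on the successive minima of $\Lambda_{\nu_{d,n}(\mathbf{x})}\cap\Lambda_{\nu_{d,n}(\mathbf{y})}$ as $(\mathbf{x},\mathbf{y})$ varies.
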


Combining Theorem~\ref{Theorem height} with work of the second author \cite[Theorem~$1$]{RandomFano}, we deduce that if $(d,n) \notin \{ (2,2), (3,3)\}$ and if $\xi : \mathbb{R}_{>0} \to \mathbb{R}_{>0}$ is any function satisfying $\xi(u) \to \infty$ as $u \to \infty$, then for $100 \%$ of the everywhere locally soluble Fano hypersurfaces
$V \in \mathbb{V}_{d,n}$ we have the optimal inequalities
\begin{equation*}
\frac1{\xi(||\mathbf{a}_V||)} < \frac{\mathfrak{M}(V)}{||\mathbf{a}_V||} \leq \xi(||\mathbf{a}_V||).
\end{equation*}
In the case $d=2$, Cassels' celebrated bound \cite{MR69217} states that for any hypersurface
$V \in \mathbb{V}_{2,n}^{\mathrm{loc}}$ we have
\begin{equation}
\label{Upper bound Cassels}
\mathfrak{M}(V) \ll ||\mathbf{a}_V||^{n(n-1)/2},
\end{equation}
where the implied constant depends at most on $n$. Note that the exponent is known to be optimal thanks to Kneser's example \cite{MR81306}. Theorem~\ref{Theorem height} shows that for typical quadratic hypersurfaces
$V \in \mathbb{V}_{2,n}^{\mathrm{loc}}$, Cassels' upper bound \eqref{Upper bound Cassels} is very far from the truth as soon as $n \geq 3$.

Heuristically we expect the counting functions $N_V(B)$ and $N_V^{\mathrm{loc}}(B)$ to be of exact order $B/A$, for generic $V \in \mathbb{V}_{d,n}(A)$. Our first result shows that when the ratio $B/A$ tends to $\infty$ sufficiently slowly with respect to $A$ then it is rare for $N_V(B)$ not to be well-approximated by $N_V^{\mathrm{loc}}(B)$, as $V$ runs over the set $\mathbb{V}_{d,n}(A)$. We stress that here and throughout Sections~\ref{Section approximating} and \ref{Section localised rarely small}, all the implied constants depend at most on $d$ and $n$ unless specified otherwise.

\begin{proposition}
\label{Proposition 1}
Let $d \geq 2$ and $n \geq d$ with $(d,n) \notin \{ (2,2), (3,3) \}$. Let $\phi : \mathbb{R}_{>0} \to \mathbb{R}_{>1}$ be such that $\phi(A) \leq (\log A)^{1/2}$. Then we have
\begin{equation*}
\frac1{\#\mathbb{V}_{d,n}(A)} \cdot
\# \left\{V \in \mathbb{V}_{d,n}(A) : \left| N_V(A\phi(A)) - N_V^{\mathrm{loc}}(A\phi(A)) \right| > \phi(A)^{2/3} \right\} \ll
\frac1{\phi(A)^{1/3}}.
\end{equation*}
\end{proposition}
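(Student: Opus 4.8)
The plan is to bound the first and second moments of the discrepancy $N_V(A\phi(A)) - N_V^{\mathrm{loc}}(A\phi(A))$ over $V \in \mathbb{V}_{d,n}(A)$, and then apply Chebyshev's inequality. Write $B = A\phi(A)$, so that $B/A = \phi(A) \leq (\log A)^{1/2}$ grows slowly. The starting point is the exact expression \eqref{Equality counting} for $N_V(B)$ and the definition \eqref{Definition local counting} of $N_V^{\mathrm{loc}}(B)$: both are sums over $\mathbf{x} \in \Xi_{d,n}(B)$, the first counting $\mathbf{x}$ with $\mathbf{a}_V \in \Lambda_{\nu_{d,n}(\mathbf{x})}$, the second a weighted count over $\mathbf{a}_V \in \Lambda_{\nu_{d,n}(\mathbf{x})}^{(W)} \cap \mathcal{C}_{\nu_{d,n}(\mathbf{x})}^{(\alpha)}$. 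Swapping the order of summation, we must understand, for a fixed $\mathbf{x}$ (equivalently a fixed vector $\mathbf{c} = \nu_{d,n}(\mathbf{x})$), how often $\mathbf{a}_V$ ranges over a primitive vector with $\langle \mathbf{c}, \mathbf{a}_V\rangle = 0$ versus the relaxed congruence-plus-cone condition. Summing $1$ over $\mathbf{a}_V \in \mathbb{Z}^{N_{d,n}}_{\mathrm{prim}}$ with $\|\mathbf{a}_V\| \leq A$ lying in the \emph{exact} hyperplane $\Lambda_{\mathbf{c}}$ is, after a lattice-point count, roughly $\vol(B_A)/\det(\Lambda_{\mathbf{c}})$ up to Möbius inversion for primitivity; the key point is that $\det(\Lambda_{\mathbf{c}}) \asymp \|\mathbf{c}\|/\gcd(\text{entries of }\mathbf{c})$, which for $\mathbf{x}$ primitive is $\asymp \|\nu_{d,n}(\mathbf{x})\| \asymp \|\mathbf{x}\|^d$. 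For the localised count, the congruence mod $W$ inflates the lattice covolume by exactly a factor $W$ (since $\gcd(W, \text{content of } \mathbf{c})$ is controlled: $W$ is highly divisible but $\mathbf{x}$ primitive forces $\mathbf{c}$ to be $p$-adically unit-ish for small $p$), while the cone $\mathcal{C}_{\mathbf{c}}^{(\alpha)}$ cuts out a proportion comparable to $1/\alpha$ of directions near the hyperplane — precisely the region where $\langle \mathbf{c},\mathbf{t}\rangle$ is small relative to $\|\mathbf{c}\|\|\mathbf{t}\|$. These two effects are compensated by the prefactor $\alpha W/\|\mathbf{a}_V\|$ together with the weight $1/\|\nu_{d,n}(\mathbf{x})\|$, so that $\mathbb{E}[N_V^{\mathrm{loc}}(B)]$ and $\mathbb{E}[N_V(B)]$ agree to leading order, both being $\asymp B/A = \phi(A)$.

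The concrete steps: (i) compute $\sum_{V \in \mathbb{V}_{d,n}(A)} N_V(B)$ by interchanging sums and invoking a standard lattice-point estimate (Davenport's lemma or similar) for the number of primitive points of $\Lambda_{\mathbf{c}}$ in a ball of radius $A$, with an honest error term; (ii) do the same for $\sum_{V} N_V^{\mathrm{loc}}(B)$, now counting primitive points of $\Lambda_{\mathbf{c}}^{(W)}$ inside $\mathcal{C}_{\mathbf{c}}^{(\alpha)} \cap B_A$, where the cone condition is handled by slicing along the value of $\langle \mathbf{c}, \mathbf{y}\rangle$ (a bounded arithmetic progression mod $W$) and counting lattice points in the resulting slabs; (iii) verify the two main terms cancel, leaving a first-moment bound on $|N_V(B) - N_V^{\mathrm{loc}}(B)|$ of size $O(\text{something} \to 0)$ — but a first moment alone will not suffice because cancellation between the two counts within a single $V$ must be exploited. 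Hence (iv) the crucial estimate is on the \emph{second moment}, i.e. bounding $\sum_V (N_V(B) - N_V^{\mathrm{loc}}(B))^2$, which expands into a sum over \emph{pairs} $(\mathbf{x}_1, \mathbf{x}_2) \in \Xi_{d,n}(B)^2$ of the number of $\mathbf{a}_V$ satisfying conditions relative to both $\nu_{d,n}(\mathbf{x}_1)$ and $\nu_{d,n}(\mathbf{x}_2)$. The diagonal-type and off-diagonal contributions must be separated, and one shows the variance is $O(\phi(A)^{4/3 + o(1)})$ or so, after which Chebyshev with threshold $\phi(A)^{2/3}$ yields the claimed bound $\ll \phi(A)^{-1/3}$.

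The main obstacle is step (iv): controlling the pair correlation $\#\{\mathbf{a}_V : \|\mathbf{a}_V\|\leq A,\ \mathbf{a}_V \in \Lambda_{\mathbf{c}_1}^{(W)} \cap \mathcal{C}_{\mathbf{c}_1}^{(\alpha)} \cap \Lambda_{\mathbf{c}_2}^{(W)} \cap \mathcal{C}_{\mathbf{c}_2}^{(\alpha)}\}$ uniformly in $\mathbf{x}_1, \mathbf{x}_2$. When $\nu_{d,n}(\mathbf{x}_1)$ and $\nu_{d,n}(\mathbf{x}_2)$ are linearly independent over $\mathbb{Q}$ the intersection $\Lambda_{\mathbf{c}_1} \cap \Lambda_{\mathbf{c}_2}$ is a lattice of rank $N_{d,n} - 2$ and covolume $\asymp \|\mathbf{c}_1 \wedge \mathbf{c}_2\|$, and one needs lower and upper bounds on this wedge-norm that are sharp enough on average — this is exactly "the geometry of the lattices involved" that the introduction flags as the sticking point, and it is presumably where the exclusion of $(d,n) = (3,3)$ enters (for cubic surfaces $N_{3,3} = 20$ but the relevant transversality / successive-minima control degenerates). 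The degenerate case where $\mathbf{x}_1, \mathbf{x}_2$ span a common line in the Veronese image — forcing $\mathbf{x}_1 = \pm\mathbf{x}_2$ since $\nu_{d,n}$ is injective on $\mathbb{P}^n$ — is the diagonal and is harmless; near-degenerate cases (small wedge norm) need to be shown to be rare. Once uniform control on the pair count is in hand, summing over $\mathbf{x}_1, \mathbf{x}_2 \in \Xi_{d,n}(B)$ and balancing against the single-variable main term delivers the variance bound. I would also expect a subsidiary technical point: handling the non-maximality of $\Lambda_{\mathbf{c}}^{(W)}$ at primes $p \mid W$ dividing the content of $\mathbf{c}$, but since $\mathbf{x}$ primitive and $W$ is built only from primes $\leq w$, the content of $\nu_{d,n}(\mathbf{x})$ is coprime to the relevant part of $W$ for all but a negligible set of $\mathbf{x}$, so this is a manageable error.
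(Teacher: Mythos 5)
Your strategy is the paper's own: set $B=A\phi(A)$, bound the averaged second moment $\frac{1}{\#\mathbb{V}_{d,n}(A)}\sum_{V}\bigl(N_V(B)-N_V^{\mathrm{loc}}(B)\bigr)^2$, and apply Chebyshev at threshold $\phi(A)^{2/3}$; indeed Proposition~\ref{Proposition 1} is deduced in two lines from the variance bound of Proposition~\ref{Proposition variance}, and your sketch of that bound (expansion into pairs, the lattice $\Lambda_{\nu_{d,n}(\mathbf{x}_1)}\cap\Lambda_{\nu_{d,n}(\mathbf{x}_2)}$ and its determinant, rarity of near-degenerate pairs, the interaction of the content of $\nu_{d,n}(\mathbf{x})$ with $W$) matches the structure of Sections~\ref{Section geometry of numbers} and~\ref{Section approximating}.

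There is, however, a concrete arithmetic error in your final deduction that would break the argument as written. A variance of $O(\phi(A)^{4/3+o(1)})$ combined with Chebyshev at threshold $\phi(A)^{2/3}$ yields only $O(\phi(A)^{o(1)})$, which is not $O(\phi(A)^{-1/3})$ and is not even $o(1)$. What is actually needed — and what Proposition~\ref{Proposition variance} supplies — is that the averaged variance is $O(B/A)=O(\phi(A))$; then $\phi(A)/\phi(A)^{4/3}=\phi(A)^{-1/3}$ as claimed. This sharper target is not optional slack: the diagonal terms you dismiss as harmless (essentially the first moment $\frac{1}{\#\mathbb{V}_{d,n}(A)}\sum_V N_V(B)$, controlled in Lemma~\ref{Lemma first moment}) already contribute $\asymp B/A$ to the variance, so $O(B/A)$ is the true order of magnitude, and the entire burden of the off-diagonal analysis is to show that the main terms of $D_{d,n}(A,B)$, $-2D_{d,n}^{\mathrm{mix}}(A,B)$ and $D_{d,n}^{\mathrm{loc}}(A,B)$ cancel to within $O\bigl(\#\mathbb{V}_{d,n}(A)\cdot (B/A)^2(\log A)^{-1/2}\bigr)$, which is $O(\#\mathbb{V}_{d,n}(A)\cdot B/A)$ precisely because $\phi(A)\leq(\log A)^{1/2}$. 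So your plan is viable, but only once the target for step (iv) is corrected from $\phi(A)^{4/3+o(1)}$ to $\phi(A)$.
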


Proposition~\ref{Proposition 1} will follow directly from Proposition~\ref{Proposition variance}, which provides a sharp upper bound for the variance
\begin{equation*}
\sum_{V \in \mathbb{V}_{d,n}(A)} \left( N_V(A\phi(A)) - N_V^{\mathrm{loc}}(A\phi(A)) \right)^2.
\end{equation*}
In order to prove Proposition~\ref{Proposition variance} we will start in Section~\ref{Section geometry of numbers} by gathering a series of tools coming from the geometry of numbers. Section~\ref{Section approximating} will then be devoted to the proof of Proposition~\ref{Proposition variance}.

Our second major ingredient in the proof of Theorem~\ref{Theorem height} states that if the ratio $B/A$ tends to
$\infty$ as $A$ tends to $\infty$ and satisfies a mild upper bound, then it is rare for the localised counting function $N_V^{\mathrm{loc}}(B)$ to be smaller than its expected size, as $V$ runs over the set
$\mathbb{V}_{d,n}^{\mathrm{loc}}(A)$.

\begin{proposition}
\label{Proposition 2}
Let $d \geq 2$ and $n \geq d$ with $(d,n) \neq (2,2)$. Let $\phi : \mathbb{R}_{>0} \to \mathbb{R}_{>1}$ be such that
$\phi(A) \leq A^{3/n}$. Then we have
\begin{equation*}
\frac1{\# \mathbb{V}_{d,n}^{\mathrm{loc}}(A)} \cdot
\# \left\{ V \in \mathbb{V}_{d,n}^{\mathrm{loc}}(A) : N_V^{\mathrm{loc}}(A \phi(A)) \leq \phi(A)^{2/3} \right\} \ll
\frac1{\phi(A)^{1/24n}}.
\end{equation*}
\end{proposition}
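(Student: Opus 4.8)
The plan is to prove Proposition~\ref{Proposition 2} by a second moment argument. Write $\mathbb{V}=\mathbb{V}_{d,n}^{\mathrm{loc}}(A)$, set $B=A\phi(A)$ and $R=B^{1/(n+1-d)}$, and take all averages and variances below over $V\in\mathbb{V}$, with $\mathbb{E}$ denoting the average. We may assume $\phi(A)$ is large, the assertion being trivial otherwise, and after removing the $V$ with $\|\mathbf{a}_V\|\notin(A/2,A]$ (a step handled by summing over $O(\log A)$ dyadic sub-shells of $\mathbb{V}$, the top one dominating) we may assume $\|\mathbf{a}_V\|\asymp A$. Since the summands in \eqref{Definition local counting} are nonnegative it suffices to bound below the contribution $T_V$ to $N_V^{\mathrm{loc}}(B)$ of the single dyadic shell $\|\mathbf{x}\|\in(R/2,R]$; on that shell $\|\nu_{d,n}(\mathbf{x})\|\asymp R^{d}$, so writing $F_V(\mathbf{x})=\langle\mathbf{a}_V,\nu_{d,n}(\mathbf{x})\rangle$ we have $T_V\asymp\frac{\alpha W}{AR^{d}}M_V$ with
\[
M_V:=\#\Bigl\{\mathbf{x}\in\mathbb{Z}^{n+1}:\ \tfrac R2<\|\mathbf{x}\|\le R,\ W\mid F_V(\mathbf{x}),\ \mathbf{a}_V\in\mathcal{C}_{\nu_{d,n}(\mathbf{x})}^{(\alpha)}\Bigr\},
\]
cf.\ \eqref{Definition Cgamma}; the $\mathbf{x}$ for which $\nu_{d,n}(\mathbf{x})$ is imprimitive form a density-zero sub-collection and may be discarded. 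I would establish
\[
\mathbb{E}M_V\asymp\frac{R^{n+1}}{\alpha W}=\frac{AR^{d}}{\alpha W}\,\phi(A),\qquad \mathrm{Var}(M_V)\ll\phi(A)^{-1/12n}(\mathbb{E}M_V)^2 ,
\]
whereupon Chebyshev's inequality gives $\#\{V\in\mathbb{V}:M_V\le\tfrac12\mathbb{E}M_V\}\ll\phi(A)^{-1/12n}\#\mathbb{V}$, and since $T_V\le\phi(A)^{2/3}$ forces $M_V\le\tfrac12\mathbb{E}M_V$ once $\phi(A)$ is large (because $\phi(A)^{2/3}=o(\phi(A))$), this is stronger than the stated bound.

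For the first moment, swap the order of summation: $\sum_{V\in\mathbb{V}}M_V=\sum_{\mathbf{x}}\#\{V\in\mathbb{V}:\|\mathbf{a}_V\|\asymp A,\ W\mid F_V(\mathbf{x}),\ \mathbf{a}_V\in\mathcal{C}_{\nu_{d,n}(\mathbf{x})}^{(\alpha)}\}$. For fixed primitive $\mathbf{x}$ the inner count is evaluated in two stages. First, a local density analysis: among everywhere locally soluble primitive coefficient vectors the extra congruence $W\mid\langle\mathbf{a},\nu_{d,n}(\mathbf{x})\rangle$ costs a factor $\asymp1/W$, the point being that $\nu_{d,n}(\mathbf{x})$ is primitive whenever $\mathbf{x}$ is, that for each $p\mid W$ the events ``$V$ is soluble over $\mathbb{Q}_p$'' and ``$p^{v_p(W)}\mid\langle\mathbf{a},\nu_{d,n}(\mathbf{x})\rangle$'' essentially decouple, that $W$ is highly divisible and satisfies $W=B^{o(1)}$ by \eqref{Upper bound W}, and that $\varrho_{d,n}^{\mathrm{loc}}>0$ by \eqref{Lower bound loc}. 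Second, a geometry-of-numbers count, over the lattice $\Lambda_{\nu_{d,n}(\mathbf{x})}^{(W)}$ of \eqref{Definition local lattice} suitably translated, of its points in the ball of radius $\asymp A$ that lie in the slab $\mathcal{C}_{\nu_{d,n}(\mathbf{x})}^{(\alpha)}$; here the hypothesis $\phi(A)\le A^{3/n}$ keeps the relevant successive minima well below $A$, so the main term $\asymp A^{N_{d,n}}/(\alpha W\|\nu_{d,n}(\mathbf{x})\|)$ dominates. Summing over the shell via $\sum_{R/2<\|\mathbf{x}\|\le R}\|\nu_{d,n}(\mathbf{x})\|^{-1}\asymp R^{n+1-d}$ yields $\sum_{V\in\mathbb{V}}M_V\asymp\varrho_{d,n}^{\mathrm{loc}}A^{N_{d,n}}R^{n+1}/(\alpha W)$, and hence the claimed estimate for $\mathbb{E}M_V$; the positivity $\varrho_{d,n}^{\mathrm{loc}}>0$ is essential here, which is one reason $(d,n)=(2,2)$ is excluded.

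The second moment is the crux and the main obstacle. Expanding $M_V^2$ and swapping, $\sum_{V\in\mathbb{V}}M_V^2=\sum_{\mathbf{x},\mathbf{x}'}\#\{V\in\mathbb{V}:\|\mathbf{a}_V\|\asymp A,\ W\mid F_V(\mathbf{x}),\ W\mid F_V(\mathbf{x}'),\ \mathbf{a}_V\in\mathcal{C}_{\nu_{d,n}(\mathbf{x})}^{(\alpha)}\cap\mathcal{C}_{\nu_{d,n}(\mathbf{x}')}^{(\alpha)}\}$. The diagonal $\mathbf{x}'=\pm\mathbf{x}$ contributes $\asymp\sum_{V\in\mathbb{V}}M_V\asymp(\mathbb{E}M_V)^{-1}(\mathbb{E}M_V)^2\#\mathbb{V}$, which is $\ll\phi(A)^{-1/12n}(\mathbb{E}M_V)^2\#\mathbb{V}$ since $\mathbb{E}M_V\gg A\gg\phi(A)^{1/12n}$. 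For the off-diagonal terms the inner count is controlled by the geometry of the rank-two lattice spanned by $\nu_{d,n}(\mathbf{x})$ and $\nu_{d,n}(\mathbf{x}')$ --- equivalently by the image sublattice of $\mathbb{Z}^2$ under $\mathbf{a}\mapsto(\langle\mathbf{a},\nu_{d,n}(\mathbf{x})\rangle,\langle\mathbf{a},\nu_{d,n}(\mathbf{x}')\rangle)$ --- together with the joint local densities at the primes dividing $W$. For pairs in general position, where this image is $\mathbb{Z}^2$ up to bounded index and the local conditions decouple, the two constraints are independent, the count factorises, and the sum over such pairs reproduces $(\mathbb{E}M_V)^2\#\mathbb{V}$. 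Everything therefore comes down to bounding the excess contributed by the remaining pairs: those for which $\nu_{d,n}(\mathbf{x}),\nu_{d,n}(\mathbf{x}')$ span a skew lattice or one of large index, or for which some $p\mid W$ forces a correlation. This requires uniform estimates, as $\mathbf{x},\mathbf{x}'$ range over the shell, for how often the Gram determinant of $\nu_{d,n}(\mathbf{x}),\nu_{d,n}(\mathbf{x}')$ is anomalously small or anomalously divisible --- that is, a quantitative grip on the geometry of the lattices attached to the Veronese embedding of Definition~\ref{Definition Veronese} --- and this pairwise analysis is the delicate heart of the proof; the exponent $1/(24n)$ is the power saving it yields (one shows the excess is $\ll\phi(A)^{-1/12n}(\mathbb{E}M_V)^2\#\mathbb{V}$ and retains a factor $2$ against the losses glossed over above). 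Granting this, $\mathrm{Var}(M_V)\ll\phi(A)^{-1/12n}(\mathbb{E}M_V)^2$ and Proposition~\ref{Proposition 2} follows by Chebyshev as in the first paragraph; the first moment, though it needs the local density bookkeeping just described, is by comparison routine.
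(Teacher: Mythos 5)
Your overall strategy is genuinely different from the paper's, and it contains a fatal gap: the variance bound $\mathrm{Var}(M_V)\ll\phi(A)^{-1/12n}(\mathbb{E}M_V)^2$ is false. The localised counting function is constructed precisely so that $N_V^{\mathrm{loc}}(B)$ tracks $(B/||\mathbf{a}_V||)\cdot\mathfrak{S}_V(B)\cdot\mathfrak{J}_V(B)$, where $\mathfrak{S}_V$ and $\mathfrak{J}_V$ are the non-Archimedean and Archimedean densities of \eqref{Definition singular series} and \eqref{Definition singular integral}; this product is a genuinely non-constant function of $V$, even after restricting to $\mathbb{V}_{d,n}^{\mathrm{loc}}(A)$ (the real density varies continuously with the real locus of $V$, and the $p$-adic densities vary with the reduction type at each $p\mid W$ and can be as small as a negative power of $p$ for locally soluble $V$ whose $p$-adic points are all near singular points of the reduction). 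Consequently $\mathrm{Var}(M_V)/(\mathbb{E}M_V)^2$ is bounded \emph{below} by a positive constant, and Chebyshev yields only $O(1)$ for the lower tail, not $O(\phi(A)^{-1/24n})$. The error enters at the step where you assert that for pairs $(\mathbf{x},\mathbf{x}')$ in general position ``the count factorises'' and ``reproduces $(\mathbb{E}M_V)^2\#\mathbb{V}$'': even for such pairs the two congruence conditions modulo $W$ (and the two cone conditions) are each correlated with the restriction $V\in\mathbb{V}_{d,n}^{\mathrm{loc}}(A)$ through the same primes $p\mid W$ (and the same real place), so the generic off-diagonal contribution is $\#\mathbb{V}\cdot(\mathbb{E}M_V)^2\cdot\mathbb{E}[(\mathfrak{S}\mathfrak{J})^2]/(\mathbb{E}[\mathfrak{S}\mathfrak{J}])^2$, with the last ratio strictly larger than $1$. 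No refinement of the ``skew pairs'' analysis can remove this; a second moment simply cannot detect a lower tail at height $\phi(A)^{-1/3}\,\mathbb{E}M_V$ when the normalised variance does not tend to zero.

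The paper avoids this by never averaging $N_V^{\mathrm{loc}}$ over $V$ in this proposition. Instead, Lemma~\ref{Lemma product factors} gives the \emph{pointwise} lower bound $\mathfrak{S}_V(B)\cdot\mathfrak{J}_V(B)\ll(A/B)N_V^{\mathrm{loc}}(B)+B^{-1/n}$, so that $N_V^{\mathrm{loc}}(A\phi(A))\leq\phi(A)^{2/3}$ forces one of the two local factors to be $\leq c^{1/2}\phi(A)^{-1/6}$. It then remains to show that each factor is rarely that small on $\mathbb{V}_{d,n}^{\mathrm{loc}}(A)$ (Propositions~\ref{Proposition non-Archimedean} and \ref{Proposition Archimedean}). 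This is done not by Chebyshev but by bounding a small \emph{negative} moment $\sum_{\mathbf{a}}\sigma(\mathbf{a};W)^{-\kappa}$ with $\kappa=1/4n$: the Chinese remainder theorem factors it over $p^r\|W$; the range $\sigma(\mathbf{a};p^r)>1/2$ is controlled by the first and second moments of $\sigma(\cdot;p^r)$ (Lemmas~\ref{Lemma first moment sigma} and \ref{Lemma second moment sigma}); and the range $\sigma(\mathbf{a};p^r)\leq1/2$ is controlled by combining a Hensel-type lower bound $\sigma(\mathbf{a};p^r)\geq p^{-(e+1)n}$ for forms admitting a point with gradient valuation $e$ (Lemma~\ref{Lemma lower bound sigma}) with an upper bound $\ll p^{rN_{d,n}-e}$ for the number of forms all of whose local points are that singular (Lemma~\ref{Lemma upper bound Re}); the Archimedean factor is treated by exact analogues. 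If you want to keep a moment-method flavour, the viable route is therefore a negative moment of the local densities conditioned on local solubility, not a variance of the counting function itself.
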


Proposition~\ref{Proposition 2} will be established in Section~\ref{Section localised rarely small}. The proof will consist in checking that certain non-Archimedean and Archimedean factors that are hidden in our localised counting function $N_V^{\mathrm{loc}}(A \phi(A))$ are rarely small as $V$ runs over $\mathbb{V}_{d,n}^{\mathrm{loc}}(A)$, as stated in Propositions~\ref{Proposition non-Archimedean} and \ref{Proposition Archimedean}. It may be worth noting that Section~\ref{Section localised rarely small} is independent from Sections~\ref{Section geometry of numbers} and
\ref{Section approximating}.

We now proceed to prove that Theorem~\ref{Theorem height} follows from Propositions~\ref{Proposition 1} and \ref{Proposition 2}.

\begin{proof}[Proof of Theorem~\ref{Theorem height}]
The case $(d,n)=(2,2)$ of plane conics is a direct consequence of the equality \eqref{Equality Serre} so we assume that $(d,n) \neq (2,2)$. We set
\begin{equation*}
\mathscr{P}_{d,n}(A) =
\# \left\{V \in \mathbb{V}_{d,n}^{\mathrm{loc}}(A) : \mathfrak{M}(V) > \psi(||\mathbf{a}_V||) \right\},
\end{equation*}
and we observe that our goal is to prove that
\begin{equation}
\label{Goal height}
\lim_{A \to \infty} \frac{\mathscr{P}_{d,n}(A)}{\#\mathbb{V}_{d,n}(A)} = 0.
\end{equation}
Let $\eta \in (0,1)$. Since we clearly have $\mathbb{V}_{d,n}^{\mathrm{loc}}(\eta A) \subset \mathbb{V}_{d,n}(\eta A)$ and $\# \mathbb{V}_{d,n}(\eta A) \ll \eta^{N_{d,n}} A^{N_{d,n}}$, we see that
\begin{equation*}
\mathscr{P}_{d,n}(A) = \# \left\{V \in \mathbb{V}_{d,n}^{\mathrm{loc}}(A) :
\begin{array}{l l}
||\mathbf{a}_V|| > \eta A \\
\mathfrak{M}(V) > \psi(||\mathbf{a}_V||)
\end{array}
\right\} + O \left( \eta^{N_{d,n}} A^{N_{d,n}} \right).
\end{equation*}
By assumption if $A$ is large enough then for any $u \geq \eta A$ we have $\psi(u) \geq u/\eta^2$ and thus
$\psi(u) \geq A/\eta$. We deduce that
\begin{equation}
\label{Upper bound P}
\mathscr{P}_{d,n}(A) \ll \# \left\{V \in \mathbb{V}_{d,n}^{\mathrm{loc}}(A) : \mathfrak{M}(V) > \frac{A}{\eta} \right\} + \eta^{N_{d,n}} A^{N_{d,n}}.
\end{equation}
Next, we note that if a hypersurface $V \in \mathbb{V}_{d,n}^{\mathrm{loc}}(A)$ satisfies the lower bound
$\mathfrak{M}(V) > A/\eta$ then $N_V(A/\eta)=0$, which implies that we have either
\begin{equation}
\label{Inequality 1}
\left|N_V\left(\frac{A}{\eta}\right) - N_V^{\mathrm{loc}}\left(\frac{A}{\eta}\right)\right| > \frac1{\eta^{2/3}},
\end{equation}
or
\begin{equation}
\label{Inequality 2}
N_V^{\mathrm{loc}}\left(\frac{A}{\eta}\right) \leq \frac1{\eta^{2/3}}.
\end{equation}
As a result, taking $\phi(A) = 1/\eta$ in Propositions~\ref{Proposition 1} and \ref{Proposition 2} to bound the number of $V \in \mathbb{V}_{d,n}^{\mathrm{loc}}(A)$ satisfying either the lower bound \eqref{Inequality 1} or the upper bound \eqref{Inequality 2}, we derive
\begin{align}
\nonumber
\frac1{\# \mathbb{V}_{d,n}(A)} \cdot \# \left\{V \in \mathbb{V}_{d,n}^{\mathrm{loc}}(A) : \mathfrak{M}(V) > \frac{A}{\eta} \right\} & \ll \eta^{1/3} + \eta^{1/24n} \frac{\# \mathbb{V}_{d,n}^{\mathrm{loc}}(A)}{\# \mathbb{V}_{d,n}(A)} \\
\label{Upper bound after propositions}
& \ll \eta^{1/24n}.
\end{align}
Note that we have used the fact that $\eta \in (0,1)$. We now remark that we have the lower bound
\begin{equation}
\label{Lower bound Vdn}
\# \mathbb{V}_{d,n}(A) \gg A^{N_{d,n}}.
\end{equation}
Hence, putting together the upper bounds \eqref{Upper bound P} and \eqref{Upper bound after propositions} we obtain
\begin{equation*}
\limsup_{A \to \infty} \frac{\mathscr{P}_{d,n}(A)}{\#\mathbb{V}_{d,n}(A)} \ll \eta^{1/24n}.
\end{equation*}
Since this upper bound holds for any $\eta \in (0,1)$ we see that the equality \eqref{Goal height} follows, which completes the proof of Theorem~\ref{Theorem height}.
\end{proof}

\section{Tools from the geometry of numbers}

\label{Section geometry of numbers}

Our goal in this section is to gather all the geometry of numbers results that we will need to establish
Proposition~\ref{Proposition 1}. In Section~\ref{Section basic facts} we start by recalling classical facts about lattices in $\mathbb{R}^N$ and we prove a series of lattice point counting estimates. We will then calculate the determinants of certain lattices in Section~\ref{Section determinants}. Our next task in
Section~\ref{Section successive minima} will be to investigate the size of the successive minima of the key lattices
$\Lambda_{\nu_{d,n}(\mathbf{x})}$ and $\Lambda_{\nu_{d,n}(\mathbf{x})} \cap \Lambda_{\nu_{d,n}(\mathbf{y})}$ for given linearly independent vectors $\mathbf{x}, \mathbf{y} \in \mathbb{Z}^N$. In Section~\ref{Section typical size} we will then turn to estimating the typical size of some key quantities uncovered in Section~\ref{Section successive minima}. Finally, in Section~\ref{Section quartic threefolds} we will collect some further results that we will require to handle the specific case of quartic threefolds.

\subsection{Basic facts and lattice point counting estimates}

\label{Section basic facts}

Let $N \geq 1$. A lattice $\Lambda \subset \mathbb{R}^N$ is a discrete subgroup of $\mathbb{R}^N$. The dimension of the subspace $\Span_{\mathbb{R}}(\Lambda)$ is called the rank of $\Lambda$. If $\Lambda$ is a rank $R$ lattice
and if $(\mathbf{b}_1, \dots, \mathbf{b}_R)$ is any basis of $\Lambda$ then the determinant of $\Lambda$ is the $R$-dimensional volume of the fundamental parallelepiped spanned by $(\mathbf{b}_1, \dots, \mathbf{b}_R)$, and is thus given by
\begin{equation}
\label{Definition det}
\det(\Lambda) = \sqrt{\det(\mathbf{B}^T \mathbf{B})},
\end{equation}
where $\mathbf B$ is the $N \times R$ matrix whose columns are the vectors $\mathbf{b}_1, \dots, \mathbf{b}_R$.

Let $\Lambda \subset \mathbb{R}^N$ be a lattice and $\Gamma$ be a sublattice of $\Lambda$ such that
$\Span_{\mathbb{R}} (\Gamma) \cap \Lambda = \Gamma$, which is equivalent to saying that any basis of $\Gamma$ can be extended into a basis of $\Lambda$. Letting $\pi : \mathbb{R}^N \to \Span_{\mathbb{R}} (\Gamma)^{\perp}$ denote the orthogonal projection on $\Span_{\mathbb{R}} (\Gamma)^{\perp}$, we define the quotient lattice of
$\Lambda$ by $\Gamma$ by
\begin{equation*}
\Lambda/\Gamma = \pi(\Lambda).
\end{equation*}
It is clear that the rank of $\Lambda$ is equal to the sum of the ranks of $\Gamma$ and $\Lambda/\Gamma$. Moreover, the calculation of the determinant of $\Lambda$ using a basis of $\Lambda$ extending a basis of $\Gamma$ yields the equality
\begin{equation}
\label{Det quotient torsion free}
\det(\Lambda/\Gamma) = \frac{\det(\Lambda)}{\det(\Gamma)}.
\end{equation}

In addition, a lattice $\Lambda \subset \mathbb{R}^N$ is said to be integral if $\Lambda \subset \mathbb{Z}^N$. Moreover, an integral lattice $\Lambda$ of rank $R$ is said to be primitive if it is not properly contained in another integral lattice of rank $R$, that is if $\Span_{\mathbb{R}}(\Lambda) \cap \mathbb{Z}^N = \Lambda$. Given an integral lattice $\Lambda \subset \mathbb{Z}^N$, the lattice $\Lambda^{\perp}$ orthogonal to $\Lambda$ is defined by
\begin{equation*}
\Lambda^{\perp} = \left\{ \mathbf{a} \in \mathbb{Z}^N : \forall \mathbf{z} \in \Lambda \ \langle \mathbf{a}, \mathbf{z} \rangle = 0 \right\}.
\end{equation*}
It is clear that $\Lambda^{\perp}$ is a primitive lattice of rank $N-R$, and we see that
$(\Lambda^{\perp})^{\perp}=\Lambda$ if and only if $\Lambda$ is a primitive lattice. Furthermore, if $\Lambda$ is primitive then (see for example \cite[Corollary of Lemma~$1$]{MR224562}) we have
\begin{equation}
\label{Det orthogonal}
\det(\Lambda^{\perp}) = \det(\Lambda).
\end{equation}

The dual lattice $\Lambda^{\ast}$ of a lattice $\Lambda \subset \mathbb{R}^N$ is defined by
\begin{equation*}
\Lambda^{\ast} = \left\{ \mathbf{a} \in \Span_{\mathbb{R}} (\Lambda) :
\forall \mathbf{z} \in \Lambda \ \langle \mathbf{a}, \mathbf{z} \rangle \in \mathbb{Z} \right\}.
\end{equation*}
It is easy to check (see for instance \cite[Chapter~I, Lemma~$5$]{MR1434478}) that the lattices $\Lambda^{\ast}$ and
$\Lambda$ have equal rank, and moreover
\begin{equation}
\label{Det dual}
\det(\Lambda^{\ast}) = \frac1{\det(\Lambda)}.
\end{equation}
The following result is due to Schmidt \cite[Lemma~$1$]{MR224562}.

\begin{lemma}
\label{Lemma Schmidt quotient}
Let $N \geq 1$ and let $\Lambda \subset \mathbb{Z}^N$ be a primitive lattice. We have
\begin{equation*}
(\Lambda^{\perp})^{\ast} = \mathbb{Z}^N / \Lambda.
\end{equation*}
\end{lemma}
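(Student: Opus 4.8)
The plan is to prove the identity $(\Lambda^{\perp})^{\ast} = \mathbb{Z}^N/\Lambda$ by verifying the two inclusions separately, exploiting the fact that both sides live naturally inside $\Span_{\mathbb{R}}(\Lambda^{\perp}) = \Span_{\mathbb{R}}(\Lambda)^{\perp}$. Write $R$ for the rank of $\Lambda$, so $\Lambda^{\perp}$ has rank $N-R$; since $\Lambda$ is primitive, $\Span_{\mathbb{R}}(\Lambda)\cap\mathbb{Z}^N = \Lambda$, and the orthogonal projection $\pi\colon\mathbb{R}^N\to\Span_{\mathbb{R}}(\Lambda)^{\perp}$ used in the definition of $\mathbb{Z}^N/\Lambda = \pi(\mathbb{Z}^N)$ is exactly the projection onto $\Span_{\mathbb{R}}(\Lambda^{\perp})$. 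So both $(\Lambda^{\perp})^{\ast}$ and $\mathbb{Z}^N/\Lambda$ are full-rank lattices in the $(N-R)$-dimensional space $V_0 := \Span_{\mathbb{R}}(\Lambda^{\perp})$, and it suffices to show each is contained in the other.

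First I would prove $\mathbb{Z}^N/\Lambda \subseteq (\Lambda^{\perp})^{\ast}$. Take $\mathbf{a}\in\mathbb{Z}^N$ and consider $\pi(\mathbf{a})\in V_0$. For any $\mathbf{z}\in\Lambda^{\perp}$ we have $\mathbf{z}\in V_0$, so $\langle \pi(\mathbf{a}),\mathbf{z}\rangle = \langle \mathbf{a},\mathbf{z}\rangle$ because $\mathbf{a}-\pi(\mathbf{a})\in\Span_{\mathbb{R}}(\Lambda)$ is orthogonal to $\mathbf{z}\in V_0$; and $\langle\mathbf{a},\mathbf{z}\rangle\in\mathbb{Z}$ since both vectors are integral. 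Hence $\pi(\mathbf{a})$ lies in the dual of $\Lambda^{\perp}$, giving the inclusion. For the reverse inclusion $(\Lambda^{\perp})^{\ast}\subseteq\mathbb{Z}^N/\Lambda$, I would instead compare determinants: by \eqref{Det dual} we have $\det((\Lambda^{\perp})^{\ast}) = 1/\det(\Lambda^{\perp})$, and since $\Lambda^{\perp}$ is primitive, \eqref{Det orthogonal} gives $\det(\Lambda^{\perp}) = \det(\Lambda)$; on the other hand $\Lambda$ is a primitive (hence saturated) sublattice of $\mathbb{Z}^N$, so \eqref{Det quotient torsion free} applies and yields $\det(\mathbb{Z}^N/\Lambda) = \det(\mathbb{Z}^N)/\det(\Lambda) = 1/\det(\Lambda)$. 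Thus the two lattices $\mathbb{Z}^N/\Lambda \subseteq (\Lambda^{\perp})^{\ast}$ have equal determinant, and a sublattice of equal determinant and equal rank must coincide with the ambient lattice, completing the proof.

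The one point requiring a little care — and the step I expect to be the main (minor) obstacle — is justifying that $\Lambda$ being primitive of rank $R$ implies it is saturated in $\mathbb{Z}^N$ in the sense needed to invoke \eqref{Det quotient torsion free}, i.e. that $\Span_{\mathbb{R}}(\Lambda)\cap\mathbb{Z}^N = \Lambda$, which is literally the definition of primitivity given in the excerpt, and that this is equivalent to any basis of $\Lambda$ extending to a basis of $\mathbb{Z}^N$. This is standard (a primitive sublattice is a direct summand), and once it is in hand the determinant computation for $\mathbb{Z}^N/\Lambda$ is immediate. I would also remark that the identification of $\pi$ with the projection onto $\Span_{\mathbb{R}}(\Lambda^{\perp})$ uses $\Span_{\mathbb{R}}(\Lambda)^{\perp} = \Span_{\mathbb{R}}(\Lambda^{\perp})$, which holds because $\Lambda^{\perp}\otimes\mathbb{R}$ is the orthogonal complement of $\Lambda\otimes\mathbb{R}$ — true since $\Lambda^{\perp}$ has the complementary rank $N-R$ and is by construction orthogonal to $\Lambda$. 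With these observations the proof is just the two inclusions above.
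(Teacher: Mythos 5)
Your proof is correct. Note that the paper does not actually prove this lemma --- it simply cites Schmidt \cite[Lemma~$1$]{MR224562} --- so there is no argument in the text to compare against; your write-up supplies a self-contained proof using exactly the toolkit the paper has already set up. The structure is sound: both $(\Lambda^{\perp})^{\ast}$ and $\mathbb{Z}^N/\Lambda$ are rank $N-R$ lattices inside the $(N-R)$-dimensional space $\Span_{\mathbb{R}}(\Lambda^{\perp})=\Span_{\mathbb{R}}(\Lambda)^{\perp}$, the inclusion $\mathbb{Z}^N/\Lambda\subseteq(\Lambda^{\perp})^{\ast}$ follows from $\langle\pi(\mathbf{a}),\mathbf{z}\rangle=\langle\mathbf{a},\mathbf{z}\rangle\in\mathbb{Z}$, and equality of determinants ($1/\det(\Lambda)$ on both sides, via \eqref{Det dual}, \eqref{Det orthogonal} and \eqref{Det quotient torsion free}) forces the index to be $1$. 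One cosmetic slip: to get $\det(\Lambda^{\perp})=\det(\Lambda)$ from \eqref{Det orthogonal} you should invoke the primitivity of $\Lambda$ itself (the lemma's hypothesis), not of $\Lambda^{\perp}$; applying \eqref{Det orthogonal} to the primitive lattice $\Lambda^{\perp}$ instead gives $\det((\Lambda^{\perp})^{\perp})=\det(\Lambda^{\perp})$, which also works but then requires the extra observation $(\Lambda^{\perp})^{\perp}=\Lambda$, again from primitivity of $\Lambda$. Either route is fine; the conclusion stands.
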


For any $u > 0$ we let
\begin{equation*}
\mathcal{B}_N(u) = \{ \mathbf{y} \in \mathbb{R}^N : ||\mathbf{y}|| \leq u \}
\end{equation*}
be the closed Euclidean ball of radius $u$ in $\mathbb{R}^N$. We now introduce notation for the successive minima of a lattice.

\begin{definition}
\label{Definition successive}
Let $N \geq 1$ and $R \in \{1, \dots, N\}$. Given a lattice $\Lambda \subset \mathbb{R}^N$ of rank $R$, the successive minima $\lambda_1(\Lambda), \dots, \lambda_R(\Lambda)$ of $\Lambda$ with respect to the unit ball
$\mathcal{B}_N(1)$ are defined for $i \in \{1, \dots, R\}$ by
\begin{equation*}
\lambda_i(\Lambda) = \inf \left\{ u \in \mathbb{R}_{> 0} :
\dim \left( \Span_{\mathbb{R}} ( \Lambda \cap \mathcal{B}_N(u) ) \right) \geq i \right\}.
\end{equation*}
\end{definition}

We clearly have $\lambda_1(\Lambda) \leq \dots \leq \lambda_R(\Lambda)$ and moreover Minkowski's second theorem (see for example \cite[Chapter VIII, Theorem V]{MR1434478}) states that
\begin{equation}
\label{Estimates Minkowski}
\det(\Lambda) \leq \lambda_1(\Lambda) \cdots \lambda_R(\Lambda) \ll \det(\Lambda),
\end{equation}
where the implied constant depends at most on $R$.

We now record a result which relates the successive minima of a lattice and those of its dual. The following version is due to Banaszczyk \cite[Theorem~$2.1$]{MR1233487}.

\begin{lemma}
\label{Lemma Banaszczyk}
Let $N \geq 1$ and $R \in \{1, \dots, N\}$. Let $\Lambda \subset \mathbb{R}^N$ be a lattice of rank $R$. For any
$i \in \{1, \dots, R \}$, we have
\begin{equation*}
\lambda_i(\Lambda) \leq \frac{R}{\lambda_{R-i+1}(\Lambda^{\ast})}.
\end{equation*}
\end{lemma}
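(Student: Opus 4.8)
This is Banaszczyk's transference theorem, so the shortest route is simply to reduce to the full-rank case and quote \cite{MR1233487}; for a self-contained argument I would reconstruct his Gaussian method, which I sketch below. First, restricting to $\Span_{\mathbb R}(\Lambda)\cong\mathbb R^R$ with its induced inner product, the dual $\Lambda^{\ast}$ in the sense of the excerpt is the usual dual lattice there and the successive minima are unaffected, so we may assume $\Lambda$ has full rank $R$ in $\mathbb R^R$ and must prove $\lambda_i(\Lambda)\,\lambda_{R-i+1}(\Lambda^{\ast})\le R$. (The complementary bound $\lambda_i(\Lambda)\,\lambda_{R-i+1}(\Lambda^{\ast})\ge1$, which is not needed here, is a one-line pigeonhole argument on the integer pairings between $i$ short vectors of $\Lambda$ and $R-i+1$ short vectors of $\Lambda^{\ast}$.)

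The tools are the Gaussian weight $\rho(\mathbf z)=e^{-\pi\|\mathbf z\|^2}$ and, for countable $S\subset\mathbb R^R$, $\rho(S)=\sum_{\mathbf z\in S}\rho(\mathbf z)$. Since the Gaussian is its own Fourier transform, Poisson summation gives, for every $\mathbf v\in\mathbb R^R$,
\begin{equation*}
\rho(\Lambda+\mathbf v)=\frac{1}{\det\Lambda}\sum_{\mathbf w\in\Lambda^{\ast}}\rho(\mathbf w)\,e^{2\pi i\langle\mathbf w,\mathbf v\rangle},
\end{equation*}
from which $0<\rho(\Lambda+\mathbf v)\le\rho(\Lambda)$, and the right-hand side isolates the $\mathbf w=\mathbf 0$ term and the contribution of the short dual vectors. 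The analytic core — which I expect to be the main obstacle — is Banaszczyk's concentration estimate: there is an absolute constant $c_0$ so that every lattice coset satisfies $\rho\big((\Lambda+\mathbf v)\setminus\mathcal B_R(c_0\sqrt R)\big)\le\tfrac12\,\rho(\Lambda)$, i.e. the Gaussian mass of any coset is essentially supported in a ball of radius $O(\sqrt R)$. Establishing this is where the real work lies: one bounds the excluded tail through the Fourier-dual sum above, after rescaling the lattice, which forces one to control the number of dual points in dyadic annuli.

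These two facts first yield the special case $2\,\mu(\Lambda)\,\lambda_1(\Lambda^{\ast})\le R$, with $\mu$ the covering radius: after normalising $\lambda_1(\Lambda^{\ast})$, if $\mu(\Lambda)$ were too large one could pick $\mathbf v$ whose distance to $\Lambda$ pushes $\Lambda+\mathbf v$ outside a ball of radius $\gg\sqrt R$, making $\rho(\Lambda+\mathbf v)$ negligible by the concentration estimate; but in the Poisson expansion every nonzero term has $\|\mathbf w\|\ge1$ and so contributes little, forcing $\rho(\Lambda+\mathbf v)$ to remain a fixed fraction of $\rho(\Lambda)$, a contradiction. The general inequality with the sharp constant $R$ then follows by a standard — though slightly delicate — geometric reduction that passes from $\Lambda$ to lower-rank lattices obtained by intersecting with, and projecting onto, the rational subspaces cut out by the short dual vectors, applying the covering-radius case there and then optimising the numerical constants exactly as Banaszczyk does. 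Apart from the concentration estimate everything is soft, and that estimate is precisely the substantive content being imported from \cite{MR1233487}.
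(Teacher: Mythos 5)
The paper itself gives no proof of this lemma: it is quoted verbatim as Theorem~$2.1$ of Banaszczyk's paper \cite{MR1233487}, after noting that the rank-$R$ case reduces to the full-rank case by working inside $\Span_{\mathbb{R}}(\Lambda)$. Your proposal to do exactly that — reduce to full rank and cite Banaszczyk — matches the paper, and your sketch of the Gaussian/Poisson-summation argument behind the cited theorem is a faithful outline of the original proof.
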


We use the convention that empty products and empty summations are respectively equal to $1$ and $0$. Recall the definition \eqref{Definition Cgamma} of the region $\mathcal{C}_{\mathbf{v}}^{(\gamma)}$ for given
$\gamma > 0$ and $\mathbf{v} \in \mathbb{R}^{N}$. The following lattice point counting result will prove pivotal in our work.

\begin{lemma}
\label{Lemma lattice pivotal}
Let $N \geq 2$ and $R \in \{1, \dots, N\}$. Let $\Lambda \subset \mathbb{R}^N$ be a lattice of rank $R$. Let
$I \in \{1, \dots, N-1 \}$ and $\mathbf{v}_1, \dots, \mathbf{v}_I \in \mathbb{R}^N$. Let also $\gamma > 0$. Define
\begin{equation*}
\mathcal{R}_{\mathbf{v}_1, \dots, \mathbf{v}_I}(T, \gamma) =
\mathcal{B}_N(T) \cap \mathcal{C}_{\mathbf{v}_1}^{(\gamma)} \cap \cdots \cap \mathcal{C}_{\mathbf{v}_I}^{(\gamma)},
\end{equation*}
and
\begin{equation*}
\mathcal{V}_{\mathbf{v}_1, \dots, \mathbf{v}_I}(\Lambda; \gamma) =
\vol \left( \Span_{\mathbb{R}} ( \Lambda ) \cap \mathcal{R}_{\mathbf{v}_1, \dots, \mathbf{v}_I}(1, \gamma) \right).
\end{equation*}
Let $Y \geq \lambda_R(\Lambda)$. For $T \geq Y$, we have
\begin{equation*}
\# \left( \Lambda \cap \mathcal{R}_{\mathbf{v}_1, \dots, \mathbf{v}_I}(T, \gamma) \right) =
\frac{T^R}{\det (\Lambda)} \left( \mathcal{V}_{\mathbf{v}_1, \dots, \mathbf{v}_I}(\Lambda; \gamma) + O \left( \frac{Y}{T} \right) \right),
\end{equation*}
where the implied constant depends at most on $R$.
\end{lemma}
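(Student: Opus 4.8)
The plan is to reduce to the classical lattice-point-counting estimate in a fixed-dimensional space applied to the region $\mathcal{R}_{\mathbf{v}_1, \dots, \mathbf{v}_I}(T, \gamma)$, treating the successive minima of $\Lambda$ as providing the error term. First I would pass to the subspace $\Span_{\mathbb{R}}(\Lambda) \subset \mathbb{R}^N$, which is an $R$-dimensional Euclidean space, and work with $\Lambda$ as a full-rank lattice there. The set $\Span_{\mathbb{R}}(\Lambda) \cap \mathcal{R}_{\mathbf{v}_1, \dots, \mathbf{v}_I}(T, \gamma)$ is the dilate by $T$ of $S := \Span_{\mathbb{R}}(\Lambda) \cap \mathcal{R}_{\mathbf{v}_1, \dots, \mathbf{v}_I}(1, \gamma)$, since each $\mathcal{C}_{\mathbf{v}}^{(\gamma)}$ is a cone (defined by a homogeneous inequality $|\langle \mathbf v, \mathbf t\rangle| \le \|\mathbf v\|\|\mathbf t\|/(2\gamma)$) and $\mathcal{B}_N(T) = T \cdot \mathcal{B}_N(1)$. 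Thus we are counting points of $\Lambda$ in $T \cdot S$, and $\vol(S) = \mathcal{V}_{\mathbf{v}_1, \dots, \mathbf{v}_I}(\Lambda; \gamma)$ by definition.

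Next I would invoke a standard Lipschitz-type counting estimate: for a bounded region $S$ in an $R$-dimensional space whose boundary is sufficiently tame, the number of lattice points of $\Lambda$ in $T \cdot S$ is $\frac{T^R \vol(S)}{\det \Lambda} + O\!\left(\text{(boundary contribution)}\right)$, where the error is controlled by the product of $T^{R-1}$ with the $(R-1)$-dimensional surface measure of the scaled boundary, divided by $\lambda_1(\Lambda)\cdots\lambda_{R-1}(\Lambda)$. Using Minkowski's second theorem \eqref{Estimates Minkowski}, $\lambda_1(\Lambda)\cdots\lambda_{R-1}(\Lambda) \asymp \det(\Lambda)/\lambda_R(\Lambda)$, so the error term is $O\!\left(\frac{T^{R-1}\lambda_R(\Lambda)}{\det \Lambda}\right)$, and since $Y \ge \lambda_R(\Lambda)$ this is $O\!\left(\frac{T^{R-1} Y}{\det \Lambda}\right) = \frac{T^R}{\det\Lambda}\cdot O(Y/T)$, matching the claimed shape. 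The surface area of $S$ is bounded in terms of $R$ and $I$ alone: $S$ is cut out from the unit ball by $I$ pairs of hyperplanes through the origin, so its boundary is a union of pieces of the unit sphere and pieces of these hyperplanes, each of $(R-1)$-dimensional measure $O_R(1)$; crucially this bound is uniform in the $\mathbf{v}_j$ and in $\gamma$, since it does not depend on the position of the hyperplanes. I would state this geometric bound as a separate sublemma or cite a standard reference such as the work of Widmer or the classical results on counting lattice points in convex-ish bodies via Davenport's lemma.

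The one point requiring care, and the main obstacle, is that $S$ is not convex: the region $\mathcal{C}_{\mathbf v}^{(\gamma)}$ is the \emph{complement} of a double cone around the hyperplane $\mathbf v^\perp$, so $S$ is an intersection of the ball with several such complements and can be quite irregular (in particular disconnected). The classical convex-body counting theorem of Davenport therefore does not apply directly; instead I would decompose $S$ into a bounded number (depending only on $R$ and $I$) of pieces each of which is a Lipschitz image of bounded Lipschitz constant of a box, or alternatively appeal to a counting result valid for semialgebraic sets of bounded complexity, and then use that the number of lattice points in $T\cdot S$ differs from the expected main term by an amount controlled by the sum over boundary faces of their lower-dimensional lattice-point counts, each handled inductively with the same successive-minima input. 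Since the combinatorial complexity of $S$ (number of faces, their degrees) depends only on $R$ and $I \le N-1 < N$, which is bounded in terms of $N$ and hence ultimately in terms of $d,n$ in the application, all implied constants stay under control. Once the boundary estimate is in place, substituting $\vol(S) = \mathcal{V}_{\mathbf{v}_1, \dots, \mathbf{v}_I}(\Lambda; \gamma)$ and collecting the error as $O(Y/T)$ completes the proof.
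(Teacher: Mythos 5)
Your proposal is correct and follows essentially the same route as the paper: reduce to an $R$-dimensional full-rank count via an isometry onto $\Span_{\mathbb{R}}(\Lambda)$, apply a lattice-point counting theorem valid for semialgebraic (o-minimal) regions --- the paper uses Barroero--Widmer, which is exactly the ``counting result for semialgebraic sets of bounded complexity'' you gesture at --- and convert the successive-minima error into $O(Y/T)$ via Minkowski's second theorem together with $\lambda_i(\Lambda)\leq Y\leq T$. The only caveat is that the cited counting theorems give the full error $\sum_{i=1}^{R} T^{R-i}/(\lambda_1(\Lambda)\cdots\lambda_{R-i}(\Lambda))$ rather than just the $i=1$ term you record, but under the hypotheses every term of that sum is $\ll T^{R-1}Y/\det(\Lambda)$ by the same Minkowski argument, so your conclusion stands.
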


\begin{proof}
Let $\mathbf{O}$ be an $N \times N$ orthogonal matrix mapping $\Span_{\mathbb{R}} ( \Lambda )$ to the subspace of
$\mathbb{R}^N$ spanned by the first $R$ coordinates, which we simply denote by $\mathbb{R}^R$. Let
$\Gamma = \mathbf{O} \cdot \Lambda$ and $\mathcal{T}_{\mathbf{v}_1, \dots, \mathbf{v}_I}(T, \gamma) =
\mathbf{O} \cdot \mathcal{R}_{\mathbf{v}_1, \dots, \mathbf{v}_I}(T, \gamma)$. We clearly have
\begin{equation*}
\# \left( \Lambda \cap \mathcal{R}_{\mathbf{v}_1, \dots, \mathbf{v}_I}(T, \gamma) \right) = \# \left( \Gamma \cap\mathcal{T}_{\mathbf{v}_1, \dots, \mathbf{v}_I}(T, \gamma)\right).
\end{equation*}
The region $\mathcal{T}_{\mathbf{v}_1, \dots, \mathbf{v}_I}(T, \gamma)$ is a semi-algebraic set and is thus definable in an o-minimal structure. Hence, it follows from work of Barroero and Widmer \cite[Theorem~$1.3$]{MR3264671} that
\begin{equation*}
\# \left( \Gamma \cap \mathcal{T}_{\mathbf{v}_1, \dots, \mathbf{v}_I}(T, \gamma) \right) =
\frac{\vol \left( \mathbb{R}^R \cap \mathcal{T}_{\mathbf{v}_1, \dots, \mathbf{v}_I}(T, \gamma) \right)}
{\det (\Gamma)} + O \left( \sum_{i=1}^R \frac{T^{R-i}}{\lambda_1(\Gamma) \cdots \lambda_{R-i}(\Gamma)} \right).
\end{equation*}
Since the matrix $\mathbf{O}$ is orthogonal we have $\det(\Gamma) = \det(\Lambda)$ and also
$ \lambda_i(\Gamma)=\lambda_i(\Lambda)$ for any $i \in \{1, \dots, R\}$, and moreover
\begin{equation*}
\vol \left( \mathbb{R}^R \cap \mathcal{T}_{\mathbf{v}_1, \dots, \mathbf{v}_I}(T, \gamma) \right) =
T^R \mathcal{V}_{\mathbf{v}_1, \dots, \mathbf{v}_I}(\Lambda; \gamma).
\end{equation*}
We thus obtain
\begin{equation*}
\# \left( \Lambda \cap \mathcal{R}_{\mathbf{v}_1, \dots, \mathbf{v}_I}(T, \gamma) \right) =
\frac{T^R}{\det (\Lambda)} \mathcal{V}_{\mathbf{v}_1, \dots, \mathbf{v}_I}(\Lambda; \gamma) +
O \left( \sum_{i = 1}^R \frac{T^{R-i}}{\lambda_1(\Lambda) \cdots \lambda_{R-i}(\Lambda)} \right).
\end{equation*}
We deduce from Minkowski's theorem \eqref{Estimates Minkowski} that
\begin{equation*}
\# \left( \Lambda \cap \mathcal{R}_{\mathbf{v}_1, \dots, \mathbf{v}_I}(T, \gamma) \right) =
\frac{T^R}{\det (\Lambda)} \left( \mathcal{V}_{\mathbf{v}_1, \dots, \mathbf{v}_I}(\Lambda; \gamma) +
O \left( \sum_{i=1}^{R} \frac{\lambda_{R-i+1}(\Lambda) \cdots \lambda_R(\Lambda)}{T^i} \right) \right).
\end{equation*}
By assumption we have $\lambda_i(\Lambda) \leq Y$ for any $i \in \{1, \dots, R\}$. We deduce that
\begin{equation*}
\# \left( \Lambda \cap \mathcal{R}_{\mathbf{v}_1, \dots, \mathbf{v}_I}(T, \gamma) \right) =
\frac{T^R}{\det (\Lambda)} \left( \mathcal{V}_{\mathbf{v}_1, \dots, \mathbf{v}_I}(\Lambda; \gamma) +
O \left( \sum_{i=1}^{R} \frac{Y^i}{T^i} \right) \right),
\end{equation*}
which completes the proof since $T \geq Y$. 
\end{proof}

Our arguments will make intensive use of the following classical result.

\begin{lemma}
\label{Lemma Schmidt upper bound}
Let $N \geq 1$ and $R \in \{1, \dots, N\}$. Let $\Lambda \subset \mathbb{R}^N$ be a lattice of rank $R$. For
$T \geq 1$, we have
\begin{equation*}
\# \left( \Lambda \cap \mathcal{B}_N(T) \right) \ll
\sum_{i = 0}^R \frac{T^{R-i}}{\lambda_1(\Lambda) \cdots \lambda_{R-i}(\Lambda)}.
\end{equation*}
In particular, if $i_0 \in \{1, \dots, R\}$ then for $T \geq \lambda_{i_0}(\Lambda)$, we have
\begin{equation*}
\# \left( \Lambda \cap \mathcal{B}_N(T) \right) \ll
\frac{T^R}{\lambda_1(\Lambda) \cdots \lambda_{i_0-1}(\Lambda) \lambda_{i_0}(\Lambda)^{R-i_0+1}}.
\end{equation*}
Moreover, the implied constants depend at most on $R$. 
\end{lemma}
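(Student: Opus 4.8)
The plan is to prove the two estimates by a standard pigeonhole argument over the successive minima. First I would handle the first inequality. Given a lattice $\Lambda \subset \mathbb{R}^N$ of rank $R$, pick a set of linearly independent vectors $\mathbf{b}_1, \dots, \mathbf{b}_R \in \Lambda$ with $||\mathbf{b}_i|| \asymp \lambda_i(\Lambda)$, realising the successive minima; one may also arrange (by a theorem of Mahler, or simply by working with a reduced basis) that these vectors span $\Span_{\mathbb{R}}(\Lambda)$ and that every lattice point $\mathbf{y} \in \Lambda \cap \mathcal{B}_N(T)$ can be written as $\mathbf{y} = \sum_{i=1}^R m_i \mathbf{b}_i$ with $|m_i| \ll T/\lambda_i(\Lambda)$, the implied constant depending only on $R$. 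Fixing the index $i$ and letting $j$ run over $\{i, \dots, R\}$, the number of choices for the tuple $(m_i, \dots, m_R)$ is $\ll \prod_{j=i}^R (1 + T/\lambda_j(\Lambda))$, while the coordinates $(m_1, \dots, m_{i-1})$ with $|m_k| \ll T/\lambda_k(\Lambda)$ and $\lambda_k(\Lambda) \le \lambda_i(\Lambda) \le T$ each contribute a factor $\ll T/\lambda_k(\Lambda)$; multiplying through and using $1 + T/\lambda_j(\Lambda) \ll T/\lambda_j(\Lambda)$ is not available in general (since $\lambda_j$ may exceed $T$), so instead I would expand the product $\prod_{j=1}^R(1 + T/\lambda_j(\Lambda))$ directly and regroup according to the largest index $j$ with $\lambda_j(\Lambda) \le T$. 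This expansion produces exactly the sum $\sum_{i=0}^R T^{R-i}/(\lambda_1(\Lambda) \cdots \lambda_{R-i}(\Lambda))$, giving the first claim.

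For the second, sharper statement, I would specialise: since $T \ge \lambda_{i_0}(\Lambda)$ and the successive minima are non-decreasing, we have $\lambda_j(\Lambda) \le T$ for every $j \le i_0$, so each term with $i \le R - i_0$ in the first bound is
\begin{equation*}
\frac{T^{R-i}}{\lambda_1(\Lambda) \cdots \lambda_{R-i}(\Lambda)}
= \frac{T^R}{\lambda_1(\Lambda) \cdots \lambda_{R-i}(\Lambda) \cdot T^i}
\le \frac{T^R}{\lambda_1(\Lambda) \cdots \lambda_{R-i}(\Lambda) \lambda_{R-i+1}(\Lambda)^i},
\end{equation*}
because $\lambda_{R-i+1}(\Lambda) \le T$ in this range. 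For the remaining terms $i > R - i_0$ one bounds $\lambda_{R-i+1}(\Lambda) \le \dots \le \lambda_{i_0}(\Lambda)$ from above only partially; the cleanest route is to note that in all cases $\lambda_1(\Lambda) \cdots \lambda_{R-i}(\Lambda) \cdot T^i \ge \lambda_1(\Lambda) \cdots \lambda_{i_0 - 1}(\Lambda) \cdot \lambda_{i_0}(\Lambda)^{R - i_0 + 1}$, which follows by replacing each $\lambda_j(\Lambda)$ with $j \ge i_0$ by the smaller quantity $\lambda_{i_0}(\Lambda)$ and each remaining factor of $T$ by the smaller quantity $\lambda_{i_0}(\Lambda)$ as well (legitimate since $T \ge \lambda_{i_0}(\Lambda)$, and since there are $R - i_0 + 1$ such factors in total across the $\lambda_{i_0}, \dots, \lambda_R$ slots and the $T^i$ slots). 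Summing the $O(1)$ many terms then yields $\# (\Lambda \cap \mathcal{B}_N(T)) \ll T^R/(\lambda_1(\Lambda) \cdots \lambda_{i_0-1}(\Lambda) \lambda_{i_0}(\Lambda)^{R-i_0+1})$.

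The step I expect to require the most care is the first one: producing a basis (or a generating set of $R$ vectors) of $\Lambda$ with both the size control $||\mathbf{b}_i|| \ll \lambda_i(\Lambda)$ and the coordinate control $|m_i| \ll T / \lambda_i(\Lambda)$ for points in the ball. This is the classical fact underpinning all such counting bounds, and it can be quoted from Cassels or deduced from the existence of a reduced basis; alternatively one avoids it entirely by invoking the covering argument of Schmidt directly, which is presumably what the phrase ``due to Schmidt'' in the surrounding literature refers to, in which case the only remaining work is the bookkeeping that turns $\prod_j (1 + T/\lambda_j(\Lambda))$ into the displayed sum and the subsequent comparison of terms. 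Everything else — monotonicity of the $\lambda_i(\Lambda)$, the inequality $T \ge \lambda_{i_0}(\Lambda)$, and the count of factors — is elementary and the implied constants visibly depend only on $R$.
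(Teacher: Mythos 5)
Your proposal is correct and follows essentially the same route as the paper: the paper obtains the first bound by citing Schmidt's lattice-point counting lemma together with Minkowski's second theorem, which is precisely the classical fact you re-derive via a reduced basis with $\|\mathbf{b}_i\| \ll_R \lambda_i(\Lambda)$ and coefficient control $|m_i| \ll T/\lambda_i(\Lambda)$ (a tool the paper itself invokes elsewhere), followed by expanding $\prod_j(1+T/\lambda_j(\Lambda))$ and using monotonicity of the minima. The deduction of the second bound — comparing each term $T^{R-i}/(\lambda_1(\Lambda)\cdots\lambda_{R-i}(\Lambda))$ with the target by replacing the factors $\lambda_j(\Lambda)$ with $j \geq i_0$ and the surplus powers of $T$ by $\lambda_{i_0}(\Lambda)$ — is the same elementary manipulation the paper performs by splitting the sum at $i = R - i_0$.
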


\begin{proof}
Appealing to a result of Schmidt \cite[Lemma~$2$]{MR224562} and using the estimates \eqref{Estimates Minkowski} we immediately obtain the first upper bound. Since $\lambda_i \geq \lambda_{i_0}$ for any $i \in \{i_0, \dots, R\}$ we deduce
\begin{equation*}
\# \left( \Lambda \cap \mathcal{B}_N(T) \right) \ll \sum_{i = 0}^{R-i_0}
\frac{T^{R-i}}{\lambda_1(\Lambda) \cdots \lambda_{i_0-1}(\Lambda) \lambda_{i_0}(\Lambda)^{R-i-i_0+1}}
+ \sum_{i = R-i_0+1}^R \frac{T^{R-i}}{\lambda_1(\Lambda) \cdots \lambda_{R-i}(\Lambda)}.
\end{equation*}
The assumption $T \geq \lambda_{i_0}(\Lambda)$ thus completes the proof.
\end{proof}

Lemma~\ref{Lemma lattice pivotal} is satisfactory when the successive minima of the lattice $\Lambda$ are small. Alternatively, if the successive minima of $\Lambda$ are known to be large then we can use
Lemma~\ref{Lemma Schmidt upper bound} to derive a useful upper bound for the cardinality of the set
$\Lambda \cap \mathcal{B}_N(T)$.

\begin{lemma}
\label{Lemma R0}
Let $N \geq 1$ and $R \in \{1, \dots, N\}$. Let $\Lambda \subset \mathbb{R}^N$ be a lattice of rank $R$. Let
$M > 0$ be such that $M < \lambda_1(\Lambda)$ and let $Y \geq \lambda_R(\Lambda)$. For any
$R_0 \in \{0, \dots, R-1\}$ and $T \leq Y$, we have
\begin{equation*}
\# \left( (\Lambda \smallsetminus \{ \boldsymbol{0} \} ) \cap \mathcal{B}_N(T) \right) \ll
\frac{T^{R-R_0} Y^{R_0}}{\det (\Lambda)} + \left(\frac{T}{M}\right)^{R-R_0-1}.
\end{equation*}
Further, let $j_0 \in \{0, \dots, R-1\}$ and $J \geq M$ be such that $J < \lambda_{j_0+1}(\Lambda)$. For any
$R_0 \in \{0, \dots, R-1-j_0\}$ and $T \leq Y$, we have
\begin{equation*}
\# \left( (\Lambda \smallsetminus \{ \boldsymbol{0} \} ) \cap \mathcal{B}_N(T) \right) \ll
\frac{T^{R-R_0} Y^{R_0}}{\det (\Lambda)} +
\left(\frac{T}{M}\right)^{j_0} \left( \left(\frac{T}{J} \right)^{R-R_0-1-j_0} + 1 \right).
\end{equation*}
Moreover, the implied constants depend at most on $R$. 
\end{lemma}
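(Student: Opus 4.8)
The plan is to derive both bounds from the first estimate of Lemma~\ref{Lemma Schmidt upper bound}, which after the substitution $i \mapsto R-i$ reads
\[
\#\bigl(\Lambda \cap \mathcal{B}_N(T)\bigr) \ll \sum_{k=0}^{R} \frac{T^{k}}{\lambda_1(\Lambda)\cdots\lambda_k(\Lambda)},
\]
with implied constant depending only on $R$ and with the $k=0$ term equal to $1$. The argument then consists of distributing the terms of this sum among the pieces of the target upper bound according to the size of $k$, and bounding each piece using the hypotheses $M < \lambda_1(\Lambda)$, $\lambda_R(\Lambda) \le Y$, the constraint $J < \lambda_{j_0+1}(\Lambda)$, and Minkowski's lower bound $\det(\Lambda) \le \lambda_1(\Lambda)\cdots\lambda_R(\Lambda)$ recorded in \eqref{Estimates Minkowski}.

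First I would dispose of the trivial case: if $\mathcal{B}_N(T)$ contains no nonzero point of $\Lambda$ then the left-hand sides of both asserted inequalities vanish, so I may assume $\lambda_1(\Lambda) \le T$; then $T > M$ and $(T/M)^{k} \ge 1$ for every integer $k \ge 0$, which in particular absorbs the $k=0$ term of the sum into the claimed error terms. It suffices to prove the second (refined) inequality, because the first is its special case $j_0 = 0$, $J = M$, after using $1 \le (T/M)^{R-R_0-1}$ to absorb the spurious summand. For the refined inequality I split the index range $\{1,\dots,R\}$ at $k = j_0$ and $k = R-R_0$; the hypothesis $R_0 \le R-1-j_0$ guarantees $j_0 < R-R_0$, so these three consecutive blocks genuinely cover $\{1,\dots,R\}$ (some possibly empty). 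On the block $k \le j_0$ the bound $\lambda_i(\Lambda) \ge \lambda_1(\Lambda) > M$ gives $T^{k}/(\lambda_1(\Lambda)\cdots\lambda_k(\Lambda)) < (T/M)^{k} \le (T/M)^{j_0}$, using $T/M > 1$. On the middle block $j_0 < k \le R-R_0-1$ the bounds $\lambda_i(\Lambda) > M$ for $i \le j_0$ and $\lambda_i(\Lambda) \ge \lambda_{j_0+1}(\Lambda) > J$ for $i > j_0$ give $T^{k}/(\lambda_1(\Lambda)\cdots\lambda_k(\Lambda)) < (T/M)^{j_0}(T/J)^{k-j_0}$; distinguishing the cases $T \ge J$ and $T < J$ bounds $(T/J)^{k-j_0}$ by $(T/J)^{R-R_0-1-j_0}$ in the first case (since $k-j_0 \le R-R_0-1-j_0$) and by $1$ in the second (since $k-j_0 \ge 1$), so this block contributes $\ll (T/M)^{j_0}\bigl((T/J)^{R-R_0-1-j_0}+1\bigr)$.

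On the large block $R-R_0 \le k \le R$ I would use $\det(\Lambda) \le \lambda_1(\Lambda)\cdots\lambda_R(\Lambda)$ together with $\lambda_i(\Lambda) \le \lambda_R(\Lambda) \le Y$ to obtain $\lambda_1(\Lambda)\cdots\lambda_k(\Lambda) \ge \det(\Lambda)/Y^{\,R-k}$, hence $T^{k}/(\lambda_1(\Lambda)\cdots\lambda_k(\Lambda)) \le T^{k}Y^{\,R-k}/\det(\Lambda) \le T^{R-R_0}Y^{R_0}/\det(\Lambda)$, the final inequality following from $T \le Y$ and $k \ge R-R_0$. Summing the $O_R(1)$ many block contributions and noting $(T/M)^{j_0} \le (T/M)^{j_0}\bigl((T/J)^{R-R_0-1-j_0}+1\bigr)$ then gives the refined bound, whence also the first. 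I do not expect a genuine obstacle here: the whole proof is elementary bookkeeping of the Schmidt--Minkowski estimate, and the only points requiring attention are the degenerate configurations --- empty blocks when $j_0 = 0$, or when $R_0 = R-1-j_0$ (in which case $R-R_0-1-j_0 = 0$ and $(T/J)^{0}=1$ renders the bound consistent) --- together with the signs of $T-M$ and $T-J$, all of which are settled by the elementary observations above.
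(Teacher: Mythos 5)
Your argument is correct and is essentially the paper's own proof: both start from the Schmidt--Minkowski bound, dispose of the case $T\leq M$ trivially, reduce the first inequality to the second via $j_0=0$, $J=M$, and split the sum into the same three blocks (indices $\geq R-R_0$ handled through $\det(\Lambda)$ and $\lambda_i\leq Y$, the middle block through $M^{j_0}J^{k-j_0}$, the low block through $M^k$). The only cosmetic difference is that you bound the middle block termwise by a case distinction on $T\gtrless J$, whereas the paper sums the block and compares its extreme terms; the bookkeeping is equivalent.
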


\begin{proof}
By assumption if $T \leq M$ then $(\Lambda \smallsetminus \{ \boldsymbol{0} \} ) \cap \mathcal{B}_N(T) = \emptyset$ so both upper bounds trivially hold and we can assume that $T > M$. It is now clear that the first upper bound follows from the second by taking $j_0=0$ and $J=M$. The estimate \eqref{Estimates Minkowski} and
Lemma~\ref{Lemma Schmidt upper bound} imply that 
\begin{equation*}
\# \left( \Lambda \cap \mathcal{B}_N(T) \right) \ll
\frac{T^R}{\det(\Lambda)} \sum_{i=0}^{R_0} \frac{\lambda_{R-i+1}(\Lambda) \cdots \lambda_R(\Lambda)}{T^i} +
\sum_{i=R_0+1}^R \frac{T^{R-i}}{\lambda_1(\Lambda) \cdots \lambda_{R-i}(\Lambda)}.
\end{equation*}
Since by assumption we have $\lambda_i(\Lambda) \leq Y$ for any $i \in \{ 1, \dots, R\}$ and $T \leq Y$, we see that
\begin{equation*}
\sum_{i=0}^{R_0} \frac{\lambda_{R-i+1}(\Lambda) \cdots \lambda_R(\Lambda)}{T^i} \ll \left( \frac{Y}{T} \right)^{R_0}.
\end{equation*}
Moreover, our assumptions also imply that $\lambda_i(\Lambda) > M$ for any $i \in \{ 1, \dots, j_0\}$ and
$\lambda_i(\Lambda) > J$ for any $i \in \{ j_0+1, \dots, R\}$. We thus have
\begin{equation*}
\sum_{i=R_0+1}^R \frac{T^{R-i}}{\lambda_1(\Lambda) \cdots \lambda_{R-i}(\Lambda)} \ll 
\sum_{i=R_0+1}^{R-1-j_0} \frac{T^{R-i}}{M^{j_0} J^{R-i-j_0}} +
\sum_{i=R-j_0}^R \frac{T^{R-i}}{M^{R-i}}.
\end{equation*}
Since $T > M$ we obtain
\begin{equation*}
\# \left( \Lambda \cap \mathcal{B}_N(T) \right) \ll \frac{T^{R-R_0} Y^{R_0}}{\det (\Lambda)} +
\left( \frac{J}{M} \right)^{j_0} \ \sum_{i=R_0+1}^{R-1-j_0} \left( \frac{T}{J} \right)^{R-i} +
\left( \frac{T}{M} \right)^{j_0}.
\end{equation*}
If $R_0 = R-1-j_0$ then the summation in the right-hand side is empty so in this case the proof is complete. If
$R_0 \leq R-2-j_0$ we get
\begin{align*}
\# \left( \Lambda \cap \mathcal{B}_N(T) \right) & \ll \frac{T^{R-R_0} Y^{R_0}}{\det (\Lambda)} +
\left( \frac{J}{M} \right)^{j_0} \left( \left( \frac{T}{J} \right)^{R-R_0-1} \! + \left( \frac{T}{J} \right)^{j_0+1} \right) +
\left( \frac{T}{M} \right)^{j_0} \\
& \ll \frac{T^{R-R_0} Y^{R_0}}{\det (\Lambda)} +
\left( \frac{T}{M} \right)^{j_0} \left( \left(\frac{T}{J} \right)^{R-R_0-1-j_0} + \frac{T}{J} + 1 \right),
\end{align*}
which completes the proof on noting that $R-R_0-1-j_0 \geq 1$.
\end{proof}

\subsection{The determinant of certain lattices}

\label{Section determinants}

In this section we establish formulae for the determinants of several lattices. The following notation will be very useful.

\begin{definition}
\label{Definition G}
Let $N \geq 1$ and $k \in \{1, \dots, N\}$. Given linearly independent vectors
$\mathbf{c}_1, \dots, \mathbf{c}_k \in \mathbb{Z}^N$ we let $\mathcal{G}(\mathbf{c}_1, \dots, \mathbf{c}_k)$ denote the greatest common divisor of the $k \times k$ minors of the $N \times k$ matrix whose columns are the vectors
$\mathbf{c}_1, \dots, \mathbf{c}_k$.
\end{definition}

Recall the respective definitions \eqref{Definition lattice} and \eqref{Definition local lattice} of the lattices
$\Lambda_{\mathbf{c}}$ and $\Lambda_{\mathbf{c}}^{(Q)}$, for given $\mathbf{c} \in \mathbb{Z}^N$ and $Q \geq 1$. The following lemma can be found in work of the second author \cite[Lemma~$4$]{RandomFano}.

\begin{lemma}
\label{Lemma determinant global}
Let $N \geq 1$ and $k \in \{1, \dots, N-1\}$. Let also $\mathbf{c}_1, \dots, \mathbf{c}_k \in \mathbb{Z}^N$ be linearly independent vectors. We have
\begin{equation*}
\det \left(\Lambda_{\mathbf{c}_1} \cap \cdots \cap \Lambda_{\mathbf{c}_k} \right) =
\frac{\det \left( \mathbb{Z} \mathbf{c}_1 \oplus \cdots \oplus \mathbb{Z} \mathbf{c}_k \right)}
{\mathcal{G}(\mathbf{c}_1, \dots, \mathbf{c}_k)}.
\end{equation*}
\end{lemma}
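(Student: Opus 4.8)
The plan is to realise the intersection lattice as the orthogonal complement of a primitive lattice, and then to reduce the determinant formula to the computation of an index that the Smith normal form resolves cleanly.

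First I would introduce $M = \mathbb{Z}\mathbf{c}_1 \oplus \cdots \oplus \mathbb{Z}\mathbf{c}_k$, a sublattice of $\mathbb{Z}^N$ of rank $k$ since the $\mathbf{c}_i$ are linearly independent, together with its saturation $\widetilde{M} = \Span_{\mathbb{R}}(M) \cap \mathbb{Z}^N$, which is a primitive lattice of rank $k$. A vector $\mathbf{y} \in \mathbb{Z}^N$ satisfies $\langle \mathbf{c}_i, \mathbf{y} \rangle = 0$ for every $i$ if and only if it is orthogonal to $\Span_{\mathbb{R}}(M)$, hence to $\widetilde{M}$; therefore $\Lambda_{\mathbf{c}_1} \cap \cdots \cap \Lambda_{\mathbf{c}_k} = \widetilde{M}^{\perp}$. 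Since $\widetilde{M}$ is primitive, \eqref{Det orthogonal} gives $\det(\widetilde{M}^{\perp}) = \det(\widetilde{M})$, so it suffices to prove that $\det(\widetilde{M}) = \det(M) / \mathcal{G}(\mathbf{c}_1, \dots, \mathbf{c}_k)$. As $M \subseteq \widetilde{M}$ are both of rank $k$, the theory of elementary divisors gives $\det(M) = [\widetilde{M} : M] \cdot \det(\widetilde{M})$, so the lemma is equivalent to the index identity $[\widetilde{M} : M] = \mathcal{G}(\mathbf{c}_1, \dots, \mathbf{c}_k)$.

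To establish this, I would let $C$ be the $N \times k$ integer matrix with columns $\mathbf{c}_1, \dots, \mathbf{c}_k$ and write its Smith normal form $C = UDV$ with $U \in \GL_N(\mathbb{Z})$, $V \in \GL_k(\mathbb{Z})$, and $D$ the $N \times k$ matrix carrying the elementary divisors $d_1, \dots, d_k$ on the diagonal and zeros elsewhere; all $d_i$ are nonzero since $C$ has rank $k$. On the geometric side, right multiplication by $V^{-1}$ does not change the column lattice, while left multiplication by $U$ is an automorphism of $\mathbb{Z}^N$ that carries a sublattice and its saturation to another such pair with the same index; hence $[\widetilde{M} : M]$ equals the index of the column lattice $\bigoplus_{i=1}^{k} d_i \mathbb{Z}\mathbf{e}_i$ of $D$ in its saturation $\bigoplus_{i=1}^{k} \mathbb{Z}\mathbf{e}_i$, namely $d_1 \cdots d_k$. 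On the arithmetic side, $\mathcal{G}(\mathbf{c}_1, \dots, \mathbf{c}_k)$ is the $k$-th determinantal divisor of $C$, i.e. the greatest common divisor of its $k \times k$ minors; the Cauchy--Binet formula shows that this quantity divides the analogous quantity for any integer matrix multiple $U'CV'$, and applying this both to $C$ and to $D = U^{-1}CV^{-1}$ shows that it equals the $k$-th determinantal divisor of $D$, which is plainly $d_1 \cdots d_k$. Combining the two computations gives $[\widetilde{M} : M] = d_1 \cdots d_k = \mathcal{G}(\mathbf{c}_1, \dots, \mathbf{c}_k)$.

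The argument is essentially bookkeeping; the only point requiring any care is keeping the two invariants — the index of a sublattice in its saturation on the geometric side, and the greatest common divisor of the maximal minors on the arithmetic side — simultaneously aligned with the $\GL_N(\mathbb{Z}) \times \GL_k(\mathbb{Z})$-action, so that each can be read off the diagonal matrix $D$. Equivalently, one can bypass the Smith normal form entirely: picking a basis $\mathbf{b}_1, \dots, \mathbf{b}_k$ of $\widetilde{M}$ and writing $\mathbf{c}_j = \sum_i a_{ij}\mathbf{b}_i$ with $A = (a_{ij}) \in M_k(\mathbb{Z})$, one has $\lvert \det A \rvert = [\widetilde{M} : M]$, while Cauchy--Binet together with the fact that the maximal minors of a basis matrix of a primitive lattice are coprime yields $\mathcal{G}(\mathbf{c}_1, \dots, \mathbf{c}_k) = \lvert \det A \rvert$.
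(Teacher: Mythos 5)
Your proof is correct. The paper itself does not prove this lemma but simply cites \cite[Lemma~4]{RandomFano}, so you have in effect supplied the missing argument, and you have done so using exactly the toolkit the paper deploys elsewhere: the identification of the intersection $\Lambda_{\mathbf{c}_1} \cap \cdots \cap \Lambda_{\mathbf{c}_k}$ with the orthogonal complement of the saturation $\widetilde{M}$, Schmidt's identity \eqref{Det orthogonal} for primitive lattices, and the Smith normal form (which the authors themselves invoke in the proof of Lemma~\ref{Lemma determinant mixed/local}). The key index identity $[\widetilde{M}:M] = \mathcal{G}(\mathbf{c}_1,\dots,\mathbf{c}_k)$ is handled correctly: both the index of a sublattice in its saturation and the $k$-th determinantal divisor are invariants of the $\GL_N(\mathbb{Z}) \times \GL_k(\mathbb{Z})$-action (the latter by Cauchy--Binet), and both read off the diagonal of the Smith form as $d_1 \cdots d_k$. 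Your closing alternative via a basis matrix of $\widetilde{M}$ is also sound, resting on the standard fact that the maximal minors of a basis matrix of a primitive lattice are coprime.
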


The next result provides us with formulae for the determinants of two further lattices involved in our work.

\begin{lemma}
\label{Lemma determinant mixed/local}
Let $N \geq 2$ and $Q \geq 1$. Let $\mathbf{c}, \mathbf{d} \in \mathbb{Z}_{\mathrm{prim}}^N$ be two linearly independent vectors. We have
\begin{equation*}
\det \left( \Lambda_{\mathbf{c}}^{(Q)} \cap \Lambda_{\mathbf{d}}^{(Q)} \right) = 
\frac{Q^2}{\gcd(\mathcal{G}(\mathbf{c}, \mathbf{d}), Q)},
\end{equation*}
and
\begin{equation*}
\det \left( \Lambda_{\mathbf{c}} \cap \Lambda_{\mathbf{d}}^{(Q)} \right) =
||\mathbf{c}|| \cdot \frac{Q}{\gcd(\mathcal{G}(\mathbf{c}, \mathbf{d}), Q)}.
\end{equation*}
\end{lemma}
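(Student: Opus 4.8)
The plan is to turn both identities into index computations, using the two elementary facts that (i) for a full‑rank sublattice $\Gamma\subseteq\Lambda$ (i.e.\ $\Span_\mathbb{R}(\Gamma)=\Span_\mathbb{R}(\Lambda)$) one has $\det(\Gamma)=[\Lambda:\Gamma]\det(\Lambda)$, and (ii) if $f\colon\Lambda\to G$ is a homomorphism into a finite abelian group then $[\Lambda:\ker f]=\#f(\Lambda)$. In both cases the relevant lattice will be exhibited as the kernel of an explicit homomorphism built from $\mathbf{c}$ and $\mathbf{d}$.

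For the first formula I would note that $\Lambda_{\mathbf{c}}^{(Q)}\cap\Lambda_{\mathbf{d}}^{(Q)}$ contains $Q\mathbb{Z}^N$, hence is a full‑rank sublattice of $\mathbb{Z}^N$, and is exactly $\ker\phi$ for the homomorphism $\phi\colon\mathbb{Z}^N\to(\mathbb{Z}/Q\mathbb{Z})^2$, $\mathbf{y}\mapsto(\langle\mathbf{c},\mathbf{y}\rangle,\langle\mathbf{d},\mathbf{y}\rangle)\bmod Q$. Since $\det(\mathbb{Z}^N)=1$, this gives $\det(\Lambda_{\mathbf{c}}^{(Q)}\cap\Lambda_{\mathbf{d}}^{(Q)})=[\mathbb{Z}^N:\ker\phi]=\#\phi(\mathbb{Z}^N)$. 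Now $\phi(\mathbb{Z}^N)=L/Q\mathbb{Z}^2$, where $L\subseteq\mathbb{Z}^2$ is the (full‑rank) sublattice generated by the rows $(c_i,d_i)$ for $1\le i\le N$ together with $(Q,0)$ and $(0,Q)$, so $\#\phi(\mathbb{Z}^N)=Q^2/[\mathbb{Z}^2:L]$. The index $[\mathbb{Z}^2:L]$ is the gcd of the $2\times2$ minors of the matrix of these generators, namely $\gcd$ of the quantities $c_id_j-c_jd_i$ ($1\le i<j\le N$), of the $\pm Qc_i$ and $\pm Qd_i$, and of $Q^2$. Because $\mathbf{c}$ is primitive we have $\gcd(c_1,\dots,c_N)=1$, so this collapses to $[\mathbb{Z}^2:L]=\gcd(\mathcal{G}(\mathbf{c},\mathbf{d}),Q)$, which is the claimed value.

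For the second formula, $\Lambda_{\mathbf{c}}$ has rank $N-1$ and $\Lambda_{\mathbf{c}}\cap\Lambda_{\mathbf{d}}^{(Q)}\supseteq Q\Lambda_{\mathbf{c}}$ is a full‑rank sublattice of it, equal to $\ker\psi$ for $\psi\colon\Lambda_{\mathbf{c}}\to\mathbb{Z}/Q\mathbb{Z}$, $\mathbf{y}\mapsto\langle\mathbf{d},\mathbf{y}\rangle\bmod Q$; hence $\det(\Lambda_{\mathbf{c}}\cap\Lambda_{\mathbf{d}}^{(Q)})=\#\psi(\Lambda_{\mathbf{c}})\cdot\det(\Lambda_{\mathbf{c}})$. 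By Lemma~\ref{Lemma determinant global} with $k=1$ (or by \eqref{Det orthogonal}, since $\Lambda_{\mathbf{c}}=(\mathbb{Z}\mathbf{c})^\perp$ and $\mathbf{c}$ is primitive) we have $\det(\Lambda_{\mathbf{c}})=||\mathbf{c}||$, so everything reduces to identifying the image $D\mathbb{Z}$ of the linear map $\Lambda_{\mathbf{c}}\to\mathbb{Z}$, $\mathbf{y}\mapsto\langle\mathbf{d},\mathbf{y}\rangle$: once $D=\mathcal{G}(\mathbf{c},\mathbf{d})$ is known, $\psi(\Lambda_{\mathbf{c}})=\gcd(D,Q)\mathbb{Z}/Q\mathbb{Z}$ has order $Q/\gcd(\mathcal{G}(\mathbf{c},\mathbf{d}),Q)$ and the formula follows. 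The inclusion $D\mid\mathcal{G}(\mathbf{c},\mathbf{d})$ is immediate from the vectors $\mathbf{w}_{ij}:=c_j\mathbf{e}_i-c_i\mathbf{e}_j\in\Lambda_{\mathbf{c}}$, for which $\langle\mathbf{d},\mathbf{w}_{ij}\rangle=c_jd_i-c_id_j$. The reverse divisibility is the only genuinely non‑formal step, and this is where I expect the main (mild) obstacle: choosing integers $a_1,\dots,a_N$ with $\sum_i a_ic_i=1$ (possible since $\mathbf{c}$ is primitive) and, for $\mathbf{y}\in\Lambda_{\mathbf{c}}$, writing
\[
\langle\mathbf{d},\mathbf{y}\rangle=\Big(\sum_j a_jc_j\Big)\langle\mathbf{d},\mathbf{y}\rangle-\Big(\sum_j a_jd_j\Big)\langle\mathbf{c},\mathbf{y}\rangle=\sum_{i\ne j}a_jy_i\,(c_jd_i-c_id_j),
\]
using $\langle\mathbf{c},\mathbf{y}\rangle=0$; since $\mathcal{G}(\mathbf{c},\mathbf{d})$ divides every minor $c_id_j-c_jd_i$, it divides $\langle\mathbf{d},\mathbf{y}\rangle$ for all $\mathbf{y}\in\Lambda_{\mathbf{c}}$, so $\mathcal{G}(\mathbf{c},\mathbf{d})\mid D$. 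Combining the two divisibilities gives $D=\mathcal{G}(\mathbf{c},\mathbf{d})$ and completes the proof; the remaining verifications (full‑rankness, the index formula for the $2\times2$ lattice, the primitivity simplifications) are routine bookkeeping.
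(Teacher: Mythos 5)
Your proof is correct, but it takes a genuinely different route from the paper's. The paper applies the Smith normal form theorem to the $2\times N$ matrix with rows $\mathbf{c}^T,\mathbf{d}^T$, producing a unimodular $\mathbf{T}\in\GL_N(\mathbb{Z})$ that reduces both lattices to explicit normal forms; the first identity then falls out immediately, while the second requires exhibiting a basis and a small Gram-matrix computation (identifying $\det(\mathbf{S})$ as a cofactor of $\mathbf{T}^T\mathbf{T}$ to get $\det(\mathbf{S})=\|\mathbf{c}\|^2$). You instead realise each lattice as the kernel of a homomorphism into a finite group and use $\det(\Gamma)=[\Lambda:\Gamma]\det(\Lambda)$ together with the first isomorphism theorem: for the first identity this reduces to the index of the sublattice of $\mathbb{Z}^2$ generated by the $(c_i,d_i)$ and $Q\mathbb{Z}^2$, computed via the gcd of $2\times2$ minors and the primitivity of $\mathbf{c}$; for the second you need $\det(\Lambda_{\mathbf{c}})=\|\mathbf{c}\|$ (Lemma~\ref{Lemma determinant global} with $k=1$) and the identification of the image of $\mathbf{y}\mapsto\langle\mathbf{d},\mathbf{y}\rangle$ on $\Lambda_{\mathbf{c}}$ as $\mathcal{G}(\mathbf{c},\mathbf{d})\mathbb{Z}$, which your two divisibility arguments (the vectors $c_j\mathbf{e}_i-c_i\mathbf{e}_j$ in one direction, the B\'ezout expansion $\langle\mathbf{d},\mathbf{y}\rangle=\sum_{i\neq j}a_jy_i(c_jd_i-c_id_j)$ in the other) establish correctly. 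Your approach trades the paper's cofactor computation for these index/image arguments; both ultimately rest on Smith-normal-form-type facts, but yours stays entirely within elementary group-theoretic bookkeeping, while the paper's single change of coordinates handles both identities uniformly and is also reused elsewhere (the identity \eqref{Equality after Smith} reappears in the proof of Lemma~\ref{Lemma second moment sigma}).
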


\begin{proof}
Since $\mathbf{c}, \mathbf{d} \in \mathbb{Z}_{\mathrm{prim}}^N$ are linearly independent, the Smith normal form theorem implies that there exist $f \in \mathbb{Z}$ and $\mathbf{T} \in \GL_N(\mathbb{Z})$ such that
\begin{equation}
\label{Equality Smith}
\begin{pmatrix}
\mathbf{c}^T \\
\mathbf{d}^T
\end{pmatrix}
\mathbf{T} =
\begin{pmatrix}
1 & 0 & 0 & \cdots & 0 \\
f & \mathcal{G}(\mathbf{c}, \mathbf{d}) & 0 & \cdots & 0 
\end{pmatrix}.
\end{equation}
By definition we have
\begin{equation*}
\Lambda_{\mathbf{c}}^{(Q)} \cap \Lambda_{\mathbf{d}}^{(Q)} =
\left\{ \mathbf{y} \in \mathbb{Z}^N :
\begin{pmatrix}
\mathbf{c}^T \\
\mathbf{d}^T
\end{pmatrix} \mathbf{y} \equiv \boldsymbol{0} \bmod{Q} \right\}.
\end{equation*}
We thus deduce that
\begin{equation}
\label{Equality after Smith}
\Lambda_{\mathbf{c}}^{(Q)} \cap \Lambda_{\mathbf{d}}^{(Q)} =
\mathbf{T} \cdot \left\{ \mathbf{z} \in \mathbb{Z}^N :
\begin{pmatrix}
1 & 0 & 0 & \cdots & 0 \\
f & \mathcal{G}(\mathbf{c}, \mathbf{d}) & 0 & \cdots & 0 
\end{pmatrix}
\mathbf{z} \equiv \boldsymbol{0} \bmod{Q} \right\},
\end{equation}
and the first part of the lemma follows since $|\det (\mathbf{T})|=1$.

Furthermore, we have
\begin{equation*}
\Lambda_{\mathbf{c}} \cap \Lambda_{\mathbf{d}}^{(Q)} =
\left\{ \mathbf{y} \in \mathbb{Z}^N :
\begin{pmatrix}
\mathbf{c}^T \\
\mathbf{d}^T
\end{pmatrix} \mathbf{y} \in
\begin{pmatrix}
0 \\
Q \mathbb{Z}
\end{pmatrix}
\right\},
\end{equation*}
so we see that
\begin{equation*}
\Lambda_{\mathbf{c}} \cap \Lambda_{\mathbf{d}}^{(Q)} =
\mathbf{T} \cdot
\left\{ \mathbf{z} \in \mathbb{Z}^N :
\begin{pmatrix}
1 & 0 & 0 & \cdots & 0 \\
f & \mathcal{G}(\mathbf{c}, \mathbf{d}) & 0 & \cdots & 0 
\end{pmatrix} \mathbf{z} \in
\begin{pmatrix}
0 \\
Q \mathbb{Z}
\end{pmatrix}
\right\}.
\end{equation*}
Therefore, a basis of the lattice $\Lambda_{\mathbf{c}} \cap \Lambda_{\mathbf{d}}^{(Q)}$ is given by
\begin{equation*}
\left( \frac{Q}{\gcd(\mathcal{G}(\mathbf{c}, \mathbf{d}), Q)} \cdot \mathbf{T} \mathbf{e}_2, \mathbf{T} \mathbf{e}_3, \dots, \mathbf{T} \mathbf{e}_N \right),
\end{equation*}
where $(\mathbf{e}_1, \dots, \mathbf{e}_N)$ denotes the canonical basis of $\mathbb{R}^N$. Using the definition \eqref{Definition det}, we obtain
\begin{equation*}
\det \left(\Lambda_{\mathbf{c}} \cap \Lambda_{\mathbf{d}}^{(Q)} \right) =
\frac{Q}{\gcd(\mathcal{G}(\mathbf{c}, \mathbf{d}), Q)} \cdot \sqrt{\det \left( \mathbf{S} \right)},
\end{equation*}
where
\begin{equation*}
\mathbf{S} = \left( \mathbf{e}_i^T \mathbf{T}^T \mathbf{T} \mathbf{e}_j \right)_{i,j = 2, \dots, N}.
\end{equation*}
The matrix $\mathbf{S}$ is formed by removing from $\mathbf{T}^T \mathbf{T}$ its first line and its first column. Therefore, $\det (\mathbf{S})$ is the cofactor of index $(1,1)$ of $\mathbf{T}^T \mathbf{T}$ and is thus equal to the entry of index $(1,1)$ of the matrix $(\mathbf{T}^T \mathbf{T})^{-1}$ since $\det(\mathbf{T}^T \mathbf{T})=1$. In other words, we have
\begin{equation*}
\det (\mathbf{S}) = \mathbf{e}_1^T \left( \mathbf{T}^T \mathbf{T} \right)^{-1} \mathbf{e}_1.
\end{equation*}
But the equality
\eqref{Equality Smith} gives $\mathbf{c}^T = \mathbf{e}_1^T \mathbf{T}^{-1}$, so that
$\det \left( \mathbf{S} \right) = ||\mathbf{c}||^2$, which completes the proof.
\end{proof}

\subsection{Bounding the successive minima of the key lattices}

\label{Section successive minima}

Recall that for $d,n \geq 1$ the Veronese embedding $\nu_{d,n} : \mathbb{R}^{n+1} \to \mathbb{R}^{N_{d,n}}$ was introduced in Definition~\ref{Definition Veronese}. Given two linearly independent vectors
$\mathbf{x}, \mathbf{y} \in \mathbb{Z}_{\mathrm{prim}}^{n+1}$, the lattices
$\Lambda_{\nu_{d,n}(\mathbf{x})}$ and $\Lambda_{\nu_{d,n}(\mathbf{x})} \cap \Lambda_{\nu_{d,n}(\mathbf{y})}$ respectively have rank $N_{d,n}-1$ and $N_{d,n}-2$. These two lattices play a pivotal role in our arguments and this section is concerned with bounding the size of their successive minima, thereby aligning us for an efficient application of Lemma~\ref{Lemma lattice pivotal}. It is convenient to introduce the following notation.

\begin{definition}
\label{Definition Polynomials}
Let $d,n \geq 1$ and let $\mathbf{X} = (X_0, \dots, X_n)$. We let $\mathbb{R}[\mathbf{X}]^{(d)}$ denote the vector space of homogeneous polynomials of degree $d$ in $n+1$ variables and we let
$\omega_d : \mathbb{R}[\mathbf{X}]^{(d)} \to \mathbb{R}^{N_{d,n}}$ be the isomorphism defined using the lexicographical ordering. We also let $\mathscr{M}_{d,n}$ denote the set of monomials of degree $d$ in $n+1$ variables.
\end{definition}

We start by stating an elementary result that we will use repeatedly in our arguments.

\begin{lemma}
\label{Lemma G}
Let $d,n \geq 1$. Let $\mathbf{x}, \mathbf{y} \in \mathbb{Z}_{\mathrm{prim}}^{n+1}$ be two linearly independent vectors. Then 
\begin{equation*}
\mathcal{G}(\nu_{d,n}(\mathbf{x}), \nu_{d,n}(\mathbf{y})) = \mathcal{G}(\mathbf{x}, \mathbf{y}).
\end{equation*}
\end{lemma}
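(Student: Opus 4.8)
For linearly independent $\mathbf{x}, \mathbf{y} \in \mathbb{Z}_{\mathrm{prim}}^{n+1}$, one has $\mathcal{G}(\nu_{d,n}(\mathbf{x}), \nu_{d,n}(\mathbf{y})) = \mathcal{G}(\mathbf{x}, \mathbf{y})$.

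The plan is to argue locally at each prime $p$ and show that $v_p(\mathcal{G}(\nu_{d,n}(\mathbf{x}), \nu_{d,n}(\mathbf{y}))) = v_p(\mathcal{G}(\mathbf{x}, \mathbf{y}))$, since $\mathcal{G}$ is, up to sign, determined by its $p$-adic valuations. Recall that for linearly independent $\mathbf{c}, \mathbf{d} \in \mathbb{Z}^N$ the quantity $\mathcal{G}(\mathbf{c},\mathbf{d})$ equals the invariant factor product appearing in the Smith normal form \eqref{Equality Smith}: if $\binom{\mathbf{c}^T}{\mathbf{d}^T}\mathbf{T}$ has the displayed shape, then $\mathcal{G}(\mathbf{c},\mathbf{d})$ is (the absolute value of) the entry in position $(2,2)$. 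Equivalently, $v_p(\mathcal{G}(\mathbf{c},\mathbf{d}))$ is characterised as follows: writing $\mathbf{c}, \mathbf{d}$ as a basis of the rank-$2$ lattice $L = \mathbb{Z}\mathbf{c} + \mathbb{Z}\mathbf{d}$, and letting $L^{\mathrm{prim}}$ be the primitive closure $\mathrm{Span}_{\mathbb{Q}}(L)\cap \mathbb{Z}^N$, one has $v_p(\mathcal{G}(\mathbf{c},\mathbf{d})) = v_p([L^{\mathrm{prim}} : L])$ — this is just the statement that $\gcd$ of $2\times 2$ minors measures the imprimitivity of $L$ in $\mathbb{Z}^N$. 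So the claim reduces to: the index $[\,(\mathbb{Z}\nu_{d,n}(\mathbf{x}) + \mathbb{Z}\nu_{d,n}(\mathbf{y}))^{\mathrm{prim}} : \mathbb{Z}\nu_{d,n}(\mathbf{x}) + \mathbb{Z}\nu_{d,n}(\mathbf{y})\,]$ in $\mathbb{Z}^{N_{d,n}}$ equals $[\,(\mathbb{Z}\mathbf{x} + \mathbb{Z}\mathbf{y})^{\mathrm{prim}} : \mathbb{Z}\mathbf{x} + \mathbb{Z}\mathbf{y}\,]$ in $\mathbb{Z}^{n+1}$.

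First I would fix a prime $p$ and put $m = v_p(\mathcal{G}(\mathbf{x},\mathbf{y}))$. Since $\mathbf{x}, \mathbf{y}$ are primitive, the Smith normal form over $\mathbb{Z}_{(p)}$ lets me choose $\mathrm{GL}_{n+1}(\mathbb{Z}_{(p)})$-coordinates in which $\mathbf{x} = \mathbf{e}_0$ and $\mathbf{y} = p^m \mathbf{e}_1$ (absorbing the unit $\mathcal{G}(\mathbf{x},\mathbf{y})/p^m$ into the basis change and, using primitivity of $\mathbf y$, clearing the $\mathbf e_0$-component; strictly one gets $\mathbf y = f\mathbf e_0 + p^m \mathbf e_1$ with $\gcd(f,p^m)$ a unit times $1$, but after a further unipotent change one may take $f=0$ at the cost of enlarging the unit, which is harmless since we only track valuations). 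Now I would compute $\nu_{d,n}$ in these coordinates. Writing $\mathbf{x}_t = \mathbf{e}_0 + t\,\mathbf{e}_1$ for the generic point of the line through $\mathbf{x},\mathbf{y}$, the monomials $X_0^{d-j}X_1^{j}$ evaluated at $\mathbf{x}_t$ give $t^j$, so $\nu_{d,n}(\mathbf{x}_t)$ has, among its coordinates, the full Vandermonde-type vector $(1, t, t^2, \dots, t^d)$ (the remaining coordinates, involving $X_2,\dots,X_n$, vanish). Specialising $t = 0$ gives $\nu_{d,n}(\mathbf{x})$ and $t = p^m$ gives (a unit multiple of) $\nu_{d,n}(\mathbf{y})$. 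The lattice $\mathbb{Z}_{(p)}\nu_{d,n}(\mathbf{x}) + \mathbb{Z}_{(p)}\nu_{d,n}(\mathbf{y})$ is then spanned inside the $2$-plane by $(1,0,\dots,0)$ and $(1, p^m, p^{2m},\dots,p^{dm})$, hence — subtracting — by $(1,0,\dots,0)$ and $(0, p^m, p^{2m}, \dots, p^{dm})$; the gcd of the entries of the second vector is $p^m$, so its primitive closure is obtained by dividing by $p^m$, giving index exactly $p^m$. Therefore $v_p(\mathcal{G}(\nu_{d,n}(\mathbf{x}),\nu_{d,n}(\mathbf{y}))) = m = v_p(\mathcal{G}(\mathbf{x},\mathbf{y}))$.

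Since this holds for every prime $p$ and both sides are positive integers (well-defined up to sign, but $\mathcal G$ is taken as a positive gcd), the two quantities agree, proving the lemma. The one point requiring a little care — and the main obstacle to writing this cleanly — is the passage to the normal-form coordinates: $\nu_{d,n}$ is defined on $\mathbb{Z}^{n+1}$ with its standard basis, not on an arbitrary $\mathbb{Z}_{(p)}$-basis, so I must verify that an element $\mathbf{T} \in \mathrm{GL}_{n+1}(\mathbb{Z}_{(p)})$ acting on $\mathbb{R}^{n+1}$ induces, via the functoriality of the Veronese map under linear substitutions, an element of $\mathrm{GL}_{N_{d,n}}(\mathbb{Z}_{(p)})$ acting on $\mathbb{R}^{N_{d,n}}$ that carries $\nu_{d,n}(\mathbf{v})$ to $\nu_{d,n}(\mathbf{T}\mathbf{v})$ up to that same integral change of basis — and such changes of basis preserve both $\det$ of the ambient integral structure locally at $p$ and hence all $v_p$ of the relevant gcds. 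Once that compatibility is in hand (it is the elementary statement that $\mathrm{Sym}^d$ of an integral invertible matrix is integral invertible), the valuation computation above goes through verbatim and the proof is complete.
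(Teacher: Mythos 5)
Your overall strategy --- localise at each prime $p$, interpret $v_p(\mathcal{G})$ as the $p$-adic index of the column lattice in its saturation, put $(\mathbf{x},\mathbf{y})$ into Smith normal form over $\mathbb{Z}_{(p)}$, and compute the Veronese minors explicitly in two variables --- is sound and genuinely different from the paper's argument, which never changes coordinates: there the divisibility $\mathcal{G}(\mathbf{x},\mathbf{y}) \mid \mathcal{G}(\nu_{d,n}(\mathbf{x}),\nu_{d,n}(\mathbf{y}))$ comes directly from the congruences $P(\mathbf{x})Q(\mathbf{y}) \equiv P(\mathbf{y})Q(\mathbf{x})$, and the reverse valuation inequality from the particular minors $x_j^{d-1}y_j^{d-1}(x_k y_\ell - x_\ell y_k)$ together with the observation that primitivity supplies some $j$ with $p \nmid x_j y_j$. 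Your route requires you to verify the functoriality of the Veronese under $\GL_{n+1}(\mathbb{Z}_{(p)})$ and the invariance of $v_p(\mathcal{G})$ under the induced element of $\GL_{N_{d,n}}(\mathbb{Z}_{(p)})$ (both true, as you note), in exchange for a transparent Vandermonde-type gcd at the end.

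However, one step as written is false and would, taken literally, disprove the lemma. You cannot normalise to $\mathbf{y} = p^m\mathbf{e}_1$: clearing the $\mathbf{e}_0$-component of $\mathbf{y} = f\mathbf{e}_0 + p^m\mathbf{e}_1$ requires the substitution $\mathbf{e}_1 \mapsto \mathbf{e}_1 + (f/p^m)\mathbf{e}_0$, which lies in $\GL_{n+1}(\mathbb{Z}_{(p)})$ only if $p^m \mid f$, whereas primitivity of $\mathbf{y}$ forces $p \nmid f$ as soon as $m \geq 1$. This is not a harmless loss of a unit: with $f=0$ the vector $\nu_{d,n}(\mathbf{y})$ is supported on the single monomial $X_1^d$ with entry $p^{dm}$, and the gcd of the $2\times 2$ minors with $\nu_{d,n}(\mathbf{e}_0)$ is then $p^{dm}$, not $p^m$. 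The fact that $f$ is a $p$-adic unit is exactly where the primitivity of $\mathbf{y}$ enters, and it is what produces the full row $(f^d, f^{d-1}p^m, \dots, p^{dm})$ whose second entry has valuation precisely $m$. Your subsequent computation via $\mathbf{x}_t = \mathbf{e}_0 + t\,\mathbf{e}_1$ with $t$ equal to a unit times $p^m$ is in fact the $f \neq 0$ computation, so the repair is simply to delete the parenthetical claim, keep $\mathbf{y} = f\mathbf{e}_0 + p^m\mathbf{e}_1$ with $f$ a unit, and observe that rescaling $\mathbf{y}$ by $f^{-1}$ affects no valuation.
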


\begin{proof}
First, it is not hard to check that
$\mathcal{G}(\mathbf{x}, \mathbf{y}) \mid \mathcal{G}(\nu_{d,n}(\mathbf{x}), \nu_{d,n}(\mathbf{y}))$. Indeed, if $q$ is a positive integer such that $x_i y_j \equiv x_j y_i \bmod{q}$ for any $i, j \in \{0, \dots, n\}$, then we clearly have
$P(\mathbf{x}) Q(\mathbf{y}) \equiv P(\mathbf{y}) Q(\mathbf{x}) \bmod{q}$ for any $P, Q \in \mathscr{M}_{d,n}$. Second, let $p$ be a prime divisor of $\mathcal{G}(\nu_{d,n}(\mathbf{x}), \nu_{d,n}(\mathbf{y}))$ and let us show that the $p$-adic valuation of $\mathcal{G}(\nu_{d,n}(\mathbf{x}), \nu_{d,n}(\mathbf{y}))$ is at most the $p$-adic valuation of $x_k y_{\ell} - x_{\ell} y_k$ for any $k, \ell \in \{0, \dots, n\}$. By definition of
$\mathcal{G}(\nu_{d,n}(\mathbf{x}), \nu_{d,n}(\mathbf{y}))$ we have
\begin{equation*}
\mathcal{G}(\nu_{d,n}(\mathbf{x}), \nu_{d,n}(\mathbf{y})) \mid x_j^{d-1} y_j^{d-1} (x_k y_{\ell} - x_{\ell} y_k),
\end{equation*}
for any $j, k, \ell \in \{0, \dots, n\}$. Therefore, it suffices to check that there exists $j \in \{0, \dots, n\}$ such that
$p \nmid x_j y_j$. Otherwise, since $\mathbf{x}$ and $\mathbf{y}$ are primitive vectors, there would exist distinct indices $j_0, j_1 \in \{0, \dots, n\}$ such that $p \nmid x_{j_1} y_{j_0}$ and $p \mid x_{j_0}$. It would follow that
$p \nmid x_{j_0}^d y_{j_1}^d- x_{j_1}^d y_{j_0}^d$, which would contradict the fact that
$p \mid \mathcal{G}(\nu_{d,n}(\mathbf{x}), \nu_{d,n}(\mathbf{y}))$. This completes the proof.
\end{proof}

Our work will make crucial use of the following notion.

\begin{definition}
\label{Definition d}
Let $n \geq 1$ and let $\mathbf{x}, \mathbf{y} \in \mathbb{Z}^{n+1}$ be two linearly independent vectors. For any
$r \in \{2, \dots, n+1\}$ we let $\mathfrak{d}_r(\mathbf{x})$ be the minimum determinant of a rank $r$ sublattice of
$\mathbb{Z}^{n+1}$ containing $\mathbf{x}$, and we let $\mathfrak{d}_r(\mathbf{x},\mathbf{y})$ be the minimum determinant of a rank $r$ sublattice of $\mathbb{Z}^{n+1}$ containing $\mathbf{x}$ and $\mathbf{y}$.
\end{definition}

Let $\mathcal{L}(\mathbf{x}, \mathbf{y})$ denote the unique primitive lattice of $\mathbb{Z}^{n+1}$ of rank $2$ containing $\mathbf{x}$ and $\mathbf{y}$, that is
\begin{equation}
\label{Definition lattice L}
\mathcal{L}(\mathbf{x}, \mathbf{y}) =
\left( \mathbb{R} \mathbf{x} \oplus \mathbb{R} \mathbf{y} \right) \cap \mathbb{Z}^{n+1}.
\end{equation}
By definition we have
\begin{equation}
\label{Equality det L}
\det (\mathcal{L}(\mathbf{x}, \mathbf{y})) = \mathfrak{d}_2(\mathbf{x},\mathbf{y}).
\end{equation}
The next result provides us with a formula for this determinant.

\begin{lemma}
\label{Lemma d2(x,y)}
Let $n \geq 1$ and let $\mathbf{x}, \mathbf{y} \in \mathbb{Z}^{n+1}$ be two linearly independent vectors. We have
\begin{equation*}
\mathfrak{d}_2(\mathbf{x},\mathbf{y}) =
\frac{\left(||\mathbf{x}||^2 ||\mathbf{y}||^2 - \langle \mathbf{x}, \mathbf{y} \rangle^2 \right)^{1/2}}
{\mathcal{G}(\mathbf{x},\mathbf{y})}.
\end{equation*}
\end{lemma}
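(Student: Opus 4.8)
The plan is to realise $\mathcal{L}(\mathbf{x},\mathbf{y})$ as the orthogonal complement of the lattice $\Lambda_{\mathbf{x}} \cap \Lambda_{\mathbf{y}}$ and then to feed this into Lemma~\ref{Lemma determinant global}, whose right-hand side already features the quantity $\mathcal{G}(\mathbf{x},\mathbf{y})$. First I would observe that, for $\mathbf{z} \in \mathbb{Z}^{n+1}$, the conditions $\langle \mathbf{x}, \mathbf{z} \rangle = \langle \mathbf{y}, \mathbf{z} \rangle = 0$ hold if and only if $\mathbf{z}$ is orthogonal to the whole plane $\mathbb{R}\mathbf{x} \oplus \mathbb{R}\mathbf{y}$, equivalently if and only if $\mathbf{z} \in \mathcal{L}(\mathbf{x},\mathbf{y})^{\perp}$, since $\mathcal{L}(\mathbf{x},\mathbf{y})$ spans that plane over $\mathbb{R}$. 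With the notation \eqref{Definition lattice} this says $\Lambda_{\mathbf{x}} \cap \Lambda_{\mathbf{y}} = \mathcal{L}(\mathbf{x},\mathbf{y})^{\perp}$. As $\mathcal{L}(\mathbf{x},\mathbf{y})$ is primitive by its very definition \eqref{Definition lattice L}, the identity \eqref{Det orthogonal} gives $\det(\mathcal{L}(\mathbf{x},\mathbf{y})^{\perp}) = \det(\mathcal{L}(\mathbf{x},\mathbf{y}))$, which by \eqref{Equality det L} equals $\mathfrak{d}_2(\mathbf{x},\mathbf{y})$; hence $\mathfrak{d}_2(\mathbf{x},\mathbf{y}) = \det(\Lambda_{\mathbf{x}} \cap \Lambda_{\mathbf{y}})$.

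Next I would apply Lemma~\ref{Lemma determinant global} with $N = n+1$, $k = 2$, $\mathbf{c}_1 = \mathbf{x}$ and $\mathbf{c}_2 = \mathbf{y}$ (these are linearly independent by hypothesis; the degenerate case $n = 1$ is immediate, as then $\mathcal{L}(\mathbf{x},\mathbf{y}) = \mathbb{Z}^2$ and both sides of the claimed identity equal $1$) to obtain
\begin{equation*}
\det(\Lambda_{\mathbf{x}} \cap \Lambda_{\mathbf{y}}) = \frac{\det(\mathbb{Z}\mathbf{x} \oplus \mathbb{Z}\mathbf{y})}{\mathcal{G}(\mathbf{x},\mathbf{y})}.
\end{equation*}
To finish, I would evaluate the numerator: writing $\mathbf{B}$ for the $(n+1)\times 2$ matrix with columns $\mathbf{x}$ and $\mathbf{y}$, the defining formula \eqref{Definition det} gives
\begin{equation*}
\det(\mathbb{Z}\mathbf{x} \oplus \mathbb{Z}\mathbf{y}) = \sqrt{\det(\mathbf{B}^T \mathbf{B})} = \left( ||\mathbf{x}||^2 \, ||\mathbf{y}||^2 - \langle \mathbf{x}, \mathbf{y} \rangle^2 \right)^{1/2},
\end{equation*}
since $\mathbf{B}^T\mathbf{B}$ is the Gram matrix with diagonal entries $||\mathbf{x}||^2, ||\mathbf{y}||^2$ and off-diagonal entry $\langle \mathbf{x}, \mathbf{y}\rangle$. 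Combining these two displays with the conclusion of the previous paragraph proves the lemma.

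I do not expect a genuine obstacle here; the one point deserving care is the identification $\Lambda_{\mathbf{x}} \cap \Lambda_{\mathbf{y}} = \mathcal{L}(\mathbf{x},\mathbf{y})^{\perp}$ together with the fact that it is $\mathcal{L}(\mathbf{x},\mathbf{y})$ — and not $\mathbb{Z}\mathbf{x} \oplus \mathbb{Z}\mathbf{y}$ — which is the primitive lattice to which \eqref{Det orthogonal} applies. A self-contained variant that sidesteps Lemma~\ref{Lemma determinant global} is to put $\mathbf{B}$ into Smith normal form $\mathbf{U}\mathbf{B}\mathbf{V} = \operatorname{diag}(e_1,e_2)$ (padded with zero rows), where $e_1 \mid e_2$ and $e_1 e_2 = \mathcal{G}(\mathbf{x},\mathbf{y})$; then $\mathbb{Z}\mathbf{x} \oplus \mathbb{Z}\mathbf{y}$ has index $e_1 e_2$ in $\mathcal{L}(\mathbf{x},\mathbf{y})$, so that $\mathfrak{d}_2(\mathbf{x},\mathbf{y}) = \det(\mathbb{Z}\mathbf{x}\oplus\mathbb{Z}\mathbf{y})/\mathcal{G}(\mathbf{x},\mathbf{y})$, and one concludes with the same Gram determinant computation as above.
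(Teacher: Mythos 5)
Your proof is correct and follows essentially the same route as the paper: identify $\Lambda_{\mathbf{x}} \cap \Lambda_{\mathbf{y}}$ with $\mathcal{L}(\mathbf{x},\mathbf{y})^{\perp}$, invoke \eqref{Det orthogonal} for the primitive lattice $\mathcal{L}(\mathbf{x},\mathbf{y})$, apply Lemma~\ref{Lemma determinant global}, and compute the Gram determinant. Your aside about the case $n=1$ (where Lemma~\ref{Lemma determinant global} formally requires $k \leq N-1$) is a careful touch the paper omits, but otherwise the arguments coincide.
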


\begin{proof}
We note that $\mathcal{L}(\mathbf{x}, \mathbf{y})^{\perp} = \Lambda_\mathbf{x} \cap \Lambda_\mathbf{y}$. Therefore, since the lattice $\mathcal{L}(\mathbf{x}, \mathbf{y})$ is primitive, the equality \eqref{Det orthogonal} gives
$\mathfrak{d}_2(\mathbf{x},\mathbf{y}) = \det \left(\Lambda_\mathbf{x}\cap \Lambda_\mathbf{y}\right)$. An application of Lemma~\ref{Lemma determinant global} thus yields
\begin{equation*}
\mathfrak{d}_2(\mathbf{x},\mathbf{y}) =
\frac{\det(\mathbb{Z} \mathbf{x} \oplus \mathbb{Z} \mathbf{y})}{\mathcal{G}(\mathbf{x},\mathbf{y})}.
\end{equation*}
It follows from the definition \eqref{Definition det} of the determinant of a lattice that
\begin{equation*}
\det(\mathbb{Z} \mathbf{x} \oplus \mathbb{Z} \mathbf{y})^2 = ||\mathbf{x}||^2 ||\mathbf{y}||^2 - \langle \mathbf{x}, \mathbf{y} \rangle^2,
\end{equation*}
which completes the proof.
\end{proof}

Given an integer $R \geq 1$ and a lattice $\Lambda$ of rank $R$, we recall that for $i \in \{1, \dots, R \}$ we have defined $\lambda_i(\Lambda)$ to be the $i$-th successive minimum of $\Lambda$, as stated in
Definition~\ref{Definition successive}. We are now ready to reveal the main results of this section. The following result sharpens work of the second author \cite[Lemma~$5$]{RandomFano}, a saving that is key for our application.

\begin{lemma}
\label{Key lemma one vector}
Let $d, n \geq 1$ and let $\mathbf{x} \in \mathbb{Z}_{\mathrm{prim}}^{n+1}$. We have
\begin{equation*}
\lambda_{N_{d,n}-1} \left(\Lambda_{\nu_{d,n}(\mathbf{x})}\right) \leq n \frac{||\mathbf{x}||}{\mathfrak{d}_2(\mathbf{x})}.
\end{equation*}
\end{lemma}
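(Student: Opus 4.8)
The plan is to exhibit $N_{d,n}-1$ linearly independent vectors in $\Lambda_{\nu_{d,n}(\mathbf{x})}$ of Euclidean norm at most $n\|\mathbf{x}\|/\mathfrak{d}_2(\mathbf{x})$; since this lattice has rank $N_{d,n}-1$ (the functional $\langle\nu_{d,n}(\mathbf{x}),\cdot\rangle$ being nonzero, as $\mathbf{x}$ is primitive hence nonzero), Definition~\ref{Definition successive} then delivers the stated bound on $\lambda_{N_{d,n}-1}$. Using the isomorphism $\omega_d$ and the identity $\langle\nu_{d,n}(\mathbf{x}),\omega_d(F)\rangle=F(\mathbf{x})$, the lattice $\Lambda_{\nu_{d,n}(\mathbf{x})}$ is the image under $\omega_d$ of $\{F\in\mathbb{Z}[\mathbf{X}]^{(d)}:F(\mathbf{x})=0\}$. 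The key elementary fact driving the construction is that if $\ell=\sum_j b_jX_j$ has integer coefficients and $M\in\mathscr{M}_{d-1,n}$, then $\ell M\in\mathbb{Z}[\mathbf{X}]^{(d)}$, the coordinate vector $\omega_d(\ell M)$ has norm exactly $\|\ell\|$ (because $X_0M,\dots,X_nM$ are pairwise distinct monomials), and $\ell M$ vanishes at $\mathbf{x}$ whenever $\ell(\mathbf{x})=0$.

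The first step is to bound $\lambda_n(\Lambda_{\mathbf{x}})$, where $\Lambda_{\mathbf{x}}$ is the rank-$n$ lattice of integral linear forms vanishing at $\mathbf{x}$. As $\mathbf{x}$ is primitive, $\mathbb{Z}\mathbf{x}$ is a primitive lattice with $(\mathbb{Z}\mathbf{x})^{\perp}=\Lambda_{\mathbf{x}}$, so Lemma~\ref{Lemma Schmidt quotient} gives $\Lambda_{\mathbf{x}}^{\ast}=\mathbb{Z}^{n+1}/\mathbb{Z}\mathbf{x}$, the image of $\mathbb{Z}^{n+1}$ under orthogonal projection onto $(\mathbb{R}\mathbf{x})^{\perp}$. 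Now every rank-$2$ sublattice of $\mathbb{Z}^{n+1}$ containing $\mathbf{x}$ has the form $\mathbb{Z}\mathbf{x}\oplus\mathbb{Z}\mathbf{v}$, since $\mathbf{x}$ is primitive in any sublattice containing it, and its determinant equals $\|\mathbf{x}\|$ times the distance from $\mathbf{v}$ to $\mathbb{R}\mathbf{x}$; minimising over $\mathbf{v}\in\mathbb{Z}^{n+1}\smallsetminus\mathbb{R}\mathbf{x}$ yields $\mathfrak{d}_2(\mathbf{x})=\|\mathbf{x}\|\cdot\lambda_1(\Lambda_{\mathbf{x}}^{\ast})$. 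Applying Lemma~\ref{Lemma Banaszczyk} to $\Lambda_{\mathbf{x}}$ with $i=R=n$ then gives
\[
\lambda_n(\Lambda_{\mathbf{x}})\leq\frac{n}{\lambda_1(\Lambda_{\mathbf{x}}^{\ast})}=\frac{n\|\mathbf{x}\|}{\mathfrak{d}_2(\mathbf{x})}.
\]

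The second step is the polynomial construction. By the definition of the $n$-th successive minimum of the rank-$n$ lattice $\Lambda_{\mathbf{x}}$, one can choose linearly independent $\ell_1,\dots,\ell_n\in\Lambda_{\mathbf{x}}$ with $\|\ell_i\|\leq\lambda_n(\Lambda_{\mathbf{x}})$ for every $i$. All the forms $\ell_iM$ with $1\leq i\leq n$ and $M\in\mathscr{M}_{d-1,n}$ then lie in $\Lambda_{\nu_{d,n}(\mathbf{x})}$ and have norm at most $\lambda_n(\Lambda_{\mathbf{x}})\leq n\|\mathbf{x}\|/\mathfrak{d}_2(\mathbf{x})$. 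It remains to verify that they span, over $\mathbb{R}$, the whole $(N_{d,n}-1)$-dimensional space $\{F\in\mathbb{R}[\mathbf{X}]^{(d)}:F(\mathbf{x})=0\}$; granting this, one extracts $N_{d,n}-1$ linearly independent vectors from the family and the proof is complete. For the spanning claim, fix a linear form $\ell_0$ with $\ell_0(\mathbf{x})\neq0$, so that $\ell_0,\ell_1,\dots,\ell_n$ is a basis of the space of linear forms and the products $\ell_0^{a_0}\cdots\ell_n^{a_n}$ with $a_0+\dots+a_n=d$ form a basis of $\mathbb{R}[\mathbf{X}]^{(d)}$; all of these except $\ell_0^d$ are divisible by some $\ell_i$ with $i\geq1$, hence vanish at $\mathbf{x}$ and lie in $\sum_{i=1}^n\ell_i\,\mathbb{R}[\mathbf{X}]^{(d-1)}=\Span_{\mathbb{R}}\{\ell_iM:1\leq i\leq n,\,M\in\mathscr{M}_{d-1,n}\}$, and since this is a codimension-$1$ subspace of $\mathbb{R}[\mathbf{X}]^{(d)}$ contained in $\{F:F(\mathbf{x})=0\}$ it must coincide with it.

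I expect the main points requiring care to be the identification $\mathfrak{d}_2(\mathbf{x})=\|\mathbf{x}\|\cdot\lambda_1(\Lambda_{\mathbf{x}}^{\ast})$ and the decision to invoke the transference inequality of Lemma~\ref{Lemma Banaszczyk} on the low-rank lattice $\Lambda_{\mathbf{x}}$ rather than directly on $\Lambda_{\nu_{d,n}(\mathbf{x})}$; the latter would cost a factor of order $N_{d,n}/n$ and fail to produce the clean constant $n$. The transference inequality is precisely what yields the sharpening over \cite[Lemma~$5$]{RandomFano}, while the remaining ingredients — the norm identity for $\ell M$ and the codimension count for the ideal of the point $[\mathbf{x}]$ in degree $d$ — are routine.
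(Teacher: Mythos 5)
Your proof is correct and follows essentially the same route as the paper: first bound $\lambda_n(\Lambda_{\mathbf{x}})$ by $n\|\mathbf{x}\|/\mathfrak{d}_2(\mathbf{x})$ via Lemma~\ref{Lemma Schmidt quotient} and the transference inequality of Lemma~\ref{Lemma Banaszczyk} applied to the rank-$n$ lattice $\Lambda_{\mathbf{x}}$, then multiply the resulting short linear forms by the monomials of $\mathscr{M}_{d-1,n}$ and check that the products span the full hyperplane $\{Q:Q(\mathbf{x})=0\}$. The only cosmetic difference is in that last spanning verification, where you use the monomial basis in $\ell_0,\dots,\ell_n$ while the paper exhibits the forms $P(\mathbf{X})\cdot(x_iX_j-x_jX_i)$; both are routine and equivalent.
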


\begin{proof}
We start by dealing with the case $d=1$ and we note that $N_{1,n}=n+1$ and $\nu_{1,n}(\mathbf{x}) = \mathbf{x}$. We aim to apply Lemma~\ref{Lemma Banaszczyk} and we thus let $\mathbf{a} \in \Lambda_{\mathbf{x}}^{\ast}$ be a
non-zero vector. The lattice $\mathbb{Z} \mathbf{x}$ is primitive so we deduce from
Lemma~\ref{Lemma Schmidt quotient} that $\Lambda_{\mathbf{x}}^{\ast} = \mathbb{Z}^{n+1} / \mathbb{Z}\mathbf{x}$. The vector $\mathbf{a}$ can thus be written as $\mathbf{a} = \mathbf{b} + \mathbf{t}$ for some
$\mathbf{b} \in \mathbb{Z}^{n+1}$ and $\mathbf{t} \in \mathbb{R} \mathbf{x}$. Since
$\langle \mathbf{a}, \mathbf{x} \rangle = 0$ and $\mathbf{a}$ is non-zero, the vectors $\mathbf{b}$ and $\mathbf{x}$ are linearly independent and therefore the integral lattice $\mathbb{Z} \mathbf{b} \oplus \mathbb{Z} \mathbf{x}$ has rank $2$. It follows that $\mathfrak{d}_2(\mathbf{x}) \leq \det (\mathbb{Z} \mathbf{b} \oplus \mathbb{Z} \mathbf{x})$. Since $\mathbf{t} \in \mathbb{R} \mathbf{x}$ we have
$\det(\mathbb{Z} \mathbf{b} \oplus \mathbb{Z} \mathbf{x})=\det(\mathbb{Z} \mathbf{a} \oplus \mathbb{Z}\mathbf{x})$ and we thus see that $\mathfrak{d}_2(\mathbf{x}) \leq ||{\mathbf{a}}|| \cdot ||\mathbf{x}||$. This gives
\begin{equation*}
\lambda_1(\Lambda_{\mathbf{x}}^{\ast}) \geq \frac{\mathfrak{d}_2(\mathbf{x})}{||\mathbf{x}||}.
\end{equation*}
An application of Lemma~\ref{Lemma Banaszczyk} thus completes the proof in the case $d=1$.

Assume now that $d \geq 2$. By the case $d=1$, we can pick $n$ linearly independent vectors
$\mathbf{a}_1, \dots, \mathbf{a}_n$ of the lattice $\Lambda_{\mathbf{x}}$ in the ball
$\mathcal{B}_{n+1}(n ||\mathbf{x}||/\mathfrak{d}_2(\mathbf{x}))$. For any $P \in \mathscr{M}_{d-1,n}$ we let
$\Theta_P : \mathbb{R}^{n+1} \to \mathbb{R}^{N_{d,n}}$ be the linear map defined for
$\mathbf{c} \in \mathbb{R}^{n+1}$ by
\begin{equation*}
\Theta_P(\mathbf{c}) = \omega_d \left( P(\mathbf{X}) \cdot \langle \mathbf{c}, \mathbf{X} \rangle \right),
\end{equation*}
and we note that by definition
$\langle \Theta_P(\mathbf{c}), \nu_{d,n}(\mathbf{X}) \rangle=P(\mathbf{X}) \cdot \langle \mathbf{c}, \mathbf{X}\rangle$. For any $i \in \{1, \dots, n\}$ we have $\langle \mathbf{a}_i, \mathbf{x} \rangle=0$ and it follows that
$\Theta_P(\mathbf{a}_i) \in \Lambda_{\nu_{d,n}(\mathbf{x})}$. Also, we see that for any
$\mathbf{c} \in \mathbb{R}^{n+1}$ we have $||\Theta_P(\mathbf{c})|| = ||\mathbf{c}||$ and thus
$\Theta_P(\mathbf{a}_i) \in \mathcal{B}_{N_{d,n}}( n ||\mathbf{x}||/\mathfrak{d}_2(\mathbf{x}))$ for any
$i \in \{1, \dots, n\}$. Therefore, in order to complete the proof it suffices to check that there are $N_{d,n}-1$ linearly independent vectors in the set
\begin{equation*}
\left\{ \Theta_P(\mathbf{a}_i) : 
\begin{array}{l l}
P \in \mathscr{M}_{d-1,n} \\
i \in \{1, \dots, n\}
\end{array}
\right\}.
\end{equation*}
But the real subspace spanned in $\mathbb{R}^{N_{d,n}}$ by this set of vectors contains in particular the image under
$\omega_d$ of the set
\begin{equation*}
\Span_{\mathbb{R}} \left( \left\{ P(\mathbf{X}) \cdot (x_i X_j - x_j X_i) :
\begin{array}{l l}
P \in \mathscr{M}_{d-1,n} \\
i, j \in \{0, \dots, n\}
\end{array}
\right\} \right)
= \left\{ Q \in \mathbb{R}[\mathbf{X}]^{(d)} : Q (\mathbf{x}) = 0 \right\}.
\end{equation*}
Note that we have used the fact that
$\Lambda_{\mathbf{x}} = \mathbb{R} \mathbf{a}_1 \oplus \cdots \oplus \mathbb{R} \mathbf{a}_n$. This implies that
\begin{equation*}
\Span_{\mathbb{R}} \left( \left\{ \Theta_P(\mathbf{a}_i) : 
\begin{array}{l l}
P \in \mathscr{M}_{d-1,n} \\
i \in \{1, \dots, n\}
\end{array}
\right\} \right)
= \left(\mathbb{R} \nu_{d,n}(\mathbf{x})\right)^{\perp},
\end{equation*}
which finishes the proof.
\end{proof}

The proof of our second result follows a similar strategy, but is much more challenging. 

\begin{lemma}
\label{Key lemma two vectors}
Let $d, n \geq 2$ and let $\mathbf{x}, \mathbf{y} \in \mathbb{Z}_{\mathrm{prim}}^{n+1}$ be two linearly independent vectors. We have
\begin{equation*}
\lambda_{N_{d,n}-2} \left( \Lambda_{\nu_{d,n}(\mathbf{x})} \cap \Lambda_{\nu_{d,n}(\mathbf{y})} \right) \leq
3n^2 \max \left\{ \frac{\mathfrak{d}_2(\mathbf{x},\mathbf{y})}{\mathfrak{d}_3(\mathbf{x},\mathbf{y})}, 
\frac{||\mathbf{x}|| \cdot ||\mathbf{y}||}{\mathfrak{d}_2(\mathbf{x},\mathbf{y})^2} \right\}.
\end{equation*}
\end{lemma}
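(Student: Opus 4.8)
The plan is to mimic the strategy of Lemma~\ref{Key lemma one vector}, but starting from a two-vector base case instead of a one-vector base case. Concretely, I would first establish an analogue of the $d=1$ step: for linearly independent primitive $\mathbf{x},\mathbf{y}$, bound $\lambda_{n-1}\bigl(\Lambda_{\mathbf{x}}\cap\Lambda_{\mathbf{y}}\bigr)$. The lattice $\Lambda_{\mathbf{x}}\cap\Lambda_{\mathbf{y}}$ has rank $n-1$ and equals $\mathcal{L}(\mathbf{x},\mathbf{y})^{\perp}$, so its determinant is $\mathfrak{d}_2(\mathbf{x},\mathbf{y})$ by \eqref{Det orthogonal}. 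To bound its top successive minimum I would pass to the dual via Lemma~\ref{Lemma Banaszczyk}: by Lemma~\ref{Lemma Schmidt quotient}, $\bigl(\mathcal{L}(\mathbf{x},\mathbf{y})^{\perp}\bigr)^{\ast}=\mathbb{Z}^{n+1}/\mathcal{L}(\mathbf{x},\mathbf{y})$, and for a nonzero vector $\mathbf{a}$ in this quotient I would lift to $\mathbf{b}\in\mathbb{Z}^{n+1}$ with $\mathbf{b}=\mathbf{a}+\mathbf{t}$, $\mathbf{t}\in\mathbb{R}\mathbf{x}\oplus\mathbb{R}\mathbf{y}$; then $\mathbb{Z}\mathbf{b}\oplus\mathcal{L}(\mathbf{x},\mathbf{y})$ is a rank-$3$ integral sublattice containing $\mathbf{x},\mathbf{y}$, so $\mathfrak{d}_3(\mathbf{x},\mathbf{y})\leq\det(\mathbb{Z}\mathbf{b}\oplus\mathcal{L}(\mathbf{x},\mathbf{y}))=\det(\mathbb{Z}\mathbf{a}\oplus\mathcal{L}(\mathbf{x},\mathbf{y}))\leq\|\mathbf{a}\|\cdot\mathfrak{d}_2(\mathbf{x},\mathbf{y})$, giving $\lambda_1\bigl((\Lambda_{\mathbf{x}}\cap\Lambda_{\mathbf{y}})^{\ast}\bigr)\geq\mathfrak{d}_3(\mathbf{x},\mathbf{y})/\mathfrak{d}_2(\mathbf{x},\mathbf{y})$. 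Banaszczyk then yields $\lambda_{n-1}\bigl(\Lambda_{\mathbf{x}}\cap\Lambda_{\mathbf{y}}\bigr)\leq (n-1)\,\mathfrak{d}_2(\mathbf{x},\mathbf{y})/\mathfrak{d}_3(\mathbf{x},\mathbf{y})$. This accounts for the first term in the max, and it is where the new ``$\mathfrak{d}_3$'' quantity enters.

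Next I would handle the lift from $d=1$ to general $d$. As in Lemma~\ref{Key lemma one vector}, for each monomial $P\in\mathscr{M}_{d-1,n}$ and each vector $\mathbf{c}$ I set $\Theta_P(\mathbf{c})=\omega_d\bigl(P(\mathbf{X})\langle\mathbf{c},\mathbf{X}\rangle\bigr)$, which is an isometry. If $\mathbf{c}\in\Lambda_{\mathbf{x}}\cap\Lambda_{\mathbf{y}}$ then $\langle\mathbf{c},\mathbf{x}\rangle=\langle\mathbf{c},\mathbf{y}\rangle=0$, so $\Theta_P(\mathbf{c})\in\Lambda_{\nu_{d,n}(\mathbf{x})}\cap\Lambda_{\nu_{d,n}(\mathbf{y})}$. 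Taking $\mathbf{c}$ to range over $n-1$ linearly independent short vectors of $\Lambda_{\mathbf{x}}\cap\Lambda_{\mathbf{y}}$ furnished by the base case, together with $P$ ranging over $\mathscr{M}_{d-1,n}$, produces vectors of norm at most $(n-1)\,\mathfrak{d}_2(\mathbf{x},\mathbf{y})/\mathfrak{d}_3(\mathbf{x},\mathbf{y})$ in the target lattice. The span of $\{\Theta_P(\mathbf{c}_i)\}$ is the image under $\omega_d$ of $\Span_{\mathbb{R}}\{P(\mathbf{X})(x_iX_j-x_jX_i),\ P(\mathbf{X})(y_iX_j-y_jX_i)\}$, i.e.\ the space of degree-$d$ forms vanishing on the line through $\mathbf{x},\mathbf{y}$ — a space of codimension $d+1$ in $\mathbb{R}[\mathbf{X}]^{(d)}$, since restricting a degree-$d$ form to $\mathbb{P}^1$ is surjective onto degree-$d$ binary forms. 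So this step alone gives $N_{d,n}-(d+1)$ linearly independent short vectors, not the full $N_{d,n}-2$; I still need $d-1$ more.

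The main obstacle, therefore, is producing the remaining $d-1$ linearly independent short vectors spanning (modulo the space already obtained) forms that do \emph{not} vanish on the line $\langle\mathbf{x},\mathbf{y}\rangle$, and controlling their length by the \emph{second} term $\|\mathbf{x}\|\,\|\mathbf{y}\|/\mathfrak{d}_2(\mathbf{x},\mathbf{y})^2$. The natural candidates are forms supported on the line: writing $\mathbf{x}^{\ast},\mathbf{y}^{\ast}$ for the dual basis of $\mathcal{L}(\mathbf{x},\mathbf{y})^{\ast}$ inside $\mathbb{R}\mathbf{x}\oplus\mathbb{R}\mathbf{y}$ (with explicit norms controlled by $\|\mathbf{x}\|,\|\mathbf{y}\|,\mathfrak{d}_2(\mathbf{x},\mathbf{y})$ via Cramer's rule), the monomials in $\langle\mathbf{x}^{\ast},\mathbf{X}\rangle$ and $\langle\mathbf{y}^{\ast},\mathbf{X}\rangle$ of degree $d$ give $d+1$ forms; among their integer combinations, one checks that the differences of consecutive such monomials lie in $\Lambda_{\nu_{d,n}(\mathbf{x})}\cap\Lambda_{\nu_{d,n}(\mathbf{y})}$ because they take equal values at $\mathbf{x}$ and at $\mathbf{y}$ — actually the cleaner choice is to take $\omega_d$ of $\langle\mathbf{x}^{\ast},\mathbf{X}\rangle^{a}\langle\mathbf{y}^{\ast},\mathbf{X}\rangle^{d-a}$ suitably scaled to be integral and then subtract off a fixed one; this leaves $d-1$ new independent vectors modulo constants, each of norm $O_n\bigl(\max\{\|\mathbf{x}^{\ast}\|,\|\mathbf{y}^{\ast}\|\}^d\bigr)$. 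The bookkeeping that these scaled forms are genuinely integral, genuinely land in the lattice, and genuinely have the claimed length — and that $d-1+(N_{d,n}-(d+1))=N_{d,n}-2$ of the combined vectors are linearly independent — is the delicate part, and is where the constant $3n^2$ and the exact shape of the two competing terms in the maximum get pinned down.
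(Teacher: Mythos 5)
Your treatment of the first term in the maximum is sound and in fact runs parallel to one branch of the paper's argument: identifying $\bigl(\Lambda_{\mathbf{x}}\cap\Lambda_{\mathbf{y}}\bigr)^{\ast}$ with $\mathbb{Z}^{n+1}/\mathcal{L}(\mathbf{x},\mathbf{y})$ and extracting $\lambda_1\geq\mathfrak{d}_3(\mathbf{x},\mathbf{y})/\mathfrak{d}_2(\mathbf{x},\mathbf{y})$ is exactly the mechanism the paper uses for dual vectors $\mathbf{a}\notin\Span_{\mathbb{R}}(\mathcal{Q}_3(\mathbf{x},\mathbf{y}))$. You have also correctly located the real difficulty: the multiplicative lift from linear forms only produces the degree-$d$ part of the ideal of the \emph{line}, which has codimension $d+1$ rather than $2$. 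But the plan for the missing $d-1$ directions does not close, for two concrete reasons. First, integrality: one computes $||\mathbf{x}^{\ast}||=||\mathbf{y}||/\det(\mathbb{Z}\mathbf{x}\oplus\mathbb{Z}\mathbf{y})$, and $\omega_d\bigl(\langle\mathbf{x}^{\ast},\mathbf{X}\rangle^{a}\langle\mathbf{y}^{\ast},\mathbf{X}\rangle^{d-a}\bigr)$ has denominators as large as a power of $\det(\mathbb{Z}\mathbf{x}\oplus\mathbb{Z}\mathbf{y})$; clearing them destroys the norm bound, and nothing in the proposal produces an \emph{integral} vector of the required length in the complementary directions. Second, even as real vectors your candidates are too long once $d\geq 3$: their norms are of order $||\mathbf{x}||^{d-a}||\mathbf{y}||^{a}/\det(\mathbb{Z}\mathbf{x}\oplus\mathbb{Z}\mathbf{y})^{d}$, which exceeds the target $||\mathbf{x}||\cdot||\mathbf{y}||/\mathfrak{d}_2(\mathbf{x},\mathbf{y})^{2}$ for, say, $\mathbf{y}=(T,1,0,\dots,0)$ and $\mathbf{x}=(T+1,1,0,\dots,0)$, where $\mathfrak{d}_2=\det(\mathbb{Z}\mathbf{x}\oplus\mathbb{Z}\mathbf{y})=1$ but the candidate norms are of order $T^{d}$ against a target of order $T^{2}$. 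So the deferred ``bookkeeping'' is not bookkeeping; it is the heart of the lemma, and in its best reading your construction reduces the statement to its hardest special case ($d=2$, one extra integral quadric of length at most $3n^{2}||\mathbf{x}||\cdot||\mathbf{y}||/\mathfrak{d}_2(\mathbf{x},\mathbf{y})^{2}$) without proving it.

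The paper resolves both issues by a structurally different route. It treats $d=2$ first and entirely on the dual side: for a nonzero $\mathbf{a}\in\bigl(\Lambda_{\nu_{2,n}(\mathbf{x})}\cap\Lambda_{\nu_{2,n}(\mathbf{y})}\bigr)^{\ast}$ it distinguishes whether $\mathbf{a}$ lies in $\Span_{\mathbb{R}}(\mathcal{Q}_3(\mathbf{x},\mathbf{y}))$, where $\mathcal{Q}_3$ is the primitive rank-$3$ lattice spanned by $\nu_{2,n}(\mathbf{x})$, $\nu_{2,n}(\mathbf{y})$ and $\nu_{2,n}(\mathbf{x}+\mathbf{y})$. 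In the first case $\mathbf{a}$ is a nonzero vector of the rank-one quotient $\mathcal{Q}_3/\mathcal{Q}_2$, whose determinant is bounded below by $\tfrac13\,\mathfrak{d}_2(\mathbf{x},\mathbf{y})^{2}/(||\mathbf{x}||\cdot||\mathbf{y}||)$ via an explicit Gram-determinant identity; this is where the second term of the maximum comes from, with no need to exhibit the extra primal vector or worry about its integrality. Only then does the paper lift to $d\geq 3$, multiplying short vectors of $\Lambda_{\nu_{2,n}(\mathbf{x})}\cap\Lambda_{\nu_{2,n}(\mathbf{y})}$ by monomials of degree $d-2$; this recovers the full codimension $2$ because the ideal of two points is generated in degree $2$, whereas the ideal of the line (your base case) never is. To repair your argument you would need, at minimum, to perform the extra construction only at $d=2$ and to replace the dual-basis product by a genuine determinant bound for the rank-one quotient of $\Lambda_{\nu_{2,n}(\mathbf{x})}\cap\Lambda_{\nu_{2,n}(\mathbf{y})}$ by its sublattice of quadrics vanishing on the line --- which is essentially the paper's Lemma~\ref{Lemma det quotient lattice} in disguise.
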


Given two linearly independent vectors $\mathbf{x}, \mathbf{y} \in \mathbb{Z}_{\mathrm{prim}}^{n+1}$, the proof of 
Lemma~\ref{Key lemma two vectors} depends on a close analysis of the primitive lattices
\begin{equation*}
\mathcal{Q}_2(\mathbf{x}, \mathbf{y}) =
\left( \mathbb{R} \nu_{2,n}(\mathbf{x}) \oplus \mathbb{R} \nu_{2,n}(\mathbf{y}) \right) \cap \mathbb{Z}^{N_{2,n}},
\end{equation*}
and
\begin{equation*}
\mathcal{Q}_3(\mathbf{x}, \mathbf{y}) =
\left( \mathbb{R} \nu_{2,n}(\mathbf{x}) \oplus \mathbb{R} \nu_{2,n}(\mathbf{y}) \oplus \mathbb{R} \nu_{2,n}(\mathbf{x} + \mathbf{y}) \right) \cap \mathbb{Z}^{N_{2,n}}.
\end{equation*}
We note that for given $i, j \in \{0, \dots, n\}$, a straightforward calculation yields
\begin{equation}
\label{Equality linear independence}
\det
\begin{pmatrix}
x_i^2 & y_i^2 & (x_i+y_i)^2 \\
x_i x_j & y_i y_j & (x_i+y_i)(x_j+y_j) \\
x_j^2 & y_j^2 & (x_j + y_j)^2 
\end{pmatrix}
= (x_j y_i -x_i y_j)^3.
\end{equation}
If the vectors $\mathbf{x}$ and $\mathbf{y}$ are linearly independent then there exist $i_0, j_0 \in \{0, \dots, n\}$ such that $x_{j_0} y_{i_0} - x_{i_0} y_{j_0} \neq 0$ so we deduce that the vectors $\nu_{2,n}(\mathbf{x})$,
$\nu_{2,n}(\mathbf{y})$ and $\nu_{2,n}(\mathbf{x} + \mathbf{y})$ are linearly independent. It follows that the lattices
$\mathcal{Q}_2(\mathbf{x}, \mathbf{y})$ and $\mathcal{Q}_3(\mathbf{x}, \mathbf{y})$ respectively have rank $2$ and $3$ and thus the quotient lattice
$\mathcal{Q}_3(\mathbf{x}, \mathbf{y}) / \mathcal{Q}_2(\mathbf{x}, \mathbf{y})$ has rank $1$. The following result is concerned with the determinant of this lattice.

\begin{lemma}
\label{Lemma det quotient lattice}
Let $n \geq 2$ and let $\mathbf{x}, \mathbf{y} \in \mathbb{Z}_{\mathrm{prim}}^{n+1}$ be two linearly independent vectors. Then 
\begin{equation*}
\det \left( \mathcal{Q}_3(\mathbf{x}, \mathbf{y}) / \mathcal{Q}_2(\mathbf{x}, \mathbf{y}) \right) \geq \frac1{3} \cdot \frac{\mathfrak{d}_2(\mathbf{x},\mathbf{y})^2}{||\mathbf{x}|| \cdot ||\mathbf{y}||}.
\end{equation*}
\end{lemma}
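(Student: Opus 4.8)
The plan is to compute $\det(\mathcal{Q}_3(\mathbf{x},\mathbf{y})/\mathcal{Q}_2(\mathbf{x},\mathbf{y}))$ via the multiplicativity of determinants over sublattices, and then reduce matters to a Euclidean distance estimate in $\mathbb{R}^{N_{2,n}}$. Write $\mathcal{Q}_2 = \mathcal{Q}_2(\mathbf{x},\mathbf{y})$ and $\mathcal{Q}_3 = \mathcal{Q}_3(\mathbf{x},\mathbf{y})$. Since $\mathcal{Q}_2$ is a rank $2$ primitive sublattice of $\mathcal{Q}_3$, the identity \eqref{Det quotient torsion free} gives $\det(\mathcal{Q}_3/\mathcal{Q}_2) = \det(\mathcal{Q}_3)/\det(\mathcal{Q}_2)$. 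As $\mathcal{Q}_2$ and $\mathcal{Q}_3$ are primitive, \eqref{Det orthogonal} lets me replace each by its orthogonal complement, namely $\mathcal{Q}_2^{\perp} = \Lambda_{\nu_{2,n}(\mathbf{x})} \cap \Lambda_{\nu_{2,n}(\mathbf{y})}$ and $\mathcal{Q}_3^{\perp} = \Lambda_{\nu_{2,n}(\mathbf{x})} \cap \Lambda_{\nu_{2,n}(\mathbf{y})} \cap \Lambda_{\nu_{2,n}(\mathbf{x}+\mathbf{y})}$ (the three Veronese vectors being linearly independent by \eqref{Equality linear independence}). Evaluating both determinants with Lemma~\ref{Lemma determinant global} and using Lemma~\ref{Lemma G} to replace $\mathcal{G}(\nu_{2,n}(\mathbf{x}), \nu_{2,n}(\mathbf{y}))$ by $\mathcal{G}(\mathbf{x},\mathbf{y})$, I obtain
\begin{equation*}
\det(\mathcal{Q}_3/\mathcal{Q}_2) = \frac{\mathcal{G}(\mathbf{x},\mathbf{y})}{\mathcal{G}(\nu_{2,n}(\mathbf{x}), \nu_{2,n}(\mathbf{y}), \nu_{2,n}(\mathbf{x}+\mathbf{y}))} \cdot \delta,
\end{equation*}
where $\delta$ is the ratio of $\det(\mathbb{Z}\nu_{2,n}(\mathbf{x}) \oplus \mathbb{Z}\nu_{2,n}(\mathbf{y}) \oplus \mathbb{Z}\nu_{2,n}(\mathbf{x}+\mathbf{y}))$ by $\det(\mathbb{Z}\nu_{2,n}(\mathbf{x}) \oplus \mathbb{Z}\nu_{2,n}(\mathbf{y}))$, i.e. the distance from $\nu_{2,n}(\mathbf{x}+\mathbf{y})$ to the plane $\Pi = \Span_{\mathbb{R}}(\nu_{2,n}(\mathbf{x}), \nu_{2,n}(\mathbf{y}))$. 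Setting $a = ||\mathbf{x}||^2$, $b = ||\mathbf{y}||^2$, $c = \langle \mathbf{x}, \mathbf{y} \rangle$ (so $ab - c^2 > 0$), Lemma~\ref{Lemma d2(x,y)} identifies $(ab - c^2)/\mathcal{G}(\mathbf{x},\mathbf{y})^2$ with $\mathfrak{d}_2(\mathbf{x},\mathbf{y})^2$, and it therefore suffices to prove the two claims $\mathcal{G}(\nu_{2,n}(\mathbf{x}), \nu_{2,n}(\mathbf{y}), \nu_{2,n}(\mathbf{x}+\mathbf{y})) \mid \mathcal{G}(\mathbf{x},\mathbf{y})^3$ and $\delta \geq (ab - c^2)/(3\sqrt{ab})$. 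The first is immediate from \eqref{Equality linear independence}: for all $i$ and $j$ the $3 \times 3$ minor of the matrix with columns $\nu_{2,n}(\mathbf{x}), \nu_{2,n}(\mathbf{y}), \nu_{2,n}(\mathbf{x}+\mathbf{y})$ along the rows indexed by the monomials $X_i^2, X_iX_j, X_j^2$ equals $(x_jy_i - x_iy_j)^3$, so $\mathcal{G}(\nu_{2,n}(\mathbf{x}), \nu_{2,n}(\mathbf{y}), \nu_{2,n}(\mathbf{x}+\mathbf{y}))$ divides each $(x_jy_i - x_iy_j)^3$ and hence divides $\gcd_{i,j}\big((x_jy_i - x_iy_j)^3\big) = \mathcal{G}(\mathbf{x},\mathbf{y})^3$, the gcd of a family of cubes being the cube of their gcd.

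For the distance estimate, I would first note that, $\nu_{2,n}$ being quadratic, $\nu_{2,n}(\mathbf{x}+\mathbf{y}) = \nu_{2,n}(\mathbf{x}) + \nu_{2,n}(\mathbf{y}) + \mathbf{p}$, where $\mathbf{p}$ is the polarization vector with coordinate $2x_iy_i$ at $X_i^2$ and $x_iy_j + x_jy_i$ at $X_iX_j$; hence $\delta$ equals the distance from $\mathbf{p}$ to $\Pi$, and it is enough to exhibit a single vector orthogonal to $\Pi$ against which $\mathbf{p}$ pairs nontrivially. For a quadratic form $Q \in \mathbb{R}[\mathbf{X}]^{(2)}$ with symmetric Gram matrix $M_Q$ (so that $Q(\mathbf{c}) = \mathbf{c}^{T} M_Q \mathbf{c}$) one reads off from the definition of $\omega_2$ that $\langle \omega_2(Q), \nu_{2,n}(\mathbf{c}) \rangle = Q(\mathbf{c})$ and $\langle \omega_2(Q), \mathbf{p} \rangle = 2\, \mathbf{x}^{T} M_Q \mathbf{y}$, so that $\omega_2(Q) \perp \Pi$ whenever $Q$ vanishes at both $\mathbf{x}$ and $\mathbf{y}$.

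I would then take $\mathbf{z} = \mathbf{y} - (c/a)\mathbf{x}$, so that $\langle \mathbf{x}, \mathbf{z} \rangle = 0$ and $||\mathbf{z}||^2 = (ab - c^2)/a$, and choose $Q$ with $M_Q = \mathbf{x}\mathbf{z}^{T} + \mathbf{z}\mathbf{x}^{T} - \tfrac{2ac}{ab - c^2}\, \mathbf{z}\mathbf{z}^{T}$. Short computations give $Q(\mathbf{x}) = Q(\mathbf{y}) = 0$, $\langle \omega_2(Q), \mathbf{p} \rangle = 2\,\mathbf{x}^{T} M_Q \mathbf{y} = 2(ab - c^2)$, and, using $||\omega_2(Q)||^2 \leq 2\,||M_Q||_{F}^2$, the bound $||\omega_2(Q)||^2 \leq 2\big(2a||\mathbf{z}||^2 + (2ac/(ab-c^2))^2||\mathbf{z}||^4\big) = 4(ab + c^2)$. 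Hence
\begin{equation*}
\delta \;\geq\; \frac{|\langle \omega_2(Q), \mathbf{p} \rangle|}{||\omega_2(Q)||} \;\geq\; \frac{2(ab - c^2)}{2\sqrt{ab + c^2}} \;=\; \frac{ab - c^2}{\sqrt{ab + c^2}} \;\geq\; \frac{ab - c^2}{3\sqrt{ab}},
\end{equation*}
the last step using $c^2 \leq ab$. Combining this with the gcd divisibility and Lemma~\ref{Lemma d2(x,y)} completes the proof; in fact this route yields the constant $1/\sqrt{2}$ in place of $1/3$.

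I expect the distance estimate to be the only genuine difficulty. The crux is that the inner product on $\mathbb{R}^{N_{2,n}}$ coming from the monomial basis is not invariant under orthogonal changes of coordinates on $\mathbb{R}^{n+1}$ (for instance $||\nu_{2,n}(\mathbf{c})||^2 = \sum_i c_i^4 + \sum_{i < j} c_i^2 c_j^2 \neq ||\mathbf{c}||^4$ in general), so one cannot reduce the configuration to a normal form and must work directly with $\mathbf{x}$ and $\mathbf{y}$ at hand; producing the explicit auxiliary quadric $Q$ through $[\mathbf{x}]$ and $[\mathbf{y}]$ whose coefficient vector both annihilates $\Pi$ and pairs nontrivially with $\mathbf{p}$ is exactly what makes the estimate manageable. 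An alternative would be to expand the $3 \times 3$ Gram determinant of $(\nu_{2,n}(\mathbf{x}), \nu_{2,n}(\mathbf{y}), \mathbf{p})$ by hand, but the resulting bookkeeping — involving auxiliary sums such as $\sum_i x_i^2 y_i^2$ and $\sum_i x_i^3 y_i$ — is considerably heavier.
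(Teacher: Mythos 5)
Your proposal is correct, and its overall skeleton (pass to the quotient via \eqref{Det quotient torsion free}, replace $\mathcal{Q}_2$ and $\mathcal{Q}_3$ by their orthogonal complements via \eqref{Det orthogonal}, evaluate with Lemma~\ref{Lemma determinant global}, control the gcd factor through \eqref{Equality linear independence}, and finish with Lemma~\ref{Lemma d2(x,y)}) matches the paper's. Where you genuinely diverge is the analytic heart of the argument. The paper bounds $\det(\mathbb{Z}\nu_{2,n}(\mathbf{x})\oplus\mathbb{Z}\nu_{2,n}(\mathbf{y}))$ from above and $\det(\mathbb{Z}\nu_{2,n}(\mathbf{x})\oplus\mathbb{Z}\nu_{2,n}(\mathbf{y})\oplus\mathbb{Z}\nu_{2,n}(\mathbf{x}+\mathbf{y}))$ from below \emph{separately}, each time by expanding the Gram determinant as a sum of squared minors over monomial pairs (resp.\ triples, with the $S_3$-symmetrized expansion) and keeping only the terms indexed by products of linear monomials; the $3\times3$ case is a fairly heavy computation. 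You instead bound the \emph{ratio} directly, interpreting it as the distance from the polarization vector $\mathbf{p}$ to $\Pi=\Span_{\mathbb{R}}(\nu_{2,n}(\mathbf{x}),\nu_{2,n}(\mathbf{y}))$ and exhibiting an explicit quadric $Q$ through $\mathbf{x}$ and $\mathbf{y}$ whose coefficient vector annihilates $\Pi$ and pairs with $\mathbf{p}$ to give $2(\|\mathbf{x}\|^2\|\mathbf{y}\|^2-\langle\mathbf{x},\mathbf{y}\rangle^2)$; I checked the identities $Q(\mathbf{x})=Q(\mathbf{y})=0$, the value of $2\mathbf{x}^TM_Q\mathbf{y}$, and the Frobenius-norm computation $\|M_Q\|_F^2=2(ab+c^2)$, and they are all right, as is the observation that a gcd of cubes is the cube of the gcd (compare valuations prime by prime), so the divisibility $\mathcal{G}(\nu_{2,n}(\mathbf{x}),\nu_{2,n}(\mathbf{y}),\nu_{2,n}(\mathbf{x}+\mathbf{y}))\mid\mathcal{G}(\mathbf{x},\mathbf{y})^3$ is a slight strengthening of the inequality the paper uses. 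Your route avoids the $3\times3$ Gram expansion entirely, is arguably more conceptual (the dual quadric makes it clear \emph{why} the quotient is large), and even improves the constant from $1/3$ to $1/\sqrt{2}$; the paper's method is more mechanical but reuses machinery (the sum-of-squared-minors lower bound) that also appears in Lemma~\ref{Lemma upper and lower bounds det}.
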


\begin{proof}
In order to achieve our goal, we employ the identity \eqref{Det quotient torsion free}, giving 
\begin{equation}
\label{Equality det quotient}
\det \left( \mathcal{Q}_3(\mathbf{x}, \mathbf{y}) / \mathcal{Q}_2(\mathbf{x}, \mathbf{y})\right) =
\frac{\det \left( \mathcal{Q}_3(\mathbf{x}, \mathbf{y}) \right)}{\det \left( \mathcal{Q}_2(\mathbf{x}, \mathbf{y}) \right)}.
\end{equation}

We start by proving an upper bound for the determinant of $\mathcal{Q}_2(\mathbf{x}, \mathbf{y})$. Since the lattice
$\mathcal{Q}_2(\mathbf{x}, \mathbf{y})$ is primitive, we see that we have
\begin{equation}
\label{Equality Q2}
\mathcal{Q}_2(\mathbf{x}, \mathbf{y}) =
\left( \Lambda_{\nu_{2,n}(\mathbf{x})} \cap \Lambda_{\nu_{2,n}(\mathbf{y})} \right)^{\perp}.
\end{equation}
It follows from the equality \eqref{Det orthogonal} that
\begin{equation*}
\det(\mathcal{Q}_2(\mathbf{x}, \mathbf{y})) =
\det(\Lambda_{\nu_{2,n}(\mathbf{x})} \cap \Lambda_{\nu_{2,n}(\mathbf{y})}),
\end{equation*}
and Lemma~\ref{Lemma determinant global} thus gives
\begin{equation}
\label{Equality det Q2}
\det \left( \mathcal{Q}_2(\mathbf{x}, \mathbf{y}) \right) =
\frac{\det ( \mathbb{Z} \nu_{2,n}(\mathbf{x}) \oplus \mathbb{Z} \nu_{2,n}(\mathbf{y}))}
{\mathcal{G}(\nu_{2,n}(\mathbf{x}),\nu_{2,n}(\mathbf{y}))}.
\end{equation}
Recalling the definition \eqref{Definition det} of the determinant of a lattice, we see that
\begin{equation*}
\det( \mathbb{Z} \nu_{2,n}(\mathbf{x}) \oplus \mathbb{Z} \nu_{2,n}(\mathbf{y}) )^2 =
||\nu_{2,n}(\mathbf{x})||^2 ||\nu_{2,n}(\mathbf{y})||^2 - \langle \nu_{2,n}(\mathbf{x}), \nu_{2,n}(\mathbf{y}) \rangle^2.
\end{equation*}
This can be rewritten as
\begin{equation*}
\det( \mathbb{Z} \nu_{2,n}(\mathbf{x}) \oplus \mathbb{Z} \nu_{2,n}(\mathbf{y}) )^2 =
\frac1{2} \sum_{P_1,P_2 \in \mathscr{M}_{2,n}} \left( P_1(\mathbf{x}) P_2(\mathbf{y}) - P_2(\mathbf{x}) P_1(\mathbf{y}) \right)^2,
\end{equation*}
which implies in particular that
\begin{equation*}
\det (\mathbb{Z} \nu_{2,n}(\mathbf{x}) \oplus \mathbb{Z} \nu_{2,n}(\mathbf{y}))^2 \leq
\frac1{2} \sum_{i_1, j_1, i_2, j_2 = 0}^n \left(x_{i_1} x_{j_1} y_{i_2} y_{j_2} - x_{i_2} x_{j_2} y_{i_1} y_{j_1} \right)^2.
\end{equation*}
Expanding the square, this leads to
\begin{equation*}
\det (\mathbb{Z} \nu_{2,n}(\mathbf{x}) \oplus \mathbb{Z} \nu_{2,n}(\mathbf{y}))^2 \leq 
||\mathbf{x}||^4||\mathbf{y}||^4 - \langle \mathbf{x}, \mathbf{y} \rangle^4.
\end{equation*}
Recalling the equality \eqref{Equality det Q2} and using Lemma~\ref{Lemma G}, we deduce
\begin{equation}
\label{Upper bound det Q2}
\det \left( \mathcal{Q}_2(\mathbf{x}, \mathbf{y}) \right) \leq
\frac{\left( ||\mathbf{x}||^4||\mathbf{y}||^4 - \langle \mathbf{x}, \mathbf{y} \rangle^4 \right)^{1/2}}{\mathcal{G}(\mathbf{x}, \mathbf{y})}.
\end{equation}

We now follow a similar approach to prove a lower bound for the determinant of
$\mathcal{Q}_3(\mathbf{x}, \mathbf{y})$. Since the lattice $\mathcal{Q}_3(\mathbf{x}, \mathbf{y})$ is primitive we have
\begin{equation*}
\mathcal{Q}_3(\mathbf{x}, \mathbf{y}) =
\left( \Lambda_{\nu_{2,n}(\mathbf{x})} \cap \Lambda_{\nu_{2,n}(\mathbf{y})}
\cap \Lambda_{\nu_{2,n}(\mathbf{x} + \mathbf{y})} \right)^{\perp}.
\end{equation*}
The equality \eqref{Det orthogonal} thus gives
\begin{equation*}
\det(\mathcal{Q}_3(\mathbf{x}, \mathbf{y})) = \det( \Lambda_{\nu_{2,n}(\mathbf{x})} \cap \Lambda_{\nu_{2,n}(\mathbf{y})} \cap \Lambda_{\nu_{2,n}(\mathbf{x} + \mathbf{y})}).
\end{equation*}
Using Lemma~\ref{Lemma determinant global} we deduce
\begin{equation}
\label{Equality det Q3}
\det \left( \mathcal{Q}_3(\mathbf{x}, \mathbf{y}) \right) =
\frac{\det ( \mathbb{Z} \nu_{2,n}(\mathbf{x}) \oplus \mathbb{Z} \nu_{2,n}(\mathbf{y}) \oplus \mathbb{Z} \nu_{2,n}(\mathbf{x}+\mathbf{y}))}{\mathcal{G}(\nu_{2,n}(\mathbf{x}),\nu_{2,n}(\mathbf{y}),\nu_{2,n}(\mathbf{x}+\mathbf{y}))}.
\end{equation}
Applying the definition \eqref{Definition det}, we see that the square of the determinant of the lattice
$\mathbb{Z} \nu_{2,n}(\mathbf{x}) \oplus \mathbb{Z} \nu_{2,n}(\mathbf{y}) \oplus \mathbb{Z} \nu_{2,n}(\mathbf{x}+\mathbf{y})$ is equal to
\begin{equation*}
\det \begin{pmatrix}
||\nu_{2,n}(\mathbf{x})||^2 & \langle \nu_{2,n}(\mathbf{x}), \nu_{2,n}(\mathbf{y}) \rangle& \langle \nu_{2,n}(\mathbf{x}), \nu_{2,n}(\mathbf{x}+\mathbf{y}) \rangle \\
\langle \nu_{2,n}(\mathbf{x}), \nu_{2,n}(\mathbf{y}) \rangle & ||\nu_{2,n}(\mathbf{y})||^2 & \langle \nu_{2,n}(\mathbf{y}), \nu_{2,n}(\mathbf{x}+\mathbf{y}) \rangle \\
\langle \nu_{2,n}(\mathbf{x}), \nu_{2,n}(\mathbf{x}+\mathbf{y}) \rangle & \langle \nu_{2,n}(\mathbf{y}), \nu_{2,n}(\mathbf{x}+\mathbf{y}) \rangle & ||\nu_{2,n}(\mathbf{x}+\mathbf{y})||^2 
\end{pmatrix}.
\end{equation*}
Letting $S_3$ denote the permutation group of the set $\{1, 2, 3\}$, the calculation of this determinant shows that it can be rewritten as
\begin{equation*}
\frac1{6} \sum_{P_1, P_2, P_3 \in \mathscr{M}_{2,n}} \left( \sum_{\sigma \in S_3} \sign(\sigma) P_{\sigma(1)}(\mathbf{x}) P_{\sigma(2)}(\mathbf{y}) P_{\sigma(3)}(\mathbf{x}+\mathbf{y}) \right)^2.
\end{equation*}
Therefore, we note that it is in particular bounded below by
\begin{equation*}
\frac1{48} \sum_{i_1, j_1, i_2, j_2, i_3, j_3=0}^n \left( \sum_{\sigma \in S_3} \sign(\sigma) x_{i_{\sigma(1)}} x_{j_{\sigma(1)}} y_{i_{\sigma(2)}} y_{j_{\sigma(2)}} (x_{i_{\sigma(3)}} + y_{i_{\sigma(3)}}) (x_{j_{\sigma(3)}} + y_{j_{\sigma(3)}}) \right)^2.
\end{equation*}
Expanding the square, a straightforward calculation shows that this quantity equals
\begin{align*}
& \frac1{8} \bigg(
||\mathbf{x}||^4 ||\mathbf{y}||^4 ||\mathbf{x} + \mathbf{y}||^4
+2 \langle \mathbf{x}, \mathbf{y} \rangle^2 \langle \mathbf{x}, \mathbf{x}+\mathbf{y} \rangle^2 \langle \mathbf{y}, \mathbf{x}+\mathbf{y} \rangle^2 \\
& - \langle \mathbf{x}, \mathbf{y} \rangle^4 ||\mathbf{x} + \mathbf{y}||^4
- \langle \mathbf{x}, \mathbf{x}+\mathbf{y} \rangle^4 ||\mathbf{y}||^4
- \langle \mathbf{y}, \mathbf{x}+\mathbf{y} \rangle^4 ||\mathbf{x}||^4 \bigg) =
\frac1{4} \left( ||\mathbf{x}||^2 ||\mathbf{y}||^2 - \langle \mathbf{x}, \mathbf{y} \rangle^2 \right)^3.
\end{align*}
We have thus obtained
\begin{equation*}
\det(\mathbb{Z} \nu_{2,n}(\mathbf{x}) \oplus \mathbb{Z} \nu_{2,n}(\mathbf{y}) \oplus \mathbb{Z} \nu_{2,n}(\mathbf{x}+\mathbf{y}))^2 \geq \frac1{4} \left( ||\mathbf{x}||^2 ||\mathbf{y}||^2 - \langle \mathbf{x}, \mathbf{y} \rangle^2 \right)^3.
\end{equation*}
In addition, the identity \eqref{Equality linear independence} shows that
\begin{equation*}
\mathcal{G}(\nu_{2,n}(\mathbf{x}),\nu_{2,n}(\mathbf{y}),\nu_{2,n}(\mathbf{x}+\mathbf{y})) \leq
\mathcal{G}( \mathbf{x}, \mathbf{y})^3.
\end{equation*}
Therefore, recalling the equality \eqref{Equality det Q3} we eventually derive
\begin{equation*}
\det \left( \mathcal{Q}_3(\mathbf{x}, \mathbf{y}) \right) \geq \frac1{2} \cdot
\frac{\left( ||\mathbf{x}||^2 ||\mathbf{y}||^2 - \langle \mathbf{x}, \mathbf{y} \rangle^2 \right)^{3/2}}
{\mathcal{G}(\mathbf{x}, \mathbf{y})^3}.
\end{equation*}

Putting together the lower bound \eqref{Equality det quotient} and the upper bound \eqref{Upper bound det Q2}, we obtain
\begin{equation*}
\det \left( \mathcal{Q}_3(\mathbf{x}, \mathbf{y}) / \mathcal{Q}_2(\mathbf{x}, \mathbf{y})\right) \geq \frac1{2} \cdot 
\frac{||\mathbf{x}||^2 ||\mathbf{y}||^2 - \langle \mathbf{x}, \mathbf{y} \rangle^2}{\left(||\mathbf{x}||^2 ||\mathbf{y}||^2 + \langle \mathbf{x}, \mathbf{y} \rangle^2\right)^{1/2}\mathcal{G}(\mathbf{x}, \mathbf{y})^2}.
\end{equation*}
Noticing that the Cauchy--Schwarz inequality gives
\begin{equation*}
\left(||\mathbf{x}||^2 ||\mathbf{y}||^2 + \langle \mathbf{x}, \mathbf{y} \rangle^2\right)^{1/2} \leq \sqrt{2} \cdot
||\mathbf{x}|| \cdot ||\mathbf{y}||
\end{equation*}
and $2 \sqrt{2} \leq 3$, we see that an application of Lemma~\ref{Lemma d2(x,y)} completes the proof.
\end{proof}

We are now ready to furnish the proof of Lemma~\ref{Key lemma two vectors}.

\begin{proof}[Proof of Lemma~\ref{Key lemma two vectors}]
We start by proving the result in the case $d=2$. Recalling the identity \eqref{Equality Q2} and aiming to apply
Lemma~\ref{Lemma Banaszczyk} we let
$\mathbf{a} \in \left(\mathcal{Q}_2(\mathbf{x}, \mathbf{y})^{\perp} \right)^{\ast}$ be a non-zero vector. The lattice
$\mathcal{Q}_2(\mathbf{x}, \mathbf{y})$ is primitive so it follows from Lemma~\ref{Lemma Schmidt quotient} that
$\left( \mathcal{Q}_2(\mathbf{x}, \mathbf{y})^{\perp} \right)^{\ast} =
\mathbb{Z}^{N_{2,n}} / \mathcal{Q}_2(\mathbf{x}, \mathbf{y})$. We thus deduce the existence of vectors
$\mathbf{b} \in \mathbb{Z}^{N_{2,n}}$ and $\mathbf{t} \in \Span_{\mathbb{R}}(\mathcal{Q}_2(\mathbf{x},\mathbf{y}))$ such that $\mathbf{a} = \mathbf{b} + \mathbf{t}$.

We are going to distinguish two cases depending on whether the vector $\mathbf{a}$ belongs to
$\Span_{\mathbb{R}}(\mathcal{Q}_3(\mathbf{x}, \mathbf{y}))$ or not. In the former case we see that
$\mathbf{b} \in \mathcal{Q}_3(\mathbf{x}, \mathbf{y})$. Therefore, if
$\pi : \mathbb{R}^{N_{2,n}} \to \Span_{\mathbb{R}}(\mathcal{Q}_2(\mathbf{x}, \mathbf{y}))^{\perp}$ denotes the orthogonal projection on $\Span_{\mathbb{R}}(\mathcal{Q}_2(\mathbf{x}, \mathbf{y}))^{\perp}$ then
$\mathbf{a} = \pi(\mathbf{a}) = \pi(\mathbf{b})$ is a non-zero vector in the quotient lattice
$\mathcal{Q}_3(\mathbf{x}, \mathbf{y})/ \mathcal{Q}_2(\mathbf{x}, \mathbf{y})$. Since this lattice has rank $1$ an application of Lemma~\ref{Lemma det quotient lattice} shows that, in the case where
$\mathbf{a} \in \Span_{\mathbb{R}}(\mathcal{Q}_3(\mathbf{x}, \mathbf{y}))$, we have
\begin{equation}
\label{Lower bound first case}
\lambda_1 \left( \left( \mathcal{Q}_2(\mathbf{x}, \mathbf{y})^{\perp} \right)^{\ast} \right) \geq \frac1{3} \cdot \frac{\mathfrak{d}_2(\mathbf{x},\mathbf{y})^2}{||\mathbf{x}|| \cdot ||\mathbf{y}||}.
\end{equation}

We now deal with the case where $\mathbf{a} \notin \Span_{\mathbb{R}} (\mathcal{Q}_3(\mathbf{x}, \mathbf{y}))$. Indexing the coordinates of $\mathbb{R}^{N_{2,n}}$ using the lexicographical ordering, we introduce a symmetric bilinear form $\sigma : \mathbb{R}^{n+1} \times \mathbb{R}^{n+1} \to \mathbb{R}^{N_{2,n}}$ by defining the coordinate of index $(i,j)$ of $\sigma(\mathbf{u},\mathbf{v})$ as being equal to
\begin{equation*}
\begin{cases}
u_i v_i, & \textrm{if } i=j, \\
u_i v_j + u_j v_i, & \textrm{if } i \neq j,
\end{cases}
\end{equation*}
for any $i \in \{0, \dots, n\}$, $j \in \{i, \dots, n\}$ and
$(\mathbf{u},\mathbf{v}) \in \mathbb{R}^{n+1} \times \mathbb{R}^{n+1}$. Recall the definition \eqref{Definition lattice L} of the lattice $\mathcal{L}(\mathbf{x}, \mathbf{y})$. Our next task is to check that
\begin{equation}
\label{Equality sigma}
\Span_{\mathbb{R}} \left(\sigma \left(\mathbb{Z}^{n+1}\times \mathcal{L}(\mathbf{x}, \mathbf{y})^{\perp} \right)\right) =
\Span_{\mathbb{R}} (\mathcal{Q}_3(\mathbf{x}, \mathbf{y}))^{\perp}.
\end{equation}
We start by noting that for any $\mathbf{z} \in \mathbb{Z}^{n+1}$, we have 
\begin{align*}
\langle \nu_{2,n}(\mathbf{z}),\sigma(\mathbf{u},\mathbf{v}) \rangle & =
\sum_{i=0}^n z_i^2 u_i v_i + \sum_{i=0}^n \sum_{j=i+1}^n z_i z_j (u_i v_j + u_j v_i) \\
& = \sum_{i=0}^n z_i u_i \sum_{j=i}^n z_j v_j + \sum_{i=0}^n \sum_{j=i+1}^n z_i z_j u_j v_i \\
& = z_0 u_0 \langle \mathbf{z},\mathbf{v} \rangle+\sum_{i=1}^n z_i u_i \left(\sum_{j=i}^n z_j v_j +\sum_{j=0}^{i-1} z_jv_j\right) \\
& = \langle \mathbf{z},\mathbf{u} \rangle \cdot \langle \mathbf{z},\mathbf{v} \rangle.
\end{align*}
It follows that $\sigma(\mathbf{u},\mathbf{v}) \in \Span_{\mathbb{R}}( \mathcal{Q}_3(\mathbf{x}, \mathbf{y}))^{\perp}$ when $(\mathbf{u},\mathbf{v}) \in \mathbb{Z}^{n+1} \times \mathcal{L}(\mathbf{x}, \mathbf{y})^{\perp}$. In addition, by examining the values of $\sigma$ at pairs of vectors of the canonical basis of $\mathbb{R}^{n+1}$, we get
\begin{equation*}
\Span_{\mathbb{R}} \left(\sigma \left( \mathbb{R}^{n+1} \times \mathbb{R}^{n+1} \right) \right)=\mathbb{R}^{N_{2,n}}.
\end{equation*}
Moreover, it is clear that any $\mathbf{v} \in \mathbb{R}^{n+1}$ can uniquely be written as
$\mathbf{v} = s \mathbf{x} + t \mathbf{y} + \mathbf{z}$ for some $s,t \in \mathbb{R}$ and
$\mathbf{z} \in \Span_{\mathbb{R}} \left(\mathcal{L}(\mathbf{x}, \mathbf{y})^{\perp} \right)$. We thus have
\begin{equation*}
\sigma \left( \mathbb{R}^{n+1}\times \mathbb{R}^{n+1} \right) =
\left\{ \Sigma_{s_1,t_1,s_2,t_2}^{(\mathbf{x}, \mathbf{y})}(\mathbf{z}_1, \mathbf{z}_2) :
\begin{array}{l l}
s_1, t_1, s_2, t_2 \in \mathbb{R} \\
\mathbf{z}_1, \mathbf{z}_2 \in \Span_{\mathbb{R}} \left(\mathcal{L}(\mathbf{x}, \mathbf{y})^{\perp} \right)
\end{array}
\right\},
\end{equation*}
where
\begin{align*}
\Sigma_{s_1,t_1,s_2,t_2}^{(\mathbf{x}, \mathbf{y})}(\mathbf{z}_1, \mathbf{z}_2) = & \ 
s_1 s_2 \sigma(\mathbf{x},\mathbf{x}) + (s_1 t_2 + s_2 t_1) \sigma(\mathbf{x},\mathbf{y})
+ t_1t_2 \sigma(\mathbf{y},\mathbf{y}) + \sigma(s_1 \mathbf{x} + t_1 \mathbf{y}, \mathbf{z}_2) \\
& + \sigma(s_2 \mathbf{x} + t_2\mathbf{y}, \mathbf{z}_1) + \sigma(\mathbf{z}_1, \mathbf{z}_2).
\end{align*}
Using the bilinearity of $\sigma$ we immediately deduce that for any $s_1, t_1, s_2, t_2 \in \mathbb{R}$ and any
$\mathbf{z}_1, \mathbf{z}_2 \in \Span_{\mathbb{R}} \left(\mathcal{L}(\mathbf{x}, \mathbf{y})^{\perp} \right)$ the vectors $\sigma(s_1\mathbf{x}+t_1\mathbf{y},\mathbf{z}_2)$, $\sigma(s_2\mathbf{x}+t_2\mathbf{y},\mathbf{z}_1)$ and
$\sigma(\mathbf{z}_1,\mathbf{z}_2)$ all belong to the vector space
$\Span_{\mathbb{R}} \left( \sigma \left( \mathbb{Z}^{n+1} \times \mathcal{L}(\mathbf{x},\mathbf{y})^{\perp} \right) \right)$. We thus derive the lower bound
\begin{equation*}
\dim \left( \Span_{\mathbb{R}} \left( \sigma \left( \mathbb{Z}^{n+1} \times \mathcal{L}(\mathbf{x},\mathbf{y})^{\perp} \right) \right) \right) \geq N_{2,n} - 3,
\end{equation*}
which finishes the proof of the identity \eqref{Equality sigma} since we have
\begin{equation*}
\dim \left( \Span_{\mathbb{R}} \left( \mathcal{Q}_3(\mathbf{x}, \mathbf{y}) \right)^{\perp} \right) = N_{2,n}-3.
\end{equation*}

Recall that we are treating the case where
$\mathbf{a} \notin \Span_{\mathbb{R}} (\mathcal{Q}_3(\mathbf{x}, \mathbf{y}))$ and let
$(\mathbf{e}_0,\dots,\mathbf{e}_n)$ be the canonical basis of $\mathbb{Z}^{n+1}$. If we had
$\langle \mathbf{a}, \sigma(\mathbf{e}_{i}, \mathbf{v})\rangle=0$ for every $i \in \{0, \dots, n \}$ and every
$\mathbf{v} \in \mathcal{L}(\mathbf{x}, \mathbf{y})^{\perp}$ then we would have
$\langle \mathbf{a}, \mathbf{w} \rangle = 0$ for any $\mathbf{w} \in
\Span_{\mathbb{R}} \left( \sigma \left( \mathbb{Z}^{n+1} \times \mathcal{L}(\mathbf{x},\mathbf{y})^{\perp} \right)\right)$ and it would follow from the equality \eqref{Equality sigma} that
$\mathbf{a} \in \Span_{\mathbb{R}} (\mathcal{Q}_3(\mathbf{x}, \mathbf{y}))$. We may thus assume that there exist
$i \in \{0, \dots, n\}$ and $\mathbf{v} \in \mathcal{L}(\mathbf{x}, \mathbf{y})^{\perp}$ such that 
$\langle \mathbf{a}, \sigma(\mathbf{e}_{i}, \mathbf{v}) \rangle \neq 0$. We now define a linear map
$r_i : \mathbb{R}^{N_{2,n}} \to \mathbb{R}^{n+1}$ by using the lexicographical ordering of coordinates in
$\mathbb{R}^{N_{2,n}}$. For any $(c_{0,0}, \dots, c_{n,n}) \in \mathbb{R}^{N_{2,n}}$, we set
\begin{equation*}
r_i(c_{0,0}, \dots, c_{n,n}) = (c_{0,i}, \dots, c_{i,i},c_{i,i+1}, \dots, c_{i,n}).
\end{equation*}
We recall that the vector $\mathbf{a}$ can be written as $\mathbf{a} = \mathbf{b} + \mathbf{t}$ for some
$\mathbf{b} \in \mathbb{Z}^{N_{2,n}}$ and
$\mathbf{t} \in \Span_{\mathbb{R}}(\mathcal{Q}_2(\mathbf{x},\mathbf{y}))$. We note that
$r_i(\nu_{2,n}(\mathbf{x})) = x_i \mathbf{x}$ and $r_i(\nu_{2,n}(\mathbf{y})) = y_i \mathbf{y}$ and thus
$r_i(\mathbf{t}) \in \Span_{\mathbb{R}}(\mathcal{L}(\mathbf{x}, \mathbf{y}))$. Since
$\mathbf{v} \in \mathcal{L}(\mathbf{x}, \mathbf{y})^{\perp}$ we deduce that
$\langle r_i(\mathbf{b}), \mathbf{v} \rangle = \langle r_i(\mathbf{a}), \mathbf{v} \rangle$. Furthermore, by definition of the map $r_i$ we see that
\begin{equation*}
\langle r_i(\mathbf{a}), \mathbf{v} \rangle = \langle \mathbf{a}, \sigma(\mathbf{e}_{i}, \mathbf{v}) \rangle \neq 0,
\end{equation*}
and it follows that the integral lattice $\mathbb{Z} r_i(\mathbf{b}) \oplus \mathcal{L}(\mathbf{x},\mathbf{y})$ has rank $3$. Since this lattice contains $\mathbf{x}$ and $\mathbf{y}$ we have
\begin{equation*}
\mathfrak{d}_3(\mathbf{x},\mathbf{y}) \leq \det \left(\mathbb{Z} r_i(\mathbf{b}) \oplus \mathcal{L}(\mathbf{x},\mathbf{y})\right). 
\end{equation*}
Using the fact that $r_i(\mathbf{t}) \in \Span_{\mathbb{R}}(\mathcal{L}(\mathbf{x}, \mathbf{y}))$ we see that
\begin{equation*}
\det \left(\mathbb{Z} r_i(\mathbf{b}) \oplus \mathcal{L}(\mathbf{x},\mathbf{y})\right) =
\det \left(\mathbb{Z} r_i(\mathbf{a}) \oplus \mathcal{L}(\mathbf{x},\mathbf{y})\right).
\end{equation*}
Recalling the equality \eqref{Equality det L} and using the upper bound $||r_i(\mathbf{a})|| \leq ||\mathbf{a}||$ we obtain
\begin{equation*}
\mathfrak{d}_3(\mathbf{x},\mathbf{y}) \leq ||\mathbf{a}|| \cdot \mathfrak{d}_2(\mathbf{x},\mathbf{y}).
\end{equation*}
This eventually shows that, in the case where $\mathbf{a} \notin \Span_{\mathbb{R}}(\mathcal{Q}_3(\mathbf{x}, \mathbf{y}))$, we have
\begin{equation}
\label{Lower bound second case}
\lambda_1 \left( \left( \mathcal{Q}_2(\mathbf{x}, \mathbf{y})^{\perp} \right)^{\ast} \right) \geq
\frac{\mathfrak{d}_3(\mathbf{x},\mathbf{y})}{\mathfrak{d}_2(\mathbf{x},\mathbf{y})}.
\end{equation}

Recalling the equality \eqref{Equality Q2} and combining the lower bounds \eqref{Lower bound first case} and \eqref{Lower bound second case} we deduce that
\begin{equation*}
\lambda_1 \left( \left( \Lambda_{\nu_{2,n}(\mathbf{x})} \cap \Lambda_{\nu_{2,n}(\mathbf{y})} \right)^{\ast} \right) \geq
\frac1{3} \cdot \min \left\{ \frac{\mathfrak{d}_3(\mathbf{x},\mathbf{y})}{\mathfrak{d}_2(\mathbf{x},\mathbf{y})}, 
\frac{\mathfrak{d}_2(\mathbf{x},\mathbf{y})^2}{||\mathbf{x}|| \cdot ||\mathbf{y}||} \right\}.
\end{equation*}
Applying Lemma~\ref{Lemma Banaszczyk} and using the fact that $N_{2,n}-2 \leq n^2$ for any $n \geq 1$ completes the proof in the case $d=2$.

Assume now that $d \geq 3$. By the case $d=2$, we can pick $N_{2,n}-2$ linearly independent vectors
$\mathbf{a}_1, \dots, \mathbf{a}_{N_{2,n}-2}$ of the lattice
$\Lambda_{\nu_{2,n}(\mathbf{x})} \cap \Lambda_{\nu_{2,n}(\mathbf{y})}$ in the ball
$\mathcal{B}_{N_{2,n}}(\mu(\mathbf{x}, \mathbf{y}))$ where $\mu(\mathbf{x}, \mathbf{y})$ denotes the right-hand side of the upper bound stated in Lemma~\ref{Key lemma two vectors}. For any $P \in \mathscr{M}_{d-2,n}$ we let $\Psi_P : \mathbb{R}^{N_{2,n}} \to \mathbb{R}^{N_{d,n}}$ be the linear map defined for
$\mathbf{c} \in \mathbb{R}^{N_{2,n}}$ by
\begin{equation*}
\Psi_P(\mathbf{c}) = \omega_d \left( P(\mathbf{X}) \cdot \langle \mathbf{c}, \nu_{2,n}(\mathbf{X}) \rangle \right),
\end{equation*}
and we note that $\langle \Psi_P(\mathbf{c}), \nu_{d,n}(\mathbf{X}) \rangle = 
P(\mathbf{X}) \cdot \langle \mathbf{c}, \nu_{2,n}(\mathbf{X}) \rangle$. For any $i \in \{1, \dots, N_{2,n}-2\}$ we have
$\langle \mathbf{a}_i, \nu_{2,n}(\mathbf{x}) \rangle = 0$ and $\langle \mathbf{a}_i, \nu_{2,n}(\mathbf{y}) \rangle = 0$ so it follows that $\Psi_P(\mathbf{a}_i) \in \Lambda_{\nu_{d,n}(\mathbf{x})} \cap \Lambda_{\nu_{d,n}(\mathbf{y})}$. Also, for any $\mathbf{c} \in \mathbb{R}^{N_{2,n}}$ we have $||\Psi_P(\mathbf{c})|| = ||\mathbf{c}||$ and thus
$\Psi_P(\mathbf{a}_i) \in \mathcal{B}_{N_{d,n}}(\mu(\mathbf{x}, \mathbf{y}))$ for any
$i \in \{1, \dots, N_{2,n}-2\}$. Therefore, in order to complete the proof it suffices to check that there are $N_{d,n}-2$ linearly independent vectors in the set
\begin{equation*}
\left\{ \Psi_P(\mathbf{a}_i) : 
\begin{array}{l l}
P \in \mathscr{M}_{d-2,n} \\
i \in \{1, \dots, N_{2,n}-2\}
\end{array}
\right\}.
\end{equation*}
The real subspace spanned in $\mathbb{R}^{N_{d,n}}$ by this set of vectors contains in particular the image under
$\omega_d$ of the set
\begin{equation*}
\Span_{\mathbb{R}} \left( \left\{ P(\mathbf{X}) \cdot L_{i,j,k}^{(\mathbf{x},\mathbf{y})}(\mathbf{X}) :
\begin{array}{l l}
P \in \mathscr{M}_{d-2,n} \\
i, j, k \in \{0, \dots, n\}
\end{array}
\right\} \right) = \left\{ Q \in \mathbb{R}[\mathbf{X}]^{(d)} :
\begin{array}{l l}
Q (\mathbf{x}) = 0 \\
Q (\mathbf{y}) = 0
\end{array}
\right\},
\end{equation*}
where
\begin{equation*}
L_{i,j,k}^{(\mathbf{x},\mathbf{y})}(\mathbf{X}) = (x_j y_k - x_k y_j) X_i + (x_k y_i - x_i y_k) X_j + (x_i y_j - x_j y_i) X_k.
\end{equation*}
Note that we have used the fact that
$\Lambda_{\nu_{2,n}(\mathbf{x})} \cap \Lambda_{\nu_{2,n}(\mathbf{y})} =
\mathbb{R}\mathbf{a}_1 \oplus \cdots \oplus \mathbb{R} \mathbf{a}_{N_{2,n}-2}$. We eventually deduce that
\begin{equation*}
\Span_{\mathbb{R}} \left( \left\{ \Psi_P(\mathbf{a}_i) : 
\begin{array}{l l}
P \in \mathscr{M}_{d-2,n} \\
i \in \{1, \dots, N_{2,n}-2\}
\end{array}
\right\} \right)
= \left(\mathbb{R} \nu_{d,n}(\mathbf{x}) \oplus \mathbb{R} \nu_{d,n}(\mathbf{y}) \right)^{\perp},
\end{equation*}
which finishes the proof.
\end{proof}

\subsection{On the typical size of some key quantities}

\label{Section typical size}

Given two linearly independent vectors $\mathbf{x}, \mathbf{y} \in \mathbb{Z}^{n+1}$, recall that the quantities
$\mathfrak{d}_r(\mathbf{x})$ and $\mathfrak{d}_r(\mathbf{x},\mathbf{y})$ were introduced in
Definition \ref{Definition d} and note that the values $\mathfrak{d}_r(\mathbf{x})$ and
$\mathfrak{d}_r(\mathbf{x},\mathbf{y})$ are necessarily attained by primitive lattices. In addition, for any
$r \in \{2, \dots, n+1\}$ we have the trivial upper bounds
\begin{equation}
\label{Upper bound trivial drx}
\mathfrak{d}_r(\mathbf{x}) \leq ||\mathbf{x}||,
\end{equation}
and
\begin{equation}
\label{Upper bound trivial drxy}
\mathfrak{d}_r(\mathbf{x},\mathbf{y}) \leq ||\mathbf{x}|| \cdot ||\mathbf{y}||.
\end{equation}
We shall need to understand how often one can improve upon these upper bounds as the vectors $\mathbf{x}$ and
$\mathbf{y}$ run over $\mathbb{Z}^{n+1}$.

It is well-known that the successive minima of a random lattice are expected to have equal order of magnitude. In order to exploit this fact we will require an upper bound for the number of primitive lattices of given rank and whose successive minima are constrained to lie in dyadic intervals. The following notation will thus be very useful.

\begin{definition}
\label{Definition S}
Let $n \geq 2$ and $r \in \{1, \dots, n+1\}$. Given $s_1, \dots, s_r \geq 1$, we let $S_{r,n}(s_1, \dots, s_r)$ denote the set of primitive lattices $L \subset \mathbb{Z}^{n+1}$ of rank $r$ and such that $\lambda_j(L) \in (s_j/2, s_j]$ for any
$j \in \{1, \dots, r\}$.
\end{definition}

We shall prove the following result.

\begin{lemma}
\label{Lemma number lattices}
Let $n \geq 2$ and $r \in \{1, \dots, n+1 \}$. For $s_1, \dots, s_r \geq 1$, we have 
\begin{equation*}
\#S_{r,n}(s_1, \dots, s_r) \ll s_1^{n+r} s_2^{n+r-2} \cdots s_r^{n-r+2},
\end{equation*}
where the implied constant depends at most on $n$.
\end{lemma}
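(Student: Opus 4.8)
The plan is to parametrize a primitive lattice $L \in S_{r,n}(s_1,\dots,s_r)$ by a reduced basis $(\mathbf{b}_1,\dots,\mathbf{b}_r)$ and to count the possible choices of the $\mathbf{b}_j$ in turn. By a standard reduction theory argument (e.g. Minkowski-reduced bases, or the construction used by Schmidt), any lattice of rank $r$ admits a basis $(\mathbf{b}_1,\dots,\mathbf{b}_r)$ with $\|\mathbf{b}_j\| \asymp \lambda_j(L)$ for each $j$, where the implied constants depend only on $r$ (hence only on $n$). So it suffices to bound the number of ordered tuples $(\mathbf{b}_1,\dots,\mathbf{b}_r) \in (\mathbb{Z}^{n+1})^r$ that are linearly independent, satisfy $\|\mathbf{b}_j\| \ll s_j$ for all $j$, and span a primitive lattice; each primitive lattice arises from $\gg 1$ such tuples (the number of reduced bases of a fixed rank-$r$ lattice is $O_r(1)$), so a bound on the tuples gives a bound on $\#S_{r,n}(s_1,\dots,s_r)$ up to a constant.

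**The key steps.** First, $\mathbf{b}_1$ is a nonzero vector of $\mathbb{Z}^{n+1}$ with $\|\mathbf{b}_1\| \ll s_1$, so there are $O(s_1^{n+1})$ choices. Now suppose $\mathbf{b}_1,\dots,\mathbf{b}_{j-1}$ have been chosen, spanning a rank-$(j-1)$ sublattice $\Gamma_{j-1} \subseteq \mathbb{Z}^{n+1}$; I want to bound the number of $\mathbf{b}_j \in \mathbb{Z}^{n+1}$ with $\|\mathbf{b}_j\| \ll s_j$ such that $\mathbf{b}_j \notin \Span_{\mathbb{R}}(\Gamma_{j-1})$. The point is that the distance from $\mathbf{b}_j$ to $\Span_{\mathbb{R}}(\Gamma_{j-1})$ is at least $1/\det(\Gamma_{j-1})$ times... more precisely, the projection $\pi_{j-1}(\mathbf{b}_j)$ onto $\Span_{\mathbb{R}}(\Gamma_{j-1})^{\perp}$ is a nonzero point of the rank-$(n+2-j)$ projected lattice $\mathbb{Z}^{n+1}/\Gamma_{j-1}'$ (where $\Gamma_{j-1}'$ is the primitive closure of $\Gamma_{j-1}$), a lattice whose shortest vector has length $\geq 1/\det(\Gamma_{j-1}')$. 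Since we may assume $\det(\Gamma_{j-1}) \asymp \lambda_1\cdots\lambda_{j-1} \ll s_1\cdots s_{j-1}$, the projected point $\pi_{j-1}(\mathbf{b}_j)$ ranges over a ball of radius $\ll s_j$ in a lattice with first minimum $\gg 1/(s_1\cdots s_{j-1})$, contributing $\ll (s_j s_1 \cdots s_{j-1})^{n+2-j} + 1$ points by Lemma~\ref{Lemma Schmidt upper bound}; and for each such projection there are $\ll \det(\Gamma_{j-1}) \ll s_1\cdots s_{j-1}$ choices of $\mathbf{b}_j$ in the fiber with $\|\mathbf{b}_j\| \ll s_j$ (a fiber is a coset of $\Gamma_{j-1}'$ inside a ball of radius $\ll s_j$, and since $\lambda_j \geq \lambda_{j-1}$ one checks $s_j \gg$ the minima of $\Gamma_{j-1}'$). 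Multiplying through, the number of choices for $\mathbf{b}_j$ given $\mathbf{b}_1,\dots,\mathbf{b}_{j-1}$ is $\ll s_j^{n+2-j}(s_1\cdots s_{j-1})^{n+3-j}$; I'd then need to check that telescoping the product over $j = 1,\dots,r$ collapses to exactly $s_1^{n+r} s_2^{n+r-2}\cdots s_r^{n-r+2}$, i.e. that $\mathbf{b}_j$ receives total exponent $(n+2-j) + \sum_{i>j}(n+3-i) = n+r-2(j-1)$, which is a routine arithmetic identity.

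**The main obstacle.** The delicate point is controlling the count of $\mathbf{b}_j$ in each step uniformly, since the sublattice $\Gamma_{j-1}$ varies and its determinant could a priori be as small as $1$ rather than as large as $s_1\cdots s_{j-1}$; using $\det(\Gamma_{j-1}) \asymp s_1\cdots s_{j-1}$ is only legitimate after we insist the basis be reduced, and one must be careful that the projected lattice $\mathbb{Z}^{n+1}/\Gamma_{j-1}'$ (not $\mathbb{Z}^{n+1}/\Gamma_{j-1}$) is what controls the shortest vector, while the fiber count is governed by $\det(\Gamma_{j-1}')$, not $\det(\Gamma_{j-1})$ — and these differ by the index $[\Gamma_{j-1}' : \Gamma_{j-1}]$, which fortunately cancels between the two factors. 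I would also double-check the edge cases $j = 1$ and $r = n+1$ separately, and confirm that the ``$+1$'' terms from Lemma~\ref{Lemma Schmidt upper bound} are dominated since all $s_i \geq 1$.
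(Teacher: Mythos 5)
Your overall strategy --- peel off one basis vector at a time and count the choices at each step --- is the same inductive skeleton as the paper's, but the counting at each step has two concrete errors, and the proof does not close. First, the fiber count is wrong: the fiber of the projection $\mathbb{Z}^{n+1}\to\mathbb{Z}^{n+1}/\Gamma_{j-1}'$ over a point is a coset of the rank-$(j-1)$ lattice $\Gamma_{j-1}'$, and since $s_j\gg\lambda_i(\Gamma_{j-1}')$ for all $i\le j-1$, such a coset meets $\mathcal{B}_{n+1}(Cs_j)$ in $\asymp s_j^{j-1}/\det(\Gamma_{j-1}')$ points --- as large as $s_j^{j-1}$ when $\det(\Gamma_{j-1}')=1$ --- not in $\ll\det(\Gamma_{j-1})$ points as you claim. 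Second, the ``routine arithmetic identity'' you defer is false: with your per-step factor $s_j^{n+2-j}(s_1\cdots s_{j-1})^{n+3-j}$, the total exponent of $s_1$ is $(n+1)+\sum_{i=2}^{r}(n+3-i)$, which already for $r=2$ equals $2n+2$ rather than the required $n+2$.

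The deeper issue is that counting the vectors $\mathbf{b}_j$ themselves (rather than the overlattices they generate) necessarily over-counts by the fiber size, and even the repaired version of your projection count is too weak: bounding $\#\bigl((\mathbb{Z}^{n+1}/\Gamma_{j-1}')\cap\mathcal{B}(Cs_j)\bigr)$ via Lemma~\ref{Lemma Schmidt upper bound} using only $\lambda_1(\mathbb{Z}^{n+1}/\Gamma_{j-1}')\ge 1/\det(\Gamma_{j-1}')$ gives $\ll(s_j\det(\Gamma_{j-1}'))^{n+2-j}$, whereas the telescoping to $s_1^{n+r}s_2^{n+r-2}\cdots s_r^{n-r+2}$ requires the determinant to appear to the \emph{first} power, i.e.\ $\ll s_j^{n+2-j}\det(\Gamma_{j-1}')$. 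That sharper count is exactly Schmidt's result \cite[Lemma~$6$]{MR224562}, which bounds the number of primitive rank-$r$ overlattices of a fixed primitive rank-$(r-1)$ lattice with controlled determinant ratio; the paper's proof is an induction on $r$ that, at each step, passes to the primitive lattice $L_r^{(-1)}$ spanned by the first $r-1$ minimal vectors and invokes that lemma to get the factor $s_r^{n-r+2}s_1\cdots s_{r-1}$. To fix your argument you would need to (i) count overlattices, not basis vectors, and (ii) replace the first-minimum bound on the quotient lattice by Schmidt's lemma or an equivalent control on all its successive minima.
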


\begin{proof}
We proceed by induction on the integer $r$ and we start by noting that the case $r=1$ follows from the observation that
\begin{equation*}
\#S_{1,n}(s_1) \leq \# \left( \mathbb{Z}^{n+1} \cap \mathcal{B}_{n+1}(s_1) \right).
\end{equation*}

We now assume that the result holds for some integer $r-1 \in \{1, \dots, n \}$. Given $L_r \in S_{r,n}(s_1, \dots, s_r)$, for each $j \in \{1, \dots, r-1\}$ we pick $\mathbf{b}_j \in L$ such that $||\mathbf{b}_j|| = \lambda_j(L_r)$ and we introduce the primitive lattice
\begin{equation*}
L_r^{(-1)}=\left( \mathbb{R} \mathbf{b}_1 \oplus \dots \oplus \mathbb{R} \mathbf{b}_{r-1} \right) \cap \mathbb{Z}^{n+1}.
\end{equation*}
Note that $L_r^{(-1)}$ depends on $L$ but may also depend on our choice of $\mathbf{b}_1, \dots, \mathbf{b}_{r-1}$. Since $L_r^{(-1)} \in S_{r-1,n}(s_1, \dots, s_{r-1})$, we deduce that
\begin{equation*}
\# S_{r,n}(s_1, \dots, s_r) \leq \sum_{L_{r-1} \in S_{r-1,n}(s_1, \dots, s_{r-1})}
\# \left\{ L_r \in S_{r,n}(s_1, \dots, s_r) : L_r^{(-1)} = L_{r-1} \right\}.
\end{equation*}
The estimates \eqref{Estimates Minkowski} imply that $\det(L_{r-1}) \ll s_1 \cdots s_{r-1}$ and $\det(L_r)/\det(L_{r-1}) \ll s_r$ for any $L_{r-1} \in S_{r-1,n}(s_1, \dots, s_{r-1})$ and $L_r \in S_{r,n}(s_1, \dots, s_r)$. It thus follows from the work of Schmidt \cite[Lemma~$6$]{MR224562} (with $i=1$) that
\begin{equation*}
\# \left\{ L_r \in S_{r,n}(s_1, \dots, s_r) : L_r^{(-1)} = L_{r-1} \right\} \ll s_r^{n-r+2} s_1 \cdots s_{r-1}.
\end{equation*}
Hence we obtain
\begin{equation*}
\# S_{r,n}(s_1, \dots, s_r) \ll s_r^{n-r+2} s_1 \cdots s_{r-1} \cdot \# S_{r-1,n}(s_1, \dots, s_{r-1}).
\end{equation*}
An application of the induction hypothesis completes the proof.
\end{proof}

For given $X, Y, \Delta \geq 1$, let
\begin{equation}
\label{Definition l_r(X)}
\ell_{r,n}(X;\Delta) = \# \left\{ \mathbf{x} \in \mathbb{Z}^{n+1} :
\begin{array}{l l}
0 < ||\mathbf{x}|| \leq X \\
\mathfrak{d}_r(\mathbf{x}) \leq \Delta
\end{array}
\right\},
\end{equation}
and
\begin{equation}
\label{Definition l_r(X,Y)}
\ell_{r,n}(X,Y;\Delta) = \# \left\{ (\mathbf{x}, \mathbf{y}) \in \mathbb{Z}^{n+1} \times \mathbb{Z}^{n+1} :
\begin{array}{l l}
\dim \left( \Span_{\mathbb{R}} (\{ \mathbf{x}, \mathbf{y} \}) \right) = 2 \\
||\mathbf{x}|| \leq X, ~||\mathbf{y}|| \leq Y \\
\mathfrak{d}_r(\mathbf{x},\mathbf{y}) \leq \Delta
\end{array}
\right\}.
\end{equation}
We begin by analysing the first of these quantities.

\begin{lemma}
\label{Lemma l_r(X)}
Let $n \geq 2$ and $r \in \{2, \dots, n+1 \}$. For $X, \Delta \geq 1$, we have
\begin{equation*}
\ell_{r,n}(X;\Delta) \ll X^r \Delta^n \log \Delta,
\end{equation*}
where the implied constant depends at most on $n$.
\end{lemma}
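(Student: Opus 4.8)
The plan is to bound $\ell_{r,n}(X;\Delta)$ by a sum over the primitive lattices in which the vectors $\mathbf{x}$ sit, and then to feed in Lemmas~\ref{Lemma Schmidt upper bound} and \ref{Lemma number lattices}. First I would note that if $\mathbf{x}\in\mathbb{Z}^{n+1}$ satisfies $0<||\mathbf{x}||\leq X$ and $\mathfrak{d}_r(\mathbf{x})\leq\Delta$, then since $\mathfrak{d}_r(\mathbf{x})$ is attained by a primitive lattice there is a primitive rank $r$ sublattice $L\subset\mathbb{Z}^{n+1}$ with $\mathbf{x}\in L$ and $\det(L)\leq\Delta$, and necessarily $\lambda_1(L)\leq||\mathbf{x}||\leq X$. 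Hence
\[
\ell_{r,n}(X;\Delta)\leq\sum_{L}\#\bigl\{\mathbf{x}\in L:0<||\mathbf{x}||\leq X\bigr\},
\]
the sum being over the primitive rank $r$ sublattices $L$ of $\mathbb{Z}^{n+1}$ with $\det(L)\leq\Delta$ and $\lambda_1(L)\leq X$. Passing to primitive lattices is precisely what makes Lemma~\ref{Lemma number lattices} applicable, and the over-counting introduced here turns out not to affect the final bound.

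Next, for each such $L$ put $k=k(L)=\#\{j\in\{1,\dots,r\}:\lambda_j(L)\leq X\}$, so $1\leq k\leq r$. Applying the first bound of Lemma~\ref{Lemma Schmidt upper bound} with $T=X$, and observing that among the terms $X^{l}/(\lambda_1(L)\cdots\lambda_l(L))$, $0\leq l\leq r$, the one with $l=k$ is the largest (passing from $l$ to $l+1$ multiplies by $X/\lambda_{l+1}(L)$, which is $\geq1$ for $l<k$ and $\leq1$ for $l\geq k$), one obtains
\[
\#\bigl\{\mathbf{x}\in L:0<||\mathbf{x}||\leq X\bigr\}\ll\frac{X^{k}}{\lambda_1(L)\cdots\lambda_k(L)}.
\]
Decomposing the successive minima dyadically, $\lambda_j(L)\in(s_j/2,s_j]$ with $s_j$ a power of two and $1\leq s_1\leq\cdots\leq s_r$, Minkowski's second theorem \eqref{Estimates Minkowski} gives $s_1\cdots s_r\ll\Delta$, while the definition of $k$ forces $s_k\ll X\ll s_{k+1}$ and hence $s_1\cdots s_k\ll\Delta/X^{r-k}$. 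Grouping the $L$ by $k$ and by the tuple $(s_1,\dots,s_r)$, and bounding the number of $L$ in each class $S_{r,n}(s_1,\dots,s_r)$ by Lemma~\ref{Lemma number lattices}, I arrive at a bound of the shape
\[
\ell_{r,n}(X;\Delta)\ll\sum_{k=1}^{r}X^{k}\sum_{(s_1,\dots,s_r)}\Bigl(\prod_{j=1}^{k}s_j^{\,n+r+1-2j}\Bigr)\Bigl(\prod_{j=k+1}^{r}s_j^{\,n+r+2-2j}\Bigr),
\]
the inner sum running over dyadic tuples with $1\leq s_1\leq\cdots\leq s_r$, $s_k\ll X\ll s_{k+1}$ and $s_1\cdots s_r\ll\Delta$.

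Evaluating this multiple dyadic sum is the heart of the proof. For $k<r$ I would sum first over the large minima $s_{k+1},\dots,s_r$: since $r\leq n+1$ makes every exponent $n+r+2-2j$ with $k<j\leq r$ a positive integer, these are geometric dyadic sums dominated by their top terms, and using $\sum_{j=k+1}^{r}(n+r+2-2j)=(r-k)(n-k+1)$ one gets a contribution $\ll(\Delta/(s_1\cdots s_k))^{\,n-k+1}$; summing the result over $s_1,\dots,s_k\ll X$ with $s_1\cdots s_k\ll\Delta/X^{r-k}$, and using $s_1\leq\cdots\leq s_k$ to control the individual sizes, shows that the term $k=1$ contributes exactly $X^{r}\Delta^{n}$ and the others no more. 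The term $k=r$ is handled directly: it equals $X^{r}\sum_{(s_1,\dots,s_r)}\prod_{j=1}^{r}s_j^{\,n+r+1-2j}$ with $s_j\ll X$ and $s_1\cdots s_r\ll\Delta$, which is $\ll X^{r}\Delta^{n}$ when $r\leq n$ and $\ll X^{r}\Delta^{n}\log\Delta$ when $r=n+1$, the logarithm arising because the exponent $n+r+1-2j$ of $s_r$ then vanishes. I expect the only real difficulty to be this bookkeeping: tracking the interlocking constraints on the dyadic parameters and checking in each case that they pin the estimate down to $X^{r}\Delta^{n}$, up to the harmless log. Everything else is a direct application of the results already established.
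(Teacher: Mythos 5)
Your proposal is correct and rests on exactly the same two pillars as the paper's proof: pass to the primitive rank $r$ lattices containing $\mathbf{x}$, count points in each via Lemma~\ref{Lemma Schmidt upper bound}, and count the lattices themselves via Lemma~\ref{Lemma number lattices} after a dyadic decomposition of the successive minima. The one place you diverge is the per-lattice point count. The paper uses the crude second bound of Lemma~\ref{Lemma Schmidt upper bound} with $i_0=1$, namely $\#(L\cap\mathcal{B}_{n+1}(X))\ll X^r/s_1^r$, which is legitimate as soon as $X\geq s_1$; combined with Lemma~\ref{Lemma number lattices} this turns the whole estimate into a single chain of dyadic sums in which summing over $s_1$ first produces the $\Delta^n$ and the remaining variables produce the $\log\Delta$ (from the exponent of $s_j$ vanishing at $j=r/2+1$). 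You instead keep the sharper bound $X^k/(\lambda_1\cdots\lambda_k)$ keyed to the number $k$ of minima below $X$, which forces a case analysis over $k$ and the interlocking constraints $s_k\ll X\ll s_{k+1}$, $s_1\cdots s_r\ll\Delta$, $s_j\ll X$ for $j\leq k$ — the ``bookkeeping'' you flag as the main difficulty. That bookkeeping does close up (indeed, since $s_j\geq s_1$ and $X/s_1\geq 1$ one has $X^k/(s_1\cdots s_k)\leq X^r/s_1^r$ termwise, so your sum is dominated by the paper's and the target bound is guaranteed), but it is work the paper avoids entirely by throwing away precision at the point-counting stage. The lesson worth absorbing is that the statement only needs $X^r\Delta^n\log\Delta$, and the crude $i_0=1$ bound already delivers it with a two-line evaluation of the dyadic sums.
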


\begin{proof}
We clearly have
\begin{equation*}
\ell_{r,n}(X;\Delta) \ll \sum_{\substack{\mathbf{x} \in \mathbb{Z}^{n+1} \\ 0 < ||\mathbf{x}|| \leq X}} \ 
\sum_{\substack{s_1 \leq \cdots \leq s_r \\s_1 \cdots s_r \ll \Delta}} \ 
\sum_{\substack{L \in S_{r,n}(s_1, \dots, s_r) \\ \mathbf{x} \in L}} 1,
\end{equation*}
where the summations over $s_1, \dots, s_r$ are over dyadic intervals. Moreover, we note that if
$L \subset \mathbb{Z}^{n+1}$ is a lattice containing a non-zero vector $\mathbf{x}$ then
$\lambda_1(L) \leq ||\mathbf{x}||$. We thus have
\begin{equation*}
\ell_{r,n}(X;\Delta) \ll \sum_{s_2 \leq \cdots \leq s_r} \ \sum_{\substack{s_1 \leq X \\ s_1 \cdots s_r \ll \Delta}} \
\sum_{L \in S_{r,n}(s_1, \dots, s_r)} \# \left( L \cap \mathcal{B}_{n+1}(X) \right).
\end{equation*}
Since $X \geq s_1$, Lemma~\ref{Lemma Schmidt upper bound} gives
\begin{equation*}
\sum_{L \in S_{r,n}(s_1, \dots, s_r)} \# \left( L \cap \mathcal{B}_{n+1}(X) \right) \ll
\frac{X^r}{s_1^r} \cdot \# S_{r,n}(s_1, \dots, s_r).
\end{equation*}
An application of Lemma~\ref{Lemma number lattices} thus yields
\begin{equation*}
\ell_{r,n}(X;\Delta) \ll X^r \sum_{s_2 \leq \cdots \leq s_r} \ \sum_{s_1 \ll \Delta/s_2 \cdots s_r}
s_1^n s_2^{n+r-2} s_3^{n+r-4} \cdots s_r^{n-r+2}.
\end{equation*}
Summing over $s_1$ we obtain
\begin{equation*}
\ell_{r,n}(X;\Delta) \ll X^r \Delta^n \sum_{s_2 \leq \cdots \leq s_r \ll \Delta} s_2^{r-2} s_3^{r-4} \cdots s_r^{-r+2},
\end{equation*}
which completes the proof on summing over the remaining variables.
\end{proof}

We now handle the quantity $\ell_{r,n}(X,Y;\Delta)$ and prove the following result.

\begin{lemma}
\label{Lemma l_r(X,Y)}
Let $n \geq 2$ and $r \in \{2, \dots, n+1 \}$. For $X, Y, \Delta \geq 1$, we have
\begin{equation*}
\ell_{r,n}(X,Y;\Delta) \ll X^r Y^r \Delta^{n-1} (\log \Delta)^{2 \min\{r-2,1\}},
\end{equation*}
where the implied constant depends at most on $n$.
\end{lemma}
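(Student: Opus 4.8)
We may assume $X\le Y$, since the claimed bound is symmetric in $X$ and $Y$. The plan is to organise the count according to the primitive rank-$2$ lattice $\mathcal{L}(\mathbf{x},\mathbf{y})=(\mathbb{R}\mathbf{x}\oplus\mathbb{R}\mathbf{y})\cap\mathbb{Z}^{n+1}$ spanned by the pair $(\mathbf{x},\mathbf{y})$, rather than according to an ambient rank-$r$ lattice; the latter leads to a hopeless over-count, essentially because $\mathfrak{d}_r$ is defined by a minimisation. The structural point is that any rank-$r$ sublattice $M\subset\mathbb{Z}^{n+1}$ containing $\mathbf{x}$ and $\mathbf{y}$ contains $\mathcal{L}:=\mathcal{L}(\mathbf{x},\mathbf{y})$, and, $\mathcal{L}$ being primitive, $M\mapsto M/\mathcal{L}$ is a bijection between such $M$ and rank-$(r-2)$ sublattices of the quotient lattice $\mathbb{Z}^{n+1}/\mathcal{L}$, with $\det M=\det(\mathcal{L})\det(M/\mathcal{L})$ by \eqref{Det quotient torsion free}. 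Minimising over $M$, we get
\[
\mathfrak{d}_r(\mathbf{x},\mathbf{y})=\det(\mathcal{L})\cdot\mathfrak{m}_{r-2}\!\left(\mathbb{Z}^{n+1}/\mathcal{L}\right),
\]
where $\mathfrak{m}_{r-2}(L_0)$ denotes the least determinant of a rank-$(r-2)$ sublattice of $L_0$, with $\mathfrak{m}_0(L_0)=1$; the right-hand side depends only on $\mathcal{L}$, not on $\mathbf{x},\mathbf{y}$ separately.

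Since two linearly independent vectors $\mathbf{x},\mathbf{y}\in\mathcal{L}$ of norms at most $X\le Y$ force $\lambda_1(\mathcal{L})\le X$ and $\lambda_2(\mathcal{L})\le Y$, bounding the number of pairs with $\mathcal{L}(\mathbf{x},\mathbf{y})=\mathcal{L}$ by $\#(\mathcal{L}\cap\mathcal{B}_{n+1}(X))\cdot\#(\mathcal{L}\cap\mathcal{B}_{n+1}(Y))$ yields
\[
\ell_{r,n}(X,Y;\Delta)\le\sum_{\mathcal{L}}\#\!\left(\mathcal{L}\cap\mathcal{B}_{n+1}(X)\right)\cdot\#\!\left(\mathcal{L}\cap\mathcal{B}_{n+1}(Y)\right),
\]
the sum running over primitive rank-$2$ sublattices $\mathcal{L}\subset\mathbb{Z}^{n+1}$ with $\lambda_1(\mathcal{L})\le X$, $\lambda_2(\mathcal{L})\le Y$ and $\det(\mathcal{L})\,\mathfrak{m}_{r-2}(\mathbb{Z}^{n+1}/\mathcal{L})\le\Delta$. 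To make the last constraint tractable I would rewrite it through the orthogonal complement: by \eqref{Estimates Minkowski} one has $\mathfrak{m}_{r-2}(\mathbb{Z}^{n+1}/\mathcal{L})\asymp\lambda_1(\mathbb{Z}^{n+1}/\mathcal{L})\cdots\lambda_{r-2}(\mathbb{Z}^{n+1}/\mathcal{L})$, and then Lemma~\ref{Lemma Schmidt quotient}, \eqref{Det orthogonal}, \eqref{Det dual} and the transference estimates of Lemma~\ref{Lemma Banaszczyk} (with their elementary converse $\lambda_i(\Lambda)\lambda_{R-i+1}(\Lambda^{\ast})\ge1$) show this product is comparable to $\bigl(\lambda_{n-r+2}(\mathcal{L}^{\perp})\cdots\lambda_{n-1}(\mathcal{L}^{\perp})\bigr)^{-1}$; since $\det(\mathcal{L})=\det(\mathcal{L}^{\perp})\asymp\lambda_1(\mathcal{L}^{\perp})\cdots\lambda_{n-1}(\mathcal{L}^{\perp})$, the constraint becomes $\lambda_1(\mathcal{L}^{\perp})\cdots\lambda_{n-r+1}(\mathcal{L}^{\perp})\ll\Delta$.

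I would then decompose the $\mathcal{L}$-sum dyadically in all the successive minima involved: $\lambda_1(\mathcal{L})\in(t_1/2,t_1]$, $\lambda_2(\mathcal{L})\in(t_2/2,t_2]$ with $t_1\le t_2$, $t_1\le X$, $t_2\le Y$, $\det(\mathcal{L})\asymp t_1t_2$; and $\lambda_j(\mathcal{L}^{\perp})\in(\sigma_j/2,\sigma_j]$ for $1\le j\le n-1$ with $\sigma_1\le\cdots\le\sigma_{n-1}$, $\sigma_1\cdots\sigma_{n-1}\asymp t_1t_2$ and $\sigma_1\cdots\sigma_{n-r+1}\ll\Delta$. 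On each box, Lemma~\ref{Lemma Schmidt upper bound} bounds the lattice-point counts — one finds $\#(\mathcal{L}\cap\mathcal{B}_{n+1}(X))\ll X/t_1+X^2/(t_1t_2)$ and, using $t_2\le Y$, $\#(\mathcal{L}\cap\mathcal{B}_{n+1}(Y))\ll Y^2/(t_1t_2)$ — while the number of admissible $\mathcal{L}$ on the box is controlled, via Lemma~\ref{Lemma number lattices} applied to $\mathcal{L}$ and to $\mathcal{L}^{\perp}$, by a monomial in $t_1,t_2,\sigma_1,\dots,\sigma_{n-1}$. What is left is to sum this monomial over the dyadic ranges under the side conditions $t_1\le X$, $t_2\le Y$, $\sigma_1\cdots\sigma_{n-1}\asymp t_1t_2$ and $\sigma_1\cdots\sigma_{n-r+1}\ll\Delta$.

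The hard part is this final multivariable dyadic summation. Two features must be exploited simultaneously: that $\mathcal{L}$ carries two short vectors (which bounds $t_1$ by $X$ and $t_2$ by $Y$), and that only the $n-r+1$ smallest minima of $\mathcal{L}^{\perp}$ enter the $\Delta$-constraint — dropping either feature already produces a power of $\Delta$ strictly larger than $\Delta^{n-1}$. Summing the variables one at a time, a variable whose accumulated exponent is positive collapses to the top of its constrained range, one with vanishing exponent contributes a factor $\ll\log\Delta$, and one with negative exponent collapses to the bottom; the fact that $\sigma_j$ exceeds $\sigma_{n-r+1}$ for $j>n-r+1$, together with $t_1\le X$ and $t_2\le Y$, conspires to pin the exponent of $\Delta$ at $n-1$ and leave the full weight $X^rY^r$ outside. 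One then checks that the net power of $\log\Delta$ produced is at most $2\min\{r-2,1\}$ — in the case $r=2$ the logarithmic factors arising in intermediate steps cancel against geometric-series tails, exactly as in the treatment of $\ell_{2,n}(X;\Delta)$. The bookkeeping is routine though lengthy, and proceeds along the same lines as the concluding step of the proof of Lemma~\ref{Lemma l_r(X)}.
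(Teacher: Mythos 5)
Your structural reduction to the primitive rank-$2$ lattice $\mathcal{L}(\mathbf{x},\mathbf{y})$ and its orthogonal complement is correct as far as it goes, but the proof is not complete: the entire difficulty of the lemma is concentrated in the final multivariable dyadic summation over $t_1,t_2,\sigma_1,\dots,\sigma_{n-1}$ subject to $t_1t_2\asymp\sigma_1\cdots\sigma_{n-1}$ and $\sigma_1\cdots\sigma_{n-r+1}\ll\Delta$, and you do not carry it out --- you only assert that the exponent of $\Delta$ ``conspires'' to equal $n-1$ and that the logarithms come out to $(\log\Delta)^{2\min\{r-2,1\}}$. You yourself observe that dropping either of the two features in play already destroys the bound, which is precisely why this step cannot be waved through as routine bookkeeping. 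There is also a quantitative obstacle you would need to confront: your box decomposition requires counting primitive rank-$2$ lattices with \emph{both} $(\lambda_1(\mathcal{L}),\lambda_2(\mathcal{L}))$ and $(\lambda_j(\mathcal{L}^{\perp}))_j$ pinned to prescribed dyadic ranges, whereas Lemma~\ref{Lemma number lattices} controls each family only separately; taking the minimum of the two bounds is legitimate, but threading that minimum through the constrained sum and verifying that it closes at $\Delta^{n-1}$ is exactly the work that remains undone.

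Your opening premise --- that parametrising by an ambient rank-$r$ lattice is ``a hopeless over-count'' --- is also mistaken, and that is in fact the route the paper takes. Since $\mathfrak{d}_r(\mathbf{x},\mathbf{y})\leq\Delta$ guarantees the existence of a primitive rank-$r$ lattice $L$ containing $\mathbf{x}$ and $\mathbf{y}$ with $\lambda_1(L)\cdots\lambda_r(L)\ll\Delta$, one may bound $\ell_{r,n}(X,Y;\Delta)$ by $\sum_{L}\#(L\cap\mathcal{B}_{n+1}(X))\cdot\#(L\cap\mathcal{B}_{n+1}(Y))$ over the dyadic classes $S_{r,n}(s_1,\dots,s_r)$ with $s_1\cdots s_r\ll\Delta$, $s_1\leq X$ and $s_2\leq Y$ (the two short vectors force $\lambda_1(L)\leq X$ and $\lambda_2(L)\leq Y$). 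The over-count is harmless for an upper bound, and combining Lemma~\ref{Lemma Schmidt upper bound} with Lemma~\ref{Lemma number lattices} gives, for $r\geq3$, a summand $\ll X^rY^rs_1^{n-1}s_2^{n-1}s_3^{n+r-4}\cdots s_r^{n-r+2}$, whose sum over the constrained dyadic ranges is $\ll X^rY^r\Delta^{n-1}(\log\Delta)^2$; the case $r=2$ is handled separately by retaining the secondary term $X/s_1$ in the count over the smaller ball, and produces no logarithm. This is shorter than your proposed route and avoids the joint-distribution problem entirely. If you wish to persist with your parametrisation you must actually execute the closing summation; as it stands the argument has a hole precisely where the content of the lemma lies.
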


\begin{proof}
We proceed as in the proof of Lemma~\ref{Lemma l_r(X)} but here we note that if $L \subset \mathbb{Z}^{n+1}$ is a lattice containing two linearly independent vectors $\mathbf{x}$ and $\mathbf{y}$ then
\begin{equation*}
\lambda_1(L) \leq \min \left\{ ||\mathbf{x}||, ||\mathbf{y}|| \right\},
\end{equation*}
and
\begin{equation*}
\lambda_2(L) \leq \max \left\{ ||\mathbf{x}||, ||\mathbf{y}|| \right\}.
\end{equation*}
We can assume by symmetry that $Y \geq X$. We thus obtain
\begin{equation*}
\ell_{r,n}(X,Y;\Delta) \ll \sum_{\substack{s_2\leq \cdots \leq s_r \\ s_2 \leq Y}} \
\sum_{\substack{s_1 \leq \min \{s_2, X\} \\ s_1 \cdots s_r \ll \Delta}} \ \sum_{L \in S_{r, n}(s_1,\dots,s_r)}
\# \left( L \cap \mathcal{B}_{n+1}(X) \right) \# \left( L \cap \mathcal{B}_{n+1}(Y) \right),
\end{equation*}
where the summations over $s_1, \dots, s_r$ are over dyadic intervals.

We first treat the case $r \geq 3$. We apply twice Lemma~\ref{Lemma Schmidt upper bound} using the inequalities $X \geq s_1$ and $Y \geq s_2$. It follows that
\begin{equation*}
\sum_{L \in S_{r,n}(s_1,\dots,s_r)} \# \left( L \cap \mathcal{B}_{n+1}(X) \right)
\# \left( L \cap \mathcal{B}_{n+1}(Y) \right) \ll \frac{X^r}{s_1^r} \cdot \frac{Y^r}{s_1 s_2^{r-1}} \cdot
\# S_{r,n}(s_1,\dots,s_r).
\end{equation*}
Invoking Lemma~\ref{Lemma number lattices}, we thus deduce
\begin{equation*}
\ell_{r,n}(X,Y;\Delta) \ll X^r Y^r \sum_{s_2 \leq \cdots \leq s_r} \ \sum_{s_1 \ll \Delta/s_2 \cdots s_r}
s_1^{n-1} s_2^{n-1} s_3^{n+r-4} s_4^{n+r-6} \cdots s_r^{n-r+2}.
\end{equation*}
The summation over $s_1$ leads to
\begin{equation*}
\ell_{r,n}(X,Y;\Delta) \ll X^r Y^r \Delta^{n-1} \sum_{s_2 \leq \cdots \leq s_r \ll \Delta} s_3^{r-3} s_4^{r-5} \cdots s_r^{-r+3}.
\end{equation*}
Summing over the remaining variables completes the proof in the case $r \geq 3$.

In the case $r=2$, using the inequalities $X \geq s_1$ and $Y \geq s_2 \geq s_1$ we see that
Lemma~\ref{Lemma Schmidt upper bound} gives
\begin{equation*}
\sum_{L \in S_{2,n}(s_1, s_2)} \# \left( L \cap \mathcal{B}_{n+1}(X) \right) \# \left( L \cap \mathcal{B}_{n+1}(Y) \right) \ll \left( \frac{X^2}{s_1 s_2} + \frac{X}{s_1} \right) \frac{Y^2}{s_1 s_2} \cdot \# S_{2,n}(s_1, s_2).
\end{equation*}
Therefore, it follows from Lemma~\ref{Lemma number lattices} that
\begin{equation*}
\ell_{2,n}(X,Y;\Delta) \ll X^2 Y^2 \sum_{\substack{s_1 s_2 \ll \Delta \\ s_1 \leq s_2}} s_1^n s_2^{n-2} +
X Y^2 \sum_{\substack{s_1 s_2 \ll \Delta \\ s_1 \leq X}} s_1^n s_2^{n-1}.
\end{equation*}
Summing over $s_2$ we derive
\begin{equation*}
\ell_{2,n}(X,Y;\Delta) \ll
X^2 Y^2 \Delta^{n-2} \sum_{s_1 \ll \Delta^{1/2}} s_1^2 + X Y^2 \Delta^{n-1} \sum_{s_1 \leq X} s_1,
\end{equation*}
which finishes the proof in the case $r=2$ on summing over $s_1$.
\end{proof}

\subsection{Handling the case of quartic threefolds}

\label{Section quartic threefolds}

In the hardest case $(d,n)=(4,4)$ of Theorem~\ref{Theorem 1}, we will struggle to handle the contribution from choices of linearly independent vectors $\mathbf{x}, \mathbf{y} \in \mathbb{Z}_{\mathrm{prim}}^5$ which produce particularly short vectors in the lattice $\Lambda_{\nu_{4,4}(\mathbf{x})} \cap \Lambda_{\nu_{4,4}(\mathbf{y})}$ and which happen to lie in a lattice of rank $3$ with small determinant. We shall deal with this issue by showing that such vectors are very rare.

Note that for any linearly independent vectors $\mathbf{x}, \mathbf{y} \in \mathbb{Z}^5$ the lattice
$\Lambda_{\nu_{4,4}(\mathbf{x})} \cap \Lambda_{\nu_{4,4}(\mathbf{y})}$ has rank $68$ since $N_{4,4} = 70$. For given $Z, \Delta, M \geq 1$ and $j \in \{1, \dots, 68\}$ we define
\begin{equation}
\label{Definition lj}
\ell^{(j)}(Z; \Delta, M) = \# \left\{ (\mathbf{x}, \mathbf{y}) \in \mathbb{Z}^5 \times \mathbb{Z}^5 :
\begin{array}{l l}
\dim \left( \Span_{\mathbb{R}} (\{ \mathbf{x}, \mathbf{y} \}) \right) = 2 \\
||\mathbf{x}||, ||\mathbf{y}|| \leq Z \\
\mathfrak{d}_3(\mathbf{x},\mathbf{y}) \leq \Delta\\
\lambda_j(\Lambda_{\nu_{4,4}(\mathbf{x})} \cap \Lambda_{\nu_{4,4}(\mathbf{y})}) \leq M
\end{array}
\right\}.
\end{equation}
Lemma~\ref{Lemma l_r(X,Y)} immediately shows that for any $j \in \{1, \dots, 68\}$ we have
\begin{equation}
\label{Upper bound lj}
\ell^{(j)}(Z; \Delta, M) \ll Z^6 \Delta^3 (\log \Delta)^2.
\end{equation}
We now focus on the case $j=1$ and we shall obtain the following result, which improves on this upper bound when $M$ is small.

\begin{lemma}
\label{Lemma l1}
Let $\varepsilon > 0$. For $Z, \Delta, M \geq 1$, we have
\begin{equation*}
\ell^{(1)}(Z; \Delta, M) \ll M^{40} Z^4 \Delta
\left( M^{30} Z^2 \Delta^{\varepsilon} + M^{30} \Delta^{8/3} + Z^2 \Delta \right) (\log \Delta)^2,
\end{equation*}
where the implied constant depends at most on $\varepsilon$.
\end{lemma}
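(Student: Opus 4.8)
The plan is to exploit the two constraints on a pair $(\mathbf{x},\mathbf{y})$ counted by $\ell^{(1)}(Z;\Delta,M)$ at the same time. The condition $\lambda_1(\Lambda_{\nu_{4,4}(\mathbf{x})}\cap\Lambda_{\nu_{4,4}(\mathbf{y})})\le M$ produces a nonzero $\mathbf{a}\in\mathbb{Z}^{70}$ with $\|\mathbf{a}\|\le M$ whose associated quartic form $Q=\omega_4^{-1}(\mathbf{a})\in\mathbb{Z}[\mathbf{X}]^{(4)}$ satisfies $Q(\mathbf{x})=Q(\mathbf{y})=0$, while $\mathfrak{d}_3(\mathbf{x},\mathbf{y})\le\Delta$ produces a primitive rank-$3$ lattice $L\subset\mathbb{Z}^5$ with $\mathbf{x},\mathbf{y}\in L$ and $\det L\le\Delta$, so that $\mathbf{x}$ and $\mathbf{y}$ lie in the plane $\Pi_L=\mathbb{P}(L\otimes\mathbb{R})\subset\mathbb{P}^4$. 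I would bound $\ell^{(1)}(Z;\Delta,M)$ by summing over all compatible quadruples $(\mathbf{a},L,\mathbf{x},\mathbf{y})$ — a harmless overcount — and organise the summation according to the position of the plane $\Pi_L$ relative to the quartic threefold $\{Q=0\}$, equivalently according to the factorisation type of $Q$.

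For the bookkeeping of $L$ I would group the primitive rank-$3$ sublattices by dyadic sizes $s_1\le s_2\le s_3$ of their successive minima: Minkowski's theorem \eqref{Estimates Minkowski} gives $s_1s_2s_3\ll\Delta$, the linearly independent vectors $\mathbf{x},\mathbf{y}\in L$ of norm $\le Z$ force $s_2\le Z$, Lemma~\ref{Lemma number lattices} bounds the number of such $L$ by $\ll s_1^{7}s_2^{5}s_3^{3}$, and Lemma~\ref{Lemma Schmidt upper bound} controls $\#(L\cap\mathcal{B}_5(Z))$ through $Z/s_1,Z/s_2,Z/s_3$. Fixing a basis of $L$ realising its successive minima turns $\mathbf{x}$ (resp.\ $\mathbf{y}$) into an integral point of a box with side lengths $\ll Z/s_1,Z/s_2,Z/s_3$ constrained to lie on the plane quartic curve cut out by $q:=Q|_{\Pi_L}$, whose coefficients have size $\ll Ms_3^{4}$.

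Next I would run through the cases. If $q\not\equiv0$, then $\mathbf{x}$ and $\mathbf{y}$ lie on a genuine plane quartic; I would decompose it into geometrically irreducible components, handling a linear component by observing that the points it carries lie on a fixed rank-$2$ sublattice, and a component of degree $e\in\{2,3,4\}$ by the uniform affine dimension-growth estimate for integral points of bounded height on plane curves, the saving improving with $e$. If $q\equiv0$, then $\Pi_L\subset\{Q=0\}$, and I would split on the factorisation of $Q$: when $Q$ has a linear factor $\ell$ — necessarily of norm $\ll M$ by Gelfond's inequality — one has $\ell(\mathbf{x})=\ell(\mathbf{y})=0$, which confines $\mathbf{x},\mathbf{y}$ to a fixed rank-$4$ lattice and reduces the count to an application of Lemma~\ref{Lemma l_r(X,Y)} inside a rank-$4$ lattice, costing only $\ll M^{5}$ for the choice of $\ell$ and producing a term of shape $Z^{6}\Delta^{2}(\log\Delta)^{2}$; when $Q$ has a quadratic factor but no linear factor one confines $\mathbf{x},\mathbf{y}$ to a quadric threefold and counts the low-determinant planes it contains; and when $Q$ is irreducible one uses that $\{Q=0\}$ either contains only boundedly many planes, or is swept out by a pencil of planes, in which case $Q$ is forced to lie on a proper subvariety of $\mathbb{R}^{70}$ and hence admits only $\ll M^{c}$ choices for an absolute $c$. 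Summing all of these contributions over the dyadic parameters $s_1,s_2,s_3$ and over $\mathbf{a}$, the various pieces are majorised by the three displayed terms, the residual cubic curve component being responsible for the exponent $\Delta^{8/3}$, the constrained-quartic count for the split $M^{40}\cdot M^{30}$, and the factors $\Delta^{\varepsilon}$ and $(\log\Delta)^{2}$ absorbing the divisor-type and dyadic losses exactly as in the proof of Lemma~\ref{Lemma l_r(X,Y)}.

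The main obstacle is precisely this case analysis together with the attendant bookkeeping: one must carry out the point-counting on reducible and singular plane quartics inside non-cubical boxes, identify and control the special families of planes sitting inside a quartic threefold, and keep $Q$ and $L$ under joint control so that the sums over the successive-minima parameters and over $\mathbf{a}$ telescope cleanly. Balancing the curve-counting savings against the exponents $7,5,3$ coming from Lemma~\ref{Lemma number lattices} — rather than bleeding powers of $\Delta$ or $Z$ — is where the real work lies, and is what forces the somewhat unusual shape of the bound.
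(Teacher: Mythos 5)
Your opening moves coincide with the paper's: pass to a nonzero $\mathbf{c}\in\mathbb{Z}^{70}$ of norm $\leq M$ orthogonal to $\nu_{4,4}(\mathbf{x})$ and $\nu_{4,4}(\mathbf{y})$, sum over primitive rank-$3$ lattices $L$ organised dyadically by successive minima via Lemma~\ref{Lemma number lattices} and Lemma~\ref{Lemma Schmidt upper bound}, and split according to whether the plane $\Pi_L$ is contained in the quartic threefold $\{\langle\nu_{4,4}(\mathbf{u}),\mathbf{c}\rangle=0\}$. In the case $q\not\equiv 0$ the paper needs nothing beyond the trivial observation that one coordinate in a reduced basis takes at most $4$ values, giving $N_{\mathbf{c}}(Z;L)\ll (Z/s_1)(Z/s_2)$; your plane-curve dimension-growth refinement is unnecessary (and its uniformity in the skewed box and in the coefficients of $q$, which are of size $\ll Ms_3^4$, would itself need justification). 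Incidentally, the exponent $\Delta^{8/3}$ comes from exactly this trivial case combined with the full lattice count ($s_1^5s_2^3s_3^3\ll\Delta^{11/3}=\Delta\cdot\Delta^{8/3}$), not from any cubic-curve component.

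The genuine gap is in the case $\Pi_L\subset\{Q=0\}$. Your plan requires a uniform classification of quartic threefolds containing planes, together with quantitative control of the set of low-determinant integral planes they contain in each factorisation type; none of this is supplied, and it is a substantial undertaking. Moreover the linear-factor subcase as written is incorrect: if $Q=\ell\cdot C$ with $C$ cubic and $\Pi_L\subset\{Q=0\}$, then by irreducibility of $\Pi_L$ one only gets $\Pi_L\subset\{\ell=0\}$ \emph{or} $\Pi_L\subset\{C=0\}$, so the conclusion $\ell(\mathbf{x})=\ell(\mathbf{y})=0$ can fail, and you are left with planes inside a cubic threefold, which your case analysis does not address. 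The paper sidesteps all of this geometry with a simple counting device: a lattice $L\in S_{3,4}(s_1,s_2,s_3;\mathbf{c})$ is determined by a reduced basis $(\mathbf{b}_1,\mathbf{b}_2,\mathbf{b}_3)$ with $||\mathbf{b}_j||\ll s_j$, and each $\mathbf{b}_j$ is an integral point on the affine quartic threefold $\langle\nu_{4,4}(\mathbf{u}),\mathbf{c}\rangle=0$. When this form is irreducible over $\mathbb{Q}$, the uniform dimension-growth bound of Broberg and Salberger \cite{MR2029864} (or a trivial codimension-$2$ estimate when it is irreducible over $\mathbb{Q}$ but reducible over $\overline{\mathbb{Q}}$) gives $\ll s_j^{3+\varepsilon}$ choices per basis vector, hence $\#S_{3,4}(s_1,s_2,s_3;\mathbf{c})\ll (s_1s_2s_3)^{3+\varepsilon}$; the forms reducible over $\mathbb{Q}$ number only $\ll M^{40}$ and are handled with the trivial $(s_1s_2s_3)^4$. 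This is the missing idea, and without it (or a worked-out substitute for the plane-classification route) the proof does not close.
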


\begin{proof}
Recall that the set $S_{3,4}(s_1,s_2,s_3)$ was introduced in Definition \ref{Definition S}. In a similar way as in the proofs of Lemmas~\ref{Lemma l_r(X)} and \ref{Lemma l_r(X,Y)}, we start by noting that
\begin{equation}
\label{Upper bound l1}
\ell^{(1)}(Z; \Delta, M) \ll \sum_{\substack{\mathbf{c} \in \mathbb{Z}^{N_{4,4}} \\ 0 < ||\mathbf{c}|| \leq M}} \ 
\sum_{\substack{s_1 \leq s_2 \leq \max \{Z, s_3\} \\ s_1 s_2 s_3 \ll \Delta}} \ \sum_{L \in S_{3,4}(s_1,s_2,s_3)}
N_{\mathbf{c}}(Z;L)^2,
\end{equation}
where the summations over $s_1$, $s_2$ and $s_3$ are over dyadic intervals and
\begin{equation}
\label{Definition Nc}
N_{\mathbf{c}}(Z;L) = \# \left\{\mathbf{x} \in L \cap \mathcal{B}_5(Z) :
\langle \nu_{4,4}(\mathbf{x}), \mathbf{c} \rangle = 0 \right\}.
\end{equation}
Given a non-zero vector $\mathbf{c} \in \mathbb{Z}^{N_{4,4}}$, we let
\begin{equation*}
S_{3,4}(s_1,s_2,s_3; \mathbf{c}) = \left\{ L \in S_{3,4}(s_1,s_2,s_3) :
\Span_{\mathbb{R}}(L) \subset \left\{ \mathbf{x} \in \mathbb{R}^5 : \langle \nu_{4,4}(\mathbf{x}), \mathbf{c} \rangle = 0 \right\} \right\},
\end{equation*}
and we also let $T_{3,4}(s_1,s_2,s_3; \mathbf{c})$ be the complement of $S_{3,4}(s_1,s_2,s_3; \mathbf{c})$ in
$S_{3,4}(s_1,s_2,s_3)$.

We first handle the contribution from lattices belonging to the set $S_{3,4}(s_1,s_2,s_3; \mathbf{c})$. We note that if
$L \in S_{3,4}(s_1,s_2,s_3; \mathbf{c})$ then
\begin{equation*}
N_{\mathbf{c}}(Z;L) = \# \left(L \cap \mathcal{B}_5(Z) \right),
\end{equation*}
so the inequality $Z \geq s_2$ gives
\begin{equation}
\label{Upper bound fixed c}
\sum_{L \in S_{3,4}(s_1,s_2,s_3; \mathbf{c})} N_{\mathbf{c}}(Z;L)^2 \ll
\left( \frac{Z}{s_1} \cdot \frac{Z}{s_2} \left( \frac{Z}{s_3} + 1 \right) \right)^2 \# S_{3,4}(s_1,s_2,s_3;\mathbf{c}).
\end{equation}
In addition, for any lattice $L \in S_{3,4}(s_1,s_2,s_3; \mathbf{c})$ we can use \cite[Lemma~$5$]{MR155800} to pick a basis $(\mathbf{b}_1,\mathbf{b}_2,\mathbf{b}_3)$ of $L$ such that for any $j \in \{1, 2, 3\}$, we have
\begin{equation*}
\lambda_j(L) \leq ||\mathbf{b}_j|| \ll \lambda_j(L).
\end{equation*}
We deduce that there exists an absolute constant $C>0$ such that
\begin{equation*}
\# S_{3,4}(s_1,s_2,s_3; \mathbf{c}) \ll \prod_{j=1}^3
\# \left\{\mathbf{b}_j \in \mathcal{B}_5(C s_j) : \langle \nu_{4,4}(\mathbf{b}_j), \mathbf{c} \rangle = 0 \right\}.
\end{equation*}
Since $\mathbf{c}$ is a non-zero vector we trivially have
\begin{equation*}
\# S_{3,4}(s_1,s_2,s_3; \mathbf{c}) \ll s_1^4 s_2^4 s_3^4,
\end{equation*}
where the implied constant is independent of $\mathbf{c}$. Furthermore, if the quartic form
$\langle \nu_{4,4}(\mathbf{u}), \mathbf{c} \rangle$ is irreducible over $\overline{\mathbb{Q}}$ then we can appeal to work of Broberg and Salberger \cite[Theorem~$1$]{MR2029864}. It follows in this case that for any $\varepsilon>0$ we have
\begin{equation}
\label{Upper bound det method}
\# S_{3,4}(s_1,s_2,s_3; \mathbf{c}) \ll s_1^{3+\varepsilon} s_2^{3+\varepsilon} s_3^{3+\varepsilon},
\end{equation}
where the implied constant may depend on $\varepsilon$ but, crucially, not on $\mathbf{c}$. In addition, if the form
$\langle \nu_{4,4}(\mathbf{u}), \mathbf{c} \rangle$ is reducible over $\overline{\mathbb{Q}}$ but irreducible over
$\mathbb{Q}$ then we see that the set
$\left\{ \mathbf{b} \in \mathbb{Z}^5 : \langle \nu_{4,4}(\mathbf{b}), \mathbf{c} \rangle = 0 \right\}$ lies on an affine subvariety of codimension at least $2$, and a trivial estimate directly yields the upper bound
\eqref{Upper bound det method} with $\varepsilon = 0$. Recalling the upper bound \eqref{Upper bound fixed c} we thus see that
\begin{equation}
\label{Upper bound S}
\sum_{L \in S_{3,4}(s_1,s_2,s_3; \mathbf{c})} N_{\mathbf{c}}(Z;L)^2 \ll
Z^6 s_1^{\vartheta_{\mathbf{c}}} s_2^{\vartheta_{\mathbf{c}}} s_3^{\vartheta_{\mathbf{c}}} +
\frac{Z^4}{s_1^2 s_2^2} \cdot \# S_{3,4}(s_1,s_2,s_3;\mathbf{c}),
\end{equation}
where
\begin{equation*}
\vartheta_{\mathbf{c}} =
\begin{cases}
1 + \varepsilon, & \textrm{if the form } \langle \nu_{4,4}(\mathbf{u}), \mathbf{c} \rangle \textrm{ is irreducible over } \mathbb{Q}, \\
2, & \textrm{otherwise}.
\end{cases}
\end{equation*}

We now deal with the contribution from lattices belonging to the set $T_{3,4}(s_1,s_2,s_3; \mathbf{c})$. Given
$L \in T_{3,4}(s_1,s_2,s_3; \mathbf{c})$ we again use \cite[Lemma~$5$]{MR155800} to select a basis
$(\mathbf{b}_1,\mathbf{b}_2,\mathbf{b}_3)$ of $L$ with the property that if $\mathbf{x} \in L$ is given by
$\mathbf{x} = t_1 \mathbf{b}_1 + t_2 \mathbf{b}_2 + t_3 \mathbf{b}_3$ for some $(t_1, t_2, t_3) \in \mathbb{Z}^3$ then $t_j \ll ||\mathbf{x}|| / s_j$ for any $j \in \{1, 2, 3 \}$. We thus have
\begin{equation}
\label{Definition Set t_i}
N_{\mathbf{c}}(Z;L) \ll \# \left\{ (t_1,t_2,t_3) \in \mathbb{Z}^3 :
\begin{array}{l l}
t_j \ll Z/s_j, ~j \in \{1, 2, 3\} \\
\langle \nu_{4,4}(t_1 \mathbf{b}_1 + t_2 \mathbf{b}_2 + t_3 \mathbf{b}_3), \mathbf{c} \rangle = 0
\end{array}
\right\}.
\end{equation}
Since $L \in T_{3,4}(s_1,s_2,s_3; \mathbf{c})$ the polynomial function
$\langle \nu_{4,4}(u_1 \mathbf{b}_1 + u_2 \mathbf{b}_2 + u_3 \mathbf{b}_3), \mathbf{c} \rangle$ is not identically equal to $0$ as $(u_1,u_2,u_3)$ runs over $\mathbb{R}^3$. It follows that there exists $j \in \{1, 2, 3\}$ such that the coordinate $t_j$ of the elements of the set in the right-hand side of the upper bound \eqref{Definition Set t_i} can assume at most $4$ values when the two other coordinates are fixed. Therefore, since $Z \geq s_2$ we have
\begin{equation}
\label{Upper bound Nc}
N_{\mathbf{c}}(Z;L) \ll \frac{Z}{s_1} \cdot \frac{Z}{s_2},
\end{equation}
where the implies constant does not depend on $\mathbf{c}$. This trivially yields
\begin{equation}
\label{Upper bound T}
\sum_{L \in T_{3,4}(s_1,s_2,s_3;\mathbf{c})} N_{\mathbf{c}}(Z;L)^2 \ll \frac{Z^4}{s_1^2 s_2^2} \cdot \# T_{3,4}(s_1,s_2,s_3; \mathbf{c}).
\end{equation}

Combining the upper bounds \eqref{Upper bound S} and \eqref{Upper bound T} we deduce that
\begin{equation*}
\sum_{L \in S_{3,4}(s_1,s_2,s_3)} N_{\mathbf{c}}(Z;L)^2 \ll
Z^6 s_1^{\vartheta_{\mathbf{c}}} s_2^{\vartheta_{\mathbf{c}}} s_3^{\vartheta_{\mathbf{c}}} +
\frac{Z^4}{s_1^2 s_2^2} \cdot \# S_{3,4}(s_1,s_2,s_3).
\end{equation*}
Recall that $N_{4,4}=70$. Using the upper bound \eqref{Upper bound l1} and Lemma~\ref{Lemma number lattices} we thus derive
\begin{equation*}
\ell^{(1)}(Z; \Delta, M) \ll M^{70} \sum_{\substack{s_1 \leq s_2 \leq s_3 \\ s_1 s_2 s_3 \ll \Delta}}
\left( Z^6 s_1^{1+\varepsilon} s_2^{1+\varepsilon} s_3^{1+\varepsilon} + Z^4 s_1^5 s_2^3 s_3^3 \right) +
M^{40} Z^6 \sum_{\substack{s_1 \leq s_2 \leq s_3 \\ s_1 s_2 s_3 \ll \Delta}} s_1^2 s_2^2 s_3^2.
\end{equation*}
Note that we have used the fact that the number of $\mathbf{c} \in \mathbb{Z}^{N_{4,4}}$ satisfying
$||\mathbf{c}|| \leq M$ and such that the form $\langle \nu_{4,4}(\mathbf{u}), \mathbf{c} \rangle$ is reducible over
$\mathbb{Q}$ is trivially bounded by an absolute constant times
\begin{equation*}
M^{\max \left\{ \binom{5}{1}+\binom{7}{3}, 2 \times \binom{6}{2} \right\}} = M^{40}.
\end{equation*}
We finally remark that it follows from the inequalities $s_1 \leq s_2 \leq s_3$ and $s_1 s_2 s_3 \ll \Delta$ that
$s_1^5 s_2^3 s_3^3 \ll \Delta^{11/3}$, which finishes the proof.
\end{proof}

We now combine Lemmas~\ref{Lemma l_r(X,Y)} and \ref{Lemma l1} to deduce an upper bound which will prove to be very convenient in the proof of Lemma~\ref{Lemma D44}.

\begin{lemma}
\label{Lemma l1 convenient}
For $Z, \Delta, M \geq 1$, we have
\begin{equation*}
\ell^{(1)}(Z; \Delta, M) \ll Z^{9/2} \Delta^2 \left( Z^{3/2} + \Delta^{3/2} \right) (\log \Delta)^2 \min \left\{ \Delta, M^{53} \right\},
\end{equation*}
where the implied constant is absolute.
\end{lemma}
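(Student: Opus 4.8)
The plan is to interpolate between two bounds already available: the crude estimate \eqref{Upper bound lj}, which gives $\ell^{(1)}(Z;\Delta,M)\ll Z^6\Delta^3(\log\Delta)^2$, and the sharper bound of Lemma~\ref{Lemma l1}. The factor $\min\{\Delta,M^{53}\}$ in the conclusion is exactly the trace of this dichotomy. Writing the target as $\ell^{(1)}(Z;\Delta,M)\ll(Z^6\Delta^2+Z^{9/2}\Delta^{7/2})\min\{\Delta,M^{53}\}(\log\Delta)^2$ and noting that $Z^6\Delta^3\leq(Z^6\Delta^2)\cdot\Delta$, we see that \eqref{Upper bound lj} already delivers the $\Delta$-branch of the minimum. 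It therefore suffices to prove the $M^{53}$-branch, that is $\ell^{(1)}(Z;\Delta,M)\ll(Z^6\Delta^2+Z^{9/2}\Delta^{7/2})M^{53}(\log\Delta)^2$.

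First I would dispatch two regimes using \eqref{Upper bound lj} alone. If $M^{53}\geq\Delta$, then $Z^6\Delta^3\leq Z^6\Delta^2\cdot M^{53}$ and we are done. If instead $M^{53}<\Delta$ but $Z^3\leq M^{106}\Delta$, then $Z^{3/2}\Delta^{-1/2}\leq M^{53}$, and since $Z^6\Delta^3=Z^{9/2}\Delta^{7/2}\cdot Z^{3/2}\Delta^{-1/2}$, the bound \eqref{Upper bound lj} yields $\ell^{(1)}(Z;\Delta,M)\ll Z^{9/2}\Delta^{7/2}M^{53}(\log\Delta)^2$, which is admissible.

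This leaves the case $M^{53}<\Delta$ and $Z^3>M^{106}\Delta$, where I would invoke Lemma~\ref{Lemma l1} with $\varepsilon=\tfrac12$; it gives $\ell^{(1)}(Z;\Delta,M)\ll\bigl(M^{70}Z^6\Delta^{3/2}+M^{70}Z^4\Delta^{11/3}+M^{40}Z^6\Delta^2\bigr)(\log\Delta)^2$, and the plan is to check that each of the three terms is $\ll(Z^6\Delta^2+Z^{9/2}\Delta^{7/2})M^{53}(\log\Delta)^2$. The third term is fine because $M^{40}\leq M^{53}$ (as $M\geq1$). For the first, write $M^{70}Z^6\Delta^{3/2}=(M^{17}\Delta^{-1/2})\,M^{53}Z^6\Delta^2$ and use $M^{17}=(M^{53})^{17/53}<\Delta^{17/53}\leq\Delta^{1/2}$, which holds since $M^{53}<\Delta$ and $\tfrac{17}{53}<\tfrac12$. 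For the second, the inequality $M^{70}Z^4\Delta^{11/3}\leq M^{53}Z^{9/2}\Delta^{7/2}$ rearranges to $M^{102}\Delta\leq Z^3$, which follows from $Z^3>M^{106}\Delta$ together with $M\geq1$. Combining the cases completes the proof, with all implied constants absolute (they depend only on $n=4$ in \eqref{Upper bound lj}, and on the fixed choice $\varepsilon=\tfrac12$ in Lemma~\ref{Lemma l1}).

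There is no essential difficulty here beyond organising the case analysis; the single point that must be verified is that the ranges of applicability of the two inputs overlap. Concretely, \eqref{Upper bound lj} is strong enough whenever $Z^3\leq M^{106}\Delta$, whereas the dominant $\Delta^{11/3}$-term produced by Lemma~\ref{Lemma l1} is under control as soon as $Z^3\geq M^{102}\Delta$; since $M^{102}\Delta\leq M^{106}\Delta$, these two ranges cover every value of $Z$. The whole scheme thus rests on the elementary arithmetic facts $106\geq102$ and $\tfrac{17}{53}<\tfrac12$, the latter being what makes $\varepsilon=\tfrac12$ an admissible choice (any $\varepsilon<1-\tfrac{17}{53}=\tfrac{36}{53}$ would serve equally well).
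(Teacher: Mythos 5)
Your proof is correct and follows essentially the same strategy as the paper's: dichotomize on $\Delta$ versus $M^{53}$, dispatch the easy branch with the crude bound \eqref{Upper bound lj}, and in the remaining range interpolate between \eqref{Upper bound lj} and Lemma~\ref{Lemma l1} with a fixed small $\varepsilon$ (the paper uses $\varepsilon=1/3$ and the inequality $\min\{a,b\}\leq a^{1/4}b^{3/4}$ where you instead split explicitly at $Z^3 = M^{106}\Delta$, but these are only cosmetic differences in the bookkeeping). All the exponent arithmetic checks out.
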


\begin{proof}
If $\Delta \leq M^{53}$ then the upper bound \eqref{Upper bound lj} provides the desired result. In the case where
$\Delta > M^{53}$, we see that $M^{30} Z^2 \Delta^{\varepsilon} \leq Z^2 \Delta$ if $\varepsilon = 1/3$, say. Therefore, using the upper bound \eqref{Upper bound lj} and Lemma~\ref{Lemma l1} we deduce that
\begin{equation*}
\ell^{(1)}(Z; \Delta, M) \ll Z^4 \Delta (\log \Delta)^2 \min \left\{ Z^2 \Delta^2, M^{40} \Delta \left( M^{30} \Delta^{5/3} + Z^2 \right) \right\}.
\end{equation*}
First, if $Z^2 > M^{30} \Delta^{5/3}$ then we obtain
\begin{equation*}
\ell^{(1)}(Z; \Delta, M) \ll Z^6 \Delta^2 (\log \Delta)^2 M^{40},
\end{equation*}
which is satisfactory. Next, if $Z^2 \leq M^{30} \Delta^{5/3}$ we get
\begin{equation*}
\ell^{(1)}(Z; \Delta, M) \ll Z^4 \Delta^3 (\log \Delta)^2 \min \left\{ Z^2, M^{70} \Delta^{2/3} \right\}.
\end{equation*}
Using the inequality
\begin{equation*}
\min \left\{ Z^2, M^{70} \Delta^{2/3} \right\} \leq \left(Z^2\right)^{1/4} \left( M^{70} \Delta^{2/3} \right)^{3/4},
\end{equation*}
we derive
\begin{equation*}
\ell^{(1)}(Z; \Delta, M) \ll Z^{9/2} \Delta^{7/2} (\log \Delta)^2 M^{105/2},
\end{equation*}
which completes the proof.
\end{proof}

In the worst situation, that is when $M$ and $\Delta$ are both small, we see that Lemma~\ref{Lemma l1 convenient} is not much stronger than the upper bound \eqref{Upper bound lj}. In this case we will thus require a different argument. Given a rank $3$ lattice $L \subset \mathbb{Z}^5$ we define the lattice
\begin{equation}
\label{Definition V(L)}
\mathscr{V}(L) = \Span_{\mathbb{Q}}(\nu_{4,4}(L)) \cap \mathbb{Z}^{N_{4,4}},
\end{equation}
and we note that the dimension of the subspace $\Span_{\mathbb{Q}}(\nu_{4,4}(L))$ is equal to $\binom{6}{4} = 15$ and therefore $\mathscr{V}(L)$ has rank $15$. We shall prove the following result, which states that the determinant of the lattice $\mathscr{V}(L)$ can be controlled in terms of the determinant of $L$.

\begin{lemma}
\label{Lemma det44}
Let $L \subset \mathbb{Z}^5$ be a lattice of rank $3$. Then
\begin{equation*}
\det(\mathscr{V}(L)) \ll \det(L)^{20},
\end{equation*}
where the implied constant is absolute.
\end{lemma}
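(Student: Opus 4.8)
The plan is to produce an explicit full-rank sublattice of $\mathscr{V}(L)$ spanned by $15$ integer vectors whose lengths we can control, and then to bound $\det(\mathscr{V}(L))$ from above by the product of these lengths via Hadamard's inequality. To begin, I would fix a basis $(\mathbf{b}_1,\mathbf{b}_2,\mathbf{b}_3)$ of $L$ with $\|\mathbf{b}_j\| \ll \lambda_j(L)$ for each $j \in \{1,2,3\}$; such a basis exists by \cite[Lemma~5]{MR155800}, which is already invoked in the proof of Lemma~\ref{Lemma l1}. Combining this with Minkowski's second theorem \eqref{Estimates Minkowski} yields $\|\mathbf{b}_1\| \, \|\mathbf{b}_2\| \, \|\mathbf{b}_3\| \ll \det(L)$.

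Next, consider the polynomial identity
\[
\nu_{4,4}(a_1 \mathbf{b}_1 + a_2 \mathbf{b}_2 + a_3 \mathbf{b}_3) = \sum_{\substack{e_1,e_2,e_3 \geq 0 \\ e_1 + e_2 + e_3 = 4}} a_1^{e_1} a_2^{e_2} a_3^{e_3} \, \mathbf{w}_{e_1,e_2,e_3},
\]
valid for all $a_1,a_2,a_3 \in \mathbb{R}$, which defines $\binom{6}{4} = 15$ vectors $\mathbf{w}_{e_1,e_2,e_3}$; these are integral because $\mathbf{b}_1,\mathbf{b}_2,\mathbf{b}_3 \in \mathbb{Z}^5$. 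Evaluating the identity at suitably many integer triples $(a_1,a_2,a_3)$ and inverting the resulting nonsingular linear system expresses each $\mathbf{w}_{e_1,e_2,e_3}$ as a $\mathbb{Q}$-linear combination of vectors $\nu_{4,4}(a_1 \mathbf{b}_1 + a_2 \mathbf{b}_2 + a_3 \mathbf{b}_3)$, so $\mathbf{w}_{e_1,e_2,e_3} \in \Span_{\mathbb{Q}}(\nu_{4,4}(L)) \cap \mathbb{Z}^{N_{4,4}} = \mathscr{V}(L)$. Moreover the $\mathbb{R}$-span of the $15$ vectors $\mathbf{w}_{e_1,e_2,e_3}$ contains $\nu_{4,4}(\mathbf{b})$ for every $\mathbf{b} \in L$, hence has dimension at least $15$; since $\mathscr{V}(L)$ has rank $15$, the $\mathbf{w}_{e_1,e_2,e_3}$ are linearly independent and $\bigoplus_{e_1+e_2+e_3=4} \mathbb{Z}\mathbf{w}_{e_1,e_2,e_3}$ is a full-rank sublattice of $\mathscr{V}(L)$. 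Therefore, by the definition \eqref{Definition det} of the determinant and Hadamard's inequality,
\[
\det(\mathscr{V}(L)) \leq \det\Bigl( \bigoplus_{e_1+e_2+e_3=4} \mathbb{Z}\mathbf{w}_{e_1,e_2,e_3} \Bigr) \leq \prod_{e_1+e_2+e_3=4} \| \mathbf{w}_{e_1,e_2,e_3} \|.
\]

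It remains to estimate each factor. Expanding the Veronese coordinates, every entry of $\mathbf{w}_{e_1,e_2,e_3}$ is a sum of products of four coordinates taken from $\mathbf{b}_1, \mathbf{b}_2, \mathbf{b}_3$ in which $\mathbf{b}_j$ occurs exactly $e_j$ times, whence $\| \mathbf{w}_{e_1,e_2,e_3} \| \ll \|\mathbf{b}_1\|^{e_1} \|\mathbf{b}_2\|^{e_2} \|\mathbf{b}_3\|^{e_3}$ with absolute implied constant. Taking the product over the $15$ triples, and noting that $\sum e_1 = \sum e_2 = \sum e_3 = 20$ by symmetry (the grand total of all exponents being $15 \cdot 4 = 60$), we obtain
\[
\prod_{e_1+e_2+e_3=4} \| \mathbf{w}_{e_1,e_2,e_3} \| \ll \bigl( \|\mathbf{b}_1\| \, \|\mathbf{b}_2\| \, \|\mathbf{b}_3\| \bigr)^{20} \ll \det(L)^{20},
\]
as required.

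The bulk of the argument — the expansion of the Veronese coordinates and the bookkeeping with the exponents — is routine. The point that needs care is the pair of structural assertions about the $\mathbf{w}_{e_1,e_2,e_3}$: that they are integral vectors lying in $\Span_{\mathbb{Q}}(\nu_{4,4}(L))$, and that they are linearly independent. Both express the fact that $\nu_{4,4}$ restricted to a $3$-dimensional subspace of $\mathbb{R}^5$ is, in suitable coordinates, the fourth symmetric power of the $5 \times 3$ matrix with columns $\mathbf{b}_1, \mathbf{b}_2, \mathbf{b}_3$; once this is set up the bound follows from Minkowski's second theorem as above.
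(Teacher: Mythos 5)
Your proof is correct and follows essentially the same route as the paper's: select a reduced basis of $L$ via \cite[Lemma~$5$]{MR155800}, define the $15$ vectors $\mathbf{v}_{e_1,e_2,e_3}$ through the same polynomial identity, bound $\det(\mathscr{V}(L))$ by the product of their norms, and conclude with the exponent count $\sum e_1 = 20$. The only difference is that you spell out in slightly more detail why the $\mathbf{v}_{e_1,e_2,e_3}$ lie in $\mathscr{V}(L)$ and are linearly independent, which the paper leaves implicit.
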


\begin{proof}
We start by using \cite[Lemma~$5$]{MR155800} to select a basis $(\mathbf{b}_1, \mathbf{b}_2, \mathbf{b}_3)$ of $L$ 
satisfying
\begin{equation}
\label{Lower bound det L}
||\mathbf{b}_1|| \cdot ||\mathbf{b}_2|| \cdot ||\mathbf{b}_3|| \ll \det(L).
\end{equation}
By definition we have
\begin{equation*}
\mathscr{V}(L) = \Span_{\mathbb{Q}} \left(
\left\{ \nu_{4,4} \left( \ell_1 \mathbf{b}_1 + \ell_2 \mathbf{b}_2 + \ell_3 \mathbf{b}_3 \right) : (\ell_1, \ell_2, \ell_3) \in \mathbb{Z}^3 \right\} \right) \cap \mathbb{Z}^{N_{4,4}}.
\end{equation*}
Letting $(b_{j,0}, \dots, b_{j,4})$ be the coordinates of the vector $\mathbf{b}_j$ for any $j \in \{1, 2, 3\}$, we see that the vector $\nu_{4,4} \left( X_1 \mathbf{b}_1 + X_2 \mathbf{b}_2 + X_3 \mathbf{b}_3 \right)$ has coordinates 
\begin{equation*}
\prod_{j=1}^4 ( X_1 b_{1,i_j} + X_2 b_{2,i_j} + X_3 b_{3,i_j}),
\end{equation*}
that are indexed by the $(i_1, \dots,i_4) \in \mathbb{Z}^4$ satisfying $0 \leq i_1 \leq i_2 \leq i_3 \leq i_4 \leq 4$. As a result, by using the polynomial identity
\begin{equation*}
\nu_{4,4} \left( X_1 \mathbf{b}_1 + X_2 \mathbf{b}_2 + X_3 \mathbf{b}_3 \right) =
\sum_{\substack{0 \leq e_1,e_2,e_3 \leq 4 \\ e_1+e_2+e_3 = 4}} X_1^{e_1} X_2^{e_2} X_3^{e_3} \mathbf{v}_{e_1,e_2,e_3},
\end{equation*}
we define $15$ vectors $\mathbf{v}_{e_1,e_2,e_3} \in \mathbb{Z}^{N_{4,4}}$ indexed by the
$(e_1,e_2,e_3) \in \{0, \dots, 4\}^3$ such that $e_1+e_2+e_3=4$, and we note that
\begin{equation}
\label{Upper bound norm v}
||\mathbf{v}_{e_1,e_2,e_3}|| \ll ||\mathbf{b}_1||^{e_1} ||\mathbf{b}_2||^{e_2} || \mathbf{b}_3||^{e_3}.
\end{equation}
Since the lattice $\mathscr{V}(L)$ has rank $15$, we see that the $15$ vectors $\mathbf{v}_{e_1,e_2,e_3}$ are linearly independent. Therefore, the lattice
\begin{equation*}
\bigoplus_{\substack{0 \leq e_1,e_2,e_3 \leq 4 \\ e_1+e_2+e_3 = 4}} \mathbb{Z} \mathbf{v}_{e_1,e_2,e_3}
\end{equation*}
is a sublattice of $\mathscr{V}(L)$ of finite index, whence
\begin{equation*}
\det (\mathscr{V}(L)) \leq \prod_{\substack{0 \leq e_1,e_2,e_3 \leq 4 \\ e_1+e_2+e_3 = 4}} ||\mathbf{v}_{e_1,e_2,e_3}||.
\end{equation*}
Using the upper bound \eqref{Upper bound norm v} and the identity
\begin{equation*}
\sum_{\substack{0 \leq e, f, g \leq 4 \\ e + f + g = 4}} e = 20,
\end{equation*}
we eventually obtain
\begin{equation*}
\det (\mathscr{V}(L)) \ll ||\mathbf{b}_1||^{20} ||\mathbf{b}_2||^{20}||\mathbf{b}_3||^{20},
\end{equation*}
which completes the proof on recalling the upper bound \eqref{Lower bound det L}.
\end{proof}

Recall the definition \eqref{Definition lj} of $\ell^{(j)}(Z; \Delta, M)$ for a given integer $j \in \{1, \dots, 68\}$. We now use Lemma~\ref{Lemma det44} to derive an upper bound for the quantity $\ell^{(56)}(Z; \Delta, M)$.

\begin{lemma}
\label{Lemma l56}
For $Z, \Delta, J \geq 1$, we have
\begin{equation*}
\ell^{(56)}(Z; \Delta, J) \ll Z^4 \Delta^{24} J^{15},
\end{equation*}
where the implied constant is absolute.
\end{lemma}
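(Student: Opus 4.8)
The plan is to sum over primitive rank‑$3$ lattices $L\subseteq\mathbb{Z}^5$, exploit the inclusion $\mathscr{V}(L)^{\perp}\subseteq\Lambda_{\nu_{4,4}(\mathbf{x})}\cap\Lambda_{\nu_{4,4}(\mathbf{y})}$ together with Lemma~\ref{Lemma det44}, and finish with a trivial point count on a quartic hypersurface. If a pair $(\mathbf{x},\mathbf{y})$ is counted by $\ell^{(56)}(Z;\Delta,J)$ then, since $\mathfrak{d}_3(\mathbf{x},\mathbf{y})$ is attained by a primitive lattice, there is a primitive rank‑$3$ lattice $L\subseteq\mathbb{Z}^5$ with $\mathbf{x},\mathbf{y}\in L$ and $\det(L)=\mathfrak{d}_3(\mathbf{x},\mathbf{y})\leq\Delta$. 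Writing $\mathscr{N}(L)$ for the number of linearly independent pairs $(\mathbf{x},\mathbf{y})\in(L\cap\mathcal{B}_5(Z))^2$ with $\lambda_{56}(\Lambda_{\nu_{4,4}(\mathbf{x})}\cap\Lambda_{\nu_{4,4}(\mathbf{y})})\leq J$, we thus have $\ell^{(56)}(Z;\Delta,J)\leq\sum_L\mathscr{N}(L)$, the sum being over primitive rank‑$3$ sublattices $L$ of $\mathbb{Z}^5$ with $\det(L)\leq\Delta$. As in the proofs of Lemmas~\ref{Lemma l_r(X)}--\ref{Lemma l1} I would split these $L$ into the sets $S_{3,4}(s_1,s_2,s_3)$ over dyadic $s_1\leq s_2\leq s_3$ with $s_1 s_2 s_3\ll\Delta$, and we may assume throughout that $s_2\ll Z$, as otherwise $L\cap\mathcal{B}_5(Z)$ spans a space of dimension $\leq 1$ and $\mathscr{N}(L)=0$.

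The heart of the argument is the bound on $\mathscr{N}(L)$ for a fixed $L\in S_{3,4}(s_1,s_2,s_3)$. Since $\mathbf{x},\mathbf{y}\in L$ we have $\nu_{4,4}(\mathbf{x}),\nu_{4,4}(\mathbf{y})\in\mathscr{V}(L)$, so $\mathscr{V}(L)^{\perp}\subseteq\Lambda:=\Lambda_{\nu_{4,4}(\mathbf{x})}\cap\Lambda_{\nu_{4,4}(\mathbf{y})}$; as $\mathscr{V}(L)$ has rank $15$, the lattice $\mathscr{V}(L)^{\perp}$ has rank $55$. Hence the hypothesis $\lambda_{56}(\Lambda)\leq J$ produces $\mathbf{w}\in\Lambda$ with $0<\|\mathbf{w}\|\leq J$ and $\mathbf{w}\notin\Span_{\mathbb{R}}(\mathscr{V}(L)^{\perp})$, i.e. the quartic form $Q_{\mathbf{w}}=\langle\nu_{4,4}(\cdot),\mathbf{w}\rangle$ vanishes at $\mathbf{x}$ and at $\mathbf{y}$ but not identically on $\Span_{\mathbb{R}}(L)$. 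The restriction $Q_{\mathbf{w}}|_{\Span_{\mathbb{R}}(L)}$ depends only on the class of $\mathbf{w}$ in $\mathbb{Z}^{N_{4,4}}/\mathscr{V}(L)^{\perp}$, which, $\mathscr{V}(L)$ and $\mathscr{V}(L)^{\perp}$ being primitive, equals $\mathscr{V}(L)^{\ast}$ by Lemma~\ref{Lemma Schmidt quotient}. Therefore the number of distinct such restrictions is at most
\begin{equation*}
\#\left(\mathscr{V}(L)^{\ast}\cap\mathcal{B}_{N_{4,4}}(J)\right)\ll\det(\mathscr{V}(L))\,J^{15}\ll\det(L)^{20}J^{15},
\end{equation*}
where the first inequality follows from Lemma~\ref{Lemma Schmidt upper bound}, the relations \eqref{Estimates Minkowski} and \eqref{Det dual}, and Lemma~\ref{Lemma Banaszczyk} applied to $\mathscr{V}(L)^{\ast}$ (which bounds every successive minimum of $\mathscr{V}(L)^{\ast}$ by $15/\lambda_1(\mathscr{V}(L))\leq 15$, using that $\mathscr{V}(L)$ is integral), and the second from Lemma~\ref{Lemma det44}. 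On the other hand, for a fixed nonzero quartic form $\tilde{Q}$, a reduced basis of $L$ obtained from \cite[Lemma~$5$]{MR155800} identifies $L\cap\mathcal{B}_5(Z)$ with the $\mathbb{Z}^3$‑points of a box with sides $\ll Z/\lambda_j(L)$, and a trivial estimate for the number of these points on the hypersurface $\tilde{Q}=0$ gives $\ll Z^2/(\lambda_1(L)\lambda_2(L))\ll Z^2/(s_1 s_2)$, on using $s_2\ll Z$. Multiplying the two estimates yields
\begin{equation*}
\mathscr{N}(L)\ll\det(L)^{20}J^{15}\left(\frac{Z^2}{s_1 s_2}\right)^2.
\end{equation*}

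It remains to sum over $L$. Using $\#S_{3,4}(s_1,s_2,s_3)\ll s_1^7 s_2^5 s_3^3$ from Lemma~\ref{Lemma number lattices} and $\det(L)\ll s_1 s_2 s_3$, we get
\begin{equation*}
\ell^{(56)}(Z;\Delta,J)\ll Z^4 J^{15}\sum_{\substack{s_1\leq s_2\leq s_3\\s_1 s_2 s_3\ll\Delta}}s_1^7 s_2^5 s_3^3\cdot\frac{(s_1 s_2 s_3)^{20}}{(s_1 s_2)^2}=Z^4 J^{15}\sum_{\substack{s_1\leq s_2\leq s_3\\s_1 s_2 s_3\ll\Delta}}s_1^{25}s_2^{23}s_3^{23},
\end{equation*}
the sums being over dyadic values. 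Since $s_1\leq s_2\leq s_3$, the summand is largest when $s_1,s_2,s_3$ all have size $\Delta^{1/3}$, so the sum is $\ll\Delta^{71/3}(\log\Delta)^3\ll\Delta^{24}$, and the lemma follows.

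The step requiring the most care is the bound on $\mathscr{N}(L)$: one must verify that the lower‑order terms in the estimate for $\#(\mathscr{V}(L)^{\ast}\cap\mathcal{B}_{N_{4,4}}(J))$ are again $\ll\det(L)^{20}J^{15}$ — this is where the transference inequality of Lemma~\ref{Lemma Banaszczyk} and the integrality of $\mathscr{V}(L)$ are used — and, more substantially, one must organise a case analysis over the possible shapes of the quartic $\tilde{Q}$ (according to which of the basis coordinates it genuinely involves, and whether it is divisible by a form in fewer variables) so that the point count on $\tilde{Q}=0$ really is $\ll Z^2/(s_1 s_2)$ whenever $s_2\ll Z$.
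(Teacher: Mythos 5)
Your proposal is correct and follows essentially the same route as the paper: decompose over primitive rank-$3$ lattices $L$ via the sets $S_{3,4}(s_1,s_2,s_3)$, use that $\mathscr{V}(L)^{\perp}$ has rank $55$ so that $\lambda_{56}\leq J$ forces a short vector of $\Lambda_{\nu_{4,4}(\mathbf{x})}\cap\Lambda_{\nu_{4,4}(\mathbf{y})}$ whose projection is a non-zero element of $\mathscr{V}(L)^{\ast}\cap\mathcal{B}(J)$, bound the number of such elements by $J^{15}\det(L)^{20}$ via Banaszczyk and Lemma~\ref{Lemma det44}, and bound the point count on each resulting non-trivial quartic by $Z^2/(s_1s_2)$ exactly as in \eqref{Upper bound Nc}. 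The final summation $\sum s_1^{25}s_2^{23}s_3^{23}\ll\Delta^{71/3}(\log\Delta)^3\ll\Delta^{24}$ matches the paper's computation.
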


\begin{proof}
We start by noting that
\begin{equation*}
\ell^{(56)}(Z; \Delta, J) \ll
\sum_{\substack{s_1 \leq s_2 \leq \max \{Z, s_3\} \\ s_1 s_2 s_3 \ll \Delta}} \ \sum_{L \in S_{3,4}(s_1,s_2,s_3)} 
\# \mathcal{L}(Z,J;L),
\end{equation*}
where
\begin{equation*}
\mathcal{L}(Z,J;L) = \left\{ (\mathbf{x}, \mathbf{y}) \in \left( L \cap \mathcal{B}_5(Z) \right)^2 :
\begin{array}{l l}
\dim \left( \Span_{\mathbb{R}} (\{ \mathbf{x}, \mathbf{y} \}) \right) = 2 \\
\lambda_{56}(\Lambda_{\nu_{4,4}(\mathbf{x})} \cap \Lambda_{\nu_{4,4}(\mathbf{y})}) \leq J
\end{array}
\right\}.
\end{equation*}
Let $(\mathbf{x}, \mathbf{y}) \in \mathcal{L}(Z,J;L)$. Recall the definition \eqref{Definition V(L)} of the lattice
$\mathscr{V}(L)$ and recall that its rank is equal to $15$. The orthogonal lattice $\mathscr{V}(L)^{\perp}$ has rank $70-15=55$ so the inequality $\lambda_{56}(\Lambda_{\nu_{4,4}(\mathbf{x})} \cap \Lambda_{\nu_{4,4}(\mathbf{y})}) \leq J$ implies that there exists $\mathbf{a} \in \Lambda_{\nu_{4,4}(\mathbf{x})}\cap\Lambda_{\nu_{4,4}(\mathbf{y})}$ such that $||\mathbf{a}|| \leq J$ and $\mathbf{a} \notin \mathscr{V}(L)^{\perp}$. Furthermore, by definition the lattice
$\mathscr{V}(L)$ is primitive so $(\mathscr{V}(L)^{\perp})^{\perp} = \mathscr{V}(L)$ and we deduce from
Lemma~\ref{Lemma Schmidt quotient} that $\mathscr{V}(L)^{\ast}=\mathbb{Z}^{70}/\mathscr{V}(L)^{\perp}$. Hence, if $\pi : \mathbb{R}^{70} \to \Span_{\mathbb{R}}(\mathscr{V}(L))$ denotes the orthogonal projection on
$\Span_{\mathbb{R}}(\mathscr{V}(L))$ then the vector $\mathbf{b} = \pi(\mathbf{a})$ belongs to
$\mathscr{V}(L)^{\ast}$. In addition, we have $||\mathbf{b}|| \leq ||\mathbf{a}|| \leq J$ and $\mathbf{b}$ is non-zero since $\mathbf{a} \notin \Span_{\mathbb{R}}(\mathscr{V}(L))^{\perp}$. Finally we note that
\begin{equation}
\label{Equalities 0}
\langle \mathbf{b}, \nu_{4,4}(\mathbf{x}) \rangle = \langle \mathbf{b}, \nu_{4,4}(\mathbf{y}) \rangle=0.
\end{equation}
Indeed, by definition of $\mathbf{b}$ we have $\mathbf{a}-\mathbf{b} \in \Span_{\mathbb{R}}(\mathscr{V}(L))^{\perp}$, which implies that
\begin{equation*}
\langle \mathbf{a}-\mathbf{b}, \nu_{4,4}(\mathbf{x}) \rangle =
\langle \mathbf{a}-\mathbf{b}, \nu_{4,4}(\mathbf{y}) \rangle = 0,
\end{equation*}
since $\mathbf{x},\mathbf{y} \in L$. Moreover we also have
$\mathbf{a} \in \Lambda_{\nu_{4,4}(\mathbf{x})} \cap \Lambda_{\nu_{4,4}(\mathbf{y})}$ so the equalities \eqref{Equalities 0} follow. As a result, recalling the definition \eqref{Definition Nc} of $N_{\mathbf{b}}(Z;L)$ we deduce that
\begin{equation}
\label{Upper bound intermediate l56}
\ell^{(56)}(Z; \Delta, J) \ll
\sum_{\substack{s_1 \leq s_2 \leq \max \{Z, s_3\} \\ s_1 s_2 s_3 \ll \Delta}} \ \sum_{L \in S_{3,4}(s_1,s_2,s_3)} \ 
\sum_{\substack{\mathbf{b} \in \mathscr{V}(L)^{\ast} \\ 0 < ||\mathbf{b}|| \leq J}} N_{\mathbf{b}}(Z;L)^2.
\end{equation}
If we had
$\Span_{\mathbb{R}}(L) \subset \left\{ \mathbf{x} \in \mathbb{R}^5 : \langle \nu_{4,4}(\mathbf{x}), \mathbf{b} \rangle = 0 \right\}$ then it would follow that $\langle \mathbf{b}, \mathbf{b} \rangle = 0$ since
$\mathbf{b} \in \Span_{\mathbb{R}}(\mathscr{V}(L))$. This is impossible as $\mathbf{b}$ is non-zero . Therefore, since we have $Z \geq s_2$, we can employ the upper bound \eqref{Upper bound Nc} to bound $N_{\mathbf{b}}(Z;L)$ independently of the vector $\mathbf{b}$. This yields
\begin{equation}
\label{Upper bound sum b}
\sum_{\substack{\mathbf{b} \in \mathscr{V}(L)^{\ast} \\ 0 < ||\mathbf{b}|| \leq J}} N_{\mathbf{b}}(Z;L)^2 \ll
\frac{Z^4}{s_1^2s_2^2} \cdot \# \left( \mathscr{V}(L)^{\ast} \cap \mathcal{B}_5(J) \right).
\end{equation}
In addition, the lattice $\mathscr{V}(L)$ is integral, so for any $i \in \{1, \dots, 15\}$ we have
$\lambda_i(\mathscr{V}(L)) \geq 1$. Lemma~\ref{Lemma Banaszczyk} thus implies that we also have
$\lambda_i(\mathscr{V}(L)^{\ast}) \leq 15$ for any $i \in \{1, \dots, 15\}$. Therefore, it follows from
Lemma~\ref{Lemma Schmidt upper bound} and the estimates \eqref{Estimates Minkowski} that
\begin{align*}
\# \left( \mathscr{V}(L)^{\ast} \cap \mathcal{B}_5(J) \right) & \ll
\frac{J^{15}}{\det (\mathscr{V}(L)^{\ast})} \left( \sum_{i = 0}^{15} \frac{\lambda_{16-i}(\mathscr{V}(L)^{\ast}) \cdots \lambda_{15}(\mathscr{V}(L)^{\ast})}{J^i} \right) \\
& \ll \frac{J^{15}}{\det (\mathscr{V}(L)^{\ast})}.
\end{align*}
We now use the equality \eqref{Det dual} and apply Lemma~\ref{Lemma det44} to derive the upper bound
\begin{equation}
\label{Upper bound dual lattice}
\# \left( \mathscr{V}(L)^{\ast} \cap \mathcal{B}_5(J) \right) \ll J^{15} \det(L)^{20}.
\end{equation}
Using again Minkowski's estimates \eqref{Estimates Minkowski} and putting together the upper bounds
\eqref{Upper bound intermediate l56}, \eqref{Upper bound sum b} and \eqref{Upper bound dual lattice}, we get
\begin{equation*}
\ell^{(56)}(Z; \Delta, J) \ll Z^4 J^{15}
\sum_{\substack{s_1 \leq s_2 \leq s_3 \\ s_1 s_2 s_3 \ll \Delta}} s_1^{18} s_2^{18} s_3^{20} \cdot \# S_{3,4}(s_1,s_2,s_3).
\end{equation*}
An application of Lemma~\ref{Lemma number lattices} eventually gives
\begin{equation*}
\ell^{(56)}(Z; \Delta, J) \ll Z^4 J^{15}
\sum_{\substack{s_1 \leq s_2 \leq s_3 \\ s_1 s_2 s_3 \ll \Delta}} s_1^{25} s_2^{23} s_3^{23},
\end{equation*}
which completes the proof on noting that the inequalities $s_1 \leq s_2 \leq s_3$ and $s_1 s_2 s_3 \ll \Delta$ imply that $s_1^{25} s_2^{23} s_3^{23} \ll \Delta^{71/3}$.
\end{proof}

\section{The global and localised counting functions are rarely apart}

\label{Section approximating}

In Section~\ref{Section variance} we check that Proposition~\ref{Proposition 1} is a direct consequence of a certain variance upper bound, as stated in Proposition~\ref{Proposition variance}. The remainder of
Section~\ref{Section approximating} is then devoted to the proof of Proposition~\ref{Proposition variance}. With this goal in mind we will start by estimating the volume of certain regions in $\mathbb{R}^N$ in
Section~\ref{Section volumes}. In Section~\ref{Section inverses determinants} we will then produce tight bounds for the average of the inverse of the determinant of the lattice
$\Lambda_{\nu_{d,n}(\mathbf{x})} \cap \Lambda_{\nu_{d,n}(\mathbf{y})}$, as one varies the linearly independent vectors $\mathbf{x}, \mathbf{y} \in \mathbb{Z}^{n+1}$. Our next task in Section~\ref{Section first} will be to prove an upper bound for the first moment of the counting function $N_V(B)$. In Section~\ref{Section second} we will turn to proving estimates for second moments involving both $N_V(B)$ and our localised counting function $N_V^{\mathrm{loc}}(B)$. We will finally combine all these estimates to prove Proposition~\ref{Proposition variance} in
Section~\ref{Section proof variance}.

\subsection{The key variance upper bound}

Recall the respective definitions \eqref{Definition counting} and \eqref{Definition local counting} of our two counting functions $N_V(B)$ and $N_V^{\mathrm{loc}}(B)$. The following result is the culmination of our work in
Section~\ref{Section approximating}.

\label{Section variance}

\begin{proposition}
\label{Proposition variance}
Let $d \geq 2$ and $n \geq d$ with $(d,n) \notin \{(2,2), (3,3) \}$. Assume that $B/(\log B)^{1/2} \leq A \leq B^2$. Then we have
\begin{equation*} 
\frac1{\# \mathbb{V}_{d,n}(A)} \sum_{V \in \mathbb{V}_{d,n}(A)} \left( N_V(B) - N_V^{\mathrm{loc}}(B) \right)^2 \ll \frac{B}{A}.
\end{equation*}
\end{proposition}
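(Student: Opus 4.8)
The plan is to expand the square, realise the resulting average as a sum over pairs of rational points, and exploit the fact that the normalising factors in $N_V^{\mathrm{loc}}(B)$ are rigged so that, pair by pair, its moments match those of $N_V(B)$. Writing
\[
N_V(B)-N_V^{\mathrm{loc}}(B)=\tfrac12\sum_{\mathbf x\in\Xi_{d,n}(B)}f_{\mathbf x}(V),\qquad
f_{\mathbf x}(V)=\mathbf 1[\mathbf a_V\in\Lambda_{\nu_{d,n}(\mathbf x)}]-\frac{\alpha W\,\mathbf 1[\mathbf a_V\in\Lambda_{\nu_{d,n}(\mathbf x)}^{(W)}\cap\mathcal C_{\nu_{d,n}(\mathbf x)}^{(\alpha)}]}{\|\mathbf a_V\|\cdot\|\nu_{d,n}(\mathbf x)\|},
\]
one gets
\[
\sum_{V\in\mathbb V_{d,n}(A)}\bigl(N_V(B)-N_V^{\mathrm{loc}}(B)\bigr)^2
=\tfrac14\sum_{\mathbf x,\mathbf y\in\Xi_{d,n}(B)}\ \sum_{V\in\mathbb V_{d,n}(A)}f_{\mathbf x}(V)f_{\mathbf y}(V),
\]
and I would split the outer sum according to whether $\mathbf x,\mathbf y$ are proportional (equivalently $\mathbf x=\pm\mathbf y$, both being primitive) or linearly independent.

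The diagonal part is a weighted first moment: three sums of the shape $\sum_{\mathbf x}\sum_V(\text{weight})\cdot\mathbf 1[\cdots]$, estimated by counting $\mathbf a_V$ in $\Lambda_{\nu_{d,n}(\mathbf x)}$, resp.\ in $\Lambda_{\nu_{d,n}(\mathbf x)}^{(W)}\cap\mathcal C_{\nu_{d,n}(\mathbf x)}^{(\alpha)}$, via Lemma~\ref{Lemma lattice pivotal}, with the successive minima controlled by Lemma~\ref{Key lemma one vector}; the rare $\mathbf x$ for which $\mathfrak d_2(\mathbf x)$ is small, so that $\lambda_{N_{d,n}-1}(\Lambda_{\nu_{d,n}(\mathbf x)})$ may exceed $A$ and the main term of Lemma~\ref{Lemma lattice pivotal} is unavailable, are handled by the trivial bound of Lemma~\ref{Lemma Schmidt upper bound} weighted by the count of Lemma~\ref{Lemma l_r(X)}. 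Summing the main terms over $\mathbf x$ reduces to $\sum_{\|\mathbf x\|\le B^{1/(n+1-d)}}\|\mathbf x\|^{-d}\ll B$ (using $n\ge d$), while $\alpha$ and $W$ are negligible by \eqref{Upper bound W} and the hypothesis $A\ge B/(\log B)^{1/2}$; thus the diagonal contributes $\ll BA^{N_{d,n}-1}\ll (B/A)\,\#\mathbb V_{d,n}(A)$.

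The substance is the off-diagonal sum over linearly independent $\mathbf x,\mathbf y$. For a fixed such pair, the determinants of the lattices $\Lambda_{\nu_{d,n}(\mathbf x)}\cap\Lambda_{\nu_{d,n}(\mathbf y)}$, $\Lambda_{\nu_{d,n}(\mathbf x)}\cap\Lambda_{\nu_{d,n}(\mathbf y)}^{(W)}$ and $\Lambda_{\nu_{d,n}(\mathbf x)}^{(W)}\cap\Lambda_{\nu_{d,n}(\mathbf y)}^{(W)}$ are given explicitly by Lemmas~\ref{Lemma determinant global}, \ref{Lemma determinant mixed/local} and \ref{Lemma G} in terms of $\mathfrak d_2(\mathbf x,\mathbf y)$ (cf.\ Lemma~\ref{Lemma d2(x,y)}) and $\gcd(\mathcal G(\mathbf x,\mathbf y),W)$. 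When the rank-$(N_{d,n}-2)$ successive minimum of these lattices is $\le A$, Lemma~\ref{Lemma lattice pivotal} — after a dyadic decomposition in $\|\mathbf a_V\|$ to absorb the weight $\alpha W/(\|\mathbf a_V\|\,\|\nu_{d,n}(\mathbf y)\|)$ — yields main terms for $\sum_V\mathbf 1_{\mathbf x}\mathbf 1_{\mathbf y}$, $\sum_V\mathbf 1_{\mathbf x}(\mathrm{loc})_{\mathbf y}$ and $\sum_V(\mathrm{loc})_{\mathbf x}(\mathrm{loc})_{\mathbf y}$ that coincide, precisely because $\alpha$ and $W$ are chosen so that the density of the congruence-plus-cone condition matches that of the exact equation and $\gcd(\mathcal G(\mathbf x,\mathbf y),W)=\mathcal G(\mathbf x,\mathbf y)$ whenever $\mathcal G(\mathbf x,\mathbf y)\mid W$, which holds for all but a sparse set of pairs since $W$ is extremely divisible by \eqref{Definition W}. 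For such a "good" pair the main terms in $\sum_V f_{\mathbf x}(V)f_{\mathbf y}(V)$ cancel, leaving only errors of size $O\bigl(\lambda_{N_{d,n}-2}(\Lambda_{\nu_{d,n}(\mathbf x)}\cap\Lambda_{\nu_{d,n}(\mathbf y)})\,A^{N_{d,n}-3}/\det(\Lambda_{\nu_{d,n}(\mathbf x)}\cap\Lambda_{\nu_{d,n}(\mathbf y)})\bigr)$ up to harmless $\alpha,W$ factors; bounding $\lambda_{N_{d,n}-2}$ via Lemma~\ref{Key lemma two vectors} by $\max\{\mathfrak d_2(\mathbf x,\mathbf y)/\mathfrak d_3(\mathbf x,\mathbf y),\ \|\mathbf x\|\,\|\mathbf y\|/\mathfrak d_2(\mathbf x,\mathbf y)^2\}$ and feeding in the average bounds for $1/\det$ from Section~\ref{Section inverses determinants}, one checks these errors sum to $\ll BA^{N_{d,n}-1}$; here the restrictions $(d,n)\notin\{(2,2),(3,3)\}$ and $A\le B^2$ are exactly what make the relevant sums converge with room to spare.

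The hard part is the remaining "bad" pairs, namely those for which $\mathfrak d_2(\mathbf x,\mathbf y)$ or $\mathfrak d_3(\mathbf x,\mathbf y)$ is small, or $\mathcal G(\mathbf x,\mathbf y)\nmid W$: then either $\det(\Lambda_{\nu_{d,n}(\mathbf x)}\cap\Lambda_{\nu_{d,n}(\mathbf y)})$ is small (inflating the lattice point count) or $\lambda_{N_{d,n}-2}>A$ (so one must replace Lemma~\ref{Lemma lattice pivotal} by the cruder Lemmas~\ref{Lemma Schmidt upper bound} and \ref{Lemma R0}). Their contribution is to be controlled by summing over dyadic ranges of $\|\mathbf x\|$, $\|\mathbf y\|$, $\mathfrak d_2(\mathbf x,\mathbf y)$, $\mathfrak d_3(\mathbf x,\mathbf y)$ and of the successive minima, weighted by the counts of Lemmas~\ref{Lemma l_r(X)} and \ref{Lemma l_r(X,Y)}, which suffices for every $(d,n)$ in range except the borderline case $(d,n)=(4,4)$, where $N_{4,4}=70$, the lattices have rank $68$, and the generic counts are too weak. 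There I would instead invoke the refined estimates of Lemmas~\ref{Lemma l1}, \ref{Lemma l1 convenient} and \ref{Lemma l56} for the number of pairs that simultaneously lie in a rank-$3$ lattice of small determinant and produce a very short vector in $\Lambda_{\nu_{4,4}(\mathbf x)}\cap\Lambda_{\nu_{4,4}(\mathbf y)}$. I expect this quartic-threefold case, together with the bookkeeping needed to ensure that each of the many dyadic regimes contributes $\ll BA^{N_{d,n}-1}$, to be the main obstacle; the rest is geometry of numbers applied to the key lattices, plus the elementary bound $\sum_{\|\mathbf x\|\le T}\|\mathbf x\|^{-d}\ll T^{n+1-d}$. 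As in the derivation of Proposition~\ref{Proposition 1}, dividing by $\#\mathbb V_{d,n}(A)\asymp A^{N_{d,n}}$ then gives the claimed bound $\ll B/A$.
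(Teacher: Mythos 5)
Your plan is the paper's plan: expand the square, peel off the diagonal using a first--moment bound, and show that for linearly independent $\mathbf{x},\mathbf{y}$ the three inner sums over $V$ (global--global, global--localised, localised--localised) have matching main terms $\iota_{d,n}A^{N_{d,n}-2}/\det(\Lambda_{\nu_{d,n}(\mathbf{x})}\cap\Lambda_{\nu_{d,n}(\mathbf{y})})$, with the residuals controlled on average by the counting lemmas of Section~\ref{Section geometry of numbers} and, for $(d,n)=(4,4)$, by Lemmas~\ref{Lemma l1 convenient} and \ref{Lemma l56}. The paper's organisation into three separate asymptotics for $D_{d,n}$, $D_{d,n}^{\mathrm{mix}}$ and $D_{d,n}^{\mathrm{loc}}$, each equal to $\iota_{d,n}A^{N_{d,n}-2}E_{d,n}(B)$ up to a relative error, is just a repackaging of your pair-by-pair cancellation.

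There is, however, a genuine gap at the single most delicate point. You assert that for ``good'' pairs the three main terms \emph{coincide}, so that the only residual is the lattice-point-counting error $O(\lambda_{N_{d,n}-2}\,A^{N_{d,n}-3}/\det)$. That is not true. Even for pairs with $\mathcal{G}(\mathbf{x},\mathbf{y})\mid W/\rad(W)$ and with $\mathfrak{d}_2,\mathfrak{d}_3$ of generic size, the Archimedean part of the localised main term matches the exact one only up to a relative error $\min\{1,\Delta(\mathbf{x},\mathbf{y})^2/\alpha^2\}$, where $\Delta(\mathbf{x},\mathbf{y})=\|\nu_{d,n}(\mathbf{x})\|\cdot\|\nu_{d,n}(\mathbf{y})\|/\det(\mathbb{Z}\nu_{d,n}(\mathbf{x})\oplus\mathbb{Z}\nu_{d,n}(\mathbf{y}))$: this is the content of Lemmas~\ref{Lemma volume mixed} and \ref{Lemma volume loc}, which you never invoke but which are indispensable for evaluating the volume $\mathcal{V}$ occurring in Lemma~\ref{Lemma lattice pivotal}; when $\nu_{d,n}(\mathbf{x})$ and $\nu_{d,n}(\mathbf{y})$ make an angle $\lesssim 1/\alpha$ the matching fails outright. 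Averaged against $1/\det$ over pairs, this mismatch contributes $F^{(1)}_{d,n}(B)\ll B^2/\alpha^{1/2}$ (Lemma~\ref{Lemma F}), i.e.\ a saving of only $(\log B)^{1/2}$ over the main term $A^{N_{d,n}-2}E_{d,n}(B)\asymp A^{N_{d,n}-2}B^2$. Since the target $BA^{N_{d,n}-1}$ is below the main term by exactly the factor $A/B\geq(\log B)^{-1/2}$, this saving is precisely, and only barely, sufficient: there is no ``room to spare'' here, and it is this error --- not the restriction on $(d,n)$ or the geometric error $O(\mu/A)$ --- that forces the hypothesis $A\geq B/(\log B)^{1/2}$ and the choice $\alpha=\log B$. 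Your proposal attributes the residual entirely to errors decaying like a power of $A$, which would wrongly suggest the proposition holds in a much wider range of $B/A$; identifying the volume-mismatch error and bounding its average is the missing step without which the argument does not close.
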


We now proceed to prove that Proposition~\ref{Proposition 1} follows from Proposition~\ref{Proposition variance}.

\begin{proof}[Proof of Proposition~\ref{Proposition 1}]
It is convenient to set
\begin{equation*}
\mathscr{L}_{\phi}(A) = \frac1{\#\mathbb{V}_{d,n}(A)} \cdot
\# \left\{ V \in \mathbb{V}_{d,n}(A) : \left| N_V(A \phi(A)) - N_V^{\mathrm{loc}}(A \phi(A)) \right| > \phi(A)^{2/3} \right\}.
\end{equation*}
We observe that 
\begin{equation*}
\mathscr{L}_{\phi}(A) \leq \frac1{\#\mathbb{V}_{d,n}(A)} \sum_{V \in \mathbb{V}_{d,n}(A)}
\left( \frac{N_V(A \phi(A)) - N_V^{\mathrm{loc}}(A \phi(A))}{\phi(A)^{2/3}} \right)^2.
\end{equation*}
By assumption we have $\phi(A) \leq (\log A)^{1/2}$ so we are in position to apply Proposition~\ref{Proposition variance}, which immediately completes the proof of Proposition~\ref{Proposition 1}.
\end{proof}

\subsection{Volume estimates}

\label{Section volumes}

For any $N \geq 1$ we let $V_N$ denote the volume of the unit ball $\mathcal{B}_N(1)$ in $\mathbb{R}^N$. For
$\mathbf{w}, \mathbf{z} \in \mathbb{R}^{N}$, we introduce the $(N-1)$-dimensional volume
\begin{equation}
\label{Definition volume mixed}
\mathcal{I}(\mathbf{w}, \mathbf{z}) = \vol \left( \left\{ \mathbf{t} \in (\mathbb{R} \mathbf{w})^{\perp} :
|\langle\mathbf{z}, \mathbf{t} \rangle| \leq ||\mathbf{t}|| \leq 1 \right\} \right),
\end{equation}
and we put
\begin{equation*}
\delta_{\mathbf{w},\mathbf{z}} = ||\mathbf{w}||^2 ||\mathbf{z}||^2 - \langle\mathbf{w}, \mathbf{z} \rangle^2.
\end{equation*}
We start by proving the following result.

\begin{lemma}
\label{Lemma volume mixed}
Let $N \geq 3$ and let $\mathbf{w}, \mathbf{z} \in \mathbb{R}^N$ be two linearly independent vectors. Then 
\begin{equation*}
\mathcal{I}(\mathbf{w}, \mathbf{z}) = 
2 \frac{N-2}{N-1} V_{N-2} \frac{||\mathbf{w}||}{\delta_{\mathbf{w},\mathbf{z}}^{1/2}}
\left( 1 + O \left( \min \left\{ 1, \frac{||\mathbf{w}||^2}{\delta_{\mathbf{w},\mathbf{z}}} \right\} \right) \right),
\end{equation*}
where the implied constant depends at most on $N$.
\end{lemma}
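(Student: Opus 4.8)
The plan is to compute the volume $\mathcal{I}(\mathbf{w},\mathbf{z})$ by first reducing to a model situation via an orthogonal change of coordinates. Choose an orthonormal basis of $(\mathbb{R}\mathbf{w})^\perp$ adapted to the projection of $\mathbf{z}$: let $\mathbf{z}' = \mathbf{z} - \frac{\langle\mathbf{w},\mathbf{z}\rangle}{\|\mathbf{w}\|^2}\mathbf{w}$ be the component of $\mathbf{z}$ orthogonal to $\mathbf{w}$, so that $\|\mathbf{z}'\|^2 = \|\mathbf{z}\|^2 - \langle\mathbf{w},\mathbf{z}\rangle^2/\|\mathbf{w}\|^2 = \delta_{\mathbf{w},\mathbf{z}}/\|\mathbf{w}\|^2$. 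For $\mathbf{t}\in(\mathbb{R}\mathbf{w})^\perp$ we have $\langle\mathbf{z},\mathbf{t}\rangle = \langle\mathbf{z}',\mathbf{t}\rangle$, so writing $\mathbf{t} = (u_1,\dots,u_{N-1})$ in an orthonormal basis whose first vector is $\mathbf{z}'/\|\mathbf{z}'\|$, the region becomes
\begin{equation*}
\left\{ (u_1,\dots,u_{N-1}) : |u_1|\,\|\mathbf{z}'\| \le \Big(\textstyle\sum_i u_i^2\Big)^{1/2} \le 1 \right\}.
\end{equation*}
Hence $\mathcal{I}(\mathbf{w},\mathbf{z})$ depends only on $N$ and $\rho := \|\mathbf{z}'\| = \delta_{\mathbf{w},\mathbf{z}}^{1/2}/\|\mathbf{w}\|$, and equals the volume $W_{N-1}(\rho)$ of the intersection of the unit ball $\mathcal{B}_{N-1}(1)$ with the slab $|u_1| \le \rho^{-1}(\sum_i u_i^2)^{1/2}$. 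It suffices to prove $W_{N-1}(\rho) = 2\frac{N-2}{N-1}V_{N-2}\,\rho^{-1}\big(1 + O(\min\{1,\rho^{-2}\})\big)$.

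Next I would evaluate $W_{N-1}(\rho)$ by slicing in the $u_1$-coordinate. For fixed $u_1$ with $|u_1|\le 1$, the cross-section is $\{(u_2,\dots,u_{N-1}) : u_2^2+\cdots+u_{N-1}^2 \le \min\{1-u_1^2,\ u_1^2/\rho^2\}\}$, an $(N-2)$-ball of radius $r(u_1) = \min\{(1-u_1^2)^{1/2}, |u_1|/\rho\}$. Thus
\begin{equation*}
W_{N-1}(\rho) = V_{N-2} \int_{-1}^{1} r(u_1)^{N-2}\, du_1 = 2 V_{N-2}\int_0^1 \min\{(1-u_1^2)^{1/2}, u_1/\rho\}^{N-2}\, du_1.
\end{equation*}
The two expressions inside the minimum cross at $u_1 = \rho/(1+\rho^2)^{1/2} =: u_\ast$, which for large $\rho$ is close to $1$. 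On $[0,u_\ast]$ the relevant term is $(u_1/\rho)^{N-2}$, contributing $2V_{N-2}\rho^{-(N-2)}\,u_\ast^{N-1}/(N-1)$; on $[u_\ast,1]$ the term is $(1-u_1^2)^{(N-2)/2}$, which I would bound by substituting $u_1 = \sin\theta$ and noting the interval $[\arcsin u_\ast, \pi/2]$ has length $O(\rho^{-1})$ with integrand $O(\rho^{-(N-2)})$ near the endpoint — more precisely, $\int_{u_\ast}^1(1-u_1^2)^{(N-2)/2}du_1 \ll \rho^{-(N-1)} + (1-u_\ast^2)^{N/2}\cdot O(1)\ll \rho^{-(N-1)}$. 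Since $u_\ast^{N-1} = (1+\rho^{-2})^{-(N-1)/2} = 1 + O(\rho^{-2})$, the main term is $2V_{N-2}\rho^{-1}\cdot\frac{1}{N-1}(1+O(\rho^{-2}))$, and collecting the error terms gives the claimed formula for $\rho\ge 1$ (equivalently $\|\mathbf{w}\|^2\le\delta_{\mathbf{w},\mathbf{z}}$), where the error is $O(\rho^{-2})=O(\|\mathbf{w}\|^2/\delta_{\mathbf{w},\mathbf{z}})$.

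Finally I would dispatch the range $\rho < 1$ (i.e. $\|\mathbf{w}\|^2 > \delta_{\mathbf{w},\mathbf{z}}$), where the claimed error term is simply $O(1)$, so it suffices to show $W_{N-1}(\rho) \ll V_{N-2}\rho^{-1}$ together with the trivial lower bound $W_{N-1}(\rho) \le V_{N-1}$; in fact from the slicing formula $W_{N-1}(\rho) \le 2V_{N-2}\int_0^1 (u_1/\rho)^{N-2}\,du_1 = \frac{2V_{N-2}}{(N-1)\rho^{N-2}}$, and since $N\ge 3$ gives $N-2\ge 1$ and $\rho<1$ gives $\rho^{-(N-2)}\le\rho^{-1}\cdot\rho^{-(N-3)}$; for $N=3$ this is exactly $\ll\rho^{-1}$, while for $N\ge 4$ one instead just uses $W_{N-1}(\rho)\le V_{N-1} \ll V_{N-2}\rho^{-1}\cdot(\rho/ (V_{N-1}^{-1}V_{N-2}))$, which is fine since $\rho<1$ forces $\rho^{-1}>1$. (A cleaner uniform bound: $W_{N-1}(\rho)\le\min\{V_{N-1},\ \tfrac{2V_{N-2}}{(N-1)\rho^{N-2}}\}$, and elementary casework in $\rho$ shows this is $\ll V_{N-2}\rho^{-1}$ for all $\rho>0$, which absorbs into the stated error $O(\min\{1,\rho^{-2}\})$ since on $\rho<1$ that error is $\asymp 1$.) The main obstacle is purely bookkeeping: tracking the error term in the $\rho\ge 1$ regime precisely enough to get $O(\|\mathbf{w}\|^2/\delta_{\mathbf{w},\mathbf{z}})$ rather than a weaker power, which requires the expansion $u_\ast^{N-1}=1+O(\rho^{-2})$ and the careful estimate of the tail integral near $u_1=1$; no deep input is needed beyond the explicit slicing computation.
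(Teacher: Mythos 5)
Your reduction to the model region is correct: after passing to an orthonormal basis of $(\mathbb{R}\mathbf{w})^{\perp}$ whose first vector is $\mathbf{z}'/\|\mathbf{z}'\|$, the set becomes $\{\mathbf{u}\in\mathbb{R}^{N-1}: \rho|u_1|\leq \|\mathbf{u}\|\leq 1\}$ with $\rho=\delta_{\mathbf{w},\mathbf{z}}^{1/2}/\|\mathbf{w}\|$, which is exactly the paper's starting point. But your identification of the slice at fixed $u_1$ is wrong. Writing $s^2=u_2^2+\cdots+u_{N-1}^2$, the constraint $\rho|u_1|\leq(u_1^2+s^2)^{1/2}$ rearranges to $s^2\geq u_1^2(\rho^2-1)$, a \emph{lower} bound on $s$; together with $u_1^2+s^2\leq 1$ the cross-section is the annulus $u_1^2(\rho^2-1)\leq s^2\leq 1-u_1^2$, nonempty only for $|u_1|\leq 1/\rho$. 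You instead wrote the cross-section as the ball $s^2\leq\min\{1-u_1^2,\,u_1^2/\rho^2\}$, which reverses the inequality and describes the set where $\mathbf{t}$ is nearly \emph{parallel} to $\mathbf{z}'$ rather than nearly orthogonal to it. This is not a cosmetic slip: your integral evaluates to $\frac{2V_{N-2}}{(N-1)}\rho^{-(N-2)}(1+O(\rho^{-2}))$, which has the wrong order of magnitude for every $N\geq 4$, and the step where you pass from "$2V_{N-2}\rho^{-(N-2)}u_*^{N-1}/(N-1)$" to "the main term is $2V_{N-2}\rho^{-1}/(N-1)$" is an unjustified substitution of exponents; even after it, your constant $\frac{1}{N-1}$ disagrees with the lemma's $\frac{N-2}{N-1}$ except when $N=3$.

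The correct computation, which is what the paper does (slicing in the distinguished coordinate and integrating the fiber first), is
\begin{equation*}
\mathcal{I}=2V_{N-2}\int_0^{1/\rho}\left((1-u_1^2)^{(N-2)/2}-(\rho^2-1)^{(N-2)/2}u_1^{N-2}\right)\mathrm{d}u_1
\end{equation*}
for $\rho>1$; the first term contributes $\approx 2V_{N-2}/\rho$ and the subtracted term $\approx 2V_{N-2}/((N-1)\rho)$, producing the factor $\frac{N-2}{N-1}$ and, after expanding $(1-u_1^2)^{(N-2)/2}=1+O(\rho^{-2})$ and $(\rho^2-1)^{(N-2)/2}=\rho^{N-2}(1+O(\rho^{-2}))$, the error $O(\rho^{-2})=O(\|\mathbf{w}\|^2/\delta_{\mathbf{w},\mathbf{z}})$. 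For $\rho\leq 1$ the constraint is vacuous and $\mathcal{I}=V_{N-1}$, which the claimed formula absorbs since the relative error is then $O(1)$; your separate casework there is unnecessary. You need to redo the cross-section with the annulus before the rest of your bookkeeping can be salvaged.
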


\begin{proof}
Let $(\mathbf{g}_3, \dots, \mathbf{g}_N)$ be an orthonormal basis of
$\left( \mathbb{R} \mathbf{w} \oplus \mathbb{R} \mathbf{z} \right)^{\perp}$ and set
\begin{equation*}
\mathbf{g}_2 =
\frac{||\mathbf{w}||^2 \mathbf{z}-\langle \mathbf{w}, \mathbf{z} \rangle \mathbf{w}}
{\delta_{\mathbf{w}, \mathbf{z}}^{1/2} \cdot ||\mathbf{w}||}.
\end{equation*}
We have $\mathbf{g}_2 \in \mathbb{R} \mathbf{w} \oplus \mathbb{R} \mathbf{z}$,
$\langle \mathbf{g}_2, \mathbf{w} \rangle =0$ and $||\mathbf{g}_2|| = 1$ so the family
$(\mathbf{g}_2, \dots, \mathbf{g}_N)$ is an orthonormal basis of $(\mathbb{R}\mathbf{w})^{\perp}$. It follows that
\begin{equation*}
\mathcal{I}(\mathbf{w}, \mathbf{z}) = \vol \left( \left\{ (u_2,\mathbf{u}) \in \mathbb{R}^{N-1} :
\frac{\delta_{\mathbf{w}, \mathbf{z}}}{||\mathbf{w}||^2} \cdot u_2^2 \leq u_2^2 + ||\mathbf{u}||^2 \leq 1
\right\} \right).
\end{equation*}
If $||\mathbf{w}||^2 \geq \delta_{\mathbf{w}, \mathbf{z}}$ then $\mathcal{I}(\mathbf{w}, \mathbf{z}) = V_{N-1}$ and the claimed estimate holds. We now handle the case where $||\mathbf{w}||^2 < \delta_{\mathbf{w}, \mathbf{z}}$. Integrating over $\mathbf{u}$ we obtain
\begin{align*}
\mathcal{I}(\mathbf{w}, \mathbf{z}) & = 2 V_{N-2}
\int_{0}^{||\mathbf{w}||/\delta_{\mathbf{w}, \mathbf{z}}^{1/2}} \left( \left( 1-u_2^2 \right)^{(N-2)/2} -
\left( \frac{\delta_{\mathbf{w}, \mathbf{z}}}{||\mathbf{w}||^2} - 1 \right)^{(N-2)/2} u_2^{N-2} \right) \mathrm{d} u_2 \\
& = 2 V_{N-2} \int_{0}^{||\mathbf{w}||/\delta_{\mathbf{w}, \mathbf{z}}^{1/2}}
\left( 1 - \left( \frac{\delta_{\mathbf{w}, \mathbf{z}}}{||\mathbf{w}||^2}\right)^{(N-2)/2} u_2^{N-2} \right)
\left( 1 +O \left( \frac{||\mathbf{w}||^2}{\delta_{\mathbf{w}, \mathbf{z}}} \right) \right) \mathrm{d} u_2 \\
& = 2 \frac{N-2}{N-1} V_{N-2} \frac{||\mathbf{w}||}{\delta_{\mathbf{w}, \mathbf{z}}^{1/2}}
\left( 1 + O \left( \frac{||\mathbf{w}||^2}{\delta_{\mathbf{w}, \mathbf{z}}} \right) \right),
\end{align*}
which completes the proof.
\end{proof}

Next, for $\mathbf{w}, \mathbf{z} \in \mathbb{R}^N$, we let
\begin{equation}
\label{Definition volume loc}
\mathcal{J}(\mathbf{w}, \mathbf{z}) = \vol \left( \left\{ \mathbf{t} \in \mathbb{R}^N :
|\langle \mathbf{w}, \mathbf{t} \rangle|, |\langle\mathbf{z}, \mathbf{t} \rangle| \leq ||\mathbf{t}|| \leq 1 \right\} \right).
\end{equation}
We shall establish the following estimate for this quantity.

\begin{lemma}
\label{Lemma volume loc}
Let $N \geq 3$ and let $\mathbf{w}, \mathbf{z} \in \mathbb{R}^N$ be two linearly independent vectors. Then 
\begin{equation*}
\mathcal{J}(\mathbf{w}, \mathbf{z}) = 4 \frac{N-2}{N} V_{N-2}
\frac1{\delta_{\mathbf{w},\mathbf{z}}^{1/2}} \left( 1 + O \left( \min \left\{ 1, \frac{(||\mathbf{w}||+ ||\mathbf{z}||)^2}{\delta_{\mathbf{w},\mathbf{z}}} \right\} \right) \right),
\end{equation*}
where the implied constant depends at most on $N$.
\end{lemma}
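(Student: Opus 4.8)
The plan is to evaluate $\mathcal{J}(\mathbf{w},\mathbf{z})$ by decomposing $\mathbb{R}^N$ along $W=\mathbb{R}\mathbf{w}\oplus\mathbb{R}\mathbf{z}$ and its orthogonal complement $W^{\perp}$, which has dimension $N-2\ge1$. Writing $\mathbf{t}=\mathbf{t}'+\mathbf{u}$ with $\mathbf{t}'\in W$ and $\mathbf{u}\in W^{\perp}$ (identified with $\mathbb{R}^{N-2}$ via an orthonormal basis), I would parametrise $W$ by the pair $(s_1,s_2)=(\langle\mathbf{w},\mathbf{t}\rangle,\langle\mathbf{z},\mathbf{t}\rangle)=(\langle\mathbf{w},\mathbf{t}'\rangle,\langle\mathbf{z},\mathbf{t}'\rangle)$. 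Since $\mathbf{w}$ and $\mathbf{z}$ are linearly independent this is a linear isomorphism $W\to\mathbb{R}^2$ whose Gram matrix is $\left(\begin{smallmatrix}||\mathbf{w}||^2 & \langle\mathbf{w},\mathbf{z}\rangle\\ \langle\mathbf{w},\mathbf{z}\rangle & ||\mathbf{z}||^2\end{smallmatrix}\right)$, of determinant $\delta_{\mathbf{w},\mathbf{z}}$. Hence $\mathrm{d}\mathbf{t}=\delta_{\mathbf{w},\mathbf{z}}^{-1/2}\,\mathrm{d}s_1\,\mathrm{d}s_2\,\mathrm{d}\mathbf{u}$ and $||\mathbf{t}||^2=\phi(s_1,s_2)+||\mathbf{u}||^2$, where $\phi(s_1,s_2)=\delta_{\mathbf{w},\mathbf{z}}^{-1}\bigl(||\mathbf{z}||^2s_1^2-2\langle\mathbf{w},\mathbf{z}\rangle s_1s_2+||\mathbf{w}||^2s_2^2\bigr)$ is the quadratic form of the inverse Gram matrix. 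This recasts the definition \eqref{Definition volume loc} as
\begin{equation*}
\mathcal{J}(\mathbf{w},\mathbf{z})=\delta_{\mathbf{w},\mathbf{z}}^{-1/2}\int_{\mathbf{u}\in\mathbb{R}^{N-2}}\vol\left(\left\{(s_1,s_2)\in\mathbb{R}^2:\ |s_1|,|s_2|\le\left(\phi(s_1,s_2)+||\mathbf{u}||^2\right)^{1/2}\le1\right\}\right)\mathrm{d}\mathbf{u}.
\end{equation*}

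The inner region is contained in the square $[-1,1]^2$ and the integrand vanishes unless $||\mathbf{u}||\le1$, so $\mathcal{J}(\mathbf{w},\mathbf{z})\le4V_{N-2}\,\delta_{\mathbf{w},\mathbf{z}}^{-1/2}$ unconditionally. Together with $\mathcal{J}(\mathbf{w},\mathbf{z})\ge0$ this already settles the lemma when $\kappa:=(||\mathbf{w}||+||\mathbf{z}||)^2/\delta_{\mathbf{w},\mathbf{z}}\ge\tfrac12$, since then $\min\{1,\kappa\}\gg1$. I may thus assume $\kappa<\tfrac12$. On the region defining the inner volume one has $|s_1|,|s_2|\le||\mathbf{t}||$, whence, by Cauchy--Schwarz, $\phi(s_1,s_2)\le\kappa||\mathbf{t}||^2=\kappa\bigl(\phi(s_1,s_2)+||\mathbf{u}||^2\bigr)$; therefore $\phi(s_1,s_2)\le\kappa||\mathbf{u}||^2/(1-\kappa)$ and $||\mathbf{u}||^2\le||\mathbf{t}||^2\le(1+O(\kappa))||\mathbf{u}||^2$. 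It follows that this inner region is contained in $\{|s_1|,|s_2|\le(1+O(\kappa))||\mathbf{u}||\}$ and is empty unless $||\mathbf{u}||\le1$; conversely, if $||\mathbf{u}||\le1-\kappa$ then on the square $\{|s_1|,|s_2|\le||\mathbf{u}||\}$ one has $\phi(s_1,s_2)\le\kappa||\mathbf{u}||^2$ and hence $\bigl(\phi(s_1,s_2)+||\mathbf{u}||^2\bigr)^{1/2}\le(1+\kappa)^{1/2}||\mathbf{u}||\le1$, so that this square is contained in the inner region. Consequently the inner volume is at least $4||\mathbf{u}||^2$ when $||\mathbf{u}||\le1-\kappa$ and at most $4(1+O(\kappa))||\mathbf{u}||^2$ when $||\mathbf{u}||\le1$ (and $0$ when $||\mathbf{u}||>1$).

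Integrating over $\mathbf{u}\in\mathbb{R}^{N-2}$ and using $\int_{||\mathbf{u}||\le\rho}||\mathbf{u}||^2\,\mathrm{d}\mathbf{u}=\frac{N-2}{N}V_{N-2}\rho^N$, both the resulting lower bound $4\frac{N-2}{N}V_{N-2}(1-\kappa)^N$ and the upper bound $4(1+O(\kappa))\frac{N-2}{N}V_{N-2}$ equal $4\frac{N-2}{N}V_{N-2}(1+O(\kappa))$. Hence $\mathcal{J}(\mathbf{w},\mathbf{z})=4\frac{N-2}{N}V_{N-2}\,\delta_{\mathbf{w},\mathbf{z}}^{-1/2}(1+O(\kappa))$, which combined with the case $\kappa\ge\tfrac12$ gives the claim. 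The lemma is thus essentially a direct computation; the one delicate point is the error bookkeeping in the middle step, namely showing that for each $\mathbf{u}$ the inner region is trapped between two concentric squares of half-side $||\mathbf{u}||(1+O(\kappa))$, with the outer one truncated by the constraint $||\mathbf{u}||\le1$ — it is near $||\mathbf{u}||=1$ that the normalisation $||\mathbf{t}||\le1$ interacts with the defining inequalities and must be handled carefully. (An alternative would be to mirror the proof of Lemma~\ref{Lemma volume mixed} using an orthonormal basis adapted to $\mathbf{w},\mathbf{z}$, trading the Jacobian $\delta_{\mathbf{w},\mathbf{z}}^{-1/2}$ for a sheared integration region.)
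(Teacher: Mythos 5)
Your proof is correct and follows essentially the same route as the paper: the change of variables $(s_1,s_2)=(\langle\mathbf{w},\mathbf{t}\rangle,\langle\mathbf{z},\mathbf{t}\rangle)$ on $\mathbb{R}\mathbf{w}\oplus\mathbb{R}\mathbf{z}$ with Jacobian $\delta_{\mathbf{w},\mathbf{z}}^{-1/2}$ is exactly the paper's substitution $\mathbf{t}=\mathbf{C}\mathbf{u}$, the trivial bound $\mathcal{J}\ll\delta_{\mathbf{w},\mathbf{z}}^{-1/2}$ disposes of the case $\kappa\gg1$ in both arguments, and your sandwich of each slice between squares of half-side $||\mathbf{u}||(1+O(\kappa))$ is just a fibrewise reorganisation of the paper's sandwich between the regions $\mathcal{K}^{(1)}$ and $\mathcal{K}^{(2)}$, both ending with $\int_{\mathcal{B}_{N-2}(1)}||\mathbf{v}||^2\,\mathrm{d}\mathbf{v}=\frac{N-2}{N}V_{N-2}$. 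The only cosmetic point is that the bound $\phi(s_1,s_2)\le\kappa\,||\mathbf{t}||^2$ comes from the triangle inequality applied to $||s_1\mathbf{z}-s_2\mathbf{w}||$ rather than Cauchy--Schwarz, but the estimate itself is right.
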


\begin{proof}
Let $(\mathbf{f}_3, \dots, \mathbf{f}_{N})$ be an orthonormal basis of
$\left( \mathbb{R} \mathbf{w} \oplus \mathbb{R} \mathbf{z} \right)^{\perp}$ and let $\mathbf{C}$ be the $N \times N$ matrix
\begin{equation*}
\mathbf{C} = 
\begin{pmatrix}
\dfrac{||\mathbf{z}||^2 \mathbf{w}-\langle \mathbf{w}, \mathbf{z} \rangle \mathbf{z}}{\delta_{\mathbf{w},\mathbf{z}}} & \dfrac{-\langle \mathbf{w}, \mathbf{z} \rangle \mathbf{w}+||\mathbf{w}||^2\mathbf{z}}{\delta_{\mathbf{w},\mathbf{z}}}
& \mathbf{f}_3 & \cdots & \mathbf{f}_{N}
\end{pmatrix}.
\end{equation*}
Letting $\boldsymbol{0}$ be the zero vector of size $N-2$ and $\mathbf{I}_{N-2}$ be the identity matrix of size $N-2$, we note that $\mathbf{C}^T \mathbf{C}$ is a block diagonal matrix given by
\begin{equation*}
\mathbf{C}^T \mathbf{C} = \frac1{\delta_{\mathbf{w},\mathbf{z}}}
\begin{pmatrix}
||\mathbf{z}||^2 & - \langle \mathbf{w}, \mathbf{z} \rangle & \boldsymbol{0}^T \\
- \langle \mathbf{w}, \mathbf{z} \rangle & ||\mathbf{w}||^2 & \boldsymbol{0}^T \\
\boldsymbol{0} & \boldsymbol{0} & \delta_{\mathbf{w},\mathbf{z}} \mathbf{I}_{N-2}
\end{pmatrix}.
\end{equation*}
This yields in particular $|\det(\mathbf{C})| = 1/\delta_{\mathbf{w},\mathbf{z}}^{1/2}$. Making the change of variables
$\mathbf{t} = \mathbf{C} \mathbf{u}$ and letting $\mathbf{v}$ be the vector whose coordinates are the $N-2$ final coordinates of $\mathbf{u}$, we find that
\begin{equation*}
\mathcal{J}(\mathbf{w}, \mathbf{z}) = \frac1{\delta_{\mathbf{w},\mathbf{z}}^{1/2}} \cdot
\vol \left( \left\{ (u_1,u_2,\mathbf{v}) \in \mathbb{R}^N :
\max \left\{ u_1^2, u_2^2 \right\} \leq q_{\mathbf{w},\mathbf{z}}(u_1,u_2) + ||\mathbf{v}||^2 \leq 1 \right\} \right),
\end{equation*}
where we have introduced the positive definite quadratic form
\begin{equation*}
q_{\mathbf{w},\mathbf{z}}(u_1,u_2) = \frac{||u_1 \mathbf{z} - u_2 \mathbf{w}||^2}{\delta_{\mathbf{w},\mathbf{z}}}.
\end{equation*}
We see that $\mathcal{J}(\mathbf{w}, \mathbf{z}) \ll 1/{\delta_{\mathbf{w},\mathbf{z}}^{1/2}}$, so if
$(||\mathbf{w}||+||\mathbf{z}||)^2 \geq \delta_{\mathbf{w},\mathbf{z}}$ then the claimed estimate holds. We now deal with the case where $(||\mathbf{w}||+||\mathbf{z}||)^2 < \delta_{\mathbf{w},\mathbf{z}}$. We first note that if
$(u_1, u_2) \in \mathbb{R}^2$ satisfies the conditions $u_1^2, u_2^2 \leq 1$ then
\begin{equation*}
q_{\mathbf{w},\mathbf{z}}(u_1,u_2) \leq \frac{(||\mathbf{w}||+||\mathbf{z}||)^2}{\delta_{\mathbf{w},\mathbf{z}}}.
\end{equation*}
We thus deduce that
\begin{equation}
\label{Inequalities J}
\frac{\mathcal{K}^{(1)}(\mathbf{w}, \mathbf{z})}{\delta_{\mathbf{w},\mathbf{z}}^{1/2}} \leq
\mathcal{J}(\mathbf{w}, \mathbf{z}) \leq
\frac{\mathcal{K}^{(2)}(\mathbf{w}, \mathbf{z})}{\delta_{\mathbf{w},\mathbf{z}}^{1/2}},
\end{equation}
where
\begin{equation*}
\mathcal{K}^{(1)}(\mathbf{w}, \mathbf{z}) = \vol \left( \left\{ (u_1,u_2,\mathbf{v}) \in \mathbb{R}^N :
\max \left\{ u_1^2, u_2^2 \right\} \leq ||\mathbf{v}||^2 \leq 1 - \frac{(||\mathbf{w}||+||\mathbf{z}||)^2}{\delta_{\mathbf{w},\mathbf{z}}} \right\} \right),
\end{equation*}
and
\begin{equation*}
\mathcal{K}^{(2)}(\mathbf{w}, \mathbf{z}) = \vol \left( \left\{ (u_1,u_2,\mathbf{v}) \in \mathbb{R}^N :
\max \left\{u_1^2, u_2^2 \right\} - \frac{(||\mathbf{w}||+||\mathbf{z}||)^2}{\delta_{\mathbf{w},\mathbf{z}}} \leq ||\mathbf{v}||^2 \leq 1 \right\} \right).
\end{equation*}
Integrating over $u_1$ and $u_2$ we easily obtain that for any $i \in \{1, 2\}$ we have
\begin{equation*}
\mathcal{K}^{(i)}(\mathbf{w}, \mathbf{z}) =
4 \left( \int_{\mathcal{B}_{N-2}(1)} ||\mathbf{v}||^2 \mathrm{d} \mathbf{v} \right)
\left( 1 + O \left( \frac{(||\mathbf{w}||+||\mathbf{z}||)^2}{\delta_{\mathbf{w},\mathbf{z}}} \right) \right).
\end{equation*}
Recalling the inequalities \eqref{Inequalities J} we see that
\begin{equation*}
\mathcal{J}(\mathbf{w}, \mathbf{z}) =
\frac{4}{\delta_{\mathbf{w},\mathbf{z}}^{1/2}} \left( \int_{\mathcal{B}_{N-2}(1)} ||\mathbf{v}||^2 \mathrm{d} \mathbf{v} \right) \left( 1 + O \left( \frac{(||\mathbf{w}||+||\mathbf{z}||)^2}{\delta_{\mathbf{w},\mathbf{z}}} \right) \right).
\end{equation*}
Finally, a straightforward calculation involving spherical coordinates shows that
\begin{equation*}
\int_{\mathcal{B}_{N-2}(1)} ||\mathbf{v}||^2 \mathrm{d} \mathbf{v} = \frac{N-2}{N} V_{N-2},
\end{equation*}
which completes the proof.
\end{proof}
 
\subsection{Inverses of lattice determinants on average}

\label{Section inverses determinants}

Let $\mathbf{x}, \mathbf{y} \in \mathbb{Z}_{\mathrm{prim}}^{n+1}$ be two linearly independent vectors. The lattice
$\Lambda_{\nu_{d,n}(\mathbf{x})} \cap \Lambda_{\nu_{d,n}(\mathbf{y})}$ features heavily in our work and we shall need to be able to control the inverse of its determinant on average. Recall that the quantities
$\mathcal{G}(\mathbf{x},\mathbf{y})$ and $\mathfrak{d}_2(\mathbf{x},\mathbf{y})$ were respectively introduced in Definitions~\ref{Definition G} and \ref{Definition d}. It follows from Lemmas~\ref{Lemma determinant global} and \ref{Lemma G} that
\begin{equation}
\label{Identity determinant}
\det(\Lambda_{\nu_{d,n}(\mathbf{x})}\cap \Lambda_{\nu_{d,n}(\mathbf{y})})=
\frac{\det\left(\mathbb{Z}\nu_{d,n}(\mathbf{x})\oplus \mathbb{Z}\nu_{d,n}(\mathbf{y})\right)}{\mathcal{G}(\mathbf{x},\mathbf{y})}.
\end{equation}
We start by proving the following pointwise bounds.

\begin{lemma}
\label{Lemma upper and lower bounds det}
Let $d,n \geq 2$ and let $\mathbf{x}, \mathbf{y} \in \mathbb{Z}_{\mathrm{prim}}^{n+1}$ be two linearly independent vectors. We have
\begin{equation*} 
\mathfrak{d}_2(\mathbf{x},\mathbf{y}) \cdot ||\mathbf{x}||^{d-1} ||\mathbf{y}||^{d-1} \ll
\det(\Lambda_{\nu_{d,n}(\mathbf{x})}\cap \Lambda_{\nu_{d,n}(\mathbf{y})}) \ll ||\mathbf{x}||^d ||\mathbf{y}||^d.
\end{equation*}
\end{lemma}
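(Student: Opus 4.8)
The plan is to deduce both bounds from the identity \eqref{Identity determinant}, which reduces the statement to estimating $\det(\mathbb{Z}\nu_{d,n}(\mathbf{x})\oplus \mathbb{Z}\nu_{d,n}(\mathbf{y}))$: indeed $\mathcal{G}(\mathbf{x},\mathbf{y})\geq 1$, while by Lemma~\ref{Lemma d2(x,y)} together with the formula $\det(\mathbb{Z}\mathbf{x}\oplus\mathbb{Z}\mathbf{y})^2=||\mathbf{x}||^2||\mathbf{y}||^2-\langle\mathbf{x},\mathbf{y}\rangle^2$ one has $\mathfrak{d}_2(\mathbf{x},\mathbf{y})\mathcal{G}(\mathbf{x},\mathbf{y})=\det(\mathbb{Z}\mathbf{x}\oplus\mathbb{Z}\mathbf{y})$. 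A preliminary remark, obtained by comparing $||\nu_{d,n}(\mathbf{z})||^2=\sum_{|\alpha|=d}\mathbf{z}^{2\alpha}$ with the multinomial expansion of $||\mathbf{z}||^{2d}$, is that $||\nu_{d,n}(\mathbf{z})||\asymp||\mathbf{z}||^d$. The upper bound then follows at once, since $\det(\Lambda_{\nu_{d,n}(\mathbf{x})}\cap\Lambda_{\nu_{d,n}(\mathbf{y})})\leq\det(\mathbb{Z}\nu_{d,n}(\mathbf{x})\oplus\mathbb{Z}\nu_{d,n}(\mathbf{y}))\leq||\nu_{d,n}(\mathbf{x})||\cdot||\nu_{d,n}(\mathbf{y})||\ll||\mathbf{x}||^d||\mathbf{y}||^d$.

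For the lower bound I would use that $\det(\mathbb{Z}\nu_{d,n}(\mathbf{x})\oplus\mathbb{Z}\nu_{d,n}(\mathbf{y}))=||\nu_{d,n}(\mathbf{x})||\cdot\operatorname{dist}\left(\nu_{d,n}(\mathbf{y}),\mathbb{R}\nu_{d,n}(\mathbf{x})\right)$, and bound the distance from below by pairing against a well-chosen linear functional. Write $\mathbf{y}=\mathbf{y}_\parallel+\mathbf{y}_\perp$ for the orthogonal decomposition of $\mathbf{y}$ relative to $\mathbb{R}\mathbf{x}$, so that $||\mathbf{y}_\perp||^2=\left(||\mathbf{x}||^2||\mathbf{y}||^2-\langle\mathbf{x},\mathbf{y}\rangle^2\right)/||\mathbf{x}||^2$, and consider the degree $d$ form $Q(\mathbf{X})=\langle\mathbf{y}_\perp,\mathbf{X}\rangle\langle\mathbf{y},\mathbf{X}\rangle^{d-1}$. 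Since $\langle\mathbf{y}_\perp,\mathbf{x}\rangle=0$ we have $Q(\mathbf{x})=0$, i.e. $\langle\omega_d(Q),\nu_{d,n}(\mathbf{x})\rangle=0$, whereas $\langle\omega_d(Q),\nu_{d,n}(\mathbf{y})\rangle=Q(\mathbf{y})=||\mathbf{y}_\perp||^2||\mathbf{y}||^{2(d-1)}$, and $\omega_d(Q)\neq\boldsymbol{0}$ because $\mathbf{x},\mathbf{y}$ are linearly independent. Granting the coefficient bound $||\omega_d(Q)||\ll||\mathbf{y}_\perp||\cdot||\mathbf{y}||^{d-1}$ discussed below, duality yields $\operatorname{dist}(\nu_{d,n}(\mathbf{y}),\mathbb{R}\nu_{d,n}(\mathbf{x}))\geq |Q(\mathbf{y})|/||\omega_d(Q)||\gg||\mathbf{y}_\perp||\cdot||\mathbf{y}||^{d-1}$. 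Combining this with $||\nu_{d,n}(\mathbf{x})||\asymp||\mathbf{x}||^d$ and the formula for $||\mathbf{y}_\perp||$ gives $\det(\mathbb{Z}\nu_{d,n}(\mathbf{x})\oplus\mathbb{Z}\nu_{d,n}(\mathbf{y}))\gg\left(||\mathbf{x}||^2||\mathbf{y}||^2-\langle\mathbf{x},\mathbf{y}\rangle^2\right)^{1/2}||\mathbf{x}||^{d-1}||\mathbf{y}||^{d-1}$, and one concludes via \eqref{Identity determinant} and Lemma~\ref{Lemma d2(x,y)}, which identifies the square root with $\mathfrak{d}_2(\mathbf{x},\mathbf{y})\mathcal{G}(\mathbf{x},\mathbf{y})$.

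It remains to bound $||\omega_d(Q)||$. I would view $\mathbf{u}\mapsto\omega_d\left(\langle\mathbf{u},\mathbf{X}\rangle\langle\mathbf{y},\mathbf{X}\rangle^{d-1}\right)$ as a linear map $\mathbb{R}^{n+1}\to\mathbb{R}^{N_{d,n}}$ and estimate its operator norm by its Frobenius norm: its $i$-th column is $\omega_d(X_i\langle\mathbf{y},\mathbf{X}\rangle^{d-1})$, which has the same Euclidean norm as $\omega_{d-1}(\langle\mathbf{y},\mathbf{X}\rangle^{d-1})$ since multiplication by $X_i$ carries distinct monomials to distinct monomials, and a further multinomial estimate gives $||\omega_{d-1}(\langle\mathbf{y},\mathbf{X}\rangle^{d-1})||\ll||\mathbf{y}||^{d-1}$; summing over the $n+1$ columns and evaluating the map at $\mathbf{u}=\mathbf{y}_\perp$ gives the required bound. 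The one genuinely delicate point of the whole argument is the choice of the auxiliary form $Q$: it must vanish at $\mathbf{x}$, be of size $||\mathbf{y}_\perp||^2||\mathbf{y}||^{2(d-1)}$ at $\mathbf{y}$, and have coefficient norm $\ll||\mathbf{y}_\perp||\cdot||\mathbf{y}||^{d-1}$ simultaneously, and the factorisation $Q=\langle\mathbf{y}_\perp,\mathbf{X}\rangle\langle\mathbf{y},\mathbf{X}\rangle^{d-1}$ is designed to do all three at once; once it is fixed the remaining estimates are routine.
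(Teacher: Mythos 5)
Your proof is correct. The upper bound is identical to the paper's (the identity \eqref{Identity determinant} together with $\mathcal{G}(\mathbf{x},\mathbf{y})\geq 1$ and $||\nu_{d,n}(\mathbf{z})||\ll||\mathbf{z}||^d$), but your lower bound takes a genuinely different route. The paper expands $\det(\mathbb{Z}\nu_{d,n}(\mathbf{x})\oplus\mathbb{Z}\nu_{d,n}(\mathbf{y}))^2$ as the sum of squares of all $2\times 2$ minors, discards everything except the minors indexed by monomial pairs of the special shape $x_i x_j^{d-1}$, recognises the surviving sum as $\tfrac12\bigl(||\mathbf{x}||^2||\mathbf{y}||^2||\psi_d(\mathbf{x})||^2||\psi_d(\mathbf{y})||^2-\langle\mathbf{x},\mathbf{y}\rangle^2\langle\psi_d(\mathbf{x}),\psi_d(\mathbf{y})\rangle^2\bigr)$ for $\psi_d(\mathbf{z})=(z_0^{d-1},\dots,z_n^{d-1})$, and finishes with Cauchy--Schwarz. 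You instead read the determinant as base times height, $||\nu_{d,n}(\mathbf{x})||\cdot\operatorname{dist}(\nu_{d,n}(\mathbf{y}),\mathbb{R}\nu_{d,n}(\mathbf{x}))$, and lower-bound the height by pairing against the coefficient vector of the auxiliary form $\langle\mathbf{y}_\perp,\mathbf{X}\rangle\langle\mathbf{y},\mathbf{X}\rangle^{d-1}$, which vanishes at $\mathbf{x}$, is large at $\mathbf{y}$, and has small coefficient norm. All the supporting steps check out: $\langle\omega_d(Q),\nu_{d,n}(\mathbf{z})\rangle=Q(\mathbf{z})$ is exactly the pairing the paper itself exploits elsewhere, the column-norm computation for $\mathbf{u}\mapsto\omega_d(\langle\mathbf{u},\mathbf{X}\rangle\langle\mathbf{y},\mathbf{X}\rangle^{d-1})$ is valid since multiplication by $X_i$ is injective on monomials, and the final assembly via Lemma~\ref{Lemma d2(x,y)} and \eqref{Identity determinant} gives precisely the stated bound. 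Your argument is more conceptual and is in fact the same duality philosophy the paper uses to prove Lemma~\ref{Key lemma one vector} (exhibiting an explicit vector orthogonal to $\nu_{d,n}(\mathbf{x})$ with controlled norm); what the paper's bare-hands minor computation buys in exchange is that it never needs the operator-norm estimate for the multiplication map, only the single multinomial identity behind the displayed sum, so the two proofs are of comparable length and either would serve.
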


\begin{proof}
Since $\mathcal{G}(\mathbf{x},\mathbf{y})\geq 1$, the equality \eqref{Identity determinant} implies that
\begin{equation*}
\det(\Lambda_{\nu_{d,n}(\mathbf{x})}\cap \Lambda_{\nu_{d,n}(\mathbf{y})}) \leq
||\nu_{d,n}(\mathbf{x})|| \cdot ||\nu_{d,n}(\mathbf{y})||,
\end{equation*}
and the claimed upper bound follows.

Recall that the set $\mathscr{M}_{d,n}$ of monomials of degree $d$ in $n+1$ variables was introduced in
Definition \ref{Definition Polynomials}. By the definition \eqref{Definition det} of the determinant of a lattice we have
\begin{equation}
\label{Equality lattice}
\det( \mathbb{Z} \nu_{d,n}(\mathbf{x}) \oplus \mathbb{Z} \nu_{d,n}(\mathbf{y}) )^2 =
||\nu_{d,n}(\mathbf{x})||^2 ||\nu_{d,n}(\mathbf{y})||^2 - \langle \nu_{d,n}(\mathbf{x}), \nu_{d,n}(\mathbf{y}) \rangle^2,
\end{equation}
which can be rewritten as
\begin{equation*}
\det( \mathbb{Z} \nu_{d,n}(\mathbf{x}) \oplus \mathbb{Z} \nu_{d,n}(\mathbf{y}) )^2 = \frac1{2}
\sum_{P_1,P_2 \in \mathscr{M}_{d,n}} \left( P_1(\mathbf{x}) P_2(\mathbf{y}) - P_2(\mathbf{x}) P_1(\mathbf{y}) \right)^2.
\end{equation*}
We thus see that
\begin{equation*}
\det( \mathbb{Z} \nu_{d,n}(\mathbf{x}) \oplus \mathbb{Z} \nu_{d,n}(\mathbf{y}) )^2 \geq
\frac1{4} \sum_{i_1, j_1, i_2, j_2 = 0}^n
\left(x_{i_1} x_{j_1}^{d-1} y_{i_2} y_{j_2}^{d-1} - x_{i_2} x_{j_2}^{d-1} y_{i_1} y_{j_1}^{d-1} \right)^2.
\end{equation*}
It follows that
\begin{equation*}
\det( \mathbb{Z} \nu_{d,n}(\mathbf{x}) \oplus \mathbb{Z} \nu_{d,n}(\mathbf{y}) )^2 \geq
\frac1{2} \left( ||\mathbf{x}||^2 ||\mathbf{y}||^2 ||\psi_d(\mathbf{x})||^2 ||\psi_d(\mathbf{y})||^2 -
\langle \mathbf{x}, \mathbf{y} \rangle^2 \langle \psi_d(\mathbf{x}), \psi_d(\mathbf{y}) \rangle^2 \right),
\end{equation*}
where the map $\psi_d : \mathbb{R}^{n+1} \to \mathbb{R}^{n+1}$ is defined for $\mathbf{z} \in \mathbb{R}^{n+1}$ by
\begin{equation*}
\psi_d(\mathbf{z}) = \left(z_0^{d-1}, \dots, z_n^{d-1} \right).
\end{equation*}
The Cauchy--Schwarz inequality
\begin{equation*}
\langle \psi_d(\mathbf{x}), \psi_d(\mathbf{y}) \rangle^2 \leq ||\psi_d(\mathbf{x})||^2 ||\psi_d(\mathbf{y})||^2
\end{equation*}
and the lower bounds $||\psi_d(\mathbf{x})|| \gg ||\mathbf{x}||^{d-1}$ and $||\psi_d(\mathbf{y})|| \gg ||\mathbf{y}||^{d-1}$ eventually yield
\begin{equation*}
\det( \mathbb{Z} \nu_{d,n}(\mathbf{x}) \oplus \mathbb{Z} \nu_{d,n}(\mathbf{y}) )^2 \gg
(||\mathbf{x}||^2 ||\mathbf{y}||^2 - \langle \mathbf{x}, \mathbf{y} \rangle^2) \cdot
||\mathbf{x}||^{2(d-1)} ||\mathbf{y}||^{2(d-1)}.
\end{equation*}
An application of Lemma~\ref{Lemma d2(x,y)} completes the proof.
\end{proof}

We define
\begin{equation}
\label{Definition E}
E_{d,n}(B) = \sum_{(\mathbf{x}, \mathbf{y}) \in \Omega_{d,n}(B)} 
\frac1{\det ( \Lambda_{ \nu_{d,n}(\mathbf{x})}\cap \Lambda_{\nu_{d,n}(\mathbf{y})})},
\end{equation}
where
\begin{equation}
\label{Definition Omega}
\Omega_{d,n}(B) =
\left\{ (\mathbf{x}, \mathbf{y}) \in \mathbb{Z}_{\mathrm{prim}}^{n+1} \times \mathbb{Z}_{\mathrm{prim}}^{n+1} :
\begin{array}{l l}
||\mathbf{x}||, ||\mathbf{y}|| \leq B^{1/(n+1-d)} \\ 
\mathbf{x} \neq \pm \mathbf{y}
\end{array}
\right\},
\end{equation}
and we shall prove the following sharp upper and lower bounds.

\begin{lemma}
\label{Lemma E}
Let $d \geq 2$ and $n \geq d$ with $(d,n) \neq (2,2)$. We have
\begin{equation*}
B^2 \ll E_{d,n}(B) \ll B^2.
\end{equation*}
\end{lemma}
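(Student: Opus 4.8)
The plan is to bound $E_{d,n}(B)$ above and below by quantities of order $B^2$, using the pointwise estimates for $\det(\Lambda_{\nu_{d,n}(\mathbf{x})} \cap \Lambda_{\nu_{d,n}(\mathbf{y})})$ from Lemma~\ref{Lemma upper and lower bounds det} to reduce to counting pairs $(\mathbf{x},\mathbf{y})$, and then invoking the elementary lattice point estimates together with Lemma~\ref{Lemma l_r(X,Y)}. Throughout I would set $P = B^{1/(n+1-d)}$, so that $\Omega_{d,n}(B)$ consists of pairs of primitive vectors of Euclidean norm at most $P$ with $\mathbf{x} \neq \pm \mathbf{y}$; such pairs are automatically linearly independent. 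Two numerical facts drive the final summations: the hypotheses $d \geq 2$, $n \geq d$, $(d,n) \neq (2,2)$ force $n \geq 3$, hence $n - 2 \geq 1$, and they also give $n + 1 - d \geq 1$.

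For the lower bound I would use $\det(\Lambda_{\nu_{d,n}(\mathbf{x})} \cap \Lambda_{\nu_{d,n}(\mathbf{y})}) \ll ||\mathbf{x}||^d ||\mathbf{y}||^d$ from Lemma~\ref{Lemma upper and lower bounds det}, so that $E_{d,n}(B) \gg \sum_{(\mathbf{x},\mathbf{y}) \in \Omega_{d,n}(B)} ||\mathbf{x}||^{-d} ||\mathbf{y}||^{-d}$, and then simply restrict to primitive $\mathbf{x}, \mathbf{y}$ with $P/2 < ||\mathbf{x}||, ||\mathbf{y}|| \leq P$ and $\mathbf{x} \neq \pm \mathbf{y}$. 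The number of primitive vectors in a dyadic annulus of radius $\asymp P$ in $\mathbb{Z}^{n+1}$ is $\gg P^{n+1}$, so there are $\gg P^{2(n+1)}$ admissible pairs once the $O(P^{n+1})$ pairs on the diagonal are discarded, and each contributes $\gg P^{-2d}$; this yields $E_{d,n}(B) \gg P^{2(n+1-d)} = B^2$.

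The upper bound is the main point. Here I would instead use the lower estimate $\det(\Lambda_{\nu_{d,n}(\mathbf{x})} \cap \Lambda_{\nu_{d,n}(\mathbf{y})}) \gg \mathfrak{d}_2(\mathbf{x},\mathbf{y}) \, ||\mathbf{x}||^{d-1} ||\mathbf{y}||^{d-1}$, giving
\[
E_{d,n}(B) \ll \sum_{(\mathbf{x},\mathbf{y}) \in \Omega_{d,n}(B)} \frac{1}{\mathfrak{d}_2(\mathbf{x},\mathbf{y}) \, ||\mathbf{x}||^{d-1} ||\mathbf{y}||^{d-1}}.
\]
I would partition the pairs dyadically by $||\mathbf{x}|| \asymp R \leq P$, $||\mathbf{y}|| \asymp S \leq P$ and $\mathfrak{d}_2(\mathbf{x},\mathbf{y}) \asymp \Delta$, where $1 \leq \Delta \ll RS$, the upper truncation of $\Delta$ being forced by $\mathfrak{d}_2(\mathbf{x},\mathbf{y}) \leq ||\mathbf{x}|| \cdot ||\mathbf{y}||$ (Lemma~\ref{Lemma d2(x,y)}). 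By Lemma~\ref{Lemma l_r(X,Y)} with $r = 2$ — for which the logarithmic factor disappears, as $2\min\{r-2,1\} = 0$ — the number of linearly independent pairs in such a box is $\ll R^2 S^2 \Delta^{n-1}$. Hence the contribution of the box is
\[
\ll \frac{R^2 S^2 \Delta^{n-1}}{\Delta \, R^{d-1} S^{d-1}} = R^{3-d} S^{3-d} \Delta^{n-2},
\]
and summing over dyadic $\Delta \ll RS$ (the sum being essentially geometric since $n - 2 \geq 1$) gives $R^{3-d} S^{3-d}(RS)^{n-2} = R^{n+1-d} S^{n+1-d}$. Summing over dyadic $R, S \leq P$, and using $n + 1 - d \geq 1$ so that $\sum_{R \leq P} R^{n+1-d} \ll P^{n+1-d} = B$ in each variable, produces $E_{d,n}(B) \ll B^2$.

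The only input beyond routine geometry of numbers is the estimate $\ell_{2,n}(X,Y;\Delta) \ll X^2 Y^2 \Delta^{n-1}$ of Lemma~\ref{Lemma l_r(X,Y)}, which quantifies how rarely two bounded vectors span a rank-$2$ sublattice of small determinant. I expect this — more precisely, the interplay between the $\Delta$-saving in that bound and the truncation $\Delta \ll RS$ — to be the crux: without it the $\mathfrak{d}_2$-small pairs, on which the weight $1/\mathfrak{d}_2$ is largest, could in principle swamp the sum, and the remaining manipulations are just bookkeeping with geometric series.
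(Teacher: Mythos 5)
Your proposal is correct and follows essentially the same route as the paper: the lower bound via the upper estimate $\det(\Lambda_{\nu_{d,n}(\mathbf{x})}\cap\Lambda_{\nu_{d,n}(\mathbf{y})}) \ll ||\mathbf{x}||^d||\mathbf{y}||^d$, and the upper bound via the lower estimate involving $\mathfrak{d}_2(\mathbf{x},\mathbf{y})$, a dyadic decomposition in $||\mathbf{x}||$, $||\mathbf{y}||$, $\mathfrak{d}_2(\mathbf{x},\mathbf{y})$, and Lemma~\ref{Lemma l_r(X,Y)} with $r=2$, using $n\geq 3$ to sum the geometric series in $\Delta$. You have correctly identified the crux (the $\Delta^{n-1}$ saving in $\ell_{2,n}$ against the weight $1/\mathfrak{d}_2$), and your slightly more detailed lower-bound count is harmless.
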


\begin{proof}
The upper bound in Lemma~\ref{Lemma upper and lower bounds det} yields
\begin{equation*}
E_{d,n}(B) \gg \sum_{(\mathbf{x}, \mathbf{y}) \in \Omega_{d,n}(B)} \frac1{||\mathbf{x}||^d ||\mathbf{y}||^d},
\end{equation*}
which immediately gives $E_{d,n}(B) \gg B^2$. Recall the definition \eqref{Definition l_r(X,Y)} of the quantity
$\ell_{2,n}(X,Y;\Delta_2)$. We proceed to break the sizes of $||\mathbf{x}||$, $||\mathbf{y}||$ and
$\mathfrak{d}_2(\mathbf{x},\mathbf{y})$ into dyadic intervals. Recalling that we have the upper bound
\eqref{Upper bound trivial drxy} and using the lower bound in Lemma~\ref{Lemma upper and lower bounds det} we get
\begin{equation*}
E_{d,n}(B) \ll \sum_{X, Y \ll B^{1/(n+1-d)}} \ \sum_{\Delta_2 \ll XY} \frac1{\Delta_2 (XY)^{d-1}} \cdot
\ell_{2,n}(X,Y;\Delta_2).
\end{equation*}
Applying Lemma~\ref{Lemma l_r(X,Y)} we deduce that
\begin{align*}
E_{d,n}(B) & \ll \sum_{X, Y \ll B^{1/(n+1-d)}} \frac1{(XY)^{d-3}} \sum_{\Delta_2 \ll XY} \Delta_2^{n-2} \\
& \ll \sum_{X, Y \ll B^{1/(n+1-d)}} (XY)^{n+1-d},
\end{align*}
since $n \geq 3$. The upper bound $E_{d,n}(B) \ll B^2$ follows, which completes the proof.
\end{proof}

Recall the respective definitions \eqref{Definition alpha}, \eqref{Definition W} and \eqref{Definition w} of $\alpha$, $W$ and $w$. We let $\rad(W)$ denote the radical of the integer $W$, that is
\begin{equation}
\label{Definition radical}
\rad(W) = \prod_{p \leq w} p.
\end{equation}
Given two linearly independent vectors $\mathbf{x}, \mathbf{y} \in \mathbb{Z}^{n+1}$ we put
\begin{equation}
\label{Definition Delta}
\Delta(\mathbf{x}, \mathbf{y}) = \frac{||\nu_{d,n}(\mathbf{x})|| \cdot ||\nu_{d,n}(\mathbf{y})||}
{\det ( \mathbb{Z} \nu_{d,n}(\mathbf{x}) \oplus \mathbb{Z} \nu_{d,n}(\mathbf{y}))},
\end{equation}
and
\begin{equation}
\label{Definition localised error}
\mathcal{E}_{\mathbf{x},\mathbf{y}} (B) = \min \left\{ 1, \frac{\Delta(\mathbf{x}, \mathbf{y})^2}{\alpha^2} \right\} + \boldsymbol{1}_{\mathcal{G}(\mathbf{x}, \mathbf{y}) \nmid W/\rad(W)}.
\end{equation}
Bearing this in mind, we let
\begin{equation}
\label{Definition F}
F_{d,n}(B) = \sum_{(\mathbf{x}, \mathbf{y}) \in \Omega_{d,n}(B)}
\frac{\mathcal{E}_{\mathbf{x},\mathbf{y}}(B)}
{\det ( \Lambda_{ \nu_{d,n}(\mathbf{x})}\cap \Lambda_{\nu_{d,n}(\mathbf{y})})},
\end{equation}
and we shall seek a saving over the trivial upper bound $F_{d,n}(B) \ll B^2$ that follows from taking
$\mathcal{E}_{\mathbf{x},\mathbf{y}}(B) \leq 2$ and applying Lemma~\ref{Lemma E}.

\begin{lemma}
\label{Lemma F}
Let $d \geq 2$ and $n \geq d$ with $(d,n) \neq (2,2)$. We have
\begin{equation*}
F_{d,n}(B) \ll \frac{B^2}{(\log B)^{1/2}}.
\end{equation*}
\end{lemma}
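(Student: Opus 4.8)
The plan is to estimate $F_{d,n}(B)$ by splitting the error factor $\mathcal{E}_{\mathbf{x},\mathbf{y}}(B)$ into its two pieces and treating each separately, in both cases breaking the sum over $(\mathbf{x},\mathbf{y}) \in \Omega_{d,n}(B)$ into dyadic ranges for $\|\mathbf{x}\|$, $\|\mathbf{y}\|$, and $\mathfrak{d}_2(\mathbf{x},\mathbf{y})$, and using the lower bound of Lemma~\ref{Lemma upper and lower bounds det} together with the counting bound of Lemma~\ref{Lemma l_r(X,Y)} with $r=2$. Write $F_{d,n}(B) \ll F_{d,n}^{(1)}(B) + F_{d,n}^{(2)}(B)$, where $F^{(1)}$ carries the factor $\min\{1, \Delta(\mathbf{x},\mathbf{y})^2/\alpha^2\}$ and $F^{(2)}$ carries the indicator $\boldsymbol{1}_{\mathcal{G}(\mathbf{x},\mathbf{y}) \nmid W/\rad(W)}$. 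Recall $\alpha = \log B$.

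For $F^{(1)}$, note that by the definition \eqref{Definition Delta} and Lemma~\ref{Lemma d2(x,y)} (via Lemma~\ref{Lemma G}) we have $\Delta(\mathbf{x},\mathbf{y}) \asymp \|\mathbf{x}\|^{d-1}\|\mathbf{y}\|^{d-1}\|\mathbf{x}\|\|\mathbf{y}\|/\det(\mathbb{Z}\nu_{d,n}(\mathbf{x})\oplus\mathbb{Z}\nu_{d,n}(\mathbf{y})) \asymp \|\mathbf{x}\|\|\mathbf{y}\|\mathcal{G}(\mathbf{x},\mathbf{y})/\mathfrak{d}_2(\mathbf{x},\mathbf{y})$ after cancellation, so $\Delta$ is large precisely when $\mathfrak{d}_2(\mathbf{x},\mathbf{y})$ is small. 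Fixing dyadic parameters $X,Y$ for $\|\mathbf{x}\|,\|\mathbf{y}\|$ and $\Delta_2$ for $\mathfrak{d}_2(\mathbf{x},\mathbf{y})$, the lower bound $\det(\Lambda_{\nu_{d,n}(\mathbf{x})}\cap\Lambda_{\nu_{d,n}(\mathbf{y})}) \gg \Delta_2 (XY)^{d-1}$ and the bound $\min\{1,\Delta^2/\alpha^2\} \ll (XY)^2/(\alpha^2 \Delta_2^2)$ (ignoring the $\mathcal{G}$ factor, which only helps) combine with Lemma~\ref{Lemma l_r(X,Y)}, giving a per-box contribution $\ll \frac{(XY)^2}{\alpha^2 \Delta_2^2}\cdot\frac{1}{\Delta_2 (XY)^{d-1}}\cdot X^2 Y^2 \Delta_2^{n-1} \ll \alpha^{-2}(XY)^{5-d}\Delta_2^{n-4}$. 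Summing $\Delta_2 \ll XY$ (using $n \geq 3$, so care is needed when $n=3$ where $\Delta_2^{n-4}=\Delta_2^{-1}$ and the sum is $O(1)$, versus $n\geq 4$ where it is $\ll (XY)^{n-3}$), one checks the total is $\ll \alpha^{-2} B^2 \log B \ll B^2/\log B \ll B^2/(\log B)^{1/2}$; the extra $\log B$ (or $(\log B)^2$ in the quartic-threefold edge case via Lemma~\ref{Lemma l_r(X,Y)}'s logarithmic factor) is comfortably absorbed since $\alpha^2 = (\log B)^2$.

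For $F^{(2)}$, the key observation is that $\mathcal{G}(\mathbf{x},\mathbf{y}) \nmid W/\rad(W)$ forces $\mathcal{G}(\mathbf{x},\mathbf{y})$ to have either a prime factor $p > w$, or a prime $p \leq w$ dividing it to a power exceeding $\lceil \log w/\log p\rceil$, i.e. exceeding roughly $\log w/\log p$; in either case $\mathcal{G}(\mathbf{x},\mathbf{y})$ admits a divisor (a prime power) of size $> w^{c}$ for an absolute $c>0$ once $B$ is large. Since $\mathcal{G}(\mathbf{x},\mathbf{y}) = \mathcal{G}(\nu_{d,n}(\mathbf{x}),\nu_{d,n}(\mathbf{y}))$ by Lemma~\ref{Lemma G}, and $\mathcal{G}(\mathbf{x},\mathbf{y}) \mid x_i y_j - x_j y_i$ for all $i,j$, the condition $e \mid \mathcal{G}(\mathbf{x},\mathbf{y})$ confines $\mathbf{y}$ to lie, modulo $e$, along the line through $\mathbf{x}$, cutting down the count of $\mathbf{y}$ by a factor $\asymp e^{-n}$ (for $\|\mathbf{y}\| \gg e$). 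Running this over the $O(1)$-per-value choices of the large prime-power divisor $e$ and using the trivial $\det(\Lambda_{\nu_{d,n}(\mathbf{x})}\cap\Lambda_{\nu_{d,n}(\mathbf{y})}) \gg \|\mathbf{x}\|^{d-1}\|\mathbf{y}\|^{d-1}\mathfrak{d}_2 \gg \|\mathbf{x}\|^d\|\mathbf{y}\|^d/\mathcal{G}(\mathbf{x},\mathbf{y})$ together with $\mathcal{G}(\mathbf{x},\mathbf{y}) \ll \min\{\|\mathbf{x}\|,\|\mathbf{y}\|\}$, one bounds $F^{(2)}_{d,n}(B)$ by $B^2$ times $\sum_{e > w^c} e^{-n+O(1)} \log e$, which is $\ll B^2/w^{c'} \ll B^2/(\log B)^{1/2}$ since $w = \log B/\log\log B$ grows faster than any fixed power of $\log B$ divided by itself—more precisely $w^{c'} \geq (\log B)^{1/2}$ for large $B$.

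The main obstacle is the bookkeeping in $F^{(1)}$: one must keep track of the interplay between $\mathcal{G}(\mathbf{x},\mathbf{y})$, $\mathfrak{d}_2(\mathbf{x},\mathbf{y})$, and $\Delta(\mathbf{x},\mathbf{y})$ to verify that the $\min\{1,\Delta^2/\alpha^2\}$ factor genuinely produces a saving of $\alpha^2 = (\log B)^2$ uniformly across all dyadic boxes, and in particular that the boundary cases $n=3$ and the quartic-threefold case $(d,n)=(4,4)$—where the dyadic sums over $\Delta_2$ are only logarithmically, rather than polynomially, convergent—still leave room after dividing by $(\log B)^{1/2}$. Since $(d,n) \neq (2,2)$ guarantees $n \geq 3$, Lemma~\ref{Lemma l_r(X,Y)} is always applicable with a power saving in $\Delta_2$ except at the genuinely critical exponent, and in every case the factor $\alpha^{-2}$ dominates the at-most-$\log^2$ loss, so the bound $F_{d,n}(B) \ll B^2/(\log B)^{1/2}$ follows.
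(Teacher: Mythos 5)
Your decomposition into $F^{(1)}$ and $F^{(2)}$ is exactly the paper's, and your treatment of $F^{(2)}$ is sound in spirit (the paper does it more cleanly: $\mathcal{G}(\mathbf{x},\mathbf{y})\nmid W/\rad(W)$ forces $\mathcal{G}(\mathbf{x},\mathbf{y})>w$, hence $\mathfrak{d}_2(\mathbf{x},\mathbf{y})\leq \|\mathbf{x}\|\|\mathbf{y}\|/w$ by Lemma~\ref{Lemma d2(x,y)}, and then Lemma~\ref{Lemma l_r(X,Y)} with the truncated range $\Delta_2\ll XY/w$ gives $B^2/w^{n-2}$ directly, with no need for your congruence count of $\mathbf{y}$ modulo a large prime power). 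The problem is in $F^{(1)}$.

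There you bound $\min\{1,\Delta^2/\alpha^2\}$ by $(XY)^2/(\alpha^2\Delta_2^2)$ throughout the whole range of $\Delta_2$. This bound exceeds $1$ precisely when $\Delta_2<XY/\alpha$, i.e.\ it is lossy exactly where the minimum saturates at $1$, and the resulting per-box quantity $\alpha^{-2}(XY)^{5-d}\Delta_2^{n-4}$ then has a $\Delta_2$-sum that converges at the \emph{bottom} when $n=3$. In that case the surviving power of $XY$ is $(XY)^{5-d}=(XY)^{(n+1-d)+1}$, one full power too many: for $(d,n)=(2,3)$ you get $\ll B^3/\alpha^2$ and for $(d,n)=(3,3)$ you get $\ll B^4/\alpha^2$, neither of which is $\ll B^2/(\log B)^{1/2}$. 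Both pairs are in the scope of the lemma (only $(2,2)$ is excluded), so this is a genuine failure, not an edge case you can wave at; your parenthetical "the sum is $O(1)$" for $n=3$ is true but is precisely what kills the argument, because the saving must then come from the $XY$-exponent and it does not. The paper's fix is to interpolate, using $\min\{1,t\}\leq t^{1/4}$, i.e.
\begin{equation*}
\min\left\{1,\frac{(XY)^2}{\Delta_2^2\alpha^2}\right\}\leq\frac{(XY)^{1/2}}{\Delta_2^{1/2}\alpha^{1/2}},
\end{equation*}
which keeps the $\Delta_2$-exponent at $n-5/2\geq 1/2>0$ so the dyadic sum is dominated by $\Delta_2\asymp XY$ and yields $(XY)^{n+1-d}$, at the cost of reducing the saving from $\alpha^2$ to $\alpha^{1/2}$ --- this is exactly why the lemma's exponent is $1/2$ and not $2$. (Equivalently you could split the $\Delta_2$-range at $XY/\alpha$ and use the trivial bound $\min\leq 1$ below the threshold; that also works and still saves a power of $\alpha$.) As a smaller point, your claim $\Delta(\mathbf{x},\mathbf{y})\asymp\|\mathbf{x}\|\|\mathbf{y}\|\mathcal{G}(\mathbf{x},\mathbf{y})/\mathfrak{d}_2(\mathbf{x},\mathbf{y})$ has $\mathcal{G}$ on the wrong side; the correct upper bound is $\Delta\ll\|\mathbf{x}\|\|\mathbf{y}\|/(\mathcal{G}\,\mathfrak{d}_2)\leq\|\mathbf{x}\|\|\mathbf{y}\|/\mathfrak{d}_2$, which is what you in fact use.
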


\begin{proof}
We let 
\begin{equation*}
F^{(1)}_{d,n}(B) = \sum_{(\mathbf{x}, \mathbf{y}) \in \Omega_{d,n}(B)} \frac1
{\det ( \Lambda_{ \nu_{d,n}(\mathbf{x})}\cap \Lambda_{\nu_{d,n}(\mathbf{y})})} \cdot
\min \left\{ 1, \frac{\Delta(\mathbf{x}, \mathbf{y})^2}{\alpha^2} \right\},
\end{equation*}
and
\begin{equation*}
F^{(2)}_{d,n}(B) = \sum_{(\mathbf{x}, \mathbf{y}) \in \Omega_{d,n}(B)} 
\frac1{\det ( \Lambda_{ \nu_{d,n}(\mathbf{x})}\cap \Lambda_{\nu_{d,n}(\mathbf{y})})} \cdot
\boldsymbol{1}_{\mathcal{G}(\mathbf{x}, \mathbf{y}) \nmid W/\rad(W)},
\end{equation*}
so that
\begin{equation}
\label{Equality F}
F_{d,n}(B) = F^{(1)}_{d,n}(B) + F^{(2)}_{d,n}(B).
\end{equation}

We start by proving an upper bound for the sum $F^{(1)}_{d,n}(B)$. It follows from the equality
\eqref{Identity determinant} and the lower bound in Lemma~\ref{Lemma upper and lower bounds det} that for any linearly independent vectors $\mathbf{x}, \mathbf{y} \in \mathbb{Z}_{\mathrm{prim}}^{n+1}$, we have
\begin{equation*}
\Delta(\mathbf{x}, \mathbf{y}) \ll \frac{||\mathbf{x}|| \cdot ||\mathbf{y}||}{\mathfrak{d}_2(\mathbf{x},\mathbf{y})}.
\end{equation*}
Recall the definition \eqref{Definition l_r(X,Y)} of the quantity $\ell_{2,n}(X,Y;\Delta_2)$. Breaking the sizes of
$||\mathbf{x}||$, $||\mathbf{y}||$ and $\mathfrak{d}_2(\mathbf{x},\mathbf{y})$ into dyadic intervals and using again Lemma~\ref{Lemma upper and lower bounds det} we deduce that
\begin{equation*}
F^{(1)}_{d,n}(B) \ll \sum_{X, Y \ll B^{1/(n+1-d)}} \ \sum_{\Delta_2 \ll XY} \frac1{\Delta_2 (XY)^{d-1}}
\left( \min \left\{ 1, \frac{(XY)^2}{\Delta_2^2\alpha^2} \right\} \right) \ell_{2,n}(X,Y;\Delta_2).
\end{equation*}
Writing that
\begin{equation*}
\min \left\{ 1, \frac{(XY)^2}{\Delta_2^2\alpha^2} \right\} \leq \frac{(XY)^{1/2}}{\Delta_2^{1/2}\alpha^{1/2}},
\end{equation*}
appealing to Lemma~\ref{Lemma l_r(X,Y)} and using the fact that $n \geq 3$, we obtain
\begin{align}
\nonumber
F^{(1)}_{d,n}(B) & \ll \frac1{\alpha^{1/2}} \sum_{X, Y \ll B^{1/(n+1-d)}} \frac1{(XY)^{d-7/2}} \sum_{\Delta_2 \ll XY}
\Delta_2^{n-5/2} \\
\nonumber
& \ll \frac1{\alpha^{1/2}} \sum_{X, Y \ll B^{1/(n+1-d)}} (XY)^{n+1-d} \\
\label{Upper bound F^1}
& \ll \frac{B^2}{\alpha^{1/2}}.
\end{align}

We now consider the sum $F^{(2)}_{d,n}(B)$. For any prime number $p$ and any $m \geq 1$, we let $v_p(m)$ denote the $p$-adic valuation of $m$. Suppose that $m$ is a positive integer which does not divide $W/\rad(W)$. In view of the definition \eqref{Definition W} of $W$ this means that either there is a prime $p>w$ which divides $m$, or else there is a prime $p \leq w$ such that 
\begin{equation*}
v_p(m) > v_p\left(\frac{W}{\rad(W)}\right) = \left\lceil \frac{\log w}{\log p}\right\rceil \geq \frac{\log w}{\log p}.
\end{equation*}
In either case we deduce that $m>w$. As a result, Lemma~\ref{Lemma d2(x,y)} shows that for any linearly independent vectors $\mathbf{x}, \mathbf{y} \in \mathbb{Z}_{\mathrm{prim}}^{n+1}$ such that
$\mathcal{G}(\mathbf{x}, \mathbf{y}) \nmid W/\rad(W)$, we have
\begin{equation*}
\mathfrak{d}_2(\mathbf{x},\mathbf{y}) \leq \frac{||\mathbf{x}|| \cdot ||\mathbf{y}||}{w}.
\end{equation*}
Breaking the sizes of $||\mathbf{x}||$, $||\mathbf{y}||$ and $\mathfrak{d}_2(\mathbf{x},\mathbf{y})$ into dyadic intervals and using Lemma~\ref{Lemma upper and lower bounds det}, we thus see that
\begin{equation*}
F^{(2)}_{d,n}(B) \ll \sum_{X, Y \ll B^{1/(n+1-d)}} \ \sum_{\Delta_2 \ll XY/w} \frac1{\Delta_2 (XY)^{d-1}} \cdot
\ell_{2,n}(X,Y;\Delta_2).
\end{equation*}
Applying Lemma~\ref{Lemma l_r(X,Y)} and using the fact that $n \geq 3$ we derive
\begin{align}
\nonumber
F^{(2)}_{d,n}(B) & \ll \sum_{X, Y \ll B^{1/(n+1-d)}} \frac1{(XY)^{d-3}} \sum_{\Delta_2 \ll XY/w} \Delta_2^{n-2} \\
\nonumber
& \ll \frac1{w^{n-2}} \sum_{X, Y \ll B^{1/(n+1-d)}} (XY)^{n+1-d} \\
\label{Upper bound F^2}
& \ll \frac{B^2}{w^{n-2}}.
\end{align}
Recalling the respective definitions \eqref{Definition alpha} and \eqref{Definition w} of $\alpha$ and $w$, we see that combining the equality \eqref{Equality F} and the upper bounds \eqref{Upper bound F^1} and \eqref{Upper bound F^2} completes the proof.
\end{proof}

\subsection{A first moment bound}

\label{Section first}

Recall the definition \eqref{Definition counting} of the counting function $N_V(B)$. A result of the second author \cite[Theorem~$3$]{RandomFano} implies in particular that for $A \geq B^{1/(n+1-d)}$, we have
\begin{equation*}
\frac1{\# \mathbb{V}_{d,n}(A)} \sum_{V \in \mathbb{V}_{d,n}(A)} N_V(B) \ll \frac{B}{A}.
\end{equation*}
Unfortunately, we will require the allowable range of $A$ to be greater in the critical case $n=d$. We shall achieve this by using our work from Section~\ref{Section geometry of numbers}. In fact, although we elect not to do so here, it would be straightforward to obtain an asymptotic formula that improves upon \cite[Theorem~$3$]{RandomFano}.

Given an integer $R \geq 1$ and a lattice $\Lambda$ of rank $R$, recall that the successive minima
$\lambda_1(\Lambda), \dots, \lambda_R(\Lambda)$ of $\Lambda$ were introduced in
Definition~\ref{Definition successive}. We will employ this notation without further notice throughout
Sections~\ref{Section first} and \ref{Section second}.

\begin{lemma}
\label{Lemma first moment}
Let $d \geq 2$ and $n \geq d$. Assume that $A \geq B^{4/5}$. Then we have 
\begin{equation*}
\frac1{\# \mathbb{V}_{d,n}(A)} \sum_{V \in \mathbb{V}_{d,n}(A)} N_V(B) \ll \frac{B}{A}.
\end{equation*}
\end{lemma}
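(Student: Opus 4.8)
The plan is to expand $N_V(B)$ via \eqref{Equality counting}, interchange the order of summation, and reduce everything to a lattice point count. Since $\#\mathbb{V}_{d,n}(A)\asymp A^{N_{d,n}}$, it suffices to prove
\[
\sum_{\mathbf{x}\in\Xi_{d,n}(B)}\#\big((\Lambda_{\nu_{d,n}(\mathbf{x})}\smallsetminus\{\boldsymbol{0}\})\cap\mathcal{B}_{N_{d,n}}(A)\big)\ll A^{N_{d,n}-1}B.
\]
Write $R=N_{d,n}-1$ and $\lambda_j=\lambda_j(\Lambda_{\nu_{d,n}(\mathbf{x})})$. By Lemma~\ref{Lemma Schmidt upper bound} the inner count is $\ll\sum_{i=0}^{R-1}A^{R-i}/(\lambda_1\cdots\lambda_{R-i})$. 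Now $\nu_{d,n}(\mathbf{x})$ is primitive whenever $\mathbf{x}$ is, so \eqref{Det orthogonal} gives $\det(\Lambda_{\nu_{d,n}(\mathbf{x})})=||\nu_{d,n}(\mathbf{x})||\asymp||\mathbf{x}||^d$; moreover, running the proof of Lemma~\ref{Key lemma one vector} with vectors $\mathbf{a}_1,\dots,\mathbf{a}_n\in\Lambda_{\mathbf{x}}$ chosen to attain the successive minima of $\Lambda_{\mathbf{x}}$ up to a constant yields the sharper bound $\lambda_R\ll\lambda_n(\Lambda_{\mathbf{x}})$. Feeding these and Minkowski's theorem \eqref{Estimates Minkowski} into the above, we obtain for each $i$ both
\[
\frac{A^{R-i}}{\lambda_1\cdots\lambda_{R-i}}\ll\frac{A^{R-i}\,\lambda_n(\Lambda_{\mathbf{x}})^i}{||\mathbf{x}||^d}\qquad\text{and}\qquad\frac{A^{R-i}}{\lambda_1\cdots\lambda_{R-i}}\ll A^{R-i},
\]
the latter from $\lambda_j\ge1$.

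Summing over $\mathbf{x}$, the term $i=0$ contributes $\ll A^{R}\sum_{\mathbf{x}\in\Xi_{d,n}(B)}||\mathbf{x}||^{-d}\ll A^{R}B$, which is the expected main term. For $i\ge1$ one needs to know how often $\lambda_n(\Lambda_{\mathbf{x}})$ is large: the map $\mathbf{x}\mapsto\Lambda_{\mathbf{x}}$ is a bijection from $\mathbb{Z}_{\mathrm{prim}}^{n+1}/\{\pm1\}$ onto the primitive rank-$n$ sublattices of $\mathbb{Z}^{n+1}$, with $\det(\Lambda_{\mathbf{x}})=||\mathbf{x}||$, so Lemma~\ref{Lemma number lattices} together with a short linear-programming computation (using the Minkowski bound $\lambda_n(\Lambda_{\mathbf{x}})\ge\det(\Lambda_{\mathbf{x}})^{1/n}=||\mathbf{x}||^{1/n}$) shows that
\[
\#\left\{\mathbf{x}\in\mathbb{Z}_{\mathrm{prim}}^{n+1}:||\mathbf{x}||\asymp X,\ \lambda_n(\Lambda_{\mathbf{x}})\asymp\sigma\right\}\ll X^{n+2}\sigma^{-n}(\log X)^{O(1)}
\]
for $X^{1/n}\le\sigma\ll X$. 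Plugging this into the dyadic decomposition of $\sum_{\mathbf{x}}\lambda_n(\Lambda_{\mathbf{x}})^i||\mathbf{x}||^{-d}$ and invoking $n\ge d$ and $A\ge B^{4/5}$, a direct computation shows that the $i$-th term is $\ll A^{R}B$ — with a small positive power of $B$ to spare, which absorbs the logarithmic losses — whenever $i$ is not too large compared with $n$; for the remaining large values of $i$ one instead bounds the $i$-th term by $A^{R-i}\#\Xi_{d,n}(B)\ll A^{R-i}B^{(n+1)/(n+1-d)}$, which is again $\ll A^{R}B$ in that range. The two ranges of $i$ overlap precisely because $n\ge d\ge2$, and summing over $i$ completes the proof.

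The key difficulty is the critical case $n=d$ with $A$ close to $B^{4/5}$: there $\Xi_{d,n}(B)$ contains vectors $\mathbf{x}$ with $||\mathbf{x}||$ as large as $B$, far bigger than $A$, so the largest successive minima of $\Lambda_{\nu_{d,n}(\mathbf{x})}$ can greatly exceed $A$ and a crude treatment of the error terms fails. The point is that one must work with $\lambda_n(\Lambda_{\mathbf{x}})$ rather than the weaker quantity $n||\mathbf{x}||/\mathfrak{d}_2(\mathbf{x})$ appearing in Lemma~\ref{Key lemma one vector}: only the former has a clean distribution controlled by Lemma~\ref{Lemma number lattices}, and the lower bound $\lambda_n(\Lambda_{\mathbf{x}})\ge||\mathbf{x}||^{1/n}$ forces the dyadic sum over $\sigma$ to be dominated by the generic regime $\sigma\asymp||\mathbf{x}||^{1/n}$ — where the count above is essentially sharp — rather than by a wasteful tail. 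When $n\ge d+1$ the argument is far easier: for $B$ large one has $B^{1/(n+1-d)}\le B^{1/2}\le A$, hence $\lambda_R\ll||\mathbf{x}||\le A$ for every $\mathbf{x}\in\Xi_{d,n}(B)$ and only the $i=0$ term survives; this case is in any event already contained in \cite[Theorem~$3$]{RandomFano}.
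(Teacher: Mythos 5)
Your argument is correct, and it is essentially the paper's proof in different packaging: the same expansion and interchange of summation, the same use of the Veronese trick behind Lemma~\ref{Key lemma one vector} to control the top successive minimum of $\Lambda_{\nu_{d,n}(\mathbf{x})}$, the identity $\det(\Lambda_{\nu_{d,n}(\mathbf{x})})=||\nu_{d,n}(\mathbf{x})||\asymp ||\mathbf{x}||^{d}$, and the same counting input (Lemma~\ref{Lemma number lattices}) for how often that minimum is large. The only organisational difference is that the paper dichotomises on $\mu(\mathbf{x})=n||\mathbf{x}||/\mathfrak{d}_2(\mathbf{x})\lessgtr A$, treating the first range with Lemma~\ref{Lemma lattice pivotal} and the second with Lemma~\ref{Lemma R0} for the single choice $R_0=n-1$ together with the pre-packaged count $\ell_{2,n}(X;\Delta)$ of Lemma~\ref{Lemma l_r(X)}, whereas you keep every term of Lemma~\ref{Lemma Schmidt upper bound} and optimise term by term; both routes close. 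One correction to your closing discussion: the insistence that one \emph{must} pass from $n||\mathbf{x}||/\mathfrak{d}_2(\mathbf{x})$ to $\lambda_n(\Lambda_{\mathbf{x}})$ is a red herring. The $d=1$ case of the proof of Lemma~\ref{Key lemma one vector} shows $\lambda_1(\Lambda_{\mathbf{x}}^{\ast})\geq \mathfrak{d}_2(\mathbf{x})/||\mathbf{x}||$, the minimal rank $2$ primitive lattice through $\mathbf{x}$ gives the reverse inequality, and Lemma~\ref{Lemma Banaszczyk} plus the trivial transference bound then yield $\lambda_n(\Lambda_{\mathbf{x}})\asymp ||\mathbf{x}||/\mathfrak{d}_2(\mathbf{x})$; consequently your count $X^{n+2}\sigma^{-n}$ is exactly $\ell_{2,n}(X;\Delta)\ll X^{2}\Delta^{n}\log\Delta$ with $\Delta\asymp X/\sigma$, which is precisely what the paper uses. (Relatedly, for $i\geq n$ your dyadic sum over $\sigma$ is dominated by the extreme regime $\sigma\asymp X$ rather than the generic one, but this is exactly the range you already cover with the trivial bound $A^{R-i}\#\Xi_{d,n}(B)$, so nothing is lost.)
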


\begin{proof}
To begin with we clearly have 
\begin{equation*}
\sum_{V \in \mathbb{V}_{d,n}(A)} N_V(B) = \sum_{\substack{x \in \mathbb{P}^n(\mathbb{Q})\\ H(x)\leq B}}
\# \{ V \in \mathbb{V}_{d,n}(A) : x \in V(\mathbb{Q}) \}.
\end{equation*}
Recall the definition \eqref{Definition Xi} of the set $\Xi_{d,n}(B)$. We see that
\begin{equation*}
\sum_{V \in \mathbb{V}_{d,n}(A)} N_V(B) = \frac1{4} \sum_{\mathbf{x} \in \Xi_{d,n}(B)}
\# \left(\Lambda_{\nu_{d,n}(\mathbf{x})} \cap \mathbb{Z}_{\mathrm{prim}}^{N_{d,n}} \cap \mathcal{B}_{N_{d,n}}(A) \right).
\end{equation*}
Recall that the quantity $\mathfrak{d}_2(\mathbf{x})$ was introduced in Definition \ref{Definition d}. For
$\mathbf{x} \in \mathbb{Z}_{\mathrm{prim}}^{n+1}$, we define
\begin{equation}
\label{Definition mu(x)}
\mu(\mathbf{x}) = n \frac{||\mathbf{x}||}{\mathfrak{d}_2(\mathbf{x})},
\end{equation}
and we note that Lemma~\ref{Key lemma one vector} states that
\begin{equation}
\label{Replacement key lemma one vector}
\lambda_{N_{d,n}-1} \left(\Lambda_{\nu_{d,n}(\mathbf{x})}\right) \leq \mu(\mathbf{x}).
\end{equation}
It is convenient to set
\begin{equation*}
M_{d,n}^{(1)}(A,B) =
\frac1{4} \sum_{\substack{\mathbf{x} \in \Xi_{d,n}(B) \\ \mu(\mathbf{x}) \leq A}}
\# \left(\Lambda_{\nu_{d,n}(\mathbf{x})} \cap \mathbb{Z}_{\mathrm{prim}}^{N_{d,n}} \cap
\mathcal{B}_{N_{d,n}}(A) \right),
\end{equation*}
and
\begin{equation}
\label{Definition M2}
M_{d,n}^{(2)}(A,B) = \sum_{V \in \mathbb{V}_{d,n}(A)} N_V(B) - M_{d,n}^{(1)}(A,B).
\end{equation}

We first deal with the sum $M_{d,n}^{(1)}(A,B)$. We have the inequalities \eqref{Replacement key lemma one vector} and $A \geq \mu(\mathbf{x})$ so we are in position to apply Lemma~\ref{Lemma lattice pivotal} with $I=1$ and
$\gamma=1/2$, say. This allows us to derive the upper bound 
\begin{equation*}
\# \left(\Lambda_{\nu_{d,n}(\mathbf{x})} \cap \mathbb{Z}_{\mathrm{prim}}^{N_{d,n}} \cap
\mathcal{B}_{N_{d,n}}(A) \right) \ll \frac{A^{N_{d,n}-1}}{\det(\Lambda_{\nu_{d,n}(\mathbf{x})})}.
\end{equation*}
Moreover Lemma~\ref{Lemma determinant global} gives
$\det(\Lambda_{\nu_{d,n}(\mathbf{x})}) = ||\nu_{d,n}(\mathbf{x})|| \gg ||\mathbf{x}||^d$ so we immediately deduce that
\begin{equation}
\label{Upper bound M1}
M_{d,n}^{(1)}(A,B) \ll A^{N_{d,n}-1} B.
\end{equation}

We now handle the sum $M_{d,n}^{(2)}(A,B)$. We have the inequalities \eqref{Replacement key lemma one vector} and $A < \mu(\mathbf{x})$ so we can apply the first part of Lemma~\ref{Lemma R0} with $M=1/2$ and $R_0=n-1$. This yields
\begin{equation*}
\# \left(\Lambda_{\nu_{d,n}(\mathbf{x})} \cap \mathbb{Z}_{\mathrm{prim}}^{N_{d,n}} \cap
\mathcal{B}_{N_{d,n}}(A) \right) \ll \frac{A^{N_{d,n}-n}}{||\mathbf{x}||^d}
\left( \frac{||\mathbf{x}||}{\mathfrak{d}_2(\mathbf{x})} \right)^{n-1} + A^{N_{d,n}-n-1}.
\end{equation*}
Recall the definition \eqref{Definition l_r(X)} of the quantity $\ell_{2,n}(X;\Delta_0)$. Breaking the sizes of
$||\mathbf{x}||$ and $\mathfrak{d}_2(\mathbf{x})$ into dyadic intervals we see that we have
\begin{equation*}
M_{d,n}^{(2)}(A,B) \ll A^{N_{d,n}-1} \sum_{X \ll B^{1/(n+1-d)}} \ \sum_{\Delta_0 \ll X/A} 
\left( \frac{X^{n-d-1}}{A^{n-1} \Delta_0^{n-1}} + \frac1{A^n} \right) \ell_{2,n}(X;\Delta_0).
\end{equation*}
It follows from Lemma~\ref{Lemma l_r(X)} that
\begin{align*}
M_{d,n}^{(2)}(A,B) & \ll A^{N_{d,n}-1} (\log B) \sum_{X \ll B^{1/(n+1-d)}} \ \sum_{\Delta_0 \ll X/A} 
\left( \frac{X^{n+1-d} \Delta_0}{A^{n-1}} + \frac{X^2 \Delta_0^n}{A^n} \right) \\
& \ll A^{N_{d,n}-1} (\log B) \sum_{X \ll B^{1/(n+1-d)}} \left( \frac{X^{n+2-d}}{A^n} + \frac{X^{n+2}}{A^{2n}} \right) \\
& \ll A^{N_{d,n}-1} \left( \frac{B^{1+1/(n+1-d)}}{A^n} + \frac{B^{(n+2)/(n+1-d)}}{A^{2n}} \right) \log B.
\end{align*}
As a result, we see that the assumption $A \geq B^{4/5}$ implies in particular that for any $d \geq 2$ and $n \geq d$ we have
\begin{equation}
\label{Upper bound M2}
M_{d,n}^{(2)}(A,B) \ll A^{N_{d,n}-1} B.
\end{equation}
We complete the proof by putting together the equality \eqref{Definition M2} and the upper bounds
\eqref{Upper bound M1} and \eqref{Upper bound M2}, and by using the lower bound \eqref{Lower bound Vdn}.
\end{proof}

\subsection{Second moment estimates}

\label{Section second}

We now turn to the proof of three second moment estimates. We start by setting some notation. Recall the expression \eqref{Equality counting} for the global counting function $N_V(B)$ and the definition
\eqref{Definition local counting} of the localised counting function $N_V^{\mathrm{loc}}(B)$. We introduce the second moment of $N_V(B)$ with its diagonal contribution removed, that is
\begin{equation*}
D_{d,n}(A,B) = \sum_{ V \in \mathbb{V}_{d,n}(A)} N_V(B)^2 - \sum_{ V \in \mathbb{V}_{d,n}(A)} N_V(B).
\end{equation*}
Similarly, we also define the mixed moment and the second moment of $N_V^{\mathrm{loc}}(B)$ with their respective diagonal contributions removed. We thus set
\begin{equation}
\label{Definition D mixed}
D_{d,n}^{\mathrm{mix}}(A,B) = \sum_{V \in \mathbb{V}_{d,n}(A)} N_V(B) N_V^{\mathrm{loc}}(B) -
\sum_{V \in \mathbb{V}_{d,n}(A)} \Delta_V^{\mathrm{mix}}(B),
\end{equation}
where
\begin{equation}
\label{Definition Delta mixed}
\Delta_V^{\mathrm{mix}}(B) = \frac1{2} \cdot \frac{\alpha W}{||\mathbf{a}_V||}
\sum_{\substack{\mathbf{x} \in \Xi_{d,n}(B) \\
\mathbf{a}_V \in \Lambda_{\nu_{d,n}(\mathbf{x})}}}
\frac1{||\nu_{d,n}(\mathbf{x})||},
\end{equation}
and
\begin{equation}
\label{Definition D loc}
D_{d,n}^{\mathrm{loc}}(A,B) = \sum_{V \in \mathbb{V}_{d,n}(A)} N_V^{\mathrm{loc}}(B)^2 -
\sum_{V \in \mathbb{V}_{d,n}(A)} \Delta_V^{\mathrm{loc}}(B),
\end{equation}
where
\begin{equation}
\label{Definition Delta loc}
\Delta_V^{\mathrm{loc}}(B)= \frac1{2} \cdot \frac{\alpha^2 W^2}{||\mathbf{a}_V||^2}
\sum_{\substack{\mathbf{x} \in \Xi_{d,n}(B) \\
\mathbf{a}_V \in \Lambda_{\nu_{d,n}(\mathbf{x})}^{(W)} \cap \mathcal{C}_{\nu_{d,n}(\mathbf{x})}^{(\alpha)}}}
\frac1{||\nu_{d,n}(\mathbf{x})||^2}.
\end{equation}
We shall use our work in Section~\ref{Section geometry of numbers} to establish asymptotic formulae for the quantities $D_{d,n}(A,B)$, $D_{d,n}^{\mathrm{mix}}(A,B)$ and $D_{d,n}^{\mathrm{loc}}(A,B)$.

For any lattice $\Lambda \subset \mathbb{Z}^{N_{d,n}}$, any bounded region
$\mathcal{R} \subset \mathbb{R}^{N_{d,n}}$ and any integer $k \geq 0$, it is convenient to set
\begin{equation}
\label{Definition Sk star}
\mathcal{S}_k^{\ast} \left(\Lambda; \mathcal{R} \right) =
\sum_{\mathbf{a} \in \Lambda \cap \mathbb{Z}^{N_{d,n}}_{\mathrm{prim}} \cap \mathcal{R}} \frac1{||\mathbf{a}||^k},
\end{equation}
and
\begin{equation}
\label{Definition Sk}
\mathcal{S}_k \left(\Lambda; \mathcal{R} \right) =
\sum_{\mathbf{a} \in (\Lambda \smallsetminus \boldsymbol{0}) \cap \mathcal{R}} \frac1{||\mathbf{a}||^k}.
\end{equation}
In addition, we recall that $V_N$ denotes the volume of the unit ball $\mathcal{B}_N(1)$ in $\mathbb{R}^N$ for any
$N \geq 1$. Moreover, for any $d,n \geq 2$ we set
\begin{equation}
\label{Definition iota}
\iota_{d,n} = \frac{V_{N_{d,n}-2}}{8 \zeta(N_{d,n}-2)}.
\end{equation}
Finally, recall the definition \eqref{Definition E} of the quantity $E_{d,n}(B)$.

We start by handling the quantity $D_{d,n}(A,B)$ in the case $(d,n) \neq (4,4)$.

\begin{lemma} 
\label{Lemma D}
Let $d \geq 2$ and $n \geq d$ with $(d,n) \notin \{(2,2), (3,3), (4,4)\}$. Assume that $A \geq B/(\log B)$. Then we have 
\begin{equation*}
D_{d,n}(A,B) = \iota_{d,n} A^{N_{d,n}-2} E_{d,n}(B) \left( 1 + O \left( \frac{(\log A)^{9/2}}{A^{1/2}} \right) \right).
\end{equation*}
\end{lemma}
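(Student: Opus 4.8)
The plan is to expand the second moment combinatorially into a sum of lattice point counts in a Euclidean ball, and then to feed each count through the geometry of numbers estimates of Section~\ref{Section geometry of numbers} before resumming. Starting from \eqref{Equality counting}, writing $\mathbf{a}_V$ for a primitive coefficient vector of $V$, squaring $N_V(B)$ and isolating the diagonal terms $\mathbf{x} = \pm\mathbf{y}$ (which reconstitute $N_V(B)$), and then swapping the order of summation over $V$ and over $(\mathbf{x},\mathbf{y})$, one obtains with the notation \eqref{Definition Omega}
\begin{equation*}
D_{d,n}(A,B) = \frac1{8} \sum_{(\mathbf{x},\mathbf{y}) \in \Omega_{d,n}(B)} \# \left( \Lambda_{\nu_{d,n}(\mathbf{x})} \cap \Lambda_{\nu_{d,n}(\mathbf{y})} \cap \mathbb{Z}_{\mathrm{prim}}^{N_{d,n}} \cap \mathcal{B}_{N_{d,n}}(A) \right).
\end{equation*}
For linearly independent $\mathbf{x},\mathbf{y}$ the lattice $\Lambda := \Lambda_{\nu_{d,n}(\mathbf{x})} \cap \Lambda_{\nu_{d,n}(\mathbf{y})}$ is a primitive sublattice of $\mathbb{Z}^{N_{d,n}}$ of rank $R := N_{d,n}-2 \geq 8$; sieving out imprimitive vectors by M\"obius inversion reduces each inner term to a sum of the shape $\sum_{k \geq 1} \mu(k) \, \#((\Lambda \smallsetminus \boldsymbol{0}) \cap \mathcal{B}_{N_{d,n}}(A/k))$.

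\emph{Geometry of numbers step.} Using Lemma~\ref{Key lemma two vectors} I would bound $\lambda_R(\Lambda) \leq Y_0(\mathbf{x},\mathbf{y})$, where $Y_0 = 3n^2 \max\{\mathfrak{d}_2(\mathbf{x},\mathbf{y})/\mathfrak{d}_3(\mathbf{x},\mathbf{y}),\, \|\mathbf{x}\|\,\|\mathbf{y}\|/\mathfrak{d}_2(\mathbf{x},\mathbf{y})^2\}$, and split $\Omega_{d,n}(B)$ into the pairs with $Y_0(\mathbf{x},\mathbf{y})$ below a suitable threshold (a small power of $A$) and those above it. For a pair below the threshold, $\lambda_R(\Lambda)$ is comfortably smaller than $A$, so for the indices $k$ with $A/k \geq \lambda_R(\Lambda)$ one applies Lemma~\ref{Lemma lattice pivotal} with $I=1$ and $\gamma=1/2$ (making the region $\mathcal{C}^{(1/2)}_{\mathbf{v}}$ all of $\mathbb{R}^{N_{d,n}}$ by Cauchy--Schwarz), picking up the expected main term together with an error $O(\lambda_R(\Lambda)(A/k)^{R-1}/\det(\Lambda))$; the remaining short range is handled by Lemma~\ref{Lemma R0} (with $M=1/2$) and Lemma~\ref{Lemma Schmidt upper bound}. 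Summing over $k$ (the relevant series converge since $R \geq 8$) and subtracting $V_R A^R/(\zeta(R)\det(\Lambda))$ gives a pointwise estimate of the form
\begin{equation*}
\# \left( \Lambda \cap \mathbb{Z}_{\mathrm{prim}}^{N_{d,n}} \cap \mathcal{B}_{N_{d,n}}(A) \right) = \frac{V_R A^R}{\zeta(R)\det(\Lambda)} + O\left( \frac{Y_0(\mathbf{x},\mathbf{y}) \, A^{R-1}\log A}{\det(\Lambda)} + A \right).
\end{equation*}
For pairs above the threshold one discards the asymptotic and bounds the count crudely via Lemma~\ref{Lemma Schmidt upper bound}, retaining only that such pairs are rare.

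\emph{Resummation.} Summing the main term over $\Omega_{d,n}(B)$ and invoking \eqref{Definition E} and \eqref{Definition iota} produces exactly $\iota_{d,n} A^{N_{d,n}-2} E_{d,n}(B)$, so the whole matter is to prove that every error contribution is $\ll \iota_{d,n} A^{N_{d,n}-2} E_{d,n}(B)\,(\log A)^{9/2}/A^{1/2}$. For this I would use the lower bound $\det(\Lambda) \gg \mathfrak{d}_2(\mathbf{x},\mathbf{y}) \, \|\mathbf{x}\|^{d-1}\|\mathbf{y}\|^{d-1}$ of Lemma~\ref{Lemma upper and lower bounds det}, the estimate $E_{d,n}(B) \asymp B^2$ of Lemma~\ref{Lemma E}, and the hypothesis $A \geq B/\log B$, which ties $B^{1/(n+1-d)}$ — hence the sizes of $\mathbf{x}$ and $\mathbf{y}$ — to $A$. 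Decomposing $\|\mathbf{x}\|,\|\mathbf{y}\|,\mathfrak{d}_2(\mathbf{x},\mathbf{y}),\mathfrak{d}_3(\mathbf{x},\mathbf{y})$ dyadically, the contributions of the below-threshold error term and of the rare above-threshold pairs are controlled by Lemmas~\ref{Lemma l_r(X)} and~\ref{Lemma l_r(X,Y)}, which quantify how seldom $\mathfrak{d}_2/\mathfrak{d}_3$ or $\|\mathbf{x}\|\|\mathbf{y}\|/\mathfrak{d}_2^2$ is large; carrying this out term by term, and choosing the threshold to balance the two sources of error, then yields the stated asymptotic.

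\emph{Main obstacle.} The delicate part is exactly this final balancing: the bound $\lambda_R(\Lambda) \leq Y_0(\mathbf{x},\mathbf{y})$ is far from sharp on average, so the saving $A^{-1/2}$ can only be obtained by weighing the rareness of exceptional pairs — those producing a rank-$3$ lattice of unusually small determinant, equivalently an unusually large $\lambda_{N_{d,n}-2}(\Lambda)$ — against the lower bound for $\det(\Lambda)$. It is the failure of this balance for quartic threefolds that forces $(d,n) = (4,4)$ to be excluded here and treated separately, using the sharper counting inputs (Lemmas~\ref{Lemma l1}, \ref{Lemma l1 convenient}, \ref{Lemma l56} and~\ref{Lemma det44}) of Section~\ref{Section quartic threefolds}.
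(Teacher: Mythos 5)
Your proposal follows essentially the same route as the paper: the same expansion of the off-diagonal second moment over pairs in $\Omega_{d,n}(B)$, the same use of Lemma~\ref{Key lemma two vectors} to control $\lambda_{N_{d,n}-2}$ of the lattice $\Lambda_{\nu_{d,n}(\mathbf{x})} \cap \Lambda_{\nu_{d,n}(\mathbf{y})}$, the same dichotomy according to the size of $\mu(\mathbf{x},\mathbf{y})$ with Lemma~\ref{Lemma lattice pivotal} on one side and Lemma~\ref{Lemma R0} on the other, and the same resummation via Lemmas~\ref{Lemma upper and lower bounds det}, \ref{Lemma E}, \ref{Lemma l_r(X)} and \ref{Lemma l_r(X,Y)}. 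The only detail to correct is the threshold, which must be taken equal to $A$ itself rather than a small power of $A$: the discarded above-threshold pairs still carry a main-term mass $\asymp A^{N_{d,n}-2}/\det\left(\Lambda_{\nu_{d,n}(\mathbf{x})} \cap \Lambda_{\nu_{d,n}(\mathbf{y})}\right)$, and the Chebyshev-type bound $\sum_{\mu > T_0} 1/\det \leq T_0^{-1} \sum \mu/\det$ only beats the target error $B^2 (\log A)^{9/2}/A^{1/2}$ when $T_0 \geq A$ (moreover the constraint $n \in \{d, d+1\}$, which the paper exploits to handle the above-threshold range, is only forced when $\mu > A$).
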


\begin{proof}
Throughout the proof we allow $(d,n)=(4,4)$ unless stated otherwise. Recall the definition \eqref{Definition lattice} of the lattice $\Lambda_{\nu_{d,n}(\mathbf{x})}$. We introduce the lattice
\begin{equation*}
\Gamma_{\mathbf{x}, \mathbf{y}} = \Lambda_{\nu_{d,n}(\mathbf{x})} \cap \Lambda_{\nu_{d,n}(\mathbf{y})}.
\end{equation*}
Recall the definition \eqref{Definition Omega} of the set $\Omega_{d,n}(B)$. We start by noting that
\begin{align*}
D_{d,n}(A,B) & = \sum_{\substack{x,y \in \mathbb{P}^n(\mathbb{Q}) \\ x \neq y \\ H(x), H(y) \leq B}}
\# \{ V \in \mathbb{V}_{d,n}(A) : x, y \in V(\mathbb{Q}) \} \\
& = \frac1{8} \sum_{(\mathbf{x}, \mathbf{y}) \in \Omega_{d,n}(B)}
\mathcal{S}^{\ast}_0(\Gamma_{\mathbf{x}, \mathbf{y}}; \mathcal{B}_{N_{d,n}}(A)).
\end{align*}
Recall that the quantities $\mathfrak{d}_2(\mathbf{x},\mathbf{y})$ and $\mathfrak{d}_3(\mathbf{x},\mathbf{y})$ were introduced in Definition \ref{Definition d}. Given two linearly independent vectors
$\mathbf{x}, \mathbf{y} \in \mathbb{Z}_{\mathrm{prim}}^{n+1}$, we define
\begin{equation}
\label{Definition mu}
\mu(\mathbf{x},\mathbf{y}) = 3 n^2
\max \left\{ \frac{\mathfrak{d}_2(\mathbf{x},\mathbf{y})}{\mathfrak{d}_3(\mathbf{x},\mathbf{y})} , 
\frac{||\mathbf{x}|| \cdot ||\mathbf{y}||}{\mathfrak{d}_2(\mathbf{x},\mathbf{y})^2} \right\},
\end{equation}
and we note that Lemma~\ref{Key lemma two vectors} states that
\begin{equation}
\label{Replacement key lemma two vectors}
\lambda_{N_{d,n}-2} \left( \Gamma_{\mathbf{x}, \mathbf{y}} \right) \leq \mu(\mathbf{x},\mathbf{y}).
\end{equation}
We then set
\begin{equation*}
\Sigma_{d,n}^{(1)}(A,B) = \frac1{8}
\sum_{\substack{(\mathbf{x}, \mathbf{y}) \in \Omega_{d,n}(B) \\ \mu(\mathbf{x},\mathbf{y}) \leq A}}
\mathcal{S}^{\ast}_0(\Gamma_{\mathbf{x}, \mathbf{y}}; \mathcal{B}_{N_{d,n}}(A)),
\end{equation*}
and
\begin{equation}
\label{Definition Sigma2(A,B)}
\Sigma_{d,n}^{(2)}(A,B) = D_{d,n}(A,B) - \Sigma_{d,n}^{(1)}(A,B).
\end{equation}

We first handle the sum $\Sigma_{d,n}^{(1)}(A,B)$. The lattice $\Gamma_{\mathbf{x}, \mathbf{y}}$ is primitive and we have the inequalities \eqref{Replacement key lemma two vectors} and $A \geq \mu(\mathbf{x},\mathbf{y})$ so we are in position to apply \cite[Lemma~$3$]{RandomFano}. We deduce that
\begin{equation*}
\mathcal{S}^{\ast}_0(\Gamma_{\mathbf{x}, \mathbf{y}}; \mathcal{B}_{N_{d,n}}(A)) =
8 \iota_{d,n} \frac{A^{N_{d,n}-2}}{\det ( \Gamma_{\mathbf{x}, \mathbf{y}} )}
\left( 1 + O \left(\frac{\mu(\mathbf{x},\mathbf{y})}{A } \right) \right) + O(A \log B).
\end{equation*}
Using the upper bound in Lemma~\ref{Lemma upper and lower bounds det}, the trivial lower bound
$\mu(\mathbf{x},\mathbf{y}) \geq 1$, and the assumption $B \ll A \log A$ it is easy to check that for any
$(\mathbf{x}, \mathbf{y}) \in \Omega_{d,n}(B)$ we have
\begin{equation*}
\frac{A^{N_{d,n}-3} \mu(\mathbf{x},\mathbf{y})}{\det ( \Gamma_{\mathbf{x}, \mathbf{y}} )} \gg A \log B.
\end{equation*}
We thus obtain
\begin{equation*}
\Sigma_{d,n}^{(1)}(A,B) = \iota_{d,n} A^{N_{d,n}-2}
\sum_{\substack{(\mathbf{x}, \mathbf{y}) \in \Omega_{d,n}(B) \\ \mu(\mathbf{x},\mathbf{y}) \leq A}}
\frac1{\det (\Gamma_{\mathbf{x}, \mathbf{y}})} \left( 1 + O \left(\frac{\mu(\mathbf{x},\mathbf{y})}{A} \right) \right).
\end{equation*}
Note that we have the obvious inequality
\begin{equation*}
\sum_{\substack{(\mathbf{x}, \mathbf{y}) \in \Omega_{d,n}(B) \\ \mu(\mathbf{x},\mathbf{y}) > A}}
\frac1{\det (\Gamma_{\mathbf{x}, \mathbf{y}})} \leq
\frac1{A} \sum_{\substack{(\mathbf{x}, \mathbf{y}) \in \Omega_{d,n}(B) \\ \mu(\mathbf{x},\mathbf{y}) > A}}
\frac{\mu(\mathbf{x},\mathbf{y})}{\det (\Gamma_{\mathbf{x}, \mathbf{y}})}.
\end{equation*}
Therefore, using Lemma~\ref{Lemma E} we deduce that
\begin{equation}
\label{Estimate Sigma1(A,B)}
\Sigma_{d,n}^{(1)}(A,B) = \iota_{d,n} A^{N_{d,n}-2} E_{d,n}(B) \left( 1 + O \left(\mathcal{E}_{d,n}^{(1)}(A,B) \right) \right),
\end{equation}
where
\begin{equation*}
\mathcal{E}_{d,n}^{(1)}(A,B) = \frac1{AB^2} \sum_{(\mathbf{x}, \mathbf{y}) \in \Omega_{d,n}(B)}
\frac{\mu(\mathbf{x},\mathbf{y})}{\det (\Gamma_{\mathbf{x}, \mathbf{y}})}.
\end{equation*}
We shall handle the quantity $\mathcal{E}_{d,n}^{(1)}(A,B)$ by breaking the sizes of $||\mathbf{x}||$, $||\mathbf{y}||$,
$\mathfrak{d}_2(\mathbf{x},\mathbf{y})$ and $\mathfrak{d}_3(\mathbf{x},\mathbf{y})$ into dyadic intervals. Recall that we have the upper bound \eqref{Upper bound trivial drxy}. Moreover, we note that the inequality
$\mathfrak{d}_2(\mathbf{x},\mathbf{y}) \geq \mathfrak{d}_3(\mathbf{x},\mathbf{y})$ implies that
\begin{equation*}
\mu(\mathbf{x},\mathbf{y}) \leq 3 n^2 \frac{||\mathbf{x}|| \cdot ||\mathbf{y}||}{\mathfrak{d}_3(\mathbf{x},\mathbf{y})}.
\end{equation*}
Using the lower bound in Lemma~\ref{Lemma upper and lower bounds det}, we thus deduce
\begin{equation*}
\frac{\mu(\mathbf{x},\mathbf{y})}{\det (\Gamma_{\mathbf{x}, \mathbf{y}})} \ll
\frac1{\mathfrak{d}_2(\mathbf{x},\mathbf{y}) \cdot \mathfrak{d}_3(\mathbf{x},\mathbf{y}) \cdot
||\mathbf{x}||^{d-2} ||\mathbf{y}||^{d-2}}.
\end{equation*}
Recalling the definition \eqref{Definition l_r(X,Y)} of the quantity $\ell_{r,n}(X,Y;\Delta_r)$, we see that we have the upper bound
\begin{equation*}
\mathcal{E}_{d,n}^{(1)}(A,B) \ll \frac1{A B^2} \sum_{X,Y \ll B^{1/(n+1-d)}} \
\sum_{\Delta_2, \Delta_3 \ll XY} \frac1{\Delta_2 \Delta_3 (XY)^{d-2}} \cdot \min_{r \in \{2,3\}} \ell_{r,n}(X,Y;\Delta_r).
\end{equation*} 
It follows from Lemma~\ref{Lemma l_r(X,Y)} that 
\begin{equation*}
\min_{r \in \{2,3\}} \ell_{r,n}(X,Y;\Delta_r) \ll (\log B)^2 (XY)^2 \min \left\{ \Delta_2^{n-1}, \Delta_3^{n-1} X Y \right\}.
\end{equation*}
We now make use of the inequality
\begin{equation*}
\min \left\{ \Delta_2^{n-1}, \Delta_3^{n-1} X Y \right\} \leq \left(\Delta_2^{n-1}\right)^{1-1/(n-1)}
\left(\Delta_3^{n-1} X Y \right)^{1/(n-1)}.
\end{equation*}
This yields
\begin{align}
\nonumber
\mathcal{E}_{d,n}^{(1)}(A,B) & \ll \frac{(\log B)^2}{A B^2} \sum_{X,Y \ll B^{1/(n+1-d)}} \
\sum_{\Delta_2, \Delta_3 \ll XY} \frac{\Delta_2^{n-3}}{(XY)^{d-4-1/(n-1)}} \\
\nonumber
& \ll \frac{(\log B)^4}{A B^2} \sum_{X,Y \ll B^{1/(n+1-d)}} (XY)^{n+1-d+1/(n-1)} \\
\label{Upper bound intermediate E1(A,B)}
& \ll (\log B)^4 \frac{B^{2/(n-1)(n+1-d)}}{A}.
\end{align}
Since $(d,n) \notin \{(2,2), (3,3), (4,4) \}$ we have $(n-1)(n+1-d) \geq 4$, so the assumption $B \ll A \log A$ gives
\begin{equation*}
\mathcal{E}_{d,n}^{(1)}(A,B) \ll \frac{(\log A)^{9/2}}{A^{1/2}}.
\end{equation*}
Recalling the estimate \eqref{Estimate Sigma1(A,B)}, we see that we have obtained
\begin{equation}
\label{Estimate Sigma_1(A,B) final}
\Sigma_{d,n}^{(1)}(A,B) = \iota_{d,n} A^{N_{d,n}-2} E_{d,n}(B) \left( 1 + O \left( \frac{(\log A)^{9/2}}{A^{1/2}} \right) \right).
\end{equation}

We now handle the sum $\Sigma_{d,n}^{(2)}(A,B)$. We start by noting that the trivial upper bound
$\mu(\mathbf{x},\mathbf{y}) \leq 3 n^2 ||\mathbf{x}|| \cdot ||\mathbf{y}||$ and the condition
$A < \mu(\mathbf{x},\mathbf{y})$ together give $A < 3n^2 B^{2/(n+1-d)}$. Therefore, the assumption $B \ll A \log A$ implies that we have either $n=d+1$ or $n=d$. In addition, we have the inequalities \eqref{Replacement key lemma two vectors} and $A < \mu(\mathbf{x},\mathbf{y})$ so we can apply the first part of Lemma~\ref{Lemma R0} with $M=1/2$. This yields
\begin{equation*}
\mathcal{S}^{\ast}_0(\Gamma_{\mathbf{x}, \mathbf{y}}; \mathcal{B}_{N_{d,n}}(A)) \ll \min_{R_0 \in \{0, \dots, N_{d,n}-3\}} \left( \frac{A^{N_{d,n}-2-R_0} \mu(\mathbf{x},\mathbf{y})^{R_0}}{\det (\Gamma_{\mathbf{x}, \mathbf{y}})} + A^{N_{d,n}-3-R_0} \right).
\end{equation*}
As a result, appealing to the lower bound in Lemma~\ref{Lemma upper and lower bounds det} and to
Lemma~\ref{Lemma E} we see that
\begin{equation}
\label{Upper bound Sigma2(A,B)}
\Sigma_{d,n}^{(2)}(A,B) \ll A^{N_{d,n}-2} E_{d,n}(B) \mathcal{E}_{d,n}^{(2)}(A,B),
\end{equation}
where
\begin{equation*}
\mathcal{E}_{d,n}^{(2)}(A,B) = \frac1{B^2}
\sum_{\substack{(\mathbf{x}, \mathbf{y}) \in \Omega_{d,n}(B) \\ \mu(\mathbf{x},\mathbf{y}) > A}}
\min_{R_0 \in \{0, \dots, N_{d,n}-3\}} \left(
\frac{\mu(\mathbf{x},\mathbf{y})^{R_0}}
{A^{R_0} \mathfrak{d}_2(\mathbf{x},\mathbf{y}) \cdot ||\mathbf{x}||^{d-1} ||\mathbf{y}||^{d-1}} + \frac1{A^{R_0+1}} \right).
\end{equation*}
We let $\mathcal{F}_{d,n}(A,B)$ be the contribution to $\mathcal{E}_{d,n}^{(2)}(A,B)$ coming from the
$(\mathbf{x}, \mathbf{y}) \in \Omega_{d,n}(B)$ satisfying
\begin{equation}
\label{Inequality cases mu}
\frac{\mathfrak{d}_2(\mathbf{x},\mathbf{y})}{\mathfrak{d}_3(\mathbf{x},\mathbf{y})} \leq
\frac{||\mathbf{x}|| \cdot ||\mathbf{y}||}{\mathfrak{d}_2(\mathbf{x},\mathbf{y})^2},
\end{equation}
and we also let
\begin{equation}
\label{Definition I}
\mathcal{I}_{d,n}(A,B) = \mathcal{E}_{d,n}^{(2)}(A,B) - \mathcal{F}_{d,n}(A,B).
\end{equation}
We first handle the quantity $\mathcal{F}_{d,n}(A,B)$. Recalling the definition \eqref{Definition mu} of
$\mu(\mathbf{x}, \mathbf{y})$, we see that
\begin{equation*}
\mathcal{F}_{d,n}(A,B) = \frac1{B^2} \! \!
\sum_{\substack{(\mathbf{x}, \mathbf{y}) \in \Omega_{d,n}(B) \\
\mathfrak{d}_2(\mathbf{x},\mathbf{y})^2 < 3 n^2 ||\mathbf{x}|| \cdot ||\mathbf{y}||/A}} \! \!
\min_{R_0 \in \{0, \dots, N_{d,n}-3\}} \left(
\frac{||\mathbf{x}||^{R_0-d+1} ||\mathbf{y}||^{R_0-d+1}}
{A^{R_0} \mathfrak{d}_2(\mathbf{x},\mathbf{y})^{2R_0+1}} + \frac1{A^{R_0+1}} \right).
\end{equation*}
Choosing $R_0 =n-1$ and breaking the size of $\mathfrak{d}_2(\mathbf{x},\mathbf{y})$ into dyadic intervals we get
\begin{align*}
\mathcal{F}_{d,n}(A,B) \ll & \ \frac1{B^2} \sum_{\Delta_2 \ll B^{1/(n+1-d)}/A^{1/2}}
\left( \frac{B^{2(n-d)/(n+1-d)}}{A^{n-1}\Delta_2^{2n-1}} + \frac1{A^n} \right) \\
& \times \ell_{2,n} \left(B^{1/(n+1-d)},B^{1/(n+1-d)};\Delta_2\right).
\end{align*}
Using Lemma~\ref{Lemma l_r(X,Y)} we deduce that
\begin{align*}
\mathcal{F}_{d,n}(A,B) & \ll \sum_{\Delta_2 \ll B^{1/(n+1-d)}/A^{1/2}}
\left( \frac{B^{2/(n+1-d)}}{A^{n-1} \Delta_2^n} + \frac{B^{4/(n+1-d)-2} \Delta_2^{n-1}}{A^n} \right) \\
& \ll \frac{B^{2/(n+1-d)}}{A^{n-1}} + \frac{B^{(2d-n+1)/(n+1-d)}}{A^{(3n-1)/2}}.
\end{align*}
Recall that we have either $n=d+1$ or $n=d$. For any $n \geq 3$, we have proved that
\begin{equation*}
\mathcal{F}_{n-1,n}(A,B) \ll \frac{B}{A^{n-1}} + \frac{B^{(n-1)/2}}{A^{(3n-1)/2}},
\end{equation*}
and
\begin{equation}
\label{Upper bound intermediate F}
\mathcal{F}_{n,n}(A,B) \ll \frac{B^2}{A^{n-1}} + \frac{B^{n+1}}{A^{(3n-1)/2}}.
\end{equation}
Therefore, using the assumptions $(d,n) \notin \{ (2,2), (3,3), (4,4)\}$ and $B \ll A \log A$ we conclude that
\begin{equation}
\label{Upper bound F}
\mathcal{F}_{d,n}(A,B) \ll \frac{(\log A)^6}{A}.
\end{equation}

We finally deal with the quantity $\mathcal{I}_{d,n}(A,B)$. We have
\begin{equation*}
\mathcal{I}_{d,n}(A,B) \leq \frac1{B^2}
\sum_{\substack{(\mathbf{x}, \mathbf{y}) \in \Omega_{d,n}(B) \\
\mathfrak{d}_3(\mathbf{x},\mathbf{y}) <3 n^2 ||\mathbf{x}|| \cdot ||\mathbf{y}||/A}}
\min_{R_0 \in \{d, \dots, N_{d,n}-3\}} \left(
\frac{||\mathbf{x}||^{R_0-d} ||\mathbf{y}||^{R_0-d}}{A^{R_0} \mathfrak{d}_3(\mathbf{x},\mathbf{y})^{R_0}} + \frac1{A^{R_0+1}} \right).
\end{equation*}
Note that we have restricted the minimum to $R_0 \geq d$ and we have then applied the upper bound
\eqref{Upper bound trivial drxy} with $r=2$. Breaking the size of $\mathfrak{d}_3(\mathbf{x},\mathbf{y})$ into dyadic intervals we see that
\begin{align*}
\mathcal{I}_{d,n}(A,B) \ll & \ \frac1{B^2} \sum_{\Delta_3 \ll B^{2/(n+1-d)}/A}
\left( \min_{R_0 \in \{d, \dots, N_{d,n}-3\}} \left( \frac{ B^{(2R_0-2d)/(n+1-d)}}{A^{R_0} \Delta_3^{R_0}} + \frac1{A^{R_0+1}} \right) \right) \\
& \times \ell_{3,n}\left(B^{1/(n+1-d)},B^{1/(n+1-d)};\Delta_3\right).
\end{align*}
Using Lemma~\ref{Lemma l_r(X,Y)} we get
\begin{equation*}
\mathcal{I}_{d,n}(A,B) \ll (\log B)^2 \! \! \! \! \sum_{\Delta_3 \ll B^{2/(n+1-d)}/A} \!
\min_{R_0} \left( \frac{B^{(2R_0-2n+4)/(n+1-d)}}{A^{R_0} \Delta_3^{R_0-n+1}} +
\frac{B^{6/(n+1-d)-2} \Delta_3^{n-1}}{A^{R_0+1}} \right) \! ,
\end{equation*}
where the minimum is over $R_0 \in \{ d, \dots, N_{d,n}-3\}$. We recall that we have either $n=d+1$ or $n=d$ and we first handle the case $n=d+1$. Choosing $R_0=n-1$ we deduce that
\begin{equation*}
\mathcal{I}_{n-1,n}(A,B) \ll (\log B)^2 \sum_{\Delta_3 \ll B/A} \left( \frac{B}{A^{n-1}} + \frac{B \Delta_3^{n-1}}{A^n} \right).
\end{equation*}
Using the assumption $B \ll A \log A$ we conclude that for any $n \geq 3$ we have
\begin{equation}
\label{Upper bound I n=d+1}
\mathcal{I}_{n-1,n}(A,B) \ll \frac{(\log A)^4}{A^{n-2}}.
\end{equation}
We now treat the case $n=d$. The assumption $B \ll A \log A$ yields
\begin{equation*}
\mathcal{I}_{n,n}(A,B) \ll (\log A)^{2N_{n,n}-2n} \sum_{\Delta_3 \ll A (\log A)^2}
\min_{R_0 \in \{ n, \dots, N_{n,n}-3\}} \left( \frac{A^{R_0-2n+4}}{\Delta_3^{R_0-n+1}} + \frac{\Delta_3^{n-1}}{A^{R_0-3}} \right).
\end{equation*}
Changing $R_0$ in $R_0+1$ we see that the right-hand side can grow at most by $(\log A)^2$ when $n$ increases by $1$. For any $n \geq 5$ we thus have
\begin{equation*}
\mathcal{I}_{n,n}(A,B) \ll (\log A)^{2 N_{n,n}-10} \sum_{\Delta_3 \ll A (\log A)^2}
\min_{R_0 \in \{5, \dots, N_{5,5}-3\}} \left( \frac{A^{R_0-6}}{\Delta_3^{R_0-4}} + \frac{\Delta_3^4}{A^{R_0-3}} \right).
\end{equation*}
Taking successively $R_0=5, \dots, R_0 = 8$ as $\Delta_3$ increases we obtain
\begin{align*}
\mathcal{I}_{n,n}(A,B) \ll & \ (\log A)^{2 N_{n,n}-10} \Bigg( \sum_{\Delta_3 \leq A^{1/3}} \left( \frac1{A \Delta_3} + \frac{\Delta_3^4}{A^2} \right) + \sum_{A^{1/3} < \Delta_3 \leq A^{4/7}} \left( \frac1{\Delta_3^2} + \frac{\Delta_3^4}{A^3} \right) \\
& + \sum_{A^{4/7} < \Delta_3 \leq A^{3/4}} \left( \frac{A}{\Delta_3^3} + \frac{\Delta_3^4}{A^4} \right) + \sum_{A^{3/4} < \Delta_3 \ll A (\log A)^2} \left( \frac{A^2}{\Delta_3^4} + \frac{\Delta_3^4}{A^5} \right) \Bigg).
\end{align*}
It thus follows that for any $n \geq 5$ we have
\begin{equation}
\label{Upper bound I n=d}
\mathcal{I}_{n,n}(A,B) \ll \frac{(\log A)^{2 N_{n,n}-10}}{A^{2/3}}.
\end{equation}
Combining the equality \eqref{Definition I} with the upper bounds \eqref{Upper bound F},
\eqref{Upper bound I n=d+1} and \eqref{Upper bound I n=d} we deduce that
\begin{equation*}
\mathcal{E}_{d,n}^{(2)}(A,B) \ll \frac{(\log A)^{2 N_{n,n}-10}}{A^{2/3}}.
\end{equation*}
Recalling the upper bound \eqref{Upper bound Sigma2(A,B)} we conclude that
\begin{equation}
\label{Upper bound Sigma2(A,B) final}
\Sigma_{d,n}^{(2)}(A,B) \ll A^{N_{d,n}-2} E_{d,n}(B) \cdot \frac{(\log A)^{2 N_{n,n}-10}}{A^{2/3}}.
\end{equation}
Putting together the equality \eqref{Definition Sigma2(A,B)}, the estimate \eqref{Estimate Sigma_1(A,B) final} and the upper bound \eqref{Upper bound Sigma2(A,B) final} completes the proof.
\end{proof}

We now use our work in Section~\ref{Section quartic threefolds} to handle separately the quantity $D_{4,4}(A,B)$. Recall the definition \eqref{Definition iota} of $\iota_{4,4}$.

\begin{lemma} 
\label{Lemma D44}
Assume that $A \geq B/(\log B)$. Then we have 
\begin{equation*}
D_{4,4}(A,B) = \iota_{4,4} A^{N_{4,4}-2} E_{4,4}(B) \left( 1 + O \left( \frac1{A^{1/21}} \right) \right).
\end{equation*}
\end{lemma}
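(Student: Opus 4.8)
The plan is to follow the proof of Lemma~\ref{Lemma D} step by step; that argument invokes the hypothesis $(d,n)\neq(4,4)$ only at its very end, so the real task is to supply the missing estimates when $(d,n)=(4,4)$. Writing $\Gamma_{\mathbf{x},\mathbf{y}}=\Lambda_{\nu_{4,4}(\mathbf{x})}\cap\Lambda_{\nu_{4,4}(\mathbf{y})}$, one again has $D_{4,4}(A,B)=\frac18\sum_{(\mathbf{x},\mathbf{y})\in\Omega_{4,4}(B)}\mathcal{S}^{\ast}_0(\Gamma_{\mathbf{x},\mathbf{y}};\mathcal{B}_{N_{4,4}}(A))$; splitting according to whether $\mu(\mathbf{x},\mathbf{y})\le A$ or not (with $\mu$ as in \eqref{Definition mu}, so that $\lambda_{N_{4,4}-2}(\Gamma_{\mathbf{x},\mathbf{y}})\le\mu(\mathbf{x},\mathbf{y})$ by Lemma~\ref{Key lemma two vectors}) and applying \cite[Lemma~$3$]{RandomFano} on the first range produces the main term $\iota_{4,4}A^{N_{4,4}-2}E_{4,4}(B)$ together with an error term $\Sigma^{(1)}_{4,4}$, and a further error term $\Sigma^{(2)}_{4,4}$ comes from $\mu(\mathbf{x},\mathbf{y})>A$. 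The bound for $\Sigma^{(1)}_{4,4}$ is the one from Lemma~\ref{Lemma D} with $(n-1)(n+1-d)=3$, namely $\ll B^{2/3}/A\ll A^{-1/3}(\log A)^{O(1)}$ under $B\ll A\log A$, well inside the target. Likewise, inside $\Sigma^{(2)}_{4,4}$, the pairs for which the maximum defining $\mu(\mathbf{x},\mathbf{y})$ equals $||\mathbf{x}||\cdot||\mathbf{y}||/\mathfrak{d}_2(\mathbf{x},\mathbf{y})^2$ form the $\mathcal{F}_{n,n}$-contribution with $n=4$, which Lemma~\ref{Lemma D} bounds by $\ll B^2/A^3+B^5/A^{11/2}\ll A^{-1/2}(\log A)^{O(1)}$. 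Thus everything reduces to the remaining subfamily: pairs with $\mu(\mathbf{x},\mathbf{y})>A$ for which $\mathfrak{d}_2(\mathbf{x},\mathbf{y})/\mathfrak{d}_3(\mathbf{x},\mathbf{y})$ is the dominant term — the $\mathcal{I}_{n,n}$-contribution with $n=4$, which the crude counting of Lemma~\ref{Lemma l_r(X,Y)} cannot control.

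For these pairs one has $\mathfrak{d}_3(\mathbf{x},\mathbf{y})\ll\mathfrak{d}_2(\mathbf{x},\mathbf{y})/A\ll B^2/A$, so $\mathbf{x}$ and $\mathbf{y}$ both lie in the rank-$3$ lattice $L\subset\mathbb{Z}^5$ realising $\mathfrak{d}_3(\mathbf{x},\mathbf{y})$, and then $\Gamma_{\mathbf{x},\mathbf{y}}$ contains the rank-$55$ lattice $\mathscr{V}(L)^{\perp}$ of \eqref{Definition V(L)}. I would introduce thresholds $M_0=A^{a}$ and $J_0=A^{b}$ with $1\le M_0\le J_0$ and split these pairs according to the sizes of $\lambda_1(\Gamma_{\mathbf{x},\mathbf{y}})$ relative to $M_0$ and of $\lambda_{56}(\Gamma_{\mathbf{x},\mathbf{y}})$ relative to $J_0$. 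When $\lambda_1(\Gamma_{\mathbf{x},\mathbf{y}})>M_0$ and $\lambda_{56}(\Gamma_{\mathbf{x},\mathbf{y}})>J_0$, these lower bounds on the minima allow one to estimate $\mathcal{S}^{\ast}_0(\Gamma_{\mathbf{x},\mathbf{y}};\mathcal{B}_{N_{4,4}}(A))$ sharply via Lemma~\ref{Lemma R0} (with $R=N_{4,4}-2=68$, $j_0=55$, $M=M_0$, $J=J_0$, $Y=\mu(\mathbf{x},\mathbf{y})$, and a suitable free exponent $R_0$) and Lemma~\ref{Lemma Schmidt upper bound}; the leading term then sums, over all the relevant pairs, to the interpolated quantity obtained from Lemmas~\ref{Lemma l_r(X,Y)} and \ref{Lemma upper and lower bounds det} — this is where the interpolation exponent, hence the eventual $1/21$, enters — while the secondary terms are powers of $A/M_0$ and $A/J_0$ times the crude count $\ll B^{12}(\log B)^2/A^3$ of relevant pairs. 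When instead $\lambda_1(\Gamma_{\mathbf{x},\mathbf{y}})\le M_0$ or $\lambda_{56}(\Gamma_{\mathbf{x},\mathbf{y}})\le J_0$, the number of pairs is small: it is bounded by the sum over dyadic $\mathfrak{d}_3(\mathbf{x},\mathbf{y})\ll B^2/A$ of the quantities estimated in Lemmas~\ref{Lemma l1 convenient} and \ref{Lemma l56}, and for such pairs one controls $\mathcal{S}^{\ast}_0(\Gamma_{\mathbf{x},\mathbf{y}};\mathcal{B}_{N_{4,4}}(A))$ by fibering $\Gamma_{\mathbf{x},\mathbf{y}}$ over its rank-$55$ sublattice $\mathscr{V}(L)^{\perp}$ and invoking Lemma~\ref{Lemma det44} to bound $\det(\mathscr{V}(L))=\det(\mathscr{V}(L)^{\perp})$ polynomially in $\det(L)=\mathfrak{d}_3(\mathbf{x},\mathbf{y})$, which keeps both the $\mathscr{V}(L)^{\perp}$-count and the complementary rank-$13$ quotient count in check.

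Finally I would add up all the classes and choose $a$, $b$ (and the free exponents above) so that every piece is $\ll A^{N_{4,4}-2}E_{4,4}(B)A^{-1/21}$, recalling $E_{4,4}(B)\asymp B^2$ from Lemma~\ref{Lemma E} and $B\ll A\log A$; the numerology forces the exponent $1/21$, with the worst case being $B$ comparable to $A$ up to logarithms (which are absorbed into the slightly generous exponent). I expect the main obstacle to be precisely this optimisation, and within it the class where $\lambda_1(\Gamma_{\mathbf{x},\mathbf{y}})$ and $\lambda_{56}(\Gamma_{\mathbf{x},\mathbf{y}})$ are simultaneously small: there one has no useful lower bound on the successive minima of $\Gamma_{\mathbf{x},\mathbf{y}}$, the trivial estimate $\mathcal{S}^{\ast}_0(\Gamma_{\mathbf{x},\mathbf{y}};\mathcal{B}_{N_{4,4}}(A))\ll A^{68}$ is far too weak against the number of pairs available, and the saving must be extracted from the scarcity of such pairs (Lemmas~\ref{Lemma l1 convenient} and \ref{Lemma l56}) together with the $\mathscr{V}(L)^{\perp}$-fibering. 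By contrast, the individual lattice-point and lattice-counting inputs are all already assembled in Section~\ref{Section geometry of numbers}, so no further structural ideas are needed.
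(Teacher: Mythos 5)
Your plan is essentially the paper's proof: the same reduction to the $\mathcal{I}_{4,4}$-type pairs with $\mu(\mathbf{x},\mathbf{y})>A$ dominated by $\mathfrak{d}_2/\mathfrak{d}_3$, the same exploitation of the scarcity of pairs with small $\lambda_1(\Gamma_{\mathbf{x},\mathbf{y}})$ (Lemma~\ref{Lemma l1 convenient}) and with small $\lambda_{56}(\Gamma_{\mathbf{x},\mathbf{y}})$ (Lemma~\ref{Lemma l56} via $\mathscr{V}(L)$), combined with Lemma~\ref{Lemma R0} when these minima are large. The only organizational caveat is that the primary dichotomy should be on the size of $\mathfrak{d}_3(\mathbf{x},\mathbf{y})$ (the paper uses the threshold $A^{\delta}$ with $\delta=1/20$) rather than on thresholds for the minima themselves, because $\ell^{(1)}$ only improves on the trivial count when $\mathfrak{d}_3$ is large relative to $\lambda_1^{53}$ while $\ell^{(56)}$ only helps when $\mathfrak{d}_3$ is small; your closing remarks show you are aware of this, and the dyadic summation over the minima then replaces your fixed thresholds $M_0,J_0$. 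The $\mathscr{V}(L)^{\perp}$-fibering you propose for bounding $\mathcal{S}^{\ast}_0$ in the bad class is not needed: the paper simply takes the crude output of Lemma~\ref{Lemma R0} there (at worst $A^{55}$, plus the term $A^{67-R_0}/\lambda_{56}^{12-R_0}$) and lets the scarcity counts absorb it.
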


\begin{proof}
Combining the equality \eqref{Definition Sigma2(A,B)}, the estimate \eqref{Estimate Sigma1(A,B)} and the upper bound \eqref{Upper bound intermediate E1(A,B)}, and using the assumption $B \ll A \log A$ we obtain
\begin{equation}
\label{Estimate D44}
D_{4,4}(A,B) = \iota_{4,4} A^{N_{4,4}-2} E_{4,4}(B)
\left( 1 + O \left( \frac{(\log A)^{11/3}}{A^{1/3}} \right) \right) + \Sigma_{4,4}^{(2)}(A,B).
\end{equation}
Let $\delta \in (0,1/15)$ to be selected in due course. Let $\mathscr{S}_{>\delta}(A,B)$ denote the contribution to
$\Sigma_{4,4}^{(2)}(A,B)$ from the $(\mathbf{x},\mathbf{y}) \in \Omega_{4,4}(B)$ satisfying
$\mathfrak{d}_3(\mathbf{x},\mathbf{y}) > A^{\delta}$ and set
\begin{equation}
\label{Definition S<delta}
\mathscr{S}_{\leq \delta}(A,B) = \Sigma_{4,4}^{(2)}(A,B) - \mathscr{S}_{>\delta}(A,B).
\end{equation}

We first handle the quantity $\mathscr{S}_{>\delta}(A,B)$. We are going to use the fact that the first successive minimum of the lattice $\Gamma_{\mathbf{x},\mathbf{y}}$ is usually quite large as $(\mathbf{x},\mathbf{y})$ runs over
$\Omega_{4,4}(B)$. We have the inequalities \eqref{Replacement key lemma two vectors} and
$A < \mu(\mathbf{x},\mathbf{y})$ so we can apply the first part of Lemma~\ref{Lemma R0} with
$M=\lambda_1(\Gamma_{\mathbf{x},\mathbf{y}})-1/2$. We deduce
\begin{equation*}
\mathcal{S}^{\ast}_0(\Gamma_{\mathbf{x}, \mathbf{y}}; \mathcal{B}_{N_{4,4}}(A)) \ll
\min_{R_0 \in \{0, \dots, N_{4,4}-3\}} \left(
\frac{A^{N_{4,4}-2-R_0} \mu(\mathbf{x},\mathbf{y})^{R_0}}{\det (\Gamma_{\mathbf{x}, \mathbf{y}})} +
\left( \frac{A}{\lambda_1(\Gamma_{\mathbf{x},\mathbf{y}})}\right)^{N_{4,4}-3-R_0} \right).
\end{equation*}
Noting that $N_{4,4}=70$ and appealing to the lower bound in Lemma~\ref{Lemma upper and lower bounds det} and to Lemma~\ref{Lemma E} we see that
\begin{equation*}
\mathscr{S}_{>\delta}(A,B) \ll A^{N_{4,4}-2} E_{4,4}(B) \mathscr{E}_{>\delta}(A,B),
\end{equation*}
where
\begin{equation*}
\mathscr{E}_{>\delta}(A,B) = \frac1{B^2}
\sum_{\substack{(\mathbf{x}, \mathbf{y}) \in \Omega_{4,4}(B) \\
\mu(\mathbf{x},\mathbf{y}) > A \\ \mathfrak{d}_3(\mathbf{x},\mathbf{y}) > A^{\delta}}}
\min_{R_0} \left(
\frac{\mu(\mathbf{x},\mathbf{y})^{R_0}}
{A^{R_0} \mathfrak{d}_2(\mathbf{x},\mathbf{y}) \cdot ||\mathbf{x}||^3 ||\mathbf{y}||^3} +
\frac1{A^{R_0+1} \lambda_1(\Gamma_{\mathbf{x},\mathbf{y}})^{67-R_0}} \right),
\end{equation*}
where the minimum is over $R_0 \in \{0, \dots, 67\}$. Since $\lambda_1(\Gamma_{\mathbf{x},\mathbf{y}}) \geq 1$ the contribution to $\mathscr{E}_{>\delta}(A,B)$ coming from the $(\mathbf{x}, \mathbf{y}) \in \Omega_{4,4}(B)$ satisfying the inequality \eqref{Inequality cases mu} is at most $\mathcal{F}_{4,4}(A,B)$. Using the upper bound
\eqref{Upper bound intermediate F} we deduce that
\begin{equation}
\label{Definition Fdelta}
\mathscr{S}_{>\delta}(A,B) \ll A^{N_{4,4}-2} E_{4,4}(B) \left( \mathscr{F}_{>\delta}(A,B) + \frac{(\log A)^5}{A^{1/2}} \right),
\end{equation}
where
\begin{equation*}
\mathscr{F}_{>\delta}(A,B) = \frac1{B^2}
\sum_{\substack{(\mathbf{x}, \mathbf{y}) \in \Omega_{4,4}(B) \\
A^{\delta} < \mathfrak{d}_3(\mathbf{x},\mathbf{y}) < ||\mathbf{x}|| \cdot ||\mathbf{y}||/ A}} \!
\min_{R_0} \left( \frac{||\mathbf{x}||^{R_0-4} ||\mathbf{y}||^{R_0-4}}
{A^{R_0} \mathfrak{d}_3(\mathbf{x},\mathbf{y})^{R_0}} +
\frac1{A^{R_0+1} \lambda_1(\Gamma_{\mathbf{x},\mathbf{y}})^{67-R_0}} \right),
\end{equation*}
where the minimum is over $R_0 \in \{1, \dots, 67\}$. Note that we have restricted the minimum to $R_0 \geq 1$ and we have then applied the upper bound \eqref{Upper bound trivial drxy} with $r=2$. Since the lattice
$\Gamma_{\mathbf{x}, \mathbf{y}}$ has rank $68$ it follows from Minkowski's estimate \eqref{Estimates Minkowski} and the upper bound in Lemma~\ref{Lemma upper and lower bounds det} that
\begin{equation*}
\lambda_1(\Gamma_{\mathbf{x},\mathbf{y}}) \ll ||\mathbf{x}||^{1/17} ||\mathbf{y}||^{1/17}.
\end{equation*}
Recall the definition \eqref{Definition lj} of the quantity $\ell^{(1)}(B;\Delta_3,M)$. Breaking the sizes of
$\mathfrak{d}_3(\mathbf{x},\mathbf{y})$ and $\lambda_1(\Gamma_{\mathbf{x},\mathbf{y}})$ into dyadic intervals we see that
\begin{equation*}
\mathscr{F}_{>\delta}(A,B) \ll
\frac1{B^2} \sum_{\substack{A^{\delta} \ll \Delta_3 \ll A (\log A)^2 \\ M \ll (A \log A)^{2/17}}} \!
\left( \min_{R_0} \left( \frac{B^{2R_0-8}}{A^{R_0}\Delta_3^{R_0}} + \frac1{A^{R_0+1} M^{67-R_0}} \right) \right)
\ell^{(1)}(B;\Delta_3,M),
\end{equation*}
where the minimum is over $R_0 \in \{4, \dots, 67\}$. Using Lemma~\ref{Lemma l1 convenient} and the assumption
$B/(\log B) \leq A$ we get
\begin{equation*}
\ell^{(1)}(B;\Delta_3,M) \ll B^{9/2} \Delta_3^2 A^{3/2} (\log A)^5 \min \left\{ \Delta_3, M^{53} \right\}.
\end{equation*}
It follows that
\begin{equation}
\label{Upper bound Fdelta}
\mathscr{F}_{>\delta}(A,B) \ll (\log A)^{134} \mathscr{G}_{>\delta}(A),
\end{equation}
where
\begin{equation*}
\mathscr{G}_{>\delta}(A) = \sum_{\substack{A^{\delta} \ll \Delta_3 \ll A (\log A)^2 \\ M \ll (A \log A)^{2/17}}}
\left( \min_{R_0 \in \{4, \dots, 67\}} \left( \frac{A^{R_0-4}}{\Delta_3^{R_0-2}} + \frac{\Delta_3^2}{A^{R_0-3} M^{67-R_0}} \right) \right) \min \left\{ \Delta_3, M^{53} \right\}.
\end{equation*}
Taking successively $R_0=4, \dots, R_0=6$ as $\Delta_3$ increases we obtain
\begin{align*}
\mathscr{G}_{>\delta}(A) \ll & \
\sum_{\substack{A^{\delta} \ll \Delta_3 \leq A^{2/5} M^{63/5} \\ M \ll (A \log A)^{2/17}}}
\left( \frac1{\Delta_3} + \frac{\Delta_3^2 \min \left\{ \Delta_3, M^{53} \right\}}{A M^{63}} \right) \\
& + \sum_{\substack{A^{2/5} M^{63/5} < \Delta_3 \leq A^{2/3} M^{31/3} \\ M \ll (A \log A)^{2/17}}}
\left( \frac{A \min \left\{ \Delta_3, M^{53} \right\}}{\Delta_3^3} + \frac{\Delta_3^2 \min \left\{ \Delta_3, M^{53} \right\}}{A^2 M^{62}} \right) \\
& + \sum_{\substack{A^{2/3} M^{31/3}< \Delta_3 \ll A (\log A)^2 \\ M \ll (A \log A)^{2/17}}}
\left( \frac{A^2 \min \left\{ \Delta_3, M^{53} \right\}}{\Delta_3^4} + \frac{\Delta_3^2}{A^3 M^8} \right).
\end{align*}
The summation over $\Delta_3$ leads to
\begin{align*}
\mathscr{G}_{>\delta}(A) & \ll \sum_{M \ll A^{1/8}}
\left( \frac1{A^{\delta}} + \min \left\{ \frac{A^{1/5}}{M^{126/5}}, \frac{M^{76/5}}{A^{1/5}} \right\}
+ \min \left\{ \frac1{M^{31}}, \frac{M^{35/3}}{A^{2/3}} \right\} + \frac{(\log A)^4}{A M^8} \right) \\
& \ll \frac{\log A}{A^{\delta}} + \! \sum_{M \leq A^{1/101}} \frac{M^{76/5}}{A^{1/5}}
+ \! \sum_{M > A^{1/101}} \frac{A^{1/5}}{M^{126/5}} + \sum_{M \leq A^{1/64}} \frac{M^{35/3}}{A^{2/3}}
+ \sum_{M > A^{1/64}} \frac1{M^{31}} \\
& \ll \frac{\log A}{A^{\delta}} + \frac1{A^{5/101}}.
\end{align*}
Recalling the upper bounds \eqref{Definition Fdelta} and \eqref{Upper bound Fdelta}, we see that we have proved
\begin{equation}
\label{Upper bound Sdelta}
\mathscr{S}_{>\delta}(A,B) \ll A^{N_{4,4}-2} E_{4,4}(B)
\left( \frac{(\log A)^{135}}{A^{\delta}} + \frac{(\log A)^{134}}{A^{5/101}} \right).
\end{equation}

We now handle the quantity $\mathscr{S}_{\leq \delta}(A,B)$. We are going to take advantage of the fact that it is rare for the lattice $\Gamma_{\mathbf{x},\mathbf{y}}$ to have $56$ small successive minima as
$(\mathbf{x},\mathbf{y})$ runs over $\Omega_{4,4}(B)$. We have the inequalities
\eqref{Replacement key lemma two vectors} and $A < \mu(\mathbf{x},\mathbf{y})$ so we can make use of the second part of Lemma~\ref{Lemma R0} with $M=1/2$, $j_0 = 55$ and
$J = \lambda_{56}(\Gamma_{\mathbf{x},\mathbf{y}})-1/2$. We obtain
\begin{equation*}
\mathcal{S}^{\ast}_0(\Gamma_{\mathbf{x}, \mathbf{y}}; \mathcal{B}_{N_{4,4}}(A)) \ll
\min_{R_0 \in \{0, \dots, 12\}} \left(
\frac{A^{N_{4,4}-2-R_0} \mu(\mathbf{x},\mathbf{y})^{R_0}}{\det (\Gamma_{\mathbf{x}, \mathbf{y}})} +
\frac{A^{67-R_0}}{\lambda_{56}(\Gamma_{\mathbf{x},\mathbf{y}})^{12-R_0}} \right) + A^{55}.
\end{equation*}
The lower bound in Lemma~\ref{Lemma upper and lower bounds det} and Lemma~\ref{Lemma E} thus give
\begin{equation}
\label{Definition Edelta}
\mathscr{S}_{\leq \delta}(A,B) \ll A^{N_{4,4}-2} E_{4,4}(B)
\left( \mathscr{E}_{\leq \delta}(A,B) + \frac{B^8}{A^{13}} \right),
\end{equation}
where
\begin{equation*}
\mathscr{E}_{\leq \delta}(A,B) = \frac1{B^2}
\sum_{\substack{(\mathbf{x}, \mathbf{y}) \in \Omega_{4,4}(B) \\
\mu(\mathbf{x},\mathbf{y}) > A \\ \mathfrak{d}_3(\mathbf{x},\mathbf{y}) \leq A^{\delta}}}
\min_{R_0} \left(
\frac{\mu(\mathbf{x},\mathbf{y})^{R_0}}
{A^{R_0} \mathfrak{d}_2(\mathbf{x},\mathbf{y}) \cdot ||\mathbf{x}||^3 ||\mathbf{y}||^3} +
\frac1{A^{R_0+1} \lambda_{56}(\Gamma_{\mathbf{x},\mathbf{y}})^{12-R_0}} \right),
\end{equation*}
and where the minimum is over $R_0 \in \{0, \dots, 12\}$. We use again the observation that the trivial lower bound
$\lambda_{56}(\Gamma_{\mathbf{x},\mathbf{y}}) \geq 1$ implies that the contribution to
$\mathscr{E}_{\leq \delta}(A,B)$ coming from the $(\mathbf{x}, \mathbf{y}) \in \Omega_{4,4}(B)$ satisfying the inequality \eqref{Inequality cases mu} is at most $\mathcal{F}_{4,4}(A,B)$. Hence the upper bounds
\eqref{Upper bound intermediate F} and \eqref{Definition Edelta} and the assumption $B \ll A \log A$ give
\begin{equation}
\label{Definition F<delta}
\mathscr{S}_{\leq \delta}(A,B) \ll A^{N_{4,4}-2} E_{4,4}(B) \left( \mathscr{F}_{\leq \delta}(A,B) + \frac{(\log A)^5}{A^{1/2}} \right),
\end{equation}
where
\begin{equation*}
\mathscr{F}_{\leq \delta}(A,B) = \frac1{B^2}
\sum_{\substack{(\mathbf{x}, \mathbf{y}) \in \Omega_{4,4}(B) \\ \mathfrak{d}_3(\mathbf{x},\mathbf{y}) \leq A^{\delta}}}
\min_{R_0 \in \{1, \dots, 12\}} \left( \frac{||\mathbf{x}||^{R_0-4} ||\mathbf{y}||^{R_0-4}}
{A^{R_0} \mathfrak{d}_3(\mathbf{x},\mathbf{y})^{R_0}} +
\frac1{A^{R_0+1} \lambda_{56}(\Gamma_{\mathbf{x},\mathbf{y}})^{12-R_0}} \right).
\end{equation*}
Note that we have used the upper bound \eqref{Upper bound trivial drxy} with $r=2$ after restricting the minimum to $R_0 \geq 1$. Since the lattice $\Gamma_{\mathbf{x}, \mathbf{y}}$ has rank $68$ it follows from Minkowski's estimate \eqref{Estimates Minkowski} and the upper bound in Lemma~\ref{Lemma upper and lower bounds det} that
\begin{equation*}
\lambda_{56}(\Gamma_{\mathbf{x},\mathbf{y}}) \ll ||\mathbf{x}||^{4/13} ||\mathbf{y}||^{4/13}.
\end{equation*}
Recall the respective definitions \eqref{Definition l_r(X,Y)} and \eqref{Definition lj} of the quantities
$\ell_{3,4}(X, Y;\Delta_3)$ and $\ell^{(56)}(B;\Delta_3,J)$. Breaking the sizes of $||\mathbf{x}||$, $||\mathbf{y}||$,
$\mathfrak{d}_3(\mathbf{x},\mathbf{y})$ and $\lambda_{56}(\Gamma_{\mathbf{x},\mathbf{y}})$ into dyadic intervals we see that
\begin{align*}
\mathscr{F}_{\leq \delta}(A,B) \ll & \ \frac1{B^2}
\sum_{\substack{X, Y \ll B \\ \Delta_3 \ll A^{\delta} \\ J \ll (A \log A)^{8/13}}}
\left( \min_{R_0 \in \{1, \dots, 12\}} \left( \frac{(XY)^{R_0-4}}{A^{R_0}\Delta_3^{R_0}} + \frac1{A^{R_0+1} J^{12-R_0}} \right) \right) \\
& \times \min \left\{ \ell_{3,4}(X, Y;\Delta_3), \ell^{(56)}(B;\Delta_3,J) \right\}.
\end{align*}
Applying Lemmas~\ref{Lemma l_r(X,Y)} and \ref{Lemma l56} and using the assumption $B \ll A \log A$ we deduce that
\begin{equation*}
\mathscr{F}_{\leq \delta}(A,B) \ll (\log A)^6 \! \! \! \!
\sum_{\substack{X, Y \ll B \\ \Delta_3 \ll A^{\delta} \\ J \ll (A \log A)^{8/13}}} \! \! \! \min_{R_0 \in \{1, \dots, 12\}} \left( \frac{(XY)^{R_0-1}}{A^{R_0}B^2\Delta_3^{R_0-3}} +
\frac{\Delta_3^3 \min \left\{ A^2, \Delta_3^{21} J^{15} \right\}}{A^{R_0-1} J^{12-R_0}} \right).
\end{equation*}
We take $R_0=3$ and make use of the inequality
\begin{equation*}
\min \left\{ A^2, \Delta_3^{21} J^{15} \right\} \leq \left( A^2 \right)^{2/5} \left( \Delta_3^{21} J^{15} \right)^{3/5}.
\end{equation*}
We thus derive
\begin{align*}
\mathscr{F}_{\leq \delta}(A) & \ll
(\log A)^6 \sum_{X,Y \ll B} \sum_{\substack{\Delta_3 \ll A^{\delta} \\ J \ll (A \log A)^{8/13}}}
\left( \frac{(XY)^2}{A^3 B^2} + \frac{\Delta_3^{78/5}}{A^{6/5}} \right) \\
& \ll (\log A)^{10} \left( \frac1{A} + \frac1{A^{6/5-78 \delta/5}} \right).
\end{align*}
Recalling the upper bound \eqref{Definition F<delta} we see that we have obtained
\begin{equation}
\label{Upper bound S<delta}
\mathscr{S}_{\leq \delta}(A,B) \ll
A^{N_{4,4}-2} E_{4,4}(B) \left( \frac{(\log A)^5}{A^{1/2}} + \frac{(\log A)^{10}}{A^{6/5-78 \delta/5}} \right).
\end{equation}

Putting together the equality \eqref{Definition S<delta} and the upper bounds \eqref{Upper bound Sdelta} and \eqref{Upper bound S<delta} and choosing for instance $\delta= 1/20$ we deduce that
\begin{equation*}
\Sigma_{4,4}^{(2)}(A,B) \ll A^{N_{4,4}-2} E_{4,4}(B) \cdot \frac{(\log A)^{134}}{A^{5/101}}.
\end{equation*}
Recalling the estimate \eqref{Estimate D44}, we see that this completes the proof.
\end{proof}

Recall the definition \eqref{Definition iota} of $\iota_{d,n}$. Our next task is to establish an estimate for the quantity $D_{d,n}^{\mathrm{mix}}(A,B)$ defined in \eqref{Definition D mixed}.

\begin{lemma}
\label{Lemma D mixed}
Let $d \geq 2$ and $n \geq d$ with $(d,n) \neq (2,2)$. Assume that $B^{5/6} \leq A \leq B^2$. Then we have 
\begin{equation*}
D_{d,n}^{\mathrm{mix}}(A,B) = \iota_{d,n} A^{N_{d,n}-2} E_{d,n}(B) \left( 1 + O \left(\frac1{(\log A)^{1/2}} \right) \right).
\end{equation*}
\end{lemma}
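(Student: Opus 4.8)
The strategy mirrors the proof of Lemma~\ref{Lemma D}. First I would expand the product $N_V(B)N_V^{\mathrm{loc}}(B)$ using \eqref{Equality counting} and \eqref{Definition local counting}, writing $\sum_{V\in\mathbb{V}_{d,n}(A)}$ as $\tfrac12$ times a sum over primitive coefficient vectors in $\mathcal{B}_{N_{d,n}}(A)$. The diagonal contribution of a pair with $\mathbf{x}=\pm\mathbf{y}$ reassembles precisely into $\sum_{V}\Delta_V^{\mathrm{mix}}(B)$, because $\langle\mathbf{c},\nu_{d,n}(\mathbf{x})\rangle=0$ automatically places $\mathbf{c}$ in $\Lambda_{\nu_{d,n}(\mathbf{x})}^{(W)}\cap\mathcal{C}_{\nu_{d,n}(\mathbf{x})}^{(\alpha)}$. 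What remains is
\begin{equation*}
D_{d,n}^{\mathrm{mix}}(A,B)=\frac18\sum_{(\mathbf{x},\mathbf{y})\in\Omega_{d,n}(B)}\frac{\alpha W}{\|\nu_{d,n}(\mathbf{y})\|}\,\mathcal{S}_1^{\ast}\!\left(\Lambda_{\nu_{d,n}(\mathbf{x})}\cap\Lambda_{\nu_{d,n}(\mathbf{y})}^{(W)};\;\mathcal{B}_{N_{d,n}}(A)\cap\mathcal{C}_{\nu_{d,n}(\mathbf{y})}^{(\alpha)}\right),
\end{equation*}
with $\mathcal{S}_1^{\ast}$ as in \eqref{Definition Sk star}.

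The technical core is a precise evaluation of the inner sum, showing that for $(\mathbf{x},\mathbf{y})$ with $\mu(\mathbf{x},\mathbf{y})\le A$ (notation of \eqref{Definition mu})
\begin{equation*}
\frac{\alpha W}{\|\nu_{d,n}(\mathbf{y})\|}\,\mathcal{S}_1^{\ast}\!\left(\Lambda_{\nu_{d,n}(\mathbf{x})}\cap\Lambda_{\nu_{d,n}(\mathbf{y})}^{(W)};\,\mathcal{B}_{N_{d,n}}(A)\cap\mathcal{C}_{\nu_{d,n}(\mathbf{y})}^{(\alpha)}\right)=\frac{8\iota_{d,n}\,A^{N_{d,n}-2}}{\det\!\left(\Lambda_{\nu_{d,n}(\mathbf{x})}\cap\Lambda_{\nu_{d,n}(\mathbf{y})}\right)}\bigl(1+O(\mathcal{E}_{\mathbf{x},\mathbf{y}}(B))\bigr)+(\text{lower order}),
\end{equation*}
the lower-order errors being negligible powers of $A^{-1}$ times powers of $\log A$ in the range $B^{5/6}\le A\le B^2$. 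Since $\Lambda_{\nu_{d,n}(\mathbf{x})}$ is primitive, primitivity in $\mathbb{Z}^{N_{d,n}}$ of a vector of $\Lambda_{\nu_{d,n}(\mathbf{x})}$ coincides with primitivity in $\Lambda_{\nu_{d,n}(\mathbf{x})}$; M\"obius inversion of that condition rewrites the inner sum as a $\mu(e)$-weighted sum of sums $\mathcal{S}_1$ over the sublattices $\Lambda_{\nu_{d,n}(\mathbf{x})}\cap\Lambda_{\nu_{d,n}(\mathbf{y})}^{(W/\gcd(e,W))}$, evaluated over the rescaled regions $\mathcal{B}_{N_{d,n}}(A/e)\cap\mathcal{C}_{\nu_{d,n}(\mathbf{y})}^{(\alpha)}$, whose determinants are supplied by Lemma~\ref{Lemma determinant mixed/local} and Lemma~\ref{Lemma G}. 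Each $\mathcal{S}_1$ is evaluated by partial summation from a lattice point count: one peels off the rank-one quotient by $\Lambda_{\nu_{d,n}(\mathbf{x})}\cap\Lambda_{\nu_{d,n}(\mathbf{y})}$ (whose top successive minimum is $\le\mu(\mathbf{x},\mathbf{y})$ by Lemma~\ref{Key lemma two vectors}), counts the $\Lambda_{\nu_{d,n}(\mathbf{x})}\cap\Lambda_{\nu_{d,n}(\mathbf{y})}$-points on the resulting affine slices via Lemma~\ref{Lemma lattice pivotal} (the complementary range $\mu(\mathbf{x},\mathbf{y})>A$ being absorbed into an error through Lemma~\ref{Lemma R0}), and matches the emerging volume with $\mathcal{I}\bigl(\nu_{d,n}(\mathbf{x}),\,2\alpha\|\nu_{d,n}(\mathbf{y})\|^{-1}\nu_{d,n}(\mathbf{y})\bigr)$, which Lemma~\ref{Lemma volume mixed} computes in terms of $\delta$, hence of $\det(\mathbb{Z}\nu_{d,n}(\mathbf{x})\oplus\mathbb{Z}\nu_{d,n}(\mathbf{y}))$; its relative error is precisely $O(\min\{1,\Delta(\mathbf{x},\mathbf{y})^2/\alpha^2\})$, the first term of $\mathcal{E}_{\mathbf{x},\mathbf{y}}(B)$ in \eqref{Definition localised error}. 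Summing the $\mu(e)$-series yields the Euler product $\prod_{p\le w}(1-p^{-(N_{d,n}-2)})\prod_{p>w}(1-p^{-(N_{d,n}-1)})$ whenever $\mathcal{G}(\mathbf{x},\mathbf{y})\mid W/\rad(W)$, and this tends to $\zeta(N_{d,n}-2)^{-1}$ up to an error that is negligible because $w\to\infty$; when $\mathcal{G}(\mathbf{x},\mathbf{y})\nmid W/\rad(W)$ one merely bounds $\mathcal{S}_1^{\ast}$ by its expected size, which is swallowed by the indicator term of $\mathcal{E}_{\mathbf{x},\mathbf{y}}(B)$. Invoking \eqref{Identity determinant} and \eqref{Definition iota} then produces the displayed formula.

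Finally I would sum over $(\mathbf{x},\mathbf{y})\in\Omega_{d,n}(B)$. The main term reproduces $\iota_{d,n}A^{N_{d,n}-2}E_{d,n}(B)$ by \eqref{Definition E}. The total contribution of the factors $O(\mathcal{E}_{\mathbf{x},\mathbf{y}}(B))$ is, by \eqref{Definition F} together with Lemmas~\ref{Lemma E} and \ref{Lemma F}, at most $O\bigl(A^{N_{d,n}-2}F_{d,n}(B)\bigr)=O\bigl(A^{N_{d,n}-2}E_{d,n}(B)(\log B)^{-1/2}\bigr)$, which is the announced error since $\log B\asymp\log A$ in the given range. The remaining lower-order errors are handled exactly as in the proof of Lemma~\ref{Lemma D}: one breaks $\|\mathbf{x}\|$, $\|\mathbf{y}\|$, $\mathfrak{d}_2(\mathbf{x},\mathbf{y})$ and $\mathfrak{d}_3(\mathbf{x},\mathbf{y})$ into dyadic ranges and invokes Lemmas~\ref{Lemma l_r(X)} and \ref{Lemma l_r(X,Y)} together with the bounds of Lemma~\ref{Lemma upper and lower bounds det}.

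The main obstacle should be the evaluation of the inner sum. Two features make it harder than the count $\mathcal{S}_0^{\ast}$ from \cite[Lemma~$3$]{RandomFano} used in Lemma~\ref{Lemma D}: the lattice $\Lambda_{\nu_{d,n}(\mathbf{x})}\cap\Lambda_{\nu_{d,n}(\mathbf{y})}^{(W)}$ has rank $N_{d,n}-1$, one more than $\Lambda_{\nu_{d,n}(\mathbf{x})}\cap\Lambda_{\nu_{d,n}(\mathbf{y})}$, and its extra successive minimum may be as large as $\|\nu_{d,n}(\mathbf{x})\|\,W$, which in the critical cases $n\in\{d,d+1\}$ exceeds $A$, so Lemma~\ref{Lemma lattice pivotal} cannot be applied to it directly and one is forced to work slice by slice; and the constant $\zeta(N_{d,n}-2)^{-1}$ emerges only from the interplay of the primitivity M\"obius sum with the congruence modulo $W$, whose success rests on the high divisibility of $W$ built into \eqref{Definition W}, the failure cases being precisely those recorded by the indicator in \eqref{Definition localised error}. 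Everything else is a routine, if lengthy, reprise of the bookkeeping of Lemma~\ref{Lemma D}.
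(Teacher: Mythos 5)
Your overall skeleton matches the paper's: the same rewriting of $D_{d,n}^{\mathrm{mix}}(A,B)$ as a sum of $\mathcal{S}_1^{\ast}$ over $\Omega_{d,n}(B)$, M\"obius inversion over $\ell$ for primitivity, partial summation down to an $\mathcal{S}_0$ count, the volume from Lemma~\ref{Lemma volume mixed}, the determinant from Lemmas~\ref{Lemma determinant mixed/local} and \ref{Lemma G}, the truncated M\"obius series converging to $\zeta(N_{d,n}-2)^{-1}$ thanks to the divisibility of $W$, and the error $\mathcal{E}_{\mathbf{x},\mathbf{y}}(B)$ absorbed by $F_{d,n}(B)$ and Lemma~\ref{Lemma F}. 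Where you diverge is precisely at the step you flag as the main obstacle, and your proposed resolution has a gap. You want to peel off the rank-one quotient by $\Lambda_{\nu_{d,n}(\mathbf{x})}\cap\Lambda_{\nu_{d,n}(\mathbf{y})}$ and count points on affine slices, invoking Lemma~\ref{Key lemma two vectors} and $\mu(\mathbf{x},\mathbf{y})$. But Lemma~\ref{Lemma lattice pivotal} counts points of a lattice, not of a lattice coset, so it does not apply to your slices as stated; you would need to prove a translated version (Barroero--Widmer does cover definable families of translates, but the paper never sets this up) and then control a further Riemann-sum error when reassembling the per-slice volumes into the $(N_{d,n}-1)$-dimensional volume, since the slice spacing is of order $W$ times a ratio of determinants.

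The paper avoids all of this with one observation you are missing: multiplying by $W$ the $N_{d,n}-1$ linearly independent short vectors of $\Lambda_{\nu_{d,n}(\mathbf{x})}$ supplied by Lemma~\ref{Key lemma one vector} produces vectors of $\Lambda_{\nu_{d,n}(\mathbf{x})}\cap\Lambda_{\nu_{d,n}(\mathbf{y})}^{(W)}$, whence
\begin{equation*}
\lambda_{N_{d,n}-1}\left(\Lambda_{\nu_{d,n}(\mathbf{x})}\cap\Lambda_{\nu_{d,n}(\mathbf{y})}^{(W)}\right)\leq W\mu(\mathbf{x}),\qquad \mu(\mathbf{x})=n\,\frac{||\mathbf{x}||}{\mathfrak{d}_2(\mathbf{x})},
\end{equation*}
so Lemma~\ref{Lemma lattice pivotal} applies \emph{directly} to the full rank-$(N_{d,n}-1)$ lattice on the range $\mu(\mathbf{x})\leq A/W$; your worry that the extra minimum could be as large as $||\nu_{d,n}(\mathbf{x})||\,W$ is defused because $\mu(\mathbf{x})$ is generically of bounded size, not of size $||\mathbf{x}||$. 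Consequently the exceptional set is governed by $\mu(\mathbf{x})$ alone (not $\mu(\mathbf{x},\mathbf{y})$), and its contribution $\Sigma_2^{\mathrm{mix}}$ is bounded via Lemma~\ref{Lemma R0} together with the one-vector count $\ell_{2,n}(X;\Delta_0)$ of Lemma~\ref{Lemma l_r(X)}, rather than the two-vector counts you cite. If you insist on your slicing route you must supply the missing coset-counting lemma and redo the exceptional-set bookkeeping with $\mu(\mathbf{x},\mathbf{y})$; otherwise, replace that step by the $W\cdot$(short basis of $\Lambda_{\nu_{d,n}(\mathbf{x})}$) trick and the rest of your argument goes through as in the paper. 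You should also record explicitly where $B\leq A^{6/5}$ is used (to get $\alpha\ll\log A$, $W\ll A^{5/\log\log A}$, and to kill the term $G_{d,n}(B)$ coming from the $O(\alpha W\mu(\mathbf{x})\ell/A)$ error), rather than waving at ``negligible powers of $A^{-1}$''.
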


\begin{proof}
Recall that the definitions of the lattices $\Lambda_{\nu_{d,n}(\mathbf{x})}$ and
$\Lambda_{\nu_{d,n}(\mathbf{y})}^{(W)}$ and the region $\mathcal{C}_{\nu_{d,n}(\mathbf{y})}^{(\alpha)}$ were respectively given in \eqref{Definition lattice}, \eqref{Definition local lattice} and \eqref{Definition Cgamma}. We introduce the lattice
\begin{equation*}
\Gamma_{\mathbf{x}, \mathbf{y}}^{\mathrm{mix}}(W) =
\Lambda_{\nu_{d,n}(\mathbf{x})} \cap \Lambda_{\nu_{d,n}(\mathbf{y})}^{(W)},
\end{equation*}
and the region
\begin{equation*}
\mathcal{T}_{\mathbf{y}}^{\mathrm{mix}}(A,\alpha) =
\mathcal{B}_{N_{d,n}}(A) \cap \mathcal{C}_{\nu_{d,n}(\mathbf{y})}^{(\alpha)}.
\end{equation*}
Recall the respective definitions \eqref{Definition Sk star} and \eqref{Definition Sk} of the sums
$\mathcal{S}_k^{\ast} \left(\Lambda; \mathcal{R} \right)$ and $\mathcal{S}_k \left(\Lambda; \mathcal{R} \right)$ for any given lattice $\Lambda \subset \mathbb{Z}^{N_{d,n}}$, any bounded region $\mathcal{R} \subset \mathbb{R}^{N_{d,n}}$ and any integer $k \geq 0$. Recall also the definition \eqref{Definition Omega} of the set $\Omega_{d,n}(B)$. We see that
\begin{equation*}
D_{d,n}^{\mathrm{mix}}(A,B) = \frac{\alpha W}{8} \sum_{(\mathbf{x}, \mathbf{y}) \in \Omega_{d,n}(B)}
\frac{\mathcal{S}^{\ast}_1 \left( \Gamma_{\mathbf{x}, \mathbf{y}}^{\mathrm{mix}}(W); \mathcal{T}_{\mathbf{y}}^{\mathrm{mix}}(A,\alpha) \right)}{||\nu_{d,n}(\mathbf{y})||}.
\end{equation*}
Recall the definition \eqref{Definition mu(x)} of the quantity $\mu(\mathbf{x})$ and set
\begin{equation}
\label{Definition Sigma1 mixed}
\Sigma_1^{\mathrm{mix}}(A,B) =
\frac{\alpha W}{8} \sum_{\substack{(\mathbf{x}, \mathbf{y}) \in \Omega_{d,n}(B) \\ \mu(\mathbf{x}) \leq A/W}}
\frac{\mathcal{S}^{\ast}_1 \left( \Gamma_{\mathbf{x}, \mathbf{y}}^{\mathrm{mix}}(W); \mathcal{T}_{\mathbf{y}}^{\mathrm{mix}}(A,\alpha) \right)}{||\nu_{d,n}(\mathbf{y})||},
\end{equation}
and
\begin{equation}
\label{Definition Sigma2 mixed}
\Sigma_2^{\mathrm{mix}}(A,B) = D_{d,n}^{\mathrm{mix}}(A,B) - \Sigma_1^{\mathrm{mix}}(A,B).
\end{equation}

We start by dealing with the sum $\Sigma_1^{\mathrm{mix}}(A,B)$. A M\"{o}bius inversion gives
\begin{equation*}
\mathcal{S}^{\ast}_1 \left( \Gamma_{\mathbf{x}, \mathbf{y}}^{\mathrm{mix}}(W); \mathcal{T}_{\mathbf{y}}^{\mathrm{mix}}(A,\alpha) \right) = \sum_{\ell \leq A} \frac{\mu(\ell)}{\ell}
\mathcal{S}_1 \left( \Gamma_{\mathbf{x}, \mathbf{y}}^{\mathrm{mix}} \left(\frac{W}{\gcd(\ell,W)} \right);
\mathcal{T}_{\mathbf{y}}^{\mathrm{mix}} \left( \frac{A}{\ell}, \alpha \right) \right).
\end{equation*}
For any real $u \geq 1$, it is clear that we have
\begin{equation*}
\mathcal{S}_1 \left( \Gamma_{\mathbf{x}, \mathbf{y}}^{\mathrm{mix}} \left(\frac{W}{\gcd(\ell,W)} \right);
\mathcal{T}_{\mathbf{y}}^{\mathrm{mix}} \left( u, \alpha \right) \right) \leq
\mathcal{S}_1 \left( \Lambda_{\nu_{d,n}(\mathbf{x})}; \mathcal{B}_{N_{d,n}}(u) \right).
\end{equation*}
Breaking the size of $\mathbf{a}$ into dyadic intervals, we see that
\begin{equation*}
\mathcal{S}_1 \left( \Lambda_{\nu_{d,n}(\mathbf{x})}; \mathcal{B}_{N_{d,n}}(u) \right) \ll
\sum_{U \ll u} \frac1{U} \mathcal{S}_0 \left( \Lambda_{\nu_{d,n}(\mathbf{x})}; \mathcal{B}_{N_{d,n}}(U) \right).
\end{equation*}
Recall that we have the inequality \eqref{Replacement key lemma one vector}. Therefore, we see that if
$U \geq \mu(\mathbf{x})$ then we are in position to apply Lemma~\ref{Lemma lattice pivotal} with $I=1$ and
$\gamma=1/2$, say. On the other hand, if $U < \mu(\mathbf{x})$ then we can apply the first part of
Lemma~\ref{Lemma R0} with $M=1/2$ and $R_0=N_{d,n}-2$. It follows that
\begin{equation*}
\mathcal{S}_1 \left( \Lambda_{\nu_{d,n}(\mathbf{x})}; \mathcal{B}_{N_{d,n}}(u) \right) \ll
\sum_{U \ll u} \frac1{U}
\left( \frac{U^{N_{d,n}-1}}{||\mathbf{x}||^d} + \frac{U \mu(\mathbf{x})^{N_{d,n}-2}}{||\mathbf{x}||^d} + 1 \right).
\end{equation*}
Note that we have used the fact that Lemma~\ref{Lemma determinant global} gives
$\det(\Lambda_{\nu_{d,n}(\mathbf{x})}) = ||\nu_{d,n}(\mathbf{x})|| \gg ||\mathbf{x}||^d$. As a result, for
$u \geq \mu(\mathbf{x})$ we obtain
\begin{equation}
\label{Upper bound trivial u}
\mathcal{S}_1 \left( \Gamma_{\mathbf{x}, \mathbf{y}}^{\mathrm{mix}} \left(\frac{W}{\gcd(\ell,W)} \right);
\mathcal{T}_{\mathbf{y}}^{\mathrm{mix}} \left( u, \alpha \right) \right) \ll
\frac{u^{N_{d,n}-2}}{||\mathbf{x}||^d} \log u + 1.
\end{equation}
Moreover, writing that
\begin{equation*}
\mathcal{S}_0 \left( \Gamma_{\mathbf{x}, \mathbf{y}}^{\mathrm{mix}} \left(\frac{W}{\gcd(\ell,W)} \right);
\mathcal{T}_{\mathbf{y}}^{\mathrm{mix}} \left( u, \alpha \right) \right) \leq
\mathcal{S}_0 \left( \Lambda_{\nu_{d,n}(\mathbf{x})}; \mathcal{B}_{N_{d,n}}(u) \right),
\end{equation*}
and applying Lemma~\ref{Lemma lattice pivotal} with $I=1$ and $\gamma=1/2$, we deduce that for
$u \geq \mu(\mathbf{x})$ we have
\begin{equation}
\label{Upper bound S0}
\mathcal{S}_0 \left( \Gamma_{\mathbf{x}, \mathbf{y}}^{\mathrm{mix}} \left(\frac{W}{\gcd(\ell,W)} \right);
\mathcal{T}_{\mathbf{y}}^{\mathrm{mix}} \left( u, \alpha \right) \right) \ll
\frac{u^{N_{d,n}-1}}{||\mathbf{x}||^d}.
\end{equation}
Using the upper bound \eqref{Upper bound trivial u} with $u = W \mu(\mathbf{x})$, we see that
\begin{align*}
\sum_{A/W\mu(\mathbf{x})< \ell \leq A} \frac{\mu(\ell)}{\ell}
\mathcal{S}_1 \left( \Gamma_{\mathbf{x}, \mathbf{y}}^{\mathrm{mix}} \left(\frac{W}{\gcd(\ell,W)} \right);
\mathcal{T}_{\mathbf{y}}^{\mathrm{mix}} \left( \frac{A}{\ell}, \alpha \right) \right) \ll & \
\left( \frac{\left(W \mu(\mathbf{x})\right)^{N_{d,n}-2}}{||\mathbf{x}||^d} + 1 \right) \\
& \times (\log A)^2.
\end{align*}
Therefore, using the upper bound \eqref{Upper bound trivial u} once again with $u = W \mu(\mathbf{x})$ we get
\begin{align}
\label{Estimate l small mixed}
\begin{split}
\mathcal{S}^{\ast}_1 \left( \Gamma_{\mathbf{x}, \mathbf{y}}^{\mathrm{mix}}(W); \mathcal{T}_{\mathbf{y}}^{\mathrm{mix}}(A,\alpha) \right) = & \
\sum_{\ell \leq A/W\mu(\mathbf{x})} \frac{\mu(\ell)}{\ell} S_{\mathbf{x}, \mathbf{y}}^{\mathrm{mix}}(A, B; \ell) \\
& + O \left( \left( \frac{\left(W \mu(\mathbf{x})\right)^{N_{d,n}-2}}{||\mathbf{x}||^d} + 1 \right) (\log A)^2 \right),
\end{split}
\end{align}
where
\begin{equation*}
S_{\mathbf{x}, \mathbf{y}}^{\mathrm{mix}}(A, B; \ell) =
\mathcal{S}_1 \left( \Gamma_{\mathbf{x}, \mathbf{y}}^{\mathrm{mix}} \left(\frac{W}{\gcd(\ell,W)} \right);
\mathcal{T}_{\mathbf{y}}^{\mathrm{mix}} \left( \frac{A}{\ell}, \alpha \right) \smallsetminus
\mathcal{T}_{\mathbf{y}}^{\mathrm{mix}} \left( W \mu(\mathbf{x}), \alpha \right) \right).
\end{equation*}
Next, an application of partial summation yields
\begin{align*}
S_{\mathbf{x}, \mathbf{y}}^{\mathrm{mix}}(A, B; \ell) = & \
\frac{\ell}{A} \mathcal{S}_0 \left( \Gamma_{\mathbf{x}, \mathbf{y}}^{\mathrm{mix}} \left(\frac{W}{\gcd(\ell,W)} \right);
\mathcal{T}_{\mathbf{y}}^{\mathrm{mix}} \left( \frac{A}{\ell}, \alpha \right) \right) \\
& \! \! + \int_{W \mu(\mathbf{x})}^{A/\ell} \!
\mathcal{S}_0 \! \left( \Gamma_{\mathbf{x}, \mathbf{y}}^{\mathrm{mix}} \left(\frac{W}{\gcd(\ell,W)} \right);
\mathcal{T}_{\mathbf{y}}^{\mathrm{mix}} \left( t, \alpha \right) \right) \! \frac{\mathrm{d} t}{t^2}
+ O \! \left( \frac{(W \mu(\mathbf{x}))^{N_{d,n}-2}}{||\mathbf{x}||^d} \right) \! .
\end{align*}
Note that we have used the upper bound \eqref{Upper bound S0} with $u = W \mu(\mathbf{x})$. In addition,
Lemma~\ref{Key lemma one vector} states that the ball $\mathcal{B}_{N_{d,n}}(\mu(\mathbf{x}))$ contains $N_{d,n}-1$ linearly independent vectors of the lattice $\Lambda_{\nu_{d,n}(\mathbf{x})}$. Therefore, multiplying these vectors by $W$ we deduce that
\begin{equation*}
\lambda_{N_{d,n}-1} \left( \Gamma_{\mathbf{x}, \mathbf{y}}^{\mathrm{mix}} \left(\frac{W}{\gcd(\ell,W)} \right) \right) \leq
W \mu(\mathbf{x}).
\end{equation*}
Lemma~\ref{Lemma lattice pivotal} thus implies that for any $t \in [W \mu(\mathbf{x}), A/\ell]$, we have
\begin{align*}
\mathcal{S}_0 \left( \Gamma_{\mathbf{x}, \mathbf{y}}^{\mathrm{mix}} \left(\frac{W}{\gcd(\ell,W)} \right);
\mathcal{T}_{\mathbf{y}}^{\mathrm{mix}} \left( t, \alpha \right) \right) = & \
t^{N_{d,n}-1} \det \left( \Gamma_{\mathbf{x}, \mathbf{y}}^{\mathrm{mix}} \left( \frac{W}{\gcd(\ell,W)}\right) \right)^{-1} \\
& \times \left( \vol \left( \mathfrak{T}_{\mathbf{x}, \mathbf{y}}(\alpha) \right) + O \left( \frac{W \mu(\mathbf{x})}{t} \right) \right),
\end{align*}
where we have set
\begin{equation*}
\mathfrak{T}_{\mathbf{x}, \mathbf{y}}(\alpha) =
\Span_{\mathbb{R}} \left(\Lambda_{\nu_{d,n}(\mathbf{x})} \right) \cap
\mathcal{T}_{\mathbf{y}}^{\mathrm{mix}} ( 1, \alpha).
\end{equation*}
It follows that
\begin{align}
\label{Estimate after partial summation mixed}
\begin{split}
S_{\mathbf{x}, \mathbf{y}}^{\mathrm{mix}}(A, B; \ell) = & \
\frac{N_{d,n}-1}{N_{d,n}-2} \frac{A^{N_{d,n}-2}}{\ell^{N_{d,n}-2}} \cdot
\det \left( \Gamma_{\mathbf{x}, \mathbf{y}}^{\mathrm{mix}} \left( \frac{W}{\gcd(\ell,W)}\right) \right)^{-1} \\
& \times \left( \vol \left( \mathfrak{T}_{\mathbf{x}, \mathbf{y}}(\alpha) \right)
+ O \left( \frac{W \mu(\mathbf{x}) \ell}{A} \right) \right)
+ O \left( \frac{(W \mu(\mathbf{x}))^{N_{d,n}-2}}{||\mathbf{x}||^d} \right).
\end{split}
\end{align}
Recalling the definition \eqref{Definition volume mixed} of $\mathcal{I}(\mathbf{w}, \mathbf{z})$, we note that we have 
\begin{equation*}
\vol \left( \mathfrak{T}_{\mathbf{x}, \mathbf{y}}(\alpha) \right) = 
\mathcal{I} \left(2 \alpha \frac{\nu_{d,n}(\mathbf{x})}{||\nu_{d,n}(\mathbf{x})||},
2 \alpha \frac{\nu_{d,n}(\mathbf{y})}{||\nu_{d,n}(\mathbf{y})||} \right).
\end{equation*}
Recall the definition \eqref{Definition Delta} of the quantity $\Delta(\mathbf{x}, \mathbf{y})$. Using
Lemma~\ref{Lemma volume mixed} and the equality \eqref{Equality lattice} we see that
\begin{equation}
\label{Estimate volume mixed}
\vol \left( \mathfrak{T}_{\mathbf{x}, \mathbf{y}}(\alpha) \right) =
\frac{N_{d,n}-2}{N_{d,n}-1} V_{N_{d,n}-2} \frac{\Delta(\mathbf{x}, \mathbf{y})}{\alpha}
\left( 1 + O \left( \min \left\{ 1, \frac{\Delta(\mathbf{x}, \mathbf{y})^2}{\alpha^2} \right\} \right) \right).
\end{equation}
Moreover, Lemmas~\ref{Lemma determinant mixed/local} and \ref{Lemma G} give
\begin{equation*}
\det \left( \Gamma_{\mathbf{x}, \mathbf{y}}^{\mathrm{mix}} \left(\frac{W}{\gcd(\ell,W)} \right) \right) =
\frac{W ||\nu_{d,n}(\mathbf{x})||}{\gcd(\ell,W)} \cdot
\gcd\left(\mathcal{G}(\mathbf{x}, \mathbf{y}), \frac{W}{\gcd(\ell,W)} \right)^{-1}.
\end{equation*}
Recall the definition \eqref{Definition radical} of the radical of the integer $W$. We note that if $\ell$ is a squarefree integer then
\begin{equation}
\label{Estimate gcd}
\gcd\left(\mathcal{G}(\mathbf{x}, \mathbf{y}), \frac{W}{\gcd(\ell,W)} \right) = \mathcal{G}(\mathbf{x}, \mathbf{y})
\left( 1 + O \left( \boldsymbol{1}_{\mathcal{G}(\mathbf{x}, \mathbf{y}) \nmid W/\rad(W)} \right) \right).
\end{equation}
Hence 
\begin{equation}
\label{Estimate det mixed}
\det \left( \Gamma_{\mathbf{x}, \mathbf{y}}^{\mathrm{mix}} \left(\frac{W}{\gcd(\ell,W)} \right) \right) =
\frac{W||\nu_{d,n}(\mathbf{x})||}{\gcd(\ell,W) \mathcal{G}(\mathbf{x}, \mathbf{y})}
\left( 1 + O \left( \boldsymbol{1}_{\mathcal{G}(\mathbf{x}, \mathbf{y}) \nmid W/\rad(W)} \right) \right).
\end{equation}
Recall the definition \eqref{Definition localised error} of the quantity $\mathcal{E}_{\mathbf{x},\mathbf{y}}(B)$. Combining the estimates \eqref{Estimate after partial summation mixed}, \eqref{Estimate volume mixed} and \eqref{Estimate det mixed} and using the lower bound $\Delta(\mathbf{x}, \mathbf{y}) \geq 1$, we obtain
\begin{align*}
S_{\mathbf{x}, \mathbf{y}}^{\mathrm{mix}}(A, B; \ell) = & \
V_{N_{d,n}-2} \frac{A^{N_{d,n}-2}}{\alpha W} \frac{\gcd(\ell,W)}{\ell^{N_{d,n}-2}}
\frac{\Delta(\mathbf{x}, \mathbf{y}) \mathcal{G}(\mathbf{x}, \mathbf{y})}{||\nu_{d,n}(\mathbf{x})||} \\
& \times \left( 1 + O \left( \mathcal{E}_{\mathbf{x},\mathbf{y}}(B) + \frac{\alpha W \mu(\mathbf{x}) \ell}{A} \right) \right)
+ O \left( \frac{(W \mu(\mathbf{x}))^{N_{d,n}-2}}{||\mathbf{x}||^d} \right).
\end{align*}
Recall the definition \eqref{Definition alpha} of $\alpha$ and the upper bound \eqref{Upper bound W} for $W$ and note that the assumption $B \leq A^{6/5}$ gives $\alpha \ll \log A$ and $W \ll A^{5 / \log \log A}$. Using these facts and the inequalities $\Delta(\mathbf{x}, \mathbf{y}), \mathcal{G}(\mathbf{x}, \mathbf{y}) \geq 1$ and
$W \mu(\mathbf{x}) \leq A$, we see that the estimate \eqref{Estimate l small mixed} implies in particular that
\begin{align*}
\mathcal{S}^{\ast}_1 \left( \Gamma_{\mathbf{x}, \mathbf{y}}^{\mathrm{mix}}(W); \mathcal{T}_{\mathbf{y}}^{\mathrm{mix}}(A,\alpha) \right) = & \
V_{N_{d,n}-2} \frac{A^{N_{d,n}-2}}{\alpha W}
\frac{\Delta(\mathbf{x}, \mathbf{y}) \mathcal{G}(\mathbf{x}, \mathbf{y})}{||\nu_{d,n}(\mathbf{x})||} \\
& \times \left( \sum_{\ell \leq A/W\mu(\mathbf{x})} \mu(\ell) \frac{\gcd(\ell,W)}{\ell^{N_{d,n}-1}}
+ O \left( \mathcal{E}_{\mathbf{x},\mathbf{y}}(B) + \frac{\mu(\mathbf{x})}{A^{3/4}} \right) \right).
\end{align*}
Now it is clear that
\begin{equation*}
\sum_{\ell \leq A/W\mu(\mathbf{x})} \mu(\ell) \frac{\gcd(\ell, W)}{\ell^{N_{d,n}-1}} =
\sum_{\ell \leq w} \mu(\ell) \frac{\gcd(\ell, W)}{\ell^{N_{d,n}-1}}
+ O \left( \left( \frac{W\mu(\mathbf{x})}{A} \right)^{N_{d,n}-3} + \frac1{w^{N_{d,n}-3}} \right).
\end{equation*}
But the definition \eqref{Definition W} of $W$ shows that for any squarefree integer $\ell \leq w$ we have
$\gcd(\ell,W) = \ell$. We thus get
\begin{equation}
\label{Estimate zeta}
\sum_{\ell \leq A/W\mu(\mathbf{x})} \mu(\ell) \frac{\gcd(\ell, W)}{\ell^{N_{d,n}-1}} =
\frac1{\zeta(N_{d,n}-2)} + O \left( \left( \frac{W\mu(\mathbf{x})}{A} \right)^{N_{d,n}-3} + \frac1{w^{N_{d,n}-3}} \right).
\end{equation}
Finally, we note that the equality \eqref{Identity determinant} yields
\begin{equation}
\label{Equality Delta}
\frac{\Delta(\mathbf{x}, \mathbf{y}) \mathcal{G}(\mathbf{x}, \mathbf{y})}
{||\nu_{d,n}(\mathbf{x})|| \cdot ||\nu_{d,n}(\mathbf{y})||} =
\frac1{\det(\Lambda_{\nu_{d,n}(\mathbf{x})} \cap \Lambda_{\nu_{d,n}(\mathbf{y})})}.
\end{equation}
Recall that the quantity $F_{d,n}(B)$ was defined in \eqref{Definition F}. It follows from Lemma~\ref{Lemma E} and the equality \eqref{Definition Sigma1 mixed} that
\begin{equation}
\label{Estimate intermediate Sigma1}
\Sigma_1^{\mathrm{mix}}(A,B) = \iota_{d,n} A^{N_{d,n}-2} E_{d,n}(B)
\left( 1 + O \left( \frac1{w^{N_{d,n}-3}} + \frac{F_{d,n}(B)}{B^2} + \frac{G_{d,n}(B)}{B^2} \right) \right),
\end{equation}
where we have set
\begin{equation*}
G_{d,n}(B) = \frac1{A^{3/4}} \sum_{(\mathbf{x}, \mathbf{y}) \in \Omega_{d,n}(B)}
\frac{\mu(\mathbf{x})}{\det(\Lambda_{\nu_{d,n}(\mathbf{x})} \cap \Lambda_{\nu_{d,n}(\mathbf{y})})}.
\end{equation*}
Note that we have used the obvious fact that
\begin{align*}
\sum_{\substack{(\mathbf{x}, \mathbf{y}) \in \Omega_{d,n}(B) \\ \mu(\mathbf{x}) > A/W}}
\frac1{\det(\Lambda_{\nu_{d,n}(\mathbf{x})} \cap \Lambda_{\nu_{d,n}(\mathbf{y})})} & \leq \frac{W}{A}
\sum_{\substack{(\mathbf{x}, \mathbf{y}) \in \Omega_{d,n}(B) \\ \mu(\mathbf{x}) > A/W}}
\frac{\mu(\mathbf{x})}{\det(\Lambda_{\nu_{d,n}(\mathbf{x})} \cap \Lambda_{\nu_{d,n}(\mathbf{y})})} \\
& \ll G_{d,n}(B).
\end{align*}
An application of Lemma~\ref{Lemma upper and lower bounds det} gives
\begin{equation*}
G_{d,n}(B) \ll \frac1{A^{3/4}} \sum_{(\mathbf{x}, \mathbf{y}) \in \Omega_{d,n}(B)}
\frac1{\mathfrak{d}_2(\mathbf{x}) \mathfrak{d}_2(\mathbf{x},\mathbf{y}) \cdot ||\mathbf{x}||^{d-2} ||\mathbf{y}||^{d-1}}.
\end{equation*}
Recall the respective definitions \eqref{Definition l_r(X)} and \eqref{Definition l_r(X,Y)} of the quantities
$\ell_{2,n}(X;\Delta_0)$ and $\ell_{2,n}(X,Y;\Delta_2)$. We proceed to break the sizes of $||\mathbf{x}||$,
$||\mathbf{y}||$, $\mathfrak{d}_2(\mathbf{x})$ and $\mathfrak{d}_2(\mathbf{x},\mathbf{y})$ into dyadic intervals. Recalling that we have the upper bounds \eqref{Upper bound trivial drx} and \eqref{Upper bound trivial drxy} we deduce that
\begin{equation*}
G_{d,n}(B) \ll \frac1{A^{3/4}} \sum_{X, Y \ll B^{1/(n+1-d)}} \ \sum_{\Delta_0 \ll X} \ \sum_{\Delta_2 \ll XY}
\frac{\min \left\{ Y^{n+1} \ell_{2,n}(X;\Delta_0), \ell_{2,n}(X,Y;\Delta_2) \right\}}{\Delta_0 \Delta_2 X^{d-2} Y^{d-1}}.
\end{equation*}
Applying Lemmas~\ref{Lemma l_r(X)} and \ref{Lemma l_r(X,Y)} we see that
\begin{align*}
\min \left\{ Y^{n+1} \ell_{2,n}(X;\Delta_0), \ell_{2,n}(X,Y;\Delta_2) \right\} & \ll (\log X)
\min \left\{ X^2 Y^{n+1} \Delta_0^n, (XY)^2 \Delta_2^{n-1} \right\} \\
& \ll (\log X) \left( X^2 Y^{n+1} \Delta_0^n \right)^{1/n} \left( (XY)^2 \Delta_2^{n-1} \right)^{1-1/n}.
\end{align*}
We thus derive
\begin{align*}
G_{d,n}(B) & \ll \frac{\log B}{A^{3/4}} \sum_{X, Y \ll B^{1/(n+1-d)}} \ \sum_{\Delta_0 \ll X} \ \sum_{\Delta_2 \ll XY}
\frac{\Delta_2^{n-3+1/n}}{X^{d-4} Y^{d-4+1/n}} \\
& \ll \frac{(\log B)^2}{A^{3/4}} \sum_{X, Y \ll B^{1/(n+1-d)}} X^{n+1-d+1/n} Y^{n+1-d} \\
& \ll \frac{(\log B)^2}{A^{3/4}} B^{2+1/n(n+1-d)}.
\end{align*}
Using the fact that $n(n+1-d) \geq 3$ and the assumption $B \leq A^{6/5}$ we obtain in particular
\begin{equation}
\label{Upper bound G}
G_{d,n}(B) \ll \frac{B^2}{A^{1/3}}.
\end{equation}
Note that the definition \eqref{Definition w} of $w$ implies that $w^{N_{d,n}-3} \gg (\log B)^{1/2}$. Therefore, combining the estimate \eqref{Estimate intermediate Sigma1}, Lemma~\ref{Lemma F} and the upper bound
\eqref{Upper bound G}, and using the assumption $A \leq B^2$ we deduce that
\begin{equation}
\label{Estimate final Sigma1}
\Sigma_1^{\mathrm{mix}}(A,B) = \iota_{d,n} A^{N_{d,n}-2} E_{d,n}(B) \left( 1 + O \left( \frac1{(\log A)^{1/2}} \right) \right).
\end{equation}

We now deal with the quantity $\Sigma_2^{\mathrm{mix}}(A,B)$. Recall the definition \eqref{Definition Xi} of the set
$\Xi_{d,n}(B)$. We start by noting that we trivially have
\begin{align*}
\Sigma_2^{\mathrm{mix}}(A,B) & \leq \frac{\alpha W}{8}
\sum_{\substack{(\mathbf{x}, \mathbf{y}) \in \Omega_{d,n}(B) \\ \mu(\mathbf{x}) > A/W}}
\frac{\mathcal{S}_1^{\ast}( \Lambda_{\nu_{d,n}(\mathbf{x})}; \mathcal{B}_{N_{d,n}}(A))}{||\nu_{d,n}(\mathbf{y})||} \\
& \ll \alpha WB \sum_{\substack{\mathbf{x} \in \Xi_{d,n}(B) \\ \mu(\mathbf{x}) > A/W}} \# \left( \Lambda_{\nu_{d,n}(\mathbf{x})} \cap \mathcal{B}_{N_{d,n}}(A) \right).
\end{align*}
Lemma~\ref{Key lemma one vector} implies in particular that
\begin{equation*}
\lambda_{N_{d,n}-1} \left( \Lambda_{\nu_{d,n}(\mathbf{x})} \right) \leq W \mu(\mathbf{x}).
\end{equation*}
Moreover we have $A < W \mu(\mathbf{x})$ so we can apply the first part of Lemma~\ref{Lemma R0} with $M=1/2$ and $R_0=n-1$. This yields
\begin{align*}
\# \left(\Lambda_{\nu_{d,n}(\mathbf{x})} \cap \mathcal{B}_{N_{d,n}}(A) \right) & \ll
\frac{A^{N_{d,n}-n} \left( W \mu(\mathbf{x}) \right)^{n-1}}{\det(\Lambda_{\nu_{d,n}(\mathbf{x})})} + A^{N_{d,n}-n-1} \\
& \ll W^{n-1} A^{N_{d,n}-n} \frac{||\mathbf{x}||^{n-d-1}}{\mathfrak{d}_2(\mathbf{x})^{n-1}} + A^{N_{d,n}-n-1}.
\end{align*}
Note that we have used the fact that Lemma~\ref{Lemma determinant global} gives
$\det(\Lambda_{\nu_{d,n}(\mathbf{x})}) = ||\nu_{d,n}(\mathbf{x})|| \gg ||\mathbf{x}||^d$. Breaking the sizes of
$||\mathbf{x}||$ and $\mathfrak{d}_2(\mathbf{x})$ into dyadic intervals we see that we have
\begin{equation*}
\Sigma_2^{\mathrm{mix}}(A,B) \ll \alpha W^n A^{N_{d,n}-2} B \sum_{\substack{X \ll B^{1/(n+1-d)} \\ \Delta_0 \ll WX/A}} 
\left( \frac{X^{n-d-1}}{A^{n-2} \Delta_0^{n-1}} + \frac1{A^{n-1}} \right) \ell_{2,n}(X;\Delta_0).
\end{equation*}
It follows from Lemma~\ref{Lemma l_r(X)} that
\begin{align*}
\Sigma_2^{\mathrm{mix}}(A,B) & \ll
\alpha W^n A^{N_{d,n}-2} B (\log B) \sum_{X \ll B^{1/(n+1-d)}} \ \sum_{\Delta_0 \ll WX/A} 
\left( \frac{X^{n+1-d} \Delta_0}{A^{n-2}} + \frac{X^2 \Delta_0^n}{A^{n-1}} \right) \\
& \ll \alpha W^n A^{N_{d,n}-2} B (\log B) \sum_{X \ll B^{1/(n+1-d)}} \left( \frac{WX^{n+2-d}}{A^{n-1}} + \frac{W^nX^{n+2}}{A^{2n-1}} \right) \\
& \ll \alpha W^{2n} A^{N_{d,n}-2} B^2 \left( \frac{B^{1/(n+1-d)}}{A^{n-1}} + \frac{B^{(d+1)/(n+1-d)}}{A^{2n-1}} \right) \log B.
\end{align*}
Since $(d+1)/(n+1-d) \leq n+1$, the assumption $B \leq A^{6/5}$ gives
\begin{equation*}
 \frac{B^{(d+1)/(n+1-d)}}{A^{2n-1}} \leq \frac1{A^{(4n-11)/5}}.
\end{equation*}
Therefore, using the upper bounds $\alpha \ll \log A$ and $W \ll A^{5 / \log \log A}$ and the fact that $n \geq 3$, we see that Lemma~\ref{Lemma E} implies in particular that
\begin{equation}
\label{Upper bound final Sigma2}
\Sigma_2^{\mathrm{mix}}(A,B) \ll A^{N_{d,n}-2} E_{d,n}(B) \cdot \frac1{A^{1/10}}.
\end{equation}
Putting together the equality \eqref{Definition Sigma2 mixed}, the estimate \eqref{Estimate final Sigma1} and the upper bound \eqref{Upper bound final Sigma2} completes the proof.
\end{proof}

Recall the definition \eqref{Definition iota} of $\iota_{d,n}$. Our final task in this section is to prove an estimate for the quantity $D_{d,n}^{\mathrm{loc}}(A,B)$ defined in \eqref{Definition D loc}.

\begin{lemma}
\label{Lemma D loc}
Let $d \geq 2$ and $n \geq d$ with $(d,n) \neq (2,2)$. Assume that $B^{1/2} \leq A \leq B^2$. Then we have 
\begin{equation*}
D_{d,n}^{\mathrm{loc}}(A,B) = \iota_{d,n} A^{N_{d,n}-2} E_{d,n}(B) \left( 1 + O \left(\frac1{(\log A)^{1/2}} \right) \right).
\end{equation*}
\end{lemma}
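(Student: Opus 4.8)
The plan is to follow the proof of Lemma~\ref{Lemma D mixed} almost verbatim, exploiting a simplification that makes the localised case easier rather than harder. Expanding the square $N_V^{\mathrm{loc}}(B)^2$ via \eqref{Definition local counting}, writing $\sum_{V\in\mathbb{V}_{d,n}(A)}$ as one half of a sum over primitive vectors $\mathbf{a}$ with $\|\mathbf{a}\|\le A$, and subtracting the diagonal contribution $\mathbf{x}=\pm\mathbf{y}$ — which is precisely $\sum_V\Delta_V^{\mathrm{loc}}(B)$ with $\Delta_V^{\mathrm{loc}}$ as in \eqref{Definition Delta loc}, since $\nu_{d,n}(-\mathbf{x})=\pm\nu_{d,n}(\mathbf{x})$ leaves the lattices \eqref{Definition local lattice} and cones \eqref{Definition Cgamma} unchanged — one obtains, in the notation \eqref{Definition Sk star},
\begin{equation*}
D_{d,n}^{\mathrm{loc}}(A,B)=\frac{\alpha^2W^2}{8}\sum_{(\mathbf{x},\mathbf{y})\in\Omega_{d,n}(B)}\frac{\mathcal{S}_2^{\ast}\!\left(\Lambda_{\nu_{d,n}(\mathbf{x})}^{(W)}\cap\Lambda_{\nu_{d,n}(\mathbf{y})}^{(W)};\ \mathcal{B}_{N_{d,n}}(A)\cap\mathcal{C}_{\nu_{d,n}(\mathbf{x})}^{(\alpha)}\cap\mathcal{C}_{\nu_{d,n}(\mathbf{y})}^{(\alpha)}\right)}{\|\nu_{d,n}(\mathbf{x})\|\cdot\|\nu_{d,n}(\mathbf{y})\|}.
\end{equation*}
This is the analogue of the opening identity of Lemma~\ref{Lemma D mixed} with the substitutions: $\mathcal{S}_1^{\ast}$ becomes $\mathcal{S}_2^{\ast}$ (two factors $\|\mathbf{a}_V\|^{-1}$); the mixed lattice $\Lambda_{\nu_{d,n}(\mathbf{x})}\cap\Lambda_{\nu_{d,n}(\mathbf{y})}^{(W)}$ of rank $N_{d,n}-1$ is replaced by the localised lattice $\Lambda_{\nu_{d,n}(\mathbf{x})}^{(W)}\cap\Lambda_{\nu_{d,n}(\mathbf{y})}^{(W)}$, which now has \emph{full} rank $N_{d,n}$; there is a second cone constraint, so Lemma~\ref{Lemma volume loc} plays the role of Lemma~\ref{Lemma volume mixed}; and the first determinant formula of Lemma~\ref{Lemma determinant mixed/local}, together with Lemma~\ref{Lemma G}, replaces the second.

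The simplification is that, since $\Lambda_{\nu_{d,n}(\mathbf{x})}^{(Q)}\cap\Lambda_{\nu_{d,n}(\mathbf{y})}^{(Q)}\supseteq Q\mathbb{Z}^{N_{d,n}}$, the vectors $Qe_1,\dots,Qe_{N_{d,n}}$ show that $\lambda_{N_{d,n}}(\Lambda_{\nu_{d,n}(\mathbf{x})}^{(Q)}\cap\Lambda_{\nu_{d,n}(\mathbf{y})}^{(Q)})\le Q$ for every $Q\mid W$. As $W\ll B^{4/\log\log B}\ll B^{1/2}\le A$ by \eqref{Upper bound W}, Lemma~\ref{Lemma lattice pivotal} is applicable to all dilates of this lattice that occur, over the entire relevant range of parameters; in particular there is \emph{no} exceptional regime analogous to $\Sigma_2^{\mathrm{mix}}(A,B)$, and neither $\mathfrak{d}_2(\mathbf{x},\mathbf{y}),\mathfrak{d}_3(\mathbf{x},\mathbf{y})$ nor Lemma~\ref{Key lemma two vectors} enters the argument. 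Concretely, a M\"{o}bius inversion over primitivity gives
\begin{equation*}
\mathcal{S}_2^{\ast}\!\left(\Lambda_{\nu_{d,n}(\mathbf{x})}^{(W)}\cap\Lambda_{\nu_{d,n}(\mathbf{y})}^{(W)};\mathcal{R}\right)=\sum_{\ell\le A}\frac{\mu(\ell)}{\ell^2}\,\mathcal{S}_2\!\left(\Lambda_{\nu_{d,n}(\mathbf{x})}^{(W/\gcd(\ell,W))}\cap\Lambda_{\nu_{d,n}(\mathbf{y})}^{(W/\gcd(\ell,W))};\tfrac{1}{\ell}\mathcal{R}\right),
\end{equation*}
where $\mathcal{R}=\mathcal{B}_{N_{d,n}}(A)\cap\mathcal{C}_{\nu_{d,n}(\mathbf{x})}^{(\alpha)}\cap\mathcal{C}_{\nu_{d,n}(\mathbf{y})}^{(\alpha)}$ and $\tfrac{1}{\ell}\mathcal{R}=\mathcal{B}_{N_{d,n}}(A/\ell)\cap\mathcal{C}_{\nu_{d,n}(\mathbf{x})}^{(\alpha)}\cap\mathcal{C}_{\nu_{d,n}(\mathbf{y})}^{(\alpha)}$, the cones being scale invariant. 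Evaluating each $\mathcal{S}_2$ by partial summation and Lemma~\ref{Lemma lattice pivotal} (with $Y=W$), the volume factor is $\vol(\mathcal{B}_{N_{d,n}}(1)\cap\mathcal{C}_{\nu_{d,n}(\mathbf{x})}^{(\alpha)}\cap\mathcal{C}_{\nu_{d,n}(\mathbf{y})}^{(\alpha)})=\mathcal{J}\!\left(2\alpha\frac{\nu_{d,n}(\mathbf{x})}{\|\nu_{d,n}(\mathbf{x})\|},\,2\alpha\frac{\nu_{d,n}(\mathbf{y})}{\|\nu_{d,n}(\mathbf{y})\|}\right)$, which by Lemma~\ref{Lemma volume loc} and \eqref{Equality lattice} equals $\frac{N_{d,n}-2}{N_{d,n}}V_{N_{d,n}-2}\,\Delta(\mathbf{x},\mathbf{y})/\alpha^2$ up to a factor $1+O(\min\{1,\Delta(\mathbf{x},\mathbf{y})^2/\alpha^2\})$, while the determinant is $W^2\gcd(\ell,W)^{-2}\mathcal{G}(\mathbf{x},\mathbf{y})^{-1}$ up to $1+O(\boldsymbol{1}_{\mathcal{G}(\mathbf{x},\mathbf{y})\nmid W/\rad(W)})$ exactly as in \eqref{Estimate det mixed}.

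Collecting the $\ell$-sum, the arithmetic factor $\sum_\ell\mu(\ell)\gcd(\ell,W)^2\ell^{-N_{d,n}}$ telescopes to $\zeta(N_{d,n}-2)^{-1}$ up to $O(w^{-(N_{d,n}-3)})$ precisely as in \eqref{Estimate zeta}; combining this with \eqref{Equality Delta} and the definitions \eqref{Definition E}, \eqref{Definition iota}, \eqref{Definition localised error} and \eqref{Definition F} yields
\begin{equation*}
D_{d,n}^{\mathrm{loc}}(A,B)=\iota_{d,n}A^{N_{d,n}-2}E_{d,n}(B)\left(1+O\!\left(\frac{F_{d,n}(B)}{B^2}+\frac1{w^{N_{d,n}-3}}\right)\right)+(\textrm{lower-order errors}).
\end{equation*}
By Lemma~\ref{Lemma F} the first displayed error is $\ll(\log B)^{-1/2}$, and $w^{N_{d,n}-3}\gg(\log B)^{1/2}$ by \eqref{Definition w} since $(d,n)\ne(2,2)$ forces $N_{d,n}\ge10$. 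The lower-order errors — the additive $O(W/T)$ in Lemma~\ref{Lemma lattice pivotal} with $T=A/\ell$, and the M\"{o}bius truncation error $O((W/A)^{N_{d,n}-3})$ — are, after dividing by the main volume $\mathcal{J}\gg\alpha^{-2}$ (which the proof of Lemma~\ref{Lemma volume loc} supplies whenever $\Delta(\mathbf{x},\mathbf{y})\le\alpha$, the only range in which an asymptotic is needed), bounded uniformly in $(\mathbf{x},\mathbf{y})$ by $\ll W\alpha^2 w/A+(W/A)^{N_{d,n}-3}\ll B^{-1/2+o(1)}$; on the pairs with $\Delta(\mathbf{x},\mathbf{y})>\alpha$ one instead bounds $\mathcal{S}_2^{\ast}$ from above via Lemma~\ref{Lemma Schmidt upper bound} and absorbs the contribution into $F_{d,n}(B)/B^2$. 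Since $A\in[B^{1/2},B^2]$ gives $\log A\asymp\log B$, all of these errors are $O((\log A)^{-1/2})$, which is the claimed estimate; here only the upper bound $E_{d,n}(B)\ll B^2$ of Lemma~\ref{Lemma E} is used, to control the error buckets.

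The one point requiring care, and the only place where the localised case is not a purely mechanical transcription of Lemma~\ref{Lemma D mixed}, is the observation that $\lambda_{N_{d,n}}$ of the dilated localised lattices is $\ll W$ and that $W\ll A$ throughout, so that Lemma~\ref{Lemma lattice pivotal} applies unconditionally — this is what removes the need for an exceptional sub-sum and, a fortiori, for the quartic-threefold machinery of Section~\ref{Section quartic threefolds}. Beyond this, one must keep track that the exponent $N_{d,n}-2$ is $\ge1$, so the upper endpoint dominates in the partial summation, and $N_{d,n}-3\ge1$, so the M\"{o}bius tail converges; both hold automatically. All the genuinely analytic inputs — Lemmas~\ref{Lemma volume loc}, \ref{Lemma F}, \ref{Lemma determinant mixed/local}, \ref{Lemma G} and \ref{Lemma lattice pivotal} — are already in hand, so I expect no obstacle of substance.
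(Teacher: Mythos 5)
Your proposal is correct and follows essentially the same route as the paper's own proof: the same opening identity for $D_{d,n}^{\mathrm{loc}}(A,B)$, the same M\"{o}bius inversion and partial summation, the same inputs (Lemmas~\ref{Lemma lattice pivotal}, \ref{Lemma determinant mixed/local}, \ref{Lemma G}, \ref{Lemma volume loc}, \ref{Lemma E} and \ref{Lemma F}), and the same key observation that the full-rank lattice $\Lambda_{\nu_{d,n}(\mathbf{x})}^{(Q)}\cap\Lambda_{\nu_{d,n}(\mathbf{y})}^{(Q)}$ contains $Q\mathbb{Z}^{N_{d,n}}$, so $\lambda_{N_{d,n}}\leq W\leq A$ and no exceptional sub-sum is needed. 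The only cosmetic difference is your separate treatment of the pairs with $\Delta(\mathbf{x},\mathbf{y})>\alpha$; the paper instead applies the estimate of Lemma~\ref{Lemma volume loc} uniformly, with the error $O(\min\{1,\Delta(\mathbf{x},\mathbf{y})^2/\alpha^2\})$ absorbed into $\mathcal{E}_{\mathbf{x},\mathbf{y}}(B)$ and hence into $F_{d,n}(B)$ — both work.
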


\begin{proof}
Recall that the definitions of the lattice $\Lambda_{\nu_{d,n}(\mathbf{y})}^{(W)}$ and the region
$\mathcal{C}_{\nu_{d,n}(\mathbf{y})}^{(\alpha)}$ were respectively given in \eqref{Definition local lattice} and \eqref{Definition Cgamma}. We define the lattice
\begin{equation*}
\Gamma_{\mathbf{x}, \mathbf{y}}^{\mathrm{loc}}(W) =
\Lambda_{\nu_{d,n}(\mathbf{x})}^{(W)} \cap \Lambda_{\nu_{d,n}(\mathbf{y})}^{(W)},
\end{equation*}
and the region
\begin{equation*}
\mathcal{T}_{\mathbf{x}, \mathbf{y}}^{\mathrm{loc}}(A,\alpha) = \mathcal{B}_{N_{d,n}}(A)
\cap \mathcal{C}_{\nu_{d,n}(\mathbf{x})}^{(\alpha)} \cap \mathcal{C}_{\nu_{d,n}(\mathbf{y})}^{(\alpha)}.
\end{equation*}
Recall that the definitions of the sums $\mathcal{S}_k^{\ast} \left(\Lambda; \mathcal{R} \right)$ and
$\mathcal{S}_k \left(\Lambda; \mathcal{R} \right)$ for any given lattice $\Lambda \subset \mathbb{Z}^{N_{d,n}}$, any bounded region $\mathcal{R} \subset \mathbb{R}^{N_{d,n}}$ and any integer $k \geq 0$, were respectively given in \eqref{Definition Sk star} and \eqref{Definition Sk}. Recalling the definition \eqref{Definition Omega} of the set
$\Omega_{d,n}(B)$ we see that
\begin{equation}
\label{Equality D loc}
D_{d,n}^{\mathrm{loc}}(A,B) = \frac{\alpha^2 W^2}{8} \sum_{(\mathbf{x}, \mathbf{y}) \in \Omega_{d,n}(B)}
\frac{\mathcal{S}^{\ast}_2 \left( \Gamma_{\mathbf{x}, \mathbf{y}}^{\mathrm{loc}}(W); \mathcal{T}_{\mathbf{x}, \mathbf{y}}^{\mathrm{loc}}(A,\alpha) \right)}{||\nu_{d,n}(\mathbf{x})|| \cdot ||\nu_{d,n}(\mathbf{y})||}.
\end{equation}
A M\"{o}bius inversion gives
\begin{equation*}
\mathcal{S}^{\ast}_2 \left( \Gamma_{\mathbf{x}, \mathbf{y}}^{\mathrm{loc}}(W); \mathcal{T}_{\mathbf{x}, \mathbf{y}}^{\mathrm{loc}}(A,\alpha) \right) = \sum_{\ell \leq A} \frac{\mu(\ell)}{\ell^2} \mathcal{S}_2 \left( \Gamma_{\mathbf{x}, \mathbf{y}}^{\mathrm{loc}} \left(\frac{W}{\gcd(\ell,W)} \right);
\mathcal{T}_{\mathbf{x}, \mathbf{y}}^{\mathrm{loc}} \left( \frac{A}{\ell}, \alpha \right) \right).
\end{equation*}
For any integer $k \geq 0$ and any real $u \geq 1$ we clearly have
\begin{equation*}
\mathcal{S}_k \left( \Gamma_{\mathbf{x}, \mathbf{y}}^{\mathrm{loc}} \left(\frac{W}{\gcd(\ell,W)} \right);
\mathcal{T}_{\mathbf{x}, \mathbf{y}}^{\mathrm{loc}} \left( u, \alpha \right) \right) \leq
\mathcal{S}_k \left( \mathbb{Z}^{N_{d,n}}; \mathcal{B}_{N_{d,n}}(u) \right).
\end{equation*}
It thus follows that if $k \in \{0, \dots, N_{d,n}-1\}$ then
\begin{equation}
\label{Upper bound trivial t}
\mathcal{S}_k \left( \Gamma_{\mathbf{x}, \mathbf{y}}^{\mathrm{loc}} \left(\frac{W}{\gcd(\ell,W)} \right);
\mathcal{T}_{\mathbf{x}, \mathbf{y}}^{\mathrm{loc}} \left( u, \alpha \right) \right) \ll u^{N_{d,n}-k}.
\end{equation}
Using the upper bound \eqref{Upper bound trivial t} with $k=2$ and $u = A/\ell$ we deduce that
\begin{equation*}
\sum_{\ell > A/W} \frac{\mu(\ell)}{\ell^2} \mathcal{S}_2 \left( \Gamma_{\mathbf{x}, \mathbf{y}}^{\mathrm{loc}} \left(\frac{W}{\gcd(\ell,W)} \right); \mathcal{T}_{\mathbf{x}, \mathbf{y}}^{\mathrm{loc}} \left( \frac{A}{\ell}, \alpha \right) \right) \ll \frac{W^{N_{d,n}-1}}{A}.
\end{equation*}
Therefore, using the upper bound \eqref{Upper bound trivial t} once again with $k=2$ and $u = W$ we get
\begin{equation}
\label{Estimate l small}
\mathcal{S}^{\ast}_2 \left( \Gamma_{\mathbf{x}, \mathbf{y}}^{\mathrm{loc}}(W); \mathcal{T}_{\mathbf{x}, \mathbf{y}}^{\mathrm{loc}}(A,\alpha) \right) = \sum_{\ell \leq A/W} \frac{\mu(\ell)}{\ell^2} S_{\mathbf{x}, \mathbf{y}}^{\mathrm{loc}}(A, B; \ell) + O \left(W^{N_{d,n}-2} \right),
\end{equation}
where
\begin{equation*}
S_{\mathbf{x}, \mathbf{y}}^{\mathrm{loc}}(A, B; \ell) = \mathcal{S}_2 \left( \Gamma_{\mathbf{x}, \mathbf{y}}^{\mathrm{loc}} \left(\frac{W}{\gcd(\ell,W)} \right); \mathcal{T}_{\mathbf{x}, \mathbf{y}}^{\mathrm{loc}} \left( \frac{A}{\ell}, \alpha \right) \smallsetminus \mathcal{T}_{\mathbf{x}, \mathbf{y}}^{\mathrm{loc}} \left( W, \alpha \right) \right) .
\end{equation*}

Next, an application of partial summation yields
\begin{align*}
S_{\mathbf{x}, \mathbf{y}}^{\mathrm{loc}}(A, B; \ell) = & \
\frac{\ell^2}{A^2} \mathcal{S}_0 \left(\Gamma_{\mathbf{x}, \mathbf{y}}^{\mathrm{loc}} \left(\frac{W}{\gcd(\ell,W)} \right);
\mathcal{T}_{\mathbf{x}, \mathbf{y}}^{\mathrm{loc}} \left( \frac{A}{\ell}, \alpha \right) \right) \\
& + 2 \int_{W}^{A/\ell} \mathcal{S}_0 \left(\Gamma_{\mathbf{x}, \mathbf{y}}^{\mathrm{loc}} \left(\frac{W}{\gcd(\ell,W)} \right); \mathcal{T}_{\mathbf{x}, \mathbf{y}}^{\mathrm{loc}} \left( t, \alpha \right) \right)
\frac{\mathrm{d} t}{t^3} + O \left(W^{N_{d,n}-2} \right).
\end{align*}
Note that we have made use of the upper bound \eqref{Upper bound trivial t} with $k=0$ and $u = W$.
In addition, it is clear that we have
\begin{equation*}
\lambda_{N_{d,n}} \left( \Gamma_{\mathbf{x}, \mathbf{y}}^{\mathrm{loc}} \left(\frac{W}{\gcd(\ell,W)} \right) \right) \leq W.
\end{equation*}
Therefore, Lemma~\ref{Lemma lattice pivotal} shows that for any $t \in [W, A /\ell]$, we have
\begin{align*}
\mathcal{S}_0 \left(\Gamma_{\mathbf{x}, \mathbf{y}}^{\mathrm{loc}} \left(\frac{W}{\gcd(\ell,W)} \right);
\mathcal{T}_{\mathbf{x}, \mathbf{y}}^{\mathrm{loc}} \left( t, \alpha \right) \right) = & \
t^{N_{d,n}} \det \left( \Gamma_{\mathbf{x}, \mathbf{y}}^{\mathrm{loc}} \left( \frac{W}{\gcd(\ell,W)} \right) \right)^{-1} \\
& \times \left( \vol \left(\mathcal{T}_{\mathbf{x}, \mathbf{y}}^{\mathrm{loc}} ( 1, \alpha ) \right) + O \left( \frac{W}{t} \right) \right).
\end{align*}
We thus derive
\begin{align}
\label{Estimate after partial summation loc}
\begin{split}
S_{\mathbf{x}, \mathbf{y}}^{\mathrm{loc}}(A, B; \ell) = & \
\frac{N_{d,n}}{N_{d,n}-2} \frac{A^{N_{d,n}-2}}{\ell^{N_{d,n}-2}} \cdot
\det \left( \Gamma_{\mathbf{x}, \mathbf{y}}^{\mathrm{loc}} \left( \frac{W}{\gcd(\ell,W)} \right) \right)^{-1} \\
& \times \left( \vol \left(\mathcal{T}_{\mathbf{x}, \mathbf{y}}^{\mathrm{loc}} ( 1, \alpha ) \right)
+ O \left( \frac{W \ell}{A} \right) \right) + O \left( W^{N_{d,n}-2} \right).
\end{split}
\end{align}
Recalling the definition \eqref{Definition volume loc} of $\mathcal{J}(\mathbf{w}, \mathbf{z})$, we see that we have 
\begin{equation*}
\vol \left( \mathcal{T}_{\mathbf{x}, \mathbf{y}}^{\mathrm{loc}} ( 1, \alpha) \right) = 
\mathcal{J} \left(2 \alpha \frac{\nu_{d,n}(\mathbf{x})}{||\nu_{d,n}(\mathbf{x})||},
2 \alpha \frac{\nu_{d,n}(\mathbf{y})}{||\nu_{d,n}(\mathbf{y})||} \right).
\end{equation*}
Recall the definition \eqref{Definition Delta} of the quantity $\Delta(\mathbf{x}, \mathbf{y})$. Using
Lemma~\ref{Lemma volume loc} and the equality \eqref{Equality lattice} we deduce that
\begin{equation}
\label{Estimate volume loc}
\vol \left( \mathcal{T}_{\mathbf{x}, \mathbf{y}}^{\mathrm{loc}} ( 1, \alpha) \right) =
\frac{N_{d,n}-2}{N_{d,n}} V_{N_{d,n}-2} \frac{\Delta(\mathbf{x}, \mathbf{y})}{\alpha^2}
\left( 1 + O \left( \min \left\{ 1, \frac{\Delta(\mathbf{x}, \mathbf{y})^2}{\alpha^2} \right\} \right) \right).
\end{equation}
In addition, Lemmas~\ref{Lemma determinant mixed/local} and \ref{Lemma G} and the estimate \eqref{Estimate gcd} give
\begin{align}
\nonumber
\det \left( \Gamma_{\mathbf{x}, \mathbf{y}}^{\mathrm{loc}} \left(\frac{W}{\gcd(\ell,W)} \right) \right) & =
\frac{W^2}{\gcd(\ell,W)^2} \cdot \gcd\left(\mathcal{G}(\mathbf{x}, \mathbf{y}), \frac{W}{\gcd(\ell,W)} \right)^{-1} \\
\label{Estimate det loc}
& = \frac{W^2}{\gcd(\ell,W)^2 \mathcal{G}(\mathbf{x}, \mathbf{y})}
\left( 1 + O \left( \boldsymbol{1}_{\mathcal{G}(\mathbf{x}, \mathbf{y}) \nmid W/\rad(W)} \right) \right).
\end{align}
Recall the definition \eqref{Definition localised error} of the quantity $\mathcal{E}_{\mathbf{x},\mathbf{y}}(B) $. Putting together the estimates \eqref{Estimate after partial summation loc}, \eqref{Estimate volume loc} and
\eqref{Estimate det loc} and using the lower bound $\Delta(\mathbf{x}, \mathbf{y}) \geq 1$, we obtain
\begin{align*}
S_{\mathbf{x}, \mathbf{y}}^{\mathrm{loc}}(A, B; \ell) = & \ 
V_{N_{d,n}-2} \frac{A^{N_{d,n}-2}}{\alpha^2 W^2} 
\frac{\gcd(\ell,W)^2}{\ell^{N_{d,n}-2}} \Delta(\mathbf{x}, \mathbf{y}) \mathcal{G}(\mathbf{x}, \mathbf{y}) \\
& \times \left( 1 + O \left( \mathcal{E}_{\mathbf{x},\mathbf{y}}(B) + \frac{\alpha^2 W \ell}{A} \right) \right)
+ O \left( W^{N_{d,n}-2} \right).
\end{align*}
Recall the upper bound \eqref{Upper bound W} for $W$ and note that the assumption $B \leq A^2$ gives
$W \ll A^{8 / \log \log A}$. Therefore, using the inequalities
$\Delta(\mathbf{x}, \mathbf{y}), \mathcal{G}(\mathbf{x}, \mathbf{y}) \geq 1$, we deduce from the estimate \eqref{Estimate l small} that
\begin{align*}
\mathcal{S}^{\ast}_2 \left( \Gamma_{\mathbf{x}, \mathbf{y}}^{\mathrm{loc}}(W); \mathcal{T}_{\mathbf{x}, \mathbf{y}}^{\mathrm{loc}}(A,\alpha) \right) = & \ V_{N_{d,n}-2} \frac{A^{N_{d,n}-2}}{\alpha^2 W^2}
\Delta(\mathbf{x}, \mathbf{y}) \mathcal{G}(\mathbf{x}, \mathbf{y}) \\
\times & \left( \sum_{\ell \leq A/W} \mu(\ell) \frac{\gcd(\ell,W)^2}{\ell^{N_{d,n}}} 
+ O \left( \mathcal{E}_{\mathbf{x},\mathbf{y}}(B) + \frac{\alpha^2 W}{A} \right) \right).
\end{align*}
Arguing as in the proof of the estimate \eqref{Estimate zeta}, we see that 
\begin{equation*}
\sum_{\ell \leq A/W} \mu(\ell) \frac{\gcd(\ell, W)^2}{\ell^{N_{d,n}}} = \frac1{\zeta(N_{d,n}-2)}
+ O \left( \left( \frac{W}{A} \right)^{N_{d,n}-3} + \frac1{w^{N_{d,n}-3}} \right).
\end{equation*}
Recall the definition \eqref{Definition F} of the quantity $F_{d,n}(B)$. We remark that the respective definitions \eqref{Definition alpha} and \eqref{Definition w} of $\alpha$ and $w$ show that $w^{N_{d,n}-3} \ll A/\alpha^2W$. As a result, Lemma~\ref{Lemma E} and the equalities \eqref{Equality Delta} and \eqref{Equality D loc} yield
\begin{equation*}
D_{d,n}^{\mathrm{loc}}(A,B) = \iota_{d,n} A^{N_{d,n}-2} E_{d,n}(B)
\left( 1 + O \left( \frac1{w^{N_{d,n}-3}} + \frac{F_{d,n}(B)}{B^2} \right) \right).
\end{equation*}
We complete the proof by applying Lemma~\ref{Lemma F} and by using the assumption $A \leq B^2$ and the fact that $w^{N_{d,n}-3} \gg (\log A)^{1/2}$.
\end{proof}

\subsection{Proof of the key variance upper bound}

\label{Section proof variance}

We now combine the tools developed in Sections~\ref{Section first} and \ref{Section second} in order to establish Proposition~\ref{Proposition variance}.

\begin{proof}[Proof of Proposition~\ref{Proposition variance}]
Recall the respective definitions \eqref{Definition Delta mixed} and \eqref{Definition Delta loc} of the two quantities
$\Delta_V^{\mathrm{mix}}(B)$ and $\Delta_V^{\mathrm{loc}}(B)$. It is convenient to set
\begin{equation*}
K(A,B) = \sum_{V \in \mathbb{V}_{d,n}(A)} \left( N_V(B) + \Delta_V^{\mathrm{mix}}(B) + \Delta_V^{\mathrm{loc}}(B) \right).
\end{equation*}
Expanding the square, we see that
\begin{equation*}
\sum_{V \in \mathbb{V}_{d,n}(A)} \! \! \left( N_V(B) - N_V^{\mathrm{loc}}(B) \right)^2 \! =
D_{d,n}(A,B) - 2 D_{d,n}^{\mathrm{mix}}(A,B) + D_{d,n}^{\mathrm{loc}}(A,B) +O(K(A,B)).
\end{equation*}
It follows from the lower bound \eqref{Lower bound Vdn} and Lemmas~\ref{Lemma E}, \ref{Lemma D}, \ref{Lemma D44}, \ref{Lemma D mixed} and \ref{Lemma D loc} that
\begin{equation*}
\frac1{\# \mathbb{V}_{d,n}(A)}
\left( D_{d,n}(A,B) - 2 D_{d,n}^{\mathrm{mix}}(A,B) + D_{d,n}^{\mathrm{loc}}(A,B) \right) \ll
\frac{B^2}{A^2} \cdot \frac1{(\log A)^{1/2}}.
\end{equation*}
We thus derive
\begin{equation}
\label{Upper bound with K}
\frac1{\# \mathbb{V}_{d,n}(A)} \sum_{V \in \mathbb{V}_{d,n}(A)} \left( N_V(B) - N_V^{\mathrm{loc}}(B) \right)^2
\ll \frac{B^2}{A^2} \cdot \frac1{(\log A)^{1/2}} + \frac{K(A,B)}{\# \mathbb{V}_{d,n}(A)}.
\end{equation}
Recall the definition \eqref{Definition alpha} of $\alpha$ and the upper bound \eqref{Upper bound W} for $W$. We trivially have
\begin{equation*}
\Delta_V^{\mathrm{mix}}(B) \leq \frac{\alpha W}{||\mathbf{a}_V||} N_V(B).
\end{equation*}
Hence, using partial summation it follows from Lemma~\ref{Lemma first moment} that 
\begin{equation}
\label{Upper bound Delta mixed}
\frac1{\#\mathbb{V}_{d,n}(A)} \sum_{V \in \mathbb{V}_{d,n}(A)} \Delta_V^{\mathrm{mix}}(B) \ll
\frac{B^{1+5/\log \log B}}{A^2}.
\end{equation}
Moreover, using the trivial upper bound
\begin{equation*}
\Delta_V^{\mathrm{loc}}(B) \ll \frac{\alpha^2 W^2}{||\mathbf{a}_V||^2}
\sum_{\mathbf{x} \in \Xi_{d,n}(B)} \frac1{||\nu_{d,n}(\mathbf{x})||},
\end{equation*}
we obtain
\begin{equation}
\label{Upper bound Delta loc}
\frac1{\#\mathbb{V}_{d,n}(A)} \sum_{V \in \mathbb{V}_{d,n}(A)} \Delta_V^{\mathrm{loc}}(B) \ll \frac{B^{1+9/\log \log B}}{A^2}.
\end{equation}
Applying Lemma~\ref{Lemma first moment} and using the upper bounds \eqref{Upper bound Delta mixed} and
\eqref{Upper bound Delta loc} together with the assumption $B \ll A (\log A)^{1/2}$, we derive
\begin{equation}
\label{Upper bound K}
\frac{K(A,B)}{\#\mathbb{V}_{d,n}(A)} \ll \frac{B}{A}.
\end{equation}
Putting together the upper bounds \eqref{Upper bound with K} and \eqref{Upper bound K}, we immediately see that the assumption $B \ll A (\log A)^{1/2}$ allows us to complete the proof.
\end{proof}

\section{The localised counting function is rarely small}

\label{Section localised rarely small}

In Section~\ref{Section factors} we start by introducing certain non-Archimedean and Archimedean factors that will arise throughout the dissection of our localised counting function $N_V^{\mathrm{loc}}(B)$. We then check that
Proposition~\ref{Proposition 2} follows from upper bounds for the number of
$V \in \mathbb{V}_{d,n}^{\mathrm{loc}}(A)$ at which one of these two factors is exceptionally small, as stated in
Propositions~\ref{Proposition non-Archimedean} and \ref{Proposition Archimedean}. We finally turn to the proofs of Propositions~\ref{Proposition non-Archimedean} and \ref{Proposition Archimedean} in
Sections~\ref{Section non-Archimedean} and ~\ref{Section Archimedean}, respectively.

\subsection{The local factors}

\label{Section factors}

Given $N \geq 1$, recall the definition \eqref{Definition Cgamma} of the region $\mathcal{C}_{\mathbf{v}}^{(\gamma)}$ for any real $\gamma > 0$ and any $\mathbf{v} \in \mathbb{R}^N$. For $\mathbf{a} \in \mathbb{R}^{N_{d,n}}$ and
$\gamma > 0$, we introduce the Archimedean factor
\begin{equation}
\label{Definition tau}
\tau(\mathbf{a};\gamma) = \gamma \cdot \vol \left( \left\{ \mathbf{u} \in \mathcal{B}_{n+1}(1) :
\mathbf{a} \in \mathcal{C}_{\nu_{d,n}(\mathbf{u})}^{(\gamma)} \right\} \right).
\end{equation}
In addition, given $Q \geq 1$ and $\mathbf{b} \in (\mathbb{Z}/Q \mathbb{Z})^N$ we let
$\gcd(Q, \mathbf{b})$ denote the greatest common divisor of $Q$ and the coordinates of the vector $\mathbf{b}$. In analogy with the Archimedean setting it is convenient for our purpose to define
\begin{equation}
\label{Definition non-Archimedean ball}
\mathfrak{R}_N(Q) = \left\{ \mathbf{b} \in (\mathbb{Z}/Q\mathbb{Z})^N : \gcd(Q, \mathbf{b}) = 1 \right\}.
\end{equation}
Recall the definition \eqref{Definition local lattice} of the lattice $\Lambda_{\mathbf{c}}^{(Q)}$ for given $Q \geq 1$ and $\mathbf{c} \in \mathbb{Z}^N$. For $\mathbf{a} \in \mathbb{Z}^{N_{d,n}}$ and $Q \geq 1$, we introduce the
non-Archimedean factor
\begin{equation}
\label{Definition sigma}
\sigma(\mathbf{a};Q) = \frac1{Q^n} \cdot \# \left\{ \mathbf{b} \in \mathfrak{R}_{n+1}(Q) :
\mathbf{a} \in \Lambda_{\nu_{d,n}(\mathbf{b})}^{(Q)} \right\}.
\end{equation}
We note that for any vector $\mathbf{b} \in \mathfrak{R}_{n+1}(Q)$ the lattice $\Lambda_{\nu_{d,n}(\mathbf{b})}^{(Q)}$ is well-defined.

Recall the respective definitions \eqref{Definition alpha} and \eqref{Definition W} of the quantity $\alpha$ and the integer $W$. Given $V \in \mathbb{V}_{d,n}$ we set
\begin{equation}
\label{Definition singular integral}
\mathfrak{J}_V(B) = \tau(\mathbf{a}_V;\alpha),
\end{equation}
and
\begin{equation}
\label{Definition singular series}
\mathfrak{S}_V(B) = \sigma(\mathbf{a}_V;W).
\end{equation}
One may check that $\mathfrak{J}_V(B)$ converges to the usual singular integral for the problem at hand as $B$ tends to $\infty$. Similarly, using the Chinese remainder theorem it is possible to show that $\mathfrak{S}_V(B)$ converges to the singular series as $B$ tends to $\infty$. We shall use neither of these facts in our work, however.

Recall the definition \eqref{Definition local counting} of our localised counting function $N_V^{\mathrm{loc}}(B)$. We prove the following upper bound for the product of the local factors.

\begin{lemma}
\label{Lemma product factors}
Let $d \geq 2$ and $n \geq d$. For any $V \in \mathbb{V}_{d,n}(A)$, we have
\begin{equation*}
\mathfrak{S}_V(B) \cdot \mathfrak{J}_V(B) \ll \frac{A}{B} \cdot N_V^{\mathrm{loc}}(B) + \frac1{B^{1/n}}.
\end{equation*}
\end{lemma}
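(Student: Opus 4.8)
The plan is to establish the equivalent lower bound
\begin{equation*}
N_V^{\mathrm{loc}}(B)\gg \frac{B}{\|\mathbf{a}_V\|}\,\mathfrak{S}_V(B)\,\mathfrak{J}_V(B)-O\!\left(\frac{\alpha W\,\mathfrak{S}_V(B)}{\|\mathbf{a}_V\|}\,B^{(n-d)/(n+1-d)}\right),
\end{equation*}
from which the assertion follows by rearranging and using $\|\mathbf{a}_V\|\le A$; throughout we may assume $B$ is large in terms of $d$ and $n$, the bounded range being trivial. Set $R=B^{1/(n+1-d)}$. Every $\mathbf{x}\in\Xi_{d,n}(B)$ has $\|\mathbf{x}\|\le R$, hence $\|\nu_{d,n}(\mathbf{x})\|\ll R^{d}$ with an implied constant depending only on $d,n$, so that $1/\|\nu_{d,n}(\mathbf{x})\|\gg R^{-d}$ on the range of summation in \eqref{Definition local counting}. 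This reduces matters to a lower bound for
\begin{equation*}
\mathcal{N}=\#\Bigl\{\mathbf{x}\in\mathbb{Z}_{\mathrm{prim}}^{n+1}:\|\mathbf{x}\|\le R,\ \langle\mathbf{a}_V,\nu_{d,n}(\mathbf{x})\rangle\equiv0\bmod W,\ \mathbf{a}_V\in\mathcal{C}_{\nu_{d,n}(\mathbf{x})}^{(\alpha)}\Bigr\},
\end{equation*}
namely to $N_V^{\mathrm{loc}}(B)\gg \tfrac{\alpha W}{\|\mathbf{a}_V\|}R^{-d}\mathcal{N}$ together with $\mathcal{N}\gg \mathfrak{S}_V(B)\mathfrak{J}_V(B)R^{n+1}/(\alpha W)-O(\mathfrak{S}_V(B)R^{n})$.

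To bound $\mathcal{N}$ I would introduce the bounded semialgebraic region $\mathcal{D}=\{\mathbf{x}\in\mathbb{R}^{n+1}:\|\mathbf{x}\|\le R,\ \mathbf{a}_V\in\mathcal{C}_{\nu_{d,n}(\mathbf{x})}^{(\alpha)}\}$. Since the inequality cutting out $\mathcal{C}_{\nu_{d,n}(\mathbf{x})}^{(\alpha)}$ is homogeneous of degree $d$ in $\mathbf{x}$, the substitution $\mathbf{x}\mapsto R\mathbf{x}$ gives $\vol(\mathcal{D})=R^{n+1}\tau(\mathbf{a}_V;\alpha)/\alpha=R^{n+1}\mathfrak{J}_V(B)/\alpha$. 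The congruence depends only on $\mathbf{x}\bmod W$, and by the definition \eqref{Definition sigma} of $\sigma$ there are exactly $W^n\mathfrak{S}_V(B)$ residues $\mathbf{b}\in(\mathbb{Z}/W\mathbb{Z})^{n+1}$ with $\gcd(W,\mathbf{b})=1$ satisfying it. Counting the points of each coset $\mathbf{b}+W\mathbb{Z}^{n+1}$ in $\mathcal{D}$ by the lattice point estimate of Barroero and Widmer \cite{MR3264671}, applied to the set $\tfrac1W(\mathcal{D}-\mathbf{b})$ (of complexity bounded in terms of $d,n$ and of diameter $\asymp R/W\gg1$, since $W\ll B^{4/\log\log B}$ by \eqref{Upper bound W}), each coset contributes $\vol(\mathcal{D})/W^{n+1}+O(R^{n}/W^{n})$, so
\begin{equation*}
\#\bigl\{\mathbf{x}\in\mathbb{Z}^{n+1}:\mathbf{x}\in\mathcal{D},\ \gcd(W,\mathbf{x})=1,\ \langle\mathbf{a}_V,\nu_{d,n}(\mathbf{x})\rangle\equiv0\bmod W\bigr\}=\frac{\mathfrak{S}_V(B)\vol(\mathcal{D})}{W}+O\bigl(\mathfrak{S}_V(B)R^{n}\bigr).
\end{equation*}
To pass from $\gcd(W,\mathbf{x})=1$ to global primitivity I would write the indicator of $\{\gcd(\mathbf{x})=1\}$ as $\boldsymbol{1}_{\gcd(W,\mathbf{x})=1}\sum_{\gcd(k,W)=1}\mu(k)\boldsymbol{1}_{k\mid\mathbf{x}}$; for each $k$ with $\gcd(k,W)=1$ the substitution $\mathbf{x}=k\mathbf{y}$ leaves the congruence modulo $W$ unchanged (as $\nu_{d,n}(k\mathbf{y})=k^{d}\nu_{d,n}(\mathbf{y})$ and $k^{d}$ is invertible modulo $W$) and satisfies $\gcd(W,k\mathbf{y})=\gcd(W,\mathbf{y})$, so the displayed estimate applies verbatim to $\tfrac1k\mathcal{D}$ in place of $\mathcal{D}$. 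Summing over $k\le R$, using $\sum_{\gcd(k,W)=1}\mu(k)k^{-(n+1)}=\prod_{p\nmid W}(1-p^{-(n+1)})\gg1$ for the main term and noting that the terms with $k\gtrsim R/W$ contribute $O(RW^{n}\mathfrak{S}_V(B))=O(\mathfrak{S}_V(B)R^{n})$ (each such $\tfrac1k\mathcal{D}$ meets each coset in $O(1)$ points), one obtains $\mathcal{N}\gg \mathfrak{S}_V(B)\vol(\mathcal{D})/W-O(\mathfrak{S}_V(B)R^{n})=\mathfrak{S}_V(B)\mathfrak{J}_V(B)R^{n+1}/(\alpha W)-O(\mathfrak{S}_V(B)R^{n})$.

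Combining this with $N_V^{\mathrm{loc}}(B)\gg \tfrac{\alpha W}{\|\mathbf{a}_V\|}R^{-d}\mathcal{N}$ and using $R^{n+1-d}=B$, $R^{n-d}=B^{(n-d)/(n+1-d)}$ and $\|\mathbf{a}_V\|\le A$, one deduces after rearranging
\begin{equation*}
\mathfrak{S}_V(B)\mathfrak{J}_V(B)\ll \frac{A}{B}\,N_V^{\mathrm{loc}}(B)+\alpha W\,\mathfrak{S}_V(B)\,B^{-1/(n+1-d)},
\end{equation*}
where we used $\tfrac{n-d}{n+1-d}-1=-\tfrac1{n+1-d}$. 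Finally $\alpha=\log B$, $W\ll B^{4/\log\log B}$ and $\mathfrak{S}_V(B)=\sigma(\mathbf{a}_V;W)\le W$ by \eqref{Definition sigma}, so $\alpha W\mathfrak{S}_V(B)=B^{o(1)}$; since $\tfrac1{n+1-d}-\tfrac1n=\tfrac{d-1}{n(n+1-d)}\ge\tfrac1{n^{2}}$ is bounded away from $0$ because $d\ge2$, the last term is $\ll B^{-1/n}$ for $B$ large, which proves the lemma. The main obstacle is the primitivity sieve (with the lattice point count over the thin cone region $\mathcal{D}$, requiring the Barroero–Widmer bound for general semialgebraic sets rather than the special case in Lemma~\ref{Lemma lattice pivotal}, as a prerequisite): it is crucial that the M\"obius inversion be taken only over $k$ coprime to $W$, so that the congruence modulo $W$ — and hence the exact factor $\mathfrak{S}_V(B)$ in the main term — survives, instead of being replaced by the larger density of congruence solutions without the coprimality constraint, which would ruin the estimate when $\mathfrak{S}_V(B)$ is small.
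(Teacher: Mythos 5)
Your proposal is correct and follows essentially the same route as the paper's proof: reduce $N_V^{\mathrm{loc}}(B)$ to a count of primitive points in the cone-restricted ball subject to the congruence modulo $W$, split into the $W^n\mathfrak{S}_V(B)$ admissible residue classes, apply the Barroero--Widmer lattice point count together with the homogeneity of $\mathcal{C}_{\nu_{d,n}(\mathbf{x})}^{(\alpha)}$ to extract $\mathfrak{J}_V(B)$, and handle primitivity by a M\"obius inversion restricted to $k$ coprime to $W$. The only (cosmetic) difference is the order of operations -- you count per coset first and sieve second, whereas the paper sieves first -- and your error bookkeeping ($\mathfrak{S}_V(B)R^n$ versus the paper's cruder $W^{n+1}B^{n/(n+1-d)}$) is if anything slightly sharper.
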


\begin{proof}
Recall the definition \eqref{Definition Xi} of the set $\Xi_{d,n}(B)$. We start by noting that for any
$\mathbf{x} \in \Xi_{d,n}(B)$ we have $||\nu_{d,n}(\mathbf{x})|| \ll B^{d/(n+1-d)}$. Using the fact that
$||\mathbf{a}_V|| \leq A$ for $V \in \mathbb{V}_{d,n}(A)$, we deduce that
\begin{equation}
\label{Lower bound localised}
\sum_{\substack{\mathbf{x} \in \Xi_{d,n}(B) \\
\mathbf{a}_V \in \Lambda_{\nu_{d,n}(\mathbf{x})}^{(W)} \cap \mathcal{C}_{\nu_{d,n}(\mathbf{x})}^{(\alpha)}}} 1 \ll
\frac{AB^{d/(n+1-d)}}{\alpha W} \cdot N_V^{\mathrm{loc}}(B).
\end{equation}
Breaking the summation into residue classes modulo $W$, we obtain
\begin{equation}
\label{Equality congruence}
\sum_{\substack{\mathbf{x} \in \Xi_{d,n}(B) \\
\mathbf{a}_V \in \Lambda_{\nu_{d,n}(\mathbf{x})}^{(W)} \cap \mathcal{C}_{\nu_{d,n}(\mathbf{x})}^{(\alpha)}}} 1 =
\sum_{\substack{\mathbf{b} \in \mathfrak{R}_{n+1}(W) \\ \mathbf{a}_V \in \Lambda_{\nu_{d,n}(\mathbf{b})}^{(W)}}}
\# \left\{ \mathbf{x} \in \Xi_{d,n}(B) :
\begin{array}{l l}
\mathbf{x} \equiv \mathbf{b} \bmod{W} \\
\mathbf{a}_V \in \mathcal{C}_{\nu_{d,n}(\mathbf{x})}^{(\alpha)}
\end{array}
\right\}.
\end{equation}
We proceed to use a M\"obius inversion to handle the condition that the vectors $\mathbf{x}$ are primitive. Note that for any non-zero real number $t$ and any vector $\mathbf{z} \in \mathbb{Z}^{n+1}$, we have 
\begin{equation}
\label{Equality Calpha invariant}
\mathcal{C}_{\nu_{d,n}(t \mathbf{z})}^{(\alpha)} = \mathcal{C}_{\nu_{d,n}(\mathbf{z})}^{(\alpha)}.
\end{equation}
Given $\mathbf{b} \in \mathfrak{R}_{n+1}(W)$, it follows that
\begin{equation}
\label{Equality after Mobius}
\# \left\{ \mathbf{x} \in \Xi_{d,n}(B) :
\begin{array}{l l}
\mathbf{x} \equiv \mathbf{b} \bmod{W} \\
\mathbf{a}_V \in \mathcal{C}_{\nu_{d,n}(\mathbf{x})}^{(\alpha)}
\end{array}
\right\}
= \sum_{\substack{k \leq B^{1/(n+1-d)} \\ \gcd(k,W)=1}} \mu(k) M_k(V;B),
\end{equation}
where
\begin{equation*}
M_k(V;B) = \# \left\{ \mathbf{z} \in \mathbb{Z}^{n+1} \smallsetminus \{\boldsymbol{0}\} :
\begin{array}{l l}
||\mathbf{z}|| \leq B^{1/(n+1-d)}/k \\
k \mathbf{z} \equiv \mathbf{b} \bmod{W} \\
\mathbf{a}_V \in \mathcal{C}_{\nu_{d,n}(\mathbf{z})}^{(\alpha)}
\end{array}
\right\}.
\end{equation*}
A trivial application of the lattice point counting result \cite[Theorem~$1.3$]{MR3264671} shows that
\begin{equation*}
M_k(V;B) = \frac1{W^{n+1}} \cdot \vol \left( \left\{ \mathbf{v} \in \mathbb{R}^{n+1} :
\begin{array}{l l}
||\mathbf{v}|| \leq B^{1/(n+1-d)}/k \\
\mathbf{a}_V \in \mathcal{C}_{\nu_{d,n}(\mathbf{v})}^{(\alpha)}
\end{array}
\right\} \right)
+ O \left( \frac{B^{n/(n+1-d)}}{k^n} \right).
\end{equation*}
Recall the definition \eqref{Definition singular integral} of the Archimedean factor $\mathfrak{J}_V(B)$. Making the change of variables $\mathbf{v} = B^{1/(n+1-d)} \mathbf{u} / k$ and using the equality \eqref{Equality Calpha invariant} again, we see that
\begin{equation*}
M_k(V;B) = \frac{B^{(n+1)/(n+1-d)}}{W^{n+1} k^{n+1}} \cdot \frac{\mathfrak{J}_V(B)}{\alpha} +
O \left( \frac{B^{n/(n+1-d)}}{k^n} \right).
\end{equation*}
In addition, the respective definitions \eqref{Definition W} and \eqref{Definition w} of the integer $W$ and the quantity $w$ easily yield
\begin{equation*}
\sum_{\substack{k \leq B^{1/(n+1-d)} \\ \gcd(k,W)=1}} \frac{\mu(k)}{k^{n+1}} = 1 + O \left( \frac1{w^n} \right).
\end{equation*}
As a result, we deduce from the equality \eqref{Equality after Mobius} that
\begin{equation*}
\frac{B^{(n+1)/(n+1-d)}}{W^{n+1}} \cdot \frac{\mathfrak{J}_V(B)}{\alpha} \ll
\# \left\{ \mathbf{x} \in \Xi_{d,n}(B) :
\begin{array}{l l}
\mathbf{x} \equiv \mathbf{b} \bmod{W} \\
\mathbf{a}_V \in \mathcal{C}_{\nu_{d,n}(\mathbf{x})}^{(\alpha)}
\end{array}
\right\}
+ B^{n/(n+1-d)}.
\end{equation*}
We now sum this upper bound over the vectors $\mathbf{b} \in \mathfrak{R}_{n+1}(W)$ such that
$\mathbf{a}_V \in \Lambda_{\nu_{d,n}(\mathbf{b})}^{(W)}$. Combining the equality \eqref{Equality congruence} and the upper bound \eqref{Lower bound localised} and recalling the definition \eqref{Definition singular series} of the
non-Archimedean factor $\mathfrak{S}_V(B)$, we derive
\begin{equation*}
B^{(n+1)/(n+1-d)} \cdot \frac{\mathfrak{S}_V(B)}{W} \cdot \frac{\mathfrak{J}_V(B)}{\alpha} \ll
\frac{AB^{d/(n+1-d)}}{\alpha W} \cdot N_V^{\mathrm{loc}}(B) + W^{n+1} B^{n/(n+1-d)}.
\end{equation*}
Recalling the definition \eqref{Definition alpha} of $\alpha$ and the upper bound \eqref{Upper bound W} for $W$, we see that this completes the proof.
\end{proof}

Recall the definition \eqref{Definition Vdnloc} of the space $\mathbb{V}_{d,n}^{\mathrm{loc}}(A)$ of hypersurfaces of height at most $A$ which are everywhere locally soluble. We now state upper bounds for the frequencies of occurrence of particularly small values of the local factors $\mathfrak{S}_V(B)$ and $\mathfrak{J}_V(B)$ as $V$ runs over the set
$\mathbb{V}_{d,n}^{\mathrm{loc}}(A)$, under the assumption that the ratio $B/A$ does not grow too rapidly as $A$ tends to $\infty$. It is worth highlighting the fact that these two results hold without any restriction on $d \geq 2$ and
$n \geq 3$.

The following statement is concerned with the non-Archimedean factor.

\begin{proposition}
\label{Proposition non-Archimedean}
Let $d \geq 2$ and $n \geq 3$. Let $\phi : \mathbb{R}_{>0} \to \mathbb{R}_{>1}$ be such that $\phi(A) \leq A$ and let $C>0$. Then we have
\begin{equation*}
\frac1{\# \mathbb{V}_{d,n}^{\mathrm{loc}}(A)} \cdot
\# \left\{V \in \mathbb{V}_{d,n}^{\mathrm{loc}}(A) : \mathfrak{S}_V(A\phi(A)) < \frac{C}{\phi(A)^{1/6}} \right\} \ll \frac1{\phi(A)^{1/24n}},
\end{equation*}
where the implied constant may depend on $C$.
\end{proposition}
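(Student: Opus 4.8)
The plan is to bound the number of everywhere locally soluble hypersurfaces $V$ for which the non-Archimedean factor $\mathfrak{S}_V(A\phi(A)) = \sigma(\mathbf{a}_V;W)$ is abnormally small by comparing it with the local densities entering the Poonen--Voloch formula. First I would recall, from the definition \eqref{Definition sigma} and the Chinese remainder theorem, that $\sigma(\mathbf{a}_V;W)$ factors as $\prod_{p \leq w} \sigma(\mathbf{a}_V;p^{k_p})$ where $k_p = \lceil \log w/\log p\rceil + 1$ is the exponent occurring in \eqref{Definition W}, and that each factor is a $p$-adic solubility density for the reduction of $V$ modulo $p^{k_p}$. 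The key structural observation is that $\sigma(\mathbf{a}_V;W)$ is small only if either some $\sigma(\mathbf{a}_V;p^{k_p})$ is much less than $1$, or there are many primes $p \leq w$ at which $\sigma(\mathbf{a}_V;p^{k_p})$ is slightly less than $1$. Since $V$ is everywhere locally soluble, each $p$-adic factor is \emph{positive}; so the deviation of $\sigma$ from the full singular series is governed entirely by how far below $1$ these factors sit, which in turn is controlled by the $p$-adic geometry of the hypersurface (smoothness of the reduction mod $p$, having a smooth $\mathbb{F}_p$-point, etc.).

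The main steps I would carry out are as follows. Step one: produce a pointwise lower bound of the shape $\sigma(\mathbf{a}_V;p^{k_p}) \gg 1 - O(p^{-1})$ \emph{whenever} the reduction of $V$ mod $p$ is a smooth hypersurface over $\mathbb{F}_p$ with an $\mathbb{F}_p$-point, using Hensel lifting together with Lang--Weil / Deligne point count estimates (this is standard and close to Poonen--Voloch \cite[\S3]{MR2029869}). Consequently $\sigma(\mathbf{a}_V;p^{k_p})$ can be substantially smaller than $1$ only for the ``bad'' primes $p$ at which the reduction of $V$ is singular, i.e. $p \mid \operatorname{disc}(V)$, or at which $V$ fails to have a smooth point. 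Step two: for those bad primes, bound $\sigma(\mathbf{a}_V;p^{k_p})$ below trivially by a negative power of $p$ (it is positive since $V \in \mathbb{V}_{d,n}^{\mathrm{loc}}$), and then bound the number of $V \in \mathbb{V}_{d,n}^{\mathrm{loc}}(A)$ for which $\prod_{p \textrm{ bad}} p$ exceeds a given threshold $Y$, using the fact that the discriminant locus has codimension $1$ in $\mathbb{V}_{d,n}$ and hence $\#\{V \in \mathbb{V}_{d,n}(A) : p^2 \mid \operatorname{disc}(\mathbf{a}_V)\} \ll A^{N_{d,n}}/p^2$ (a square-free-sieve type count), summed over $p$. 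Step three: combine these to show that $\mathfrak{S}_V(A\phi(A)) < C\phi(A)^{-1/6}$ forces the product of bad primes $\leq w$ to be $\gg \phi(A)^{c}$ for some explicit $c>0$, which happens for at most $O(\phi(A)^{-1/24n})\cdot\#\mathbb{V}_{d,n}(A)$ many $V$; finally invoke \eqref{Lower bound loc} so that $\#\mathbb{V}_{d,n}^{\mathrm{loc}}(A) \gg A^{N_{d,n}}$ to pass to the normalised count.

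Some care is needed in two places. One is that $W$ in \eqref{Definition W} involves prime powers $p^{k_p}$ with $k_p \to \infty$ for small $p$, so I cannot simply reduce to densities mod $p$; but since each $\sigma(\mathbf{a}_V;p^{k_p}) \leq \sigma(\mathbf{a}_V;p^{k_p-1}) \cdot (1+O(p^{-1}))$ stabilises once the reduction is smooth with a smooth point, the extra powers cost at most a bounded factor per good prime and the infinite product $\prod_p(1+O(p^{-1}))$ over good primes still converges — so the total good-prime contribution is $\gg 1$. The other, and this is what I expect to be the genuine obstacle, is the \emph{uniformity} of the Lang--Weil estimates and of the square-free-sieve bound in the prime $p$ as $p$ ranges up to $w = \log(A\phi(A))/\log\log(A\phi(A))$: one must ensure the implied constants depend only on $d$ and $n$, not on $p$, which requires either an effective version of Lang--Weil for hypersurfaces of bounded degree in bounded dimension, or the explicit $p$-adic density estimates already isolated by Poonen--Voloch, together with a uniform bound on $\#\{V : p \mid \operatorname{disc}\}$ valid for $p$ up to a power of $A$. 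Assembling these uniform inputs, choosing the threshold $Y$ so that the loss exponent matches $1/24n$ in the statement, and keeping track of the (harmless) difference between $\phi(A)^{1/6}$ in the hypothesis and $\phi(A)^{1/24n}$ in the conclusion, is the crux of the argument.
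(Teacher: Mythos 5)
Your overall architecture (factor $\sigma(\mathbf{a}_V;W)$ over prime powers by CRT, treat ``good'' primes by Hensel lifting plus uniform point counts, and control the rare ``bad'' primes separately before normalising by \eqref{Lower bound loc}) overlaps with the paper's in its outer shell, but the quantitative core of your argument has a genuine gap. The paper's proof rests on a matched pair of estimates stratified by the invariant $e=v_p(\nabla f_{\mathbf{a}}(\mathbf{x}))$ at a solution $\mathbf{x}$ of $f_{\mathbf{a}}\equiv 0\bmod p^r$: the set $\mathfrak{R}_{N_{d,n}}^{(e)}(p^r)$ of coefficient vectors whose least singular solution has gradient valuation $e$ has cardinality $\leq 2p^{rN_{d,n}-e}$ (Lemma~\ref{Lemma upper bound Re}), while on that set $\sigma(\mathbf{a};p^r)\geq p^{-(e+1)n}$ (Lemma~\ref{Lemma lower bound sigma}). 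It is precisely the summability of the product of these two losses over $e$ (after raising $\sigma^{-1}$ to a small power $\kappa=1/4n$) that makes the local sums $\sum_{\mathbf{a}}\sigma(\mathbf{a};p^r)^{-\kappa}$ equal to $p^{rN_{d,n}}(1+O(p^{-5/4}))$ and hence the product over $p\mid W$ convergent. Your proposal replaces this invariant by the $p$-adic valuation of the discriminant and invokes only the ``trivial'' positivity lower bound $\sigma(\mathbf{a};p^{k_p})>0$ at bad primes. That trivial bound is $\sigma(\mathbf{a};p^{k_p})\geq p^{-nk_p}$ with $k_p\approx \log w/\log p$, so a single bad prime may a priori depress $\sigma(\mathbf{a};W)$ by a factor as small as $(\log A)^{-3n}$, and you have no estimate saying that the vectors achieving depression $p^{-j}$ are rare in proportion to $p^{-j}$. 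The squarefree-sieve count $\#\{V: p^2\mid \operatorname{disc}\}\ll A^{N_{d,n}}/p^2$ addresses only the first two layers of degeneracy and does not translate into a lower bound for $\sigma$; relating $v_p(\operatorname{disc})$ to the gradient valuation at the best $p$-adic point would itself require an argument comparable in length to Lemmas~\ref{Lemma upper bound Re} and \ref{Lemma lower bound sigma}.

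The second missing piece is the mechanism converting per-prime losses into a power saving in $\phi(A)$. Since every prime in $W$ is at most $w\asymp\log B/\log\log B$ while the threshold $C\phi(A)^{-1/6}$ can be as small as a negative power of $A$, the event you are bounding is caused either by very many mildly bad primes or by a few extremely degenerate ones, and a union bound over ``the product of bad primes exceeds $Y$'' does not by itself produce the exponent $1/24n$; you would need a moment bound of the form $\sum_V \prod_{p \text{ bad}} p^{\delta}\ll A^{N_{d,n}}$ together with the per-prime dichotomy above. The paper packages all of this at once through the H\"older-type inequality $\#\{\sigma<T\}\leq T^{\kappa}\sum_{\mathbf{a}}\sigma(\mathbf{a};W)^{-\kappa}$, whose right-hand side factors over $p^r\,\|\,W$ by CRT; the moments of $\sigma(\cdot;p^r)$ over the full residue system (Lemmas~\ref{Lemma first moment sigma} and \ref{Lemma second moment sigma}) then handle the range $\sigma>1/2$, and the stratification handles $\sigma\leq 1/2$. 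If you want to salvage your route, the minimal repair is to replace the discriminant by the gradient-valuation stratification and to adopt the negative-moment inequality in place of the union bound over bad-prime configurations.
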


The next result deals with the Archimedean factor.

\begin{proposition}
\label{Proposition Archimedean}
Let $d \geq 2$ and $n \geq 3$. Let $\phi : \mathbb{R}_{>0} \to \mathbb{R}_{>1}$ be such that $\phi(A) \leq A$ and let $C>0$. Then we have
\begin{equation*}
\frac1{\#\mathbb{V}_{d,n}^{\mathrm{loc}}(A)} \cdot
\# \left\{ V \in \mathbb{V}_{d,n}^{\mathrm{loc}}(A) : \mathfrak{J}_V(A\phi(A)) < \frac{C}{\phi(A)^{1/6}} \right\} \ll \frac1{\phi(A)^{1/6n}},
\end{equation*}
where the implied constant may depend on $C$.
\end{proposition}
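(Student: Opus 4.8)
\textbf{Proof strategy for Proposition~\ref{Proposition Archimedean}.}
The plan is to reduce the statement to an equidistribution input about how often the real form $\langle \nu_{d,n}(\mathbf{u}), \mathbf{a}_V \rangle$ is small on the unit ball. Write $A_1 = A\phi(A)$, $\alpha_1 = \log A_1$, so that $\mathfrak{J}_V(A_1) = \tau(\mathbf{a}_V; \alpha_1)$, which by definition \eqref{Definition tau} equals $\alpha_1$ times the volume of the set of $\mathbf{u} \in \mathcal{B}_{n+1}(1)$ with $|\langle \mathbf{a}_V, \nu_{d,n}(\mathbf{u}) \rangle| \leq \|\mathbf{a}_V\|\cdot\|\nu_{d,n}(\mathbf{u})\|/(2\alpha_1)$. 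So the factor being small forces this volume to be small, i.e.\ the normalised quartic-type form $F_V(\mathbf{u}) = \langle \mathbf{a}_V, \nu_{d,n}(\mathbf{u})\rangle / \|\mathbf{a}_V\|$ must avoid being near zero on most of the ball. First I would record the trivial lower bound $\tau(\mathbf{a};\gamma) \gg 1$ for \emph{any} nonzero $\mathbf{a}$ and $\gamma$, coming from the fact that the zero set of $F_V$ on $\mathcal{B}_{n+1}(1)$ has positive-codimension and a tubular neighbourhood of width $\sim 1/\alpha_1$ around it has volume $\gg 1/\alpha_1$; this shows $\mathfrak{J}_V(A_1) \gg 1$ unconditionally. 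The content of the proposition is that one \emph{cannot} have $\mathfrak{J}_V(A_1)$ as small as $\phi(A)^{-1/6}$ for more than a $\phi(A)^{-1/6n}$-proportion of everywhere locally soluble $V$.

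The key point is that $V \in \mathbb{V}_{d,n}^{\mathrm{loc}}(A)$ is in particular soluble over $\mathbb{R}$, so $V(\mathbb{R}) \neq \emptyset$: there is a real point $\mathbf{u}_0 \in \mathcal{B}_{n+1}(1)\smallsetminus\{\mathbf 0\}$ with $F_V(\mathbf{u}_0) = 0$. I would then argue that the gradient of $F_V$ cannot be too large everywhere near $\mathbf{u}_0$ unless $\mathbf{a}_V$ lies in a thin subset. Concretely, by homogeneity and compactness, $\|\nabla_{\mathbf{u}} F_V\|$ is bounded on the ball by an absolute constant (since $\|\mathbf{a}_V\|$ is factored out in the definition of $F_V$), so a neighbourhood of $\mathbf{u}_0$ of radius $c/\alpha_1$ lies inside the region $\{|F_V(\mathbf{u})| \leq \|\nu_{d,n}(\mathbf u)\|/(2\alpha_1)\}$ \emph{unless} $\nabla F_V$ degenerates — and the set of $\mathbf{a}_V$ for which the real solution locus behaves pathologically (e.g.\ the real variety is zero-dimensional, or $\mathbf{a}_V$ defines a form vanishing to high order) is cut out by auxiliary polynomial conditions and thus by a Zariski-closed subset of $\mathbb{V}_{d,n}$, whose integer points of height $\leq A$ number $\ll A^{N_{d,n}-1}$, hence contribute $\ll 1/A$ to the proportion, which is $\ll \phi(A)^{-1/6n}$ by the hypothesis $\phi(A) \leq A$. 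For the generic $V$ the above gives $\mathfrak{J}_V(A_1) = \alpha_1 \cdot \mathrm{vol}(\cdots) \gg \alpha_1 \cdot \alpha_1^{-(n+1)} \cdot (\text{something}) $, so to get the quantitative count I would stratify $\mathbb{V}_{d,n}^{\mathrm{loc}}(A)$ by the dyadic size of the smallest nonzero value of an appropriate discriminant-like invariant $\mathrm{disc}(\mathbf{a}_V)$ of the form, and on each stratum bound the volume below in terms of that invariant, then sum the contributions of the strata where the invariant is small using a Lipschitz/lattice-point count for the number of $\mathbf{a}_V$ with $|\mathrm{disc}(\mathbf{a}_V)| \leq \Delta$, which is $\ll \Delta \cdot A^{N_{d,n}-1}$ by a standard argument (this is the same mechanism used implicitly behind \eqref{Lower bound loc}).

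The main obstacle I anticipate is making precise the \emph{quantitative} link between smallness of the Archimedean factor and smallness of a polynomial invariant of $\mathbf{a}_V$: one needs a \L ojasiewicz-type inequality of the shape $\mathrm{vol}\{\mathbf{u}\in\mathcal{B}_{n+1}(1) : |F_V(\mathbf{u})| \leq \varepsilon\} \gg \varepsilon \cdot |\mathrm{disc}(\mathbf{a}_V)|^{-\kappa}$ with an \emph{explicit, uniform} constant $\kappa$ depending only on $d,n$, valid for all $\mathbf{a}_V$ with $F_V$ having a real zero in the ball. Because there is no accessible real point away from the boundary in general, I would get around this by using that an everywhere locally soluble $V$ in fact has a smooth real point (the singular locus is a proper subvariety and real solubility of a Fano hypersurface with a rational adelic point gives a smooth real point after a small deformation argument, cf.\ the implicit function theorem applied on the real manifold $V(\mathbb{R})^{\mathrm{sm}}$), at which $\nabla F_V \neq 0$; then the tube around that point has the claimed volume with $\mathrm{disc}$ controlling how close we are to the non-smooth locus. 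Assembling the dyadic sum and checking that the exponent $1/6n$ is what comes out — losing a power of $n$ because $F_V$ lives on an $(n+1)$-dimensional ball and each of the $n+1$ normal directions can each only absorb a $\phi^{-1/6(n+1)}$-type loss — is then bookkeeping parallel to the non-Archimedean case in Proposition~\ref{Proposition non-Archimedean}.
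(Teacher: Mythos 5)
Your overall architecture --- a Chebyshev bound via a fractional negative moment of the Archimedean factor, a stratification of the coefficient vectors by how degenerate their real zeros are, a measure bound for each stratum and a lower bound for $\tau$ on each stratum --- is the same as the paper's. But there are two genuine problems. First, your ``trivial lower bound $\tau(\mathbf{a};\gamma)\gg 1$ for any nonzero $\mathbf{a}$'' is false, and if it were true the proposition would be vacuous (the set in question would be empty for large $\phi(A)$, contradicting your own next sentence). The real zero locus of $f_{\mathbf{a}}$ inside $\mathcal{B}_{n+1}(1)$ need not carry $n$-dimensional measure $\gg 1$, and the gradient along it need not be $\gg \|\mathbf{a}\|$: for $d$ even, a form close to a definite one has a real locus that is a thin cone, and forms with a near-singular real point have tubes far narrower than width $1/\alpha$ would suggest. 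The correct lower bound, which is the content of Lemma~\ref{Lemma lower bound tau}, is $\tau(\mathbf{a};\gamma)\gg\lambda^{n+1}\min\{\gamma,\lambda^{-2}\}$ where $\lambda$ measures $\|\nabla f_{\mathbf{a}}(\mathbf{x})\|/\|\mathbf{a}\|$ at a real zero $\mathbf{x}\in\mathbb{S}^n$, and this genuinely degrades as $\lambda\to 0$.

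Second, the step you yourself flag as ``the main obstacle'' --- a uniform quantitative link between smallness of $\tau$ and smallness of a polynomial invariant of $\mathbf{a}_V$ --- is exactly the crux, and routing it through a discriminant together with a Zariski-closed exceptional locus does not work as stated. The relevant bad set is not a proper subvariety contributing $\ll A^{N_{d,n}-1}$ integer points: the set of $\mathbf{a}$ for which every real zero on $\mathbb{S}^n$ has gradient of size $\leq\lambda\|\mathbf{a}\|$ is a full-dimensional semialgebraic set, and what one needs is that its \emph{measure} is $\ll\lambda^2$. The paper proves precisely this (Lemma~\ref{Lemma upper bound Blambda}) by fibering over $\mathbf{x}\in\mathbb{S}^n$ and bounding, for each fixed $\mathbf{x}$, the volume of coefficient vectors with $|f_{\mathbf{a}}(\mathbf{x})|\leq\lambda^2$ and $\|\nabla f_{\mathbf{a}}(\mathbf{x})\|\leq 2\lambda$; no discriminant or \L ojasiewicz inequality with an unspecified exponent $\kappa$ is needed, and the exponent $2$ here is what makes the dyadic sum with $\kappa=1/n$ converge to give $\phi(A)^{-1/6n}$. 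You also omit the passage from the count over integer vectors $\mathbf{a}_V$ to a volume integral, which requires both the boundary estimate of Lemma~\ref{Lemma upper bound boundary} (to discard the $\ll A^{N_{d,n}-1}$ integer points whose unit neighbourhood leaves $\mathbb{I}_{d,n}^{\mathrm{loc}}$) and the monotonicity $\tau(\mathbf{y};2\alpha)\leq 2\,\tau(\mathbf{a};\alpha)$ for $\mathbf{y}\in\mathcal{N}(\mathbf{a})$. Without these three quantitative inputs the final bookkeeping cannot be carried out.
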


Propositions~\ref{Proposition non-Archimedean} and \ref{Proposition Archimedean} will respectively be established in Sections~\ref{Section non-Archimedean} and ~\ref{Section Archimedean}. We now have everything in place to provide the proof of Proposition~\ref{Proposition 2}.

\begin{proof}[Proof of Proposition~\ref{Proposition 2}]
It is convenient to set
\begin{equation*}
\mathscr{Q}_{\phi}(A) = \frac1{\# \mathbb{V}_{d,n}^{\mathrm{loc}}(A)} \cdot
\# \left\{ V \in \mathbb{V}_{d,n}^{\mathrm{loc}}(A) : N_V^{\mathrm{loc}}(A \phi(A)) \leq \phi(A)^{2/3} \right\}.
\end{equation*}
Lemma \ref{Lemma product factors} and the assumption $\phi(A) \leq A^{3/n}$ imply that there exists a constant
$c > 0$ depending at most on $d$ and $n$ such that
\begin{equation*}
\mathscr{Q}_{\phi}(A) \leq \frac1{\#\mathbb{V}_{d,n}^{\mathrm{loc}}(A)} \cdot
\# \left\{ V \in \mathbb{V}_{d,n}^{\mathrm{loc}}(A) : \mathfrak{S}_V(A \phi(A)) \cdot \mathfrak{J}_V(A \phi(A)) \leq
\frac{c}{\phi(A)^{1/3}} \right\}.
\end{equation*}
Therefore, we also have
\begin{equation*}
\mathscr{Q}_{\phi}(A) \leq \frac1{\#\mathbb{V}_{d,n}^{\mathrm{loc}}(A)} \cdot
\# \left\{ V \in \mathbb{V}_{d,n}^{\mathrm{loc}}(A) :
\min \left\{ \mathfrak{S}_V(A \phi(A)), \mathfrak{J}_V(A \phi(A)) \right\} \leq \frac{c^{1/2}}{\phi(A)^{1/6}} \right\}.
\end{equation*}
Our assumption $\phi(A) \leq A^{3/n}$ allows us to apply Propositions \ref{Proposition non-Archimedean} and
\ref{Proposition Archimedean} to conclude that 
\begin{equation*}
\mathscr{Q}_{\phi}(A) \ll \frac1{\phi(A)^{1/24n}},
\end{equation*}
which completes the proof of Proposition~\ref{Proposition 2}.
\end{proof}

\subsection{The non-Archimedean factor is rarely small}

\label{Section non-Archimedean}

The purpose of this section is to provide the proof of Proposition~\ref{Proposition non-Archimedean}. Given
$N, Q \geq 1$, recall the respective definitions \eqref{Definition non-Archimedean ball} and \eqref{Definition sigma} of the set $\mathfrak{R}_N(Q)$ and the quantity $\sigma(\mathbf{a};Q)$ for $\mathbf{a} \in \mathbb{Z}^{N_{d,n}}$. We note that given a prime number $p$ and $r \geq 1$, we clearly have
\begin{equation}
\label{Equality cardinality B}
\# \mathfrak{R}_N(p^r) = p^{rN} \left( 1 - \frac1{p^N} \right).
\end{equation}
In addition, it follows from the Chinese remainder theorem that for any $\mathbf{a} \in \mathfrak{R}_{N_{d,n}}(Q)$ we have
\begin{equation}
\label{Equality multiplicativity}
\sigma(\mathbf{a};Q) = \prod_{p^r \| Q} \sigma(\mathbf{a};p^r),
\end{equation}
where the notation $p^r \| Q$ means that $p$ is a prime number dividing $Q$ and $r$ is the $p$-adic valuation of $Q$. The following pair of results will allow us to handle situations in which one of the factors $\sigma(\mathbf{a};p^r)$ is somewhat large despite the fact that $\sigma(\mathbf{a};Q)$ is assumed to be small. We shall start by estimating the first moment of the quantity $\sigma(\mathbf{a};p^r)$ as $\mathbf{a}$ runs over the set $\mathfrak{R}_{N_{d,n}}(p^r)$.

\begin{lemma}
\label{Lemma first moment sigma}
Let $d \geq 2$ and $n \geq 3$. Let also $p$ be a prime number and $r \geq 1$. We have
\begin{equation*}
\frac1{\# \mathfrak{R}_{N_{d,n}}(p^r)} \sum_{\mathbf{a} \in \mathfrak{R}_{N_{d,n}}(p^r)} \sigma(\mathbf{a};p^r) = 1 + O \left( \frac1{p^{n+1}} \right).
\end{equation*}
\end{lemma}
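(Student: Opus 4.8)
The plan is to compute the average of $\sigma(\mathbf{a};p^r)$ by swapping the order of summation, moving the sum over $\mathbf{a}$ inside. By the definition \eqref{Definition sigma} of $\sigma(\mathbf{a};p^r)$ we have
\begin{equation*}
\sum_{\mathbf{a} \in \mathfrak{R}_{N_{d,n}}(p^r)} \sigma(\mathbf{a};p^r) = \frac1{p^{rn}}
\sum_{\mathbf{b} \in \mathfrak{R}_{n+1}(p^r)}
\# \left\{ \mathbf{a} \in \mathfrak{R}_{N_{d,n}}(p^r) : \langle \nu_{d,n}(\mathbf{b}), \mathbf{a} \rangle \equiv 0 \bmod{p^r} \right\}.
\end{equation*}
So the first step is to count, for a fixed $\mathbf{b} \in \mathfrak{R}_{n+1}(p^r)$, the number of $\mathbf{a} \bmod p^r$ with $\mathbf{a}$ primitive (i.e. $\gcd(p^r,\mathbf{a})=1$, which in this local setting just means $p \nmid \mathbf{a}$) satisfying the single linear congruence $\langle \nu_{d,n}(\mathbf{b}), \mathbf{a} \rangle \equiv 0 \bmod{p^r}$. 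The key observation is that since $\mathbf{b}$ is primitive and $d\geq 1$, the vector $\nu_{d,n}(\mathbf{b})$ is also primitive modulo $p$: some monomial $b_0^{d}$ or $b_i^d$ is a unit since some $b_i$ is a unit. Hence the linear form $\mathbf{a} \mapsto \langle \nu_{d,n}(\mathbf{b}), \mathbf{a} \rangle$ has at least one unit coefficient modulo $p$, so the congruence cuts out a subgroup of $(\mathbb{Z}/p^r\mathbb{Z})^{N_{d,n}}$ of index exactly $p^r$, that is, of size $p^{r(N_{d,n}-1)}$.

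The second step is to subtract off the non-primitive $\mathbf{a}$. Among the $p^{r(N_{d,n}-1)}$ solutions $\mathbf{a}$ of the congruence, those with $p \mid \mathbf{a}$ are in bijection (via $\mathbf{a} = p\mathbf{a}'$) with solutions of $\langle \nu_{d,n}(\mathbf{b}), \mathbf{a}' \rangle \equiv 0 \bmod{p^{r-1}}$ modulo $p^{r-1}$, lifted back — a short inclusion–exclusion or a direct count shows the number of primitive solutions is $p^{r(N_{d,n}-1)}(1 - p^{-N_{d,n}})$ when $r \geq 1$ (and one should handle the bookkeeping slightly more carefully: the count of $\mathbf{a} \bmod p^r$ with $\langle \nu_{d,n}(\mathbf b),\mathbf a\rangle\equiv 0$ and $p\mid\mathbf a$ is $p^{(r-1)(N_{d,n}-1)}\cdot p^{N_{d,n}-1}\cdot$ correction; cleanest is to note the solution set is a free $\mathbb{Z}/p^r$-module of rank $N_{d,n}-1$ once we pick the unit coordinate to solve for, so the primitive count is $p^{r(N_{d,n}-1)} - p^{(r-1)(N_{d,n}-1)}$ only if the whole module reduces, which it does). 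Plugging this in gives
\begin{equation*}
\sum_{\mathbf{a} \in \mathfrak{R}_{N_{d,n}}(p^r)} \sigma(\mathbf{a};p^r) = \frac1{p^{rn}} \cdot \#\mathfrak{R}_{n+1}(p^r) \cdot \frac{p^{r(N_{d,n}-1)} - p^{(r-1)(N_{d,n}-1)}}{1} \cdot \frac{1}{?},
\end{equation*}
and after dividing by $\#\mathfrak{R}_{N_{d,n}}(p^r) = p^{rN_{d,n}}(1-p^{-N_{d,n}})$ from \eqref{Equality cardinality B} and using $\#\mathfrak{R}_{n+1}(p^r)=p^{r(n+1)}(1-p^{-(n+1)})$, the powers of $p$ cancel and one is left with $(1-p^{-(n+1)})$ times an error stemming from the fact that the ``unit coordinate'' argument may fail to solve the congruence uniquely when the relevant coefficient is not a unit — but by the primitivity of $\nu_{d,n}(\mathbf b)$ this happens on a subset of $\mathbf b$'s of relative density $O(p^{-1})$, and on that subset the trivial bound $\sigma(\mathbf a; p^r)\ll p$ (or a cruder count) absorbs into $O(p^{-(n+1)})$ after one checks $n\geq 3$. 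Assembling these pieces yields the claimed main term $1$ with error $O(p^{-(n+1)})$.

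\textbf{Main obstacle.} The genuinely delicate point is the second step: correctly counting \emph{primitive} $\mathbf{a}$ (respectively accounting for the $\mathbf b$ for which $\nu_{d,n}(\mathbf b)$ fails to have a unit coordinate after possible reduction), and verifying that the error terms from these boundary cases are $O(p^{-(n+1)})$ uniformly in $r$ rather than merely $O(p^{-1})$ — the exponent $n+1$ is exactly what is needed for the later summation over primes to converge. I expect to handle this by splitting $\mathfrak{R}_{n+1}(p^r)$ according to whether $p \nmid b_i^{d}$ for some $i$ after passing to the reduction mod $p$ (always true by primitivity) and tracking carefully that the linear congruence in $\mathbf a$ therefore always has a unit coefficient mod $p$, which makes the primitive solution count exact and the error term in fact come only from the interplay $\#\mathfrak R_{n+1}(p^r)$ versus the full $p^{r(n+1)}$; a short computation then shows the error is $O(p^{-(n+1)})$ with no dependence on $r$, as required.
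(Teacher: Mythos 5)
Your proposal is correct and follows essentially the same route as the paper: swap the order of summation, note that primitivity of $\mathbf{b}$ forces $p \nmid \nu_{d,n}(\mathbf{b})$, so the linear congruence has exactly $p^{r(N_{d,n}-1)} - p^{(r-1)(N_{d,n}-1)}$ primitive solutions $\mathbf{a}$, and then conclude via the cardinality formula \eqref{Equality cardinality B}. Your hedging about the unit-coordinate argument possibly ``failing'' is vacuous (as you yourself observe at the end), and the only point worth making explicit in the final assembly is that $N_{d,n}-1 \geq n+1$, so that the ratio $(1-p^{-(N_{d,n}-1)})/(1-p^{-N_{d,n}})$ coming from the primitivity correction on the $\mathbf{a}$ side is also $1+O(p^{-(n+1)})$.
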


\begin{proof}
Inverting the order of summation we obtain
\begin{equation*}
\sum_{\mathbf{a} \in \mathfrak{R}_{N_{d,n}}(p^r)} \sigma(\mathbf{a};p^r) =
\frac1{p^{rn}} \sum_{\mathbf{b} \in \mathfrak{R}_{n+1}(p^r)}
\# \left\{ \mathbf{a} \in \mathfrak{R}_{N_{d,n}}(p^r) : \mathbf{a} \in \Lambda_{\nu_{d,n}(\mathbf{b})}^{(p^r)} \right\}.
\end{equation*}
Recall the definition \eqref{Definition local lattice} of the lattice $\Lambda_{\nu_{d,n}(\mathbf{b})}^{(p^r)}$ and observe that if $p\nmid \mathbf{b}$ then $p\nmid \nu_{d,n}(\mathbf{b})$. We deduce that
\begin{align*}
\# \left\{ \mathbf{a} \in \mathfrak{R}_{N_{d,n}}(p^r) : \mathbf{a} \in \Lambda_{\nu_{d,n}(\mathbf{b})}^{(p^r)} \right\}
& = p^{r(N_{d,n}-1)} - p^{(r-1)(N_{d,n}-1)} \\
& = p^{r(N_{d,n}-1)} \left( 1 - \frac1{p^{N_{d,n}-1}} \right).
\end{align*}
We thus get
\begin{equation*}
\frac1{\# \mathfrak{R}_{N_{d,n}}(p^r)} \sum_{\mathbf{a} \in \mathfrak{R}_{N_{d,n}}(p^r)} \sigma(\mathbf{a};p^r) =
\frac{\# \mathfrak{R}_{n+1}(p^r)}{p^{r(n+1)}} \cdot \frac{p^{rN_{d,n}}}{\# \mathfrak{R}_{N_{d,n}}(p^r)}
\left( 1 - \frac1{p^{N_{d,n}-1}} \right).
\end{equation*}
We see that two applications of the equality \eqref{Equality cardinality B} allow us to complete the proof.
\end{proof}

We now establish an upper bound for the variance of the quantity $\sigma(\mathbf{a};p^r)$ as $\mathbf{a}$ runs over the set $\mathfrak{R}_{N_{d,n}}(p^r)$.

\begin{lemma}
\label{Lemma second moment sigma}
Let $d \geq 2$ and $n \geq 3$. Let also $p$ be a prime number and $r \geq 1$. We have
\begin{equation*}
\frac1{\# \mathfrak{R}_{N_{d,n}}(p^r)} \sum_{\mathbf{a} \in \mathfrak{R}_{N_{d,n}}(p^r)}
\left( \sigma(\mathbf{a};p^r) - 1 \right)^2 \ll \frac1{p^{n-1}}.
\end{equation*}
\end{lemma}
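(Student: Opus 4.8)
The plan is to expand the square and reduce the second moment to a count of pairs of primitive vectors in $(\mathbb{Z}/p^r\mathbb{Z})^{n+1}$ that, together with a single primitive vector $\mathbf{a}$, satisfy two congruence conditions simultaneously. First I would write, by inverting the order of summation exactly as in the proof of Lemma~\ref{Lemma first moment sigma},
\begin{equation*}
\frac1{\# \mathfrak{R}_{N_{d,n}}(p^r)} \sum_{\mathbf{a} \in \mathfrak{R}_{N_{d,n}}(p^r)} \sigma(\mathbf{a};p^r)^2
= \frac1{p^{2rn} \# \mathfrak{R}_{N_{d,n}}(p^r)} \sum_{\mathbf{b}_1, \mathbf{b}_2 \in \mathfrak{R}_{n+1}(p^r)}
\# \left\{ \mathbf{a} \in \mathfrak{R}_{N_{d,n}}(p^r) : \mathbf{a} \in \Lambda_{\nu_{d,n}(\mathbf{b}_1)}^{(p^r)} \cap \Lambda_{\nu_{d,n}(\mathbf{b}_2)}^{(p^r)} \right\},
\end{equation*}
so that, in view of Lemma~\ref{Lemma first moment sigma}, it suffices to show the diagonal-type main term $1$ is produced and the off-diagonal error is $O(p^{-(n-1)})$. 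Combined with Lemma~\ref{Lemma first moment sigma}, which already controls the cross term $-2\sigma(\mathbf{a};p^r)$ and the constant $1$, the bound on $\sum(\sigma(\mathbf{a};p^r)-1)^2$ follows by the standard identity for the variance.

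Next I would split the inner sum over $(\mathbf{b}_1,\mathbf{b}_2)$ according to whether $\nu_{d,n}(\mathbf{b}_1)$ and $\nu_{d,n}(\mathbf{b}_2)$ are proportional modulo $p$ or not. When they are independent modulo $p$, the lattice $\Lambda_{\nu_{d,n}(\mathbf{b}_1)}^{(p^r)} \cap \Lambda_{\nu_{d,n}(\mathbf{b}_2)}^{(p^r)}$ has index $p^{2r}$ in $\mathbb{Z}^{N_{d,n}}$ by Lemma~\ref{Lemma determinant mixed/local} (with $Q = p^r$ and $\gcd(\mathcal{G}(\mathbf{b}_1,\mathbf{b}_2),Q)=1$), so the number of $\mathbf{a}$ in $\mathfrak{R}_{N_{d,n}}(p^r)$ lying in it is $p^{r(N_{d,n}-2)}(1+O(p^{-(N_{d,n}-2)}))$; using $\#\mathfrak{R}_{n+1}(p^r)^2 = p^{2r(n+1)}(1+O(p^{-(n+1)}))$ and $\#\mathfrak{R}_{N_{d,n}}(p^r)=p^{rN_{d,n}}(1+O(p^{-N_{d,n}}))$ this contributes $1 + O(p^{-(n+1)})$ to the second moment. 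The remaining pairs — those for which $\nu_{d,n}(\mathbf{b}_1) \equiv \lambda \nu_{d,n}(\mathbf{b}_2) \bmod p$ for some unit $\lambda$, which by Lemma~\ref{Lemma G} (or rather its mod-$p$ analogue, deduced from the identity \eqref{Equality linear independence}) forces $\mathbf{b}_1 \equiv \mu \mathbf{b}_2 \bmod p$ for a unit $\mu$ — number $O(p^{r(n+1)} \cdot p^{rn})$, since $\mathbf{b}_1$ is then determined modulo $p$ up to $p-1$ scalars and free modulo $p^{r-1}$ in its remaining $n+1$ coordinate directions beyond $\mathbf{b}_2$; bounding the inner count trivially by $p^{r(N_{d,n}-1)}$ gives a contribution $O(p^{-(n-1)})$ after dividing by $p^{2rn}\#\mathfrak{R}_{N_{d,n}}(p^r)$.

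The main obstacle will be the book-keeping in the degenerate case: one must pin down precisely how many pairs $(\mathbf{b}_1,\mathbf{b}_2) \in \mathfrak{R}_{n+1}(p^r)^2$ have $\nu_{d,n}(\mathbf{b}_1)$ and $\nu_{d,n}(\mathbf{b}_2)$ linearly dependent modulo $p$, and to verify that dependence of the Veronese images modulo $p$ implies dependence of $\mathbf{b}_1,\mathbf{b}_2$ themselves modulo $p$ (this is where $n \geq 3$, or really just $n \geq 1$ together with $d \geq 2$, and the mod-$p$ version of the argument behind Lemma~\ref{Lemma G} enter). Once that count is shown to be $O(p^{r(2n+1)})$ — i.e. a factor $p^{-r(n+1)}$ smaller than the full square $p^{2r(n+1)}$ — the trivial bound $p^{r(N_{d,n}-1)}$ on the number of admissible $\mathbf{a}$ yields exactly the claimed error term $O(p^{-(n-1)})$, and assembling the three pieces (the $1$ from Lemma~\ref{Lemma first moment sigma} appearing twice with opposite sign, plus the $1 + O(p^{-(n+1)})$ main term and the $O(p^{-(n-1)})$ degenerate term from the second moment) completes the proof.
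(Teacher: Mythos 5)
Your overall strategy coincides with the paper's: expand the second moment, invert the order of summation, and split the pairs $(\mathbf{b}_1,\mathbf{b}_2)$ according to whether they are proportional modulo $p$. The non-degenerate case is handled correctly. However, your treatment of the degenerate pairs has a genuine gap that makes the argument fail for $r \geq 2$. Lumping together all pairs with $\mathbf{b}_1 \equiv \mu \mathbf{b}_2 \bmod{p}$ and bounding the inner count $\#\{\mathbf{a} \in \mathfrak{R}_{N_{d,n}}(p^r) : \mathbf{a} \in \Lambda_{\nu_{d,n}(\mathbf{b}_1)}^{(p^r)} \cap \Lambda_{\nu_{d,n}(\mathbf{b}_2)}^{(p^r)}\}$ trivially by $p^{r(N_{d,n}-1)}$ is too lossy. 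The number of such degenerate pairs is $\asymp p^{2r(n+1)-n}$ (this is what your own description --- $p-1$ scalars modulo $p$ and a free lift modulo $p^{r-1}$ --- yields, not the $p^{r(n+1)}\cdot p^{rn}$ you wrote); multiplying by $p^{r(N_{d,n}-1)}$ and dividing by $p^{2rn}\,\#\mathfrak{R}_{N_{d,n}}(p^r) \asymp p^{2rn+rN_{d,n}}$ gives $p^{r-n}$, which equals the desired $p^{-(n-1)}$ only when $r=1$ and is $\gg 1$ once $r \geq n$. The point is that the trivial bound on the inner count is only attained when $\mathbf{b}_1$ and $\mathbf{b}_2$ are proportional modulo the full $p^r$; for a typical degenerate pair it overcounts by a large power of $p$.

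The repair is to stratify by the exact power $p^e = \gcd(\mathcal{G}(\mathbf{b}_1,\mathbf{b}_2),p^r)$, which is what the paper does. The Smith normal form computation behind Lemma~\ref{Lemma determinant mixed/local}, via \eqref{Equality after Smith}, gives the inner count exactly as $p^{r(N_{d,n}-2)}\gcd(\mathcal{G}(\mathbf{b}_1,\mathbf{b}_2),p^r)\left(1+O\left(p^{-(N_{d,n}-2)}\right)\right)$, uniformly over degenerate and non-degenerate pairs, while for fixed primitive $\mathbf{b}_1$ the condition $\gcd(\mathcal{G}(\mathbf{b}_1,\mathbf{b}_2),p^r) = p^e$ forces $\mathbf{b}_2 \equiv \mu\mathbf{b}_1 \bmod{p^e}$, so the number of pairs in the stratum is $\ll p^{2r(n+1)-en}$. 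Each stratum then contributes $\ll p^{e(1-n)}$ to the normalised second moment, and summing over $e \geq 1$ produces the claimed $O(p^{-(n-1)})$. With this modification the rest of your plan --- combining the second moment estimate with Lemma~\ref{Lemma first moment sigma} to bound the variance --- goes through as you describe.
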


\begin{proof}
We start by estimating the second moment of $\sigma(\mathbf{a};p^r)$ as $\mathbf{a}$ runs over
$\mathfrak{R}_{N_{d,n}}(p^r)$. We have
\begin{equation}
\label{Definition L}
\sum_{\mathbf{a} \in \mathfrak{R}_{N_{d,n}}(p^r)} \sigma(\mathbf{a};p^r)^2 = \frac1{p^{2rn}}
\sum_{\mathbf{b}_1, \mathbf{b}_2 \in \mathfrak{R}_{n+1}(p^r)} \# L(\mathbf{b}_1,\mathbf{b}_2;p^r),
\end{equation}
where
\begin{equation*}
L(\mathbf{b}_1,\mathbf{b}_2;p^r) =
\left\{ \mathbf{a} \in \mathfrak{R}_{N_{d,n}}(p^r) : \mathbf{a} \in \Lambda_{\nu_{d,n}(\mathbf{b}_1)}^{(p^r)} \cap
\Lambda_{\nu_{d,n}(\mathbf{b}_2)}^{(p^r)} \right\}.
\end{equation*}

We first investigate the cardinality of the set $L(\mathbf{b}_1,\mathbf{b}_2;p^r)$ under the assumption that there does not exist $g \in (\mathbb{Z}/p\mathbb{Z})^{\times}$ such that $\mathbf{b}_1 = g \mathbf{b}_2$. In this case we select any primitive representatives $\mathbf{c}_1, \mathbf{c}_2 \in \mathbb{Z}^{n+1}$ of $\mathbf{b}_1$ and $\mathbf{b}_2$ respectively, and we note that $\mathbf{c}_1$ and $\mathbf{c}_2$ are linearly independent. Appealing to the equality \eqref{Equality after Smith} we deduce that there exists $f \in \mathbb{Z}$ such that the cardinality of the set $L(\mathbf{b}_1,\mathbf{b}_2;p^r)$ is equal to
\begin{equation*}
\# \left\{ \mathbf{z} \in \mathfrak{R}_{N_{d,n}}(p^r) :
\begin{pmatrix}
1 & 0 & 0 & \cdots & 0 \\
f & \mathcal{G}(\nu_{d,n}(\mathbf{c}_1), \nu_{d,n}(\mathbf{c}_2)) & 0 & \cdots & 0 
\end{pmatrix}
\mathbf{z} \equiv \boldsymbol{0} \bmod{p^r} \right\}.
\end{equation*}
Using Lemma~\ref{Lemma G} and the fact that
$\gcd(\mathcal{G}(\mathbf{c}_1,\mathbf{c}_2),p^r) = \gcd(\mathcal{G}(\mathbf{b}_1,\mathbf{b}_2), p^r)$, we obtain
\begin{equation*}
\# L(\mathbf{b}_1,\mathbf{b}_2;p^r) = p^{r(N_{d,n}-2)} \gcd(\mathcal{G}(\mathbf{b}_1,\mathbf{b_2}), p^r) - p^{(r-1)(N_{d,n}-2)} \gcd(\mathcal{G}(\mathbf{b}_1,\mathbf{b_2}), p^{r-1}).
\end{equation*}
In the case where there exists $g \in (\mathbb{Z}/p\mathbb{Z})^{\times}$ such that $\mathbf{b}_1 = g\mathbf{b}_2$, we have
\begin{equation*}
\# L(\mathbf{b}_1,\mathbf{b}_2;p^r) = \#
\left\{ \mathbf{a} \in \mathfrak{R}_{N_{d,n}}(p^r) : \mathbf{a} \in \Lambda_{\nu_{d,n}(\mathbf{b}_1)}^{(p^r)} \right\},
\end{equation*}
and since $p \nmid \mathbf{b}_1$ we get
\begin{equation*}
\# L(\mathbf{b}_1,\mathbf{b}_2;p^r) = p^{r(N_{d,n}-1)} - p^{(r-1)(N_{d,n}-1)}.
\end{equation*}
As a result, we have proved in particular that in both cases we have
\begin{equation}
\label{Estimate L}
\# L(\mathbf{b}_1,\mathbf{b}_2;p^r) = p^{r(N_{d,n}-2)} \gcd(\mathcal{G}(\mathbf{b}_1,\mathbf{b_2}),p^r)
\left( 1 + O \left( \frac1{p^{N_{d,n}-2}} \right) \right).
\end{equation}

Next, we note that
\begin{equation*}
\sum_{\mathbf{b}_1, \mathbf{b}_2 \in \mathfrak{R}_{n+1}(p^r)} \gcd(\mathcal{G}(\mathbf{b}_1,\mathbf{b_2}),p^r) =
\# \mathfrak{R}_{n+1}(p^r)^2 + O \left( \sum_{e=1}^r p^e \cdot \# \mathfrak{F}^{(e)}(p^r) \right),
\end{equation*}
where, for $e \in \{1, \dots, r\}$, we have introduced the set
\begin{equation*}
\mathfrak{F}^{(e)}(p^r) = \left\{ (\mathbf{b}_1,\mathbf{b}_2) \in \mathfrak{R}_{n+1}(p^r)^2 :
\gcd(\mathcal{G}(\mathbf{b}_1,\mathbf{b_2}),p^r) = p^e \right\}.
\end{equation*}
Furthermore, for given $\mathbf{b}_1 \in \mathfrak{R}_{n+1}(p^r)$, a little thought reveals that the number of
$\mathbf{b}_2 \in \mathfrak{R}_{n+1}(p^r)$ such that $(\mathbf{b}_1,\mathbf{b}_2) \in \mathfrak{F}^{(e)}(p^r)$ is at most $p^{r(n+1)-en}$. This yields 
\begin{equation*}
\# \mathfrak{F}^{(e)}(p^r) \ll p^{2r(n+1)-en}.
\end{equation*}
Using the equality \eqref{Equality cardinality B}, we deduce
\begin{equation}
\label{Estimate sum G final}
\sum_{\mathbf{b}_1, \mathbf{b}_2 \in \mathfrak{R}_{n+1}(p^r)} \gcd(\mathcal{G}(\mathbf{b}_1,\mathbf{b_2}),p^r) =
p^{2r(n+1)} \left( 1 + O \left( \frac1{p^{n-1}} \right) \right).
\end{equation}

Putting together the equality \eqref{Definition L} and the estimates \eqref{Estimate L} and \eqref{Estimate sum G final}, we derive
\begin{equation*}
\sum_{\mathbf{a} \in \mathfrak{R}_{N_{d,n}}(p^r)} \sigma(\mathbf{a};p^r)^2 = p^{rN_{d,n}} \left( 1 + O \left( \frac1{p^{n-1}} \right) \right).
\end{equation*}
In view of the equality \eqref{Equality cardinality B}, this can eventually be rewritten as
\begin{equation}
\label{Estimate second moment sigma}
\frac1{ \# \mathfrak{R}_{N_{d,n}}(p^r)} \sum_{\mathbf{a} \in \mathfrak{R}_{N_{d,n}}(p^r)} \sigma(\mathbf{a};p^r)^2 =
1 + O \left( \frac1{p^{n-1}} \right).
\end{equation}
It is now immediate to check that we may complete the proof by combining Lemma~\ref{Lemma first moment sigma} and the estimate \eqref{Estimate second moment sigma}.
\end{proof}

Given an integer $Q \geq 1$, for any vector $\mathbf{a} \in (\mathbb{Z}/Q\mathbb{Z})^{N_{d,n}}$ we let
$f_{\mathbf{a}}$ denote the form of degree $d$ in $n+1$ variables which has coefficient vector $\mathbf{a}$. The set of vectors $\mathbf{a} \in \mathfrak{R}_{N_{d,n}}(Q)$ such that the form $f_{\mathbf{a}}$ has a non-trivial point modulo $p^{v_p(Q)}$ for any prime divisor $p$ of $Q$ will play a major role in the proof of
Proposition~\ref{Proposition non-Archimedean}. For $Q \geq 1$ we thus introduce the set
\begin{equation}
\label{Definition Floc}
\mathbb{F}_{d,n}^{\mathrm{loc}}(Q) = \left\{ \mathbf{a} \in \mathfrak{R}_{N_{d,n}}(Q) :
\forall p^r \| Q \ \exists \mathbf{x} \in \mathfrak{R}_{n+1}(p^r) \ f_{\mathbf{a}}(\mathbf{x}) \equiv 0 \bmod{p^r} \right\}.
\end{equation}
In addition, given a prime number $p$ and integers $r, N \geq 1$ we define the $p$-adic valuation $v_p(\mathbf{v})$ of a vector $\mathbf{v} \in (\mathbb{Z}/p^r\mathbb{Z})^N$ as the largest integer $e \in \{0, \dots, r\}$ such that we have
$\mathbf{v} \equiv \boldsymbol{0} \bmod{p^e}$. For $e \in \{0, \dots, r\}$, we also define the set
\begin{equation}
\label{Definition Re}
\mathfrak{R}_{N_{d,n}}^{(e)}(p^r) = \left\{ \mathbf{a} \in \mathfrak{R}_{N_{d,n}}(p^r) :
\exists \mathbf{x} \in \mathfrak{R}_{n+1}(p^r) \
\begin{array}{l l}
f_{\mathbf{a}}(\mathbf{x}) \equiv 0 \bmod{p^r} \\
v_p(\nabla f_{\mathbf{a}}(\mathbf{x}))=e
\end{array}
\right\}.
\end{equation}
It will be very important for our purpose to note that
\begin{equation}
\label{Partition non-Archimedean}
\mathbb{F}_{d,n}^{\mathrm{loc}}(p^r) = \bigcup_{e=0}^{r} \mathfrak{R}_{N_{d,n}}^{(e)}(p^r).
\end{equation}

The following two results provide us with the suitable tools to deal with small values of the quantity
$\sigma(\mathbf{a};p^r)$ for $\mathbf{a} \in \mathfrak{R}_{N_{d,n}}(p^r)$. We start by proving an upper bound for the cardinality of the set $\mathfrak{R}_{N_{d,n}}^{(e)}(p^r)$.

\begin{lemma}
\label{Lemma upper bound Re}
Let $d \geq 2$ and $n \geq 3$. Let also $p$ be a prime number and $r \geq 1$. For $e \in \{1, \dots, r\}$, we have
\begin{equation*}
\mathfrak{R}_{N_{d,n}}^{(e)}(p^r) \leq 2 p^{rN_{d,n}-e}.
\end{equation*}
\end{lemma}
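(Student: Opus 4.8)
The plan is to bound the cardinality $\#\mathfrak{R}_{N_{d,n}}^{(e)}(p^r)$ by combining an explicit linear-algebra count attached to each fixed point $\mathbf{x}$, a scaling argument producing many ``witnesses'' for every admissible coefficient vector, and a descent on the exponent $r$. First, since we only want an upper bound, I would drop the precise valuation condition and work with the larger set
\begin{equation*}
\mathfrak{G}(p^r) = \left\{ \mathbf{a} \in \mathfrak{R}_{N_{d,n}}(p^r) : \exists \, \mathbf{x} \in \mathfrak{R}_{n+1}(p^r), \ f_{\mathbf{a}}(\mathbf{x}) \equiv 0 \bmod{p^r}, \ \nabla f_{\mathbf{a}}(\mathbf{x}) \equiv \boldsymbol{0} \bmod{p^e} \right\},
\end{equation*}
which contains $\mathfrak{R}_{N_{d,n}}^{(e)}(p^r)$.

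The heart of the matter is the case $r=e$. Here, for a \emph{fixed} $\mathbf{x} \in \mathfrak{R}_{n+1}(p^e)$, I would pick $i_0$ with $p \nmid x_{i_0}$ and single out the $n+1$ coefficients $a_{X_{i_0}^d}$ and $a_{X_{i_0}^{d-1}X_j}$ for $j \neq i_0$. Once the remaining $N_{d,n}-n-1$ coefficients of $\mathbf{a}$ are fixed, the map sending these $n+1$ coefficients to $\bigl(f_{\mathbf{a}}(\mathbf{x}), (\partial_{X_j} f_{\mathbf{a}}(\mathbf{x}))_{j \neq i_0}\bigr)$ is an affine bijection of $(\mathbb{Z}/p^e\mathbb{Z})^{n+1}$, since the associated coefficient matrix is triangular with determinant $\pm x_{i_0}^{d+n(d-1)}$, a unit modulo $p$. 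Imposing $f_{\mathbf{a}}(\mathbf{x}) \equiv 0$ and $\partial_{X_j} f_{\mathbf{a}}(\mathbf{x}) \equiv 0 \bmod{p^e}$ for $j \neq i_0$ therefore leaves exactly $p^{e(N_{d,n}-n-1)}$ vectors $\mathbf{a}$, and the condition $\partial_{X_{i_0}} f_{\mathbf{a}}(\mathbf{x}) \equiv 0 \bmod{p^e}$ is then automatic from Euler's identity $\sum_{j} x_j \partial_{X_j} f_{\mathbf{a}}(\mathbf{x}) = d\, f_{\mathbf{a}}(\mathbf{x})$ together with $p \nmid x_{i_0}$. Summing over the at most $p^{e(n+1)}$ vectors $\mathbf{x} \in \mathfrak{R}_{n+1}(p^e)$ yields at most $p^{e N_{d,n}}$ pairs $(\mathbf{a},\mathbf{x})$ with $f_{\mathbf{a}}(\mathbf{x}) \equiv 0 \bmod{p^e}$ and $\nabla f_{\mathbf{a}}(\mathbf{x}) \equiv \boldsymbol{0} \bmod{p^e}$.

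Next I would use homogeneity: if $\mathbf{x}$ is a witness for $\mathbf{a}$ then so is $g\mathbf{x}$ for every unit $g$ modulo $p^e$, because $f_{\mathbf{a}}$ and its partials are homogeneous, and these are pairwise distinct since $\mathbf{x}$ is primitive; hence every $\mathbf{a} \in \mathfrak{G}(p^e)$ admits at least $p^{e-1}(p-1) \geq \frac1{2} p^e$ witnesses. Double counting the pairs $(\mathbf{a},\mathbf{x})$ then gives $\#\mathfrak{R}_{N_{d,n}}^{(e)}(p^e) \leq \#\mathfrak{G}(p^e) \leq 2 p^{e N_{d,n}-e}$. Finally, for $r > e$ I would descend: if $\mathbf{a} \in \mathfrak{R}_{N_{d,n}}^{(e)}(p^r)$ is witnessed by $\mathbf{x}$, then reduction modulo $p^e$ preserves $f_{\mathbf{a}}(\mathbf{x}) \equiv 0 \bmod{p^e}$ and turns $\nabla f_{\mathbf{a}}(\mathbf{x})$ into $\boldsymbol{0}$ (as $v_p(\nabla f_{\mathbf{a}}(\mathbf{x}))=e$), with $\mathbf{x} \bmod{p^e}$ still primitive, so $\mathbf{a} \bmod{p^e} \in \mathfrak{R}_{N_{d,n}}^{(e)}(p^e)$; since the reduction map $\mathfrak{R}_{N_{d,n}}(p^r) \to \mathfrak{R}_{N_{d,n}}(p^e)$ has fibres of size $p^{(r-e)N_{d,n}}$ (primitivity being a condition modulo $p$ only), we get $\#\mathfrak{R}_{N_{d,n}}^{(e)}(p^r) \leq p^{(r-e)N_{d,n}} \cdot 2 p^{e N_{d,n}-e} = 2 p^{r N_{d,n}-e}$.

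The main obstacle is the base case $r=e$: one must choose the distinguished monomials correctly and verify that the induced linear map on the corresponding coefficients is invertible modulo $p$. The triangular shape of the matrix, and the redundancy of one of the $n+2$ linear conditions supplied by Euler's relation, are precisely what make the count come out with the clean exponent $N_{d,n}-n-1$ rather than $N_{d,n}-n-2$; it is this extra free coordinate, together with the $\frac1{2}p^e$ witnesses from scaling, that produces the saving factor $p^{-e}$. Once the base case is in place, the descent is essentially bookkeeping, the only subtle point being that reduction modulo $p^e$ does not destroy the gradient condition because $v_p(\nabla f_{\mathbf{a}}(\mathbf{x}))=e$ forces $\nabla f_{\mathbf{a}}(\mathbf{x}) \equiv \boldsymbol{0} \bmod{p^e}$ exactly.
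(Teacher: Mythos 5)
Your proposal is correct and follows essentially the same strategy as the paper: the scaling trick producing at least $p^{e-1}(p-1)\geq p^e/2$ witnesses per coefficient vector, combined with the linear-algebra count over the distinguished coefficients $a_{X_{i_0}^{d-1}X_j}$ for a fixed witness $\mathbf{x}$. The only (harmless) differences are organizational — the paper counts $\mathbf{a}$ modulo $p^r$ directly rather than reducing to the case $r=e$ and lifting through fibres of size $p^{(r-e)N_{d,n}}$, and it simply drops the condition on $\partial_{X_{i_0}}f_{\mathbf{a}}(\mathbf{x})$ rather than noting via Euler's identity that it is redundant.
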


\begin{proof}
For $\mathbf{a} \in \mathfrak{R}_{N_{d,n}}(p^r)$, we let
\begin{equation*}
\mathfrak{D}_{\mathbf{a}}(p^e) = \left\{ \mathbf{x} \in \mathfrak{R}_{n+1}(p^e) :
\begin{array}{l l}
f_{\mathbf{a}}(\mathbf{x}) \equiv 0 \bmod{p^e} \\
\nabla f_{\mathbf{a}}(\mathbf{x}) \equiv \boldsymbol{0} \bmod{p^e}
\end{array}
\right\},
\end{equation*}
and we observe that
\begin{equation*}
\# \mathfrak{R}_{N_{d,n}}^{(e)}(p^r) \leq
\# \left\{ \mathbf{a} \in \mathfrak{R}_{N_{d,n}}(p^r) : \mathfrak{D}_{\mathbf{a}}(p^e) \neq \emptyset \right\}.
\end{equation*}
Given $\mathbf{x} \in \mathfrak{D}_{\mathbf{a}}(p^e)$, we see that for any $g \in \mathbb{Z}/p^e\mathbb{Z}$ satisfying
$p \nmid g$ we have $g \mathbf{x} \in \mathfrak{D}_{\mathbf{a}}(p^e)$. Moreover these $p^{e-1}(p-1)$ vectors are distinct. In this way we see that
\begin{equation*}
\# \mathfrak{R}_{N_{d,n}}^{(e)}(p^r) \leq
\sum_{\mathbf{a} \in \mathfrak{R}_{N_{d,n}}(p^r)} \frac{\# \mathfrak{D}_{\mathbf{a}}(p^e)}{p^{e-1}(p-1)}.
\end{equation*}
We deduce that
\begin{equation}
\label{Upper bound after inversion}
\# \mathfrak{R}_{N_{d,n}}^{(e)}(p^r) \leq \frac{2}{p^e} \sum_{\mathbf{x} \in \mathfrak{R}_{n+1}(p^e)}
\# \left\{ \mathbf{a} \in \mathfrak{R}_{N_{d,n}}(p^r) :
\begin{array}{l l}
f_{\mathbf{a}}(\mathbf{x}) \equiv 0 \bmod{p^e} \\
\nabla f_{\mathbf{a}}(\mathbf{x}) \equiv \boldsymbol{0} \bmod{p^e}
\end{array}
\right\}.
\end{equation}
Given $\mathbf{x} \in \mathfrak{R}_{n+1}(p^e)$ we assume without loss of generality that $p \nmid x_0$. For
$\mathbf{a} \in \mathfrak{R}_{N_{d,n}}(p^r)$ and $i \in \{0, \dots, n \}$, we let
$c_{\mathbf{a}}^{(i)} \in \mathbb{Z}/p^r\mathbb{Z}$ be the coordinate of $\mathbf{a}$ corresponding to the monomial $x_0^{d-1} x_i$ and we set
\begin{equation}
\label{Definition ga}
g_{\mathbf{a}}(\mathbf{x}) = f_{\mathbf{a}}(\mathbf{x}) - x_0^{d-1}
\left( c_{\mathbf{a}}^{(0)} x_0 + c_{\mathbf{a}}^{(1)} x_1 + \dots + c_{\mathbf{a}}^{(n)} x_n \right).
\end{equation}
Therefore we have
\begin{equation}
\label{Equality nabla fa}
\nabla f_{\mathbf{a}}(\mathbf{x}) =
\begin{pmatrix}
x_0^{d-2} \left( d c_{\mathbf{a}}^{(0)} x_0 + (d-1) \left( c_{\mathbf{a}}^{(1)} x_1 + \dots + c_{\mathbf{a}}^{(n)} x_n \right) \right) \\
c_{\mathbf{a}}^{(1)} x_0^{d-1} \\
\vdots \\
c_{\mathbf{a}}^{(n)} x_0^{d-1}
\end{pmatrix}
+ \nabla g_{\mathbf{a}}(\mathbf{x}).
\end{equation}
Setting $\mathbf{c}_{\mathbf{a}} = \left( c_{\mathbf{a}}^{(0)}, \dots, c_{\mathbf{a}}^{(n)} \right)$ and estimating first the number of $c_{\mathbf{a}}^{(0)} \in \mathbb{Z}/p^r\mathbb{Z}$, we deduce from the assumption $p \nmid x_0$ that the cardinality of the set
\begin{equation*}
\left\{ \mathbf{c}_{\mathbf{a}} \in (\mathbb{Z}/p^r\mathbb{Z})^{n+1} :
\begin{array}{l l}
c_{\mathbf{a}}^{(0)} x_0^d \equiv - x_0^{d-1} \left( c_{\mathbf{a}}^{(1)} x_1 + \dots + c_{\mathbf{a}}^{(n)} x_n \right) -g_{\mathbf{a}}(\mathbf{x}) \bmod{p^e} \\
c_{\mathbf{a}}^{(i)} x_0^{d-1} \equiv - \dfrac{\partial g_{\mathbf{a}}}{\partial x_i}(\mathbf{x}) \bmod{p^e},
\ i \in \{1, \dots, n\}
\end{array}
\right\}
\end{equation*}
is equal to $p^{(r-e)(n+1)}$. We thus derive
\begin{equation*}
\# \left\{ \mathbf{a} \in \mathfrak{R}_{N_{d,n}}(p^r) :
\begin{array}{l l}
f_{\mathbf{a}}(\mathbf{x}) \equiv 0 \bmod{p^e} \\
\nabla f_{\mathbf{a}}(\mathbf{x}) \equiv \boldsymbol{0} \bmod{p^e}
\end{array}
\right\} \leq p^{r(N_{d,n}-n-1)+(r-e)(n+1)}.
\end{equation*}
Recalling the upper bound \eqref{Upper bound after inversion}, we see that an application of the trivial inequality
$\# \mathfrak{R}_{n+1}(p^e) \leq p^{e(n+1)}$ completes the proof.
\end{proof}

We now establish a lower bound for the quantity $\sigma(\mathbf{a};p^r)$ for
$\mathbf{a} \in \mathfrak{R}_{N_{d,n}}^{(e)}(p^r)$.

\begin{lemma}
\label{Lemma lower bound sigma}
Let $d \geq 2$ and $n \geq 3$. Let also $p$ be a prime number and $r \geq 1$. For $e \in \{0, \dots, r\}$ and
$\mathbf{a} \in \mathfrak{R}_{N_{d,n}}^{(e)}(p^r)$, we have
\begin{equation*}
\sigma(\mathbf{a};p^r) \geq \frac1{p^{(e+1) n}}.
\end{equation*}
\end{lemma}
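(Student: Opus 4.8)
The plan is to exhibit, for a given $\mathbf{a} \in \mathfrak{R}_{N_{d,n}}^{(e)}(p^r)$, a large family of vectors $\mathbf{b} \in \mathfrak{R}_{n+1}(p^r)$ with $\mathbf{a} \in \Lambda_{\nu_{d,n}(\mathbf{b})}^{(p^r)}$, i.e. with $f_{\mathbf{a}}(\mathbf{b}) \equiv 0 \bmod p^r$, and then count this family to obtain the claimed lower bound on $\sigma(\mathbf{a};p^r)$. By definition of $\mathfrak{R}_{N_{d,n}}^{(e)}(p^r)$ in \eqref{Definition Re}, there exists $\mathbf{x} \in \mathfrak{R}_{n+1}(p^r)$ with $f_{\mathbf{a}}(\mathbf{x}) \equiv 0 \bmod p^r$ and $v_p(\nabla f_{\mathbf{a}}(\mathbf{x})) = e$. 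Write $\nabla f_{\mathbf{a}}(\mathbf{x}) = p^e \mathbf{g}$ with $\mathbf{g} \in (\mathbb{Z}/p^{r-e}\mathbb{Z})^{n+1}$ not identically zero modulo $p$; say the $i_0$-th coordinate $g_{i_0}$ of $\mathbf g$ is a unit mod $p$.

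First I would Taylor-expand $f_{\mathbf{a}}$ around $\mathbf{x}$: for any $\mathbf{h} \in \mathbb{Z}^{n+1}$,
\begin{equation*}
f_{\mathbf{a}}(\mathbf{x} + p^{r-1}\mathbf{h}) \equiv f_{\mathbf{a}}(\mathbf{x}) + p^{r-1} \langle \nabla f_{\mathbf{a}}(\mathbf{x}), \mathbf{h} \rangle \bmod{p^{2(r-1)}},
\end{equation*}
and since $2(r-1) \geq r$ for $r \geq 2$ (the case $r=1$ being a degenerate instance handled directly, or absorbed since then $e \in \{0,1\}$ and one perturbs modulo a suitable power) this is $\equiv p^{r-1}\langle \nabla f_{\mathbf{a}}(\mathbf{x}), \mathbf{h}\rangle = p^{r-1+e}\langle \mathbf{g}, \mathbf{h}\rangle \bmod p^r$. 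More carefully, to gain control I would perturb at level $p^{r-e-?}$: the right scaling is to consider $\mathbf{b} = \mathbf{x} + p^{k}\mathbf{h}$ with $k$ chosen so that the linear term $p^{k+e}\langle \mathbf{g},\mathbf{h}\rangle$ can be made to cancel $f_{\mathbf{a}}(\mathbf{x})$ (which is $\equiv 0 \bmod p^r$) while the quadratic and higher terms $p^{2k}(\cdots)$ are $\equiv 0 \bmod p^r$; since $f_{\mathbf a}(\mathbf x)\equiv 0$ we only need $p^{2k}\equiv 0\bmod p^r$, i.e. $2k\geq r$, and we may take $k = r-e-1$ after checking $2(r-e-1)\geq r$ when $e$ is small, and otherwise simply note that for $e$ close to $r$ there is nothing to gain and the bound $\sigma(\mathbf{a};p^r) \geq p^{-(e+1)n}$ is weak enough to follow from keeping the single solution $\mathbf{x}$ and its unit multiples. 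The cleanest route: for every $\mathbf{h}$ with $p \nmid h_{i_0}$ suitably constrained, one solves the single congruence $g_{i_0} h_{i_0} \equiv (\text{prescribed}) \bmod p$ to kill the $p^{r-1}$-coefficient of $f_{\mathbf a}$, giving a supply of at least $p^{(r-1)n} \cdot (\text{something})$ solutions $\mathbf{b}$; dividing by $p^{rn}$ as in \eqref{Definition sigma} yields a power of $p$ of exponent at least $-(e+1)n$. I would organize the bookkeeping so that each of the $n+1$ coordinates of $\mathbf h$ except the pivot $i_0$ ranges freely over a residue system of size $p$, the pivot being determined, and track exactly how many powers of $p$ are lost due to the valuation-$e$ obstruction — this accounts for the factor $e+1$ rather than $e$.

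The main obstacle I anticipate is the careful $p$-adic Hensel-type bookkeeping when $e$ is neither very small nor close to $r$: one must choose the perturbation level $k$ so that higher-order Taylor terms vanish modulo $p^r$ while the linear term is still solvable, and verify that the resulting count of valid $\mathbf b \in \mathfrak{R}_{n+1}(p^r)$ (in particular that they remain primitive, i.e. lie in $\mathfrak{R}_{n+1}(p^r)$, which is automatic since $\mathbf b \equiv \mathbf x \bmod p$) is at least $p^{rn - en}/p^n$. A clean way to avoid case analysis is to prove the uniform statement: the number of $\mathbf b \in (\mathbb{Z}/p^r\mathbb{Z})^{n+1}$ with $\mathbf b \equiv \mathbf x \bmod p^{\lceil r/2\rceil}$ — or better, with $\mathbf b$ in the appropriate coset so that the linear form $\langle \mathbf g,\cdot\rangle$ controls $f_{\mathbf a}$ — and $f_{\mathbf a}(\mathbf b)\equiv 0\bmod p^r$ is $\gg p^{rn-en}$, using that $\langle p^e\mathbf g, \cdot\rangle$ is surjective onto $p^e\mathbb{Z}/p^r\mathbb{Z}$ with fibres of size $p^{(r-e)n}\cdot p^{\text{(adjustment)}}$. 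Once this count is in hand, the lemma follows immediately from the definition \eqref{Definition sigma}, and I would close by simply plugging the count into $\sigma(\mathbf a;p^r) = p^{-rn}\#\{\mathbf b\in\mathfrak R_{n+1}(p^r): f_{\mathbf a}(\mathbf b)\equiv 0\bmod p^r\}$.
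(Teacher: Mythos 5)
Your starting point coincides with the paper's: take the witness $\mathbf{x}$ with $f_{\mathbf{a}}(\mathbf{x}) \equiv 0 \bmod{p^r}$ and $v_p(\nabla f_{\mathbf{a}}(\mathbf{x})) = e$, Taylor-expand around it, and count nearby solutions $\mathbf{b}$. But neither of the two concrete counting mechanisms you propose delivers the required bound, and together they do not cover all $(e,r)$. The lemma needs at least $p^{rn-(e+1)n}$ solutions $\mathbf{b} \bmod{p^r}$. Your perturbation $\mathbf{b} = \mathbf{x} + p^{k}\mathbf{h}$ with $k = r-e-1$ and $2k \geq r$ confines $\mathbf{b}$ to the single class $\mathbf{x} \bmod{p^{r-e-1}}$ and produces at most $p^{(e+1)(n+1)-1}$ solutions (the $\mathbf{h}$ range over $(\mathbb{Z}/p^{e+1}\mathbb{Z})^{n+1}$ subject to one condition mod $p$); the inequality $(e+1)(2n+1) \geq rn+1$ that this would require fails already for $e=0$ and $r \geq 3$ — for $e=0$, $n=3$, $r=4$ you supply $p^{3}$ solutions where $p^{9}$ are needed. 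The correct count in the class $\mathbf{b} \equiv \mathbf{x} \bmod{p^{e+1}}$ is of order $p^{rn-(e+1)n+e}$, and obtaining it requires \emph{iterated} lifting, not a single high-level perturbation: one must show that the normalized count of $\mathbf{b} \bmod{p^s}$ with $\mathbf{b} \equiv \mathbf{x} \bmod{p^{e+1}}$ and $f_{\mathbf{a}}(\mathbf{b}) \equiv 0 \bmod{p^s}$, divided by $p^{sn}$, is independent of $s$ once $s \geq 2e+2$. This is exactly the quantitative Hensel step the paper invokes (via \cite[Lemma~$3.3$]{MR3742196}) to reduce the modulus from $p^r$ to $p^{2e+1}$. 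You correctly flag this bookkeeping as the main obstacle, but the surjectivity statement you sketch for the linear form $\langle p^e\mathbf{g},\cdot\rangle$ does not by itself control $f_{\mathbf{a}}$, because the higher-order Taylor terms re-enter at every layer of the lift; the target $p^{(r-1)n}$ you name for $e=0$ is the right answer, but your mechanism does not produce it.

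Second, your fallback for $e$ close to $r$ — the solution $\mathbf{x}$ and its unit multiples — gives only $p^{r-1}(p-1)$ solutions, which exceeds $p^{(r-e-1)n}$ only when roughly $e \geq r(1-1/n)-1$. Combined with the constraint $e \leq r/2-1$ from your first mechanism, this leaves the whole range $r/2 - 1 < e < r(1-1/n)-1$ uncovered; already $n=3$, $r=3$, $e=1$ falls through both cases. The paper's dichotomy is at $e = \lfloor r/2 \rfloor$: for $e \geq \lfloor r/2\rfloor$, every $\mathbf{b} \equiv \mathbf{x} \bmod{p^{e+1}}$ is automatically a solution, since the linear term of the Taylor expansion then has valuation at least $2e+1 \geq r$ and no congruence needs to be solved at all, yielding $p^{(r-e-1)(n+1)}$ solutions, which suffices; for $e < \lfloor r/2\rfloor$ the Hensel reduction to modulus $p^{2e+1}$ brings you back to that case. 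You should adopt this split and supply the Hensel stabilization for small $e$; as written, the proposal has a genuine quantitative gap.
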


\begin{proof}
Since by assumption $\mathbf{a} \in \mathfrak{R}_{N_{d,n}}^{(e)}(p^r)$ we may select
$\mathbf{x} \in \mathfrak{R}_{n+1}(p^r)$ satisfying the conditions $f_{\mathbf{a}}(\mathbf{x}) \equiv 0 \bmod{p^r}$ and
$v_p(\nabla f_{\mathbf{a}}(\mathbf{x}))=e$. In the case where $e=r$ the existence of $\mathbf{x}$ implies that
$\sigma(\mathbf{a};p^r) \geq 1/p^{rn}$, and the desired lower bound follows. We thus assume that
$e \in \{0, \dots, r-1 \}$ and we note that we trivially have
\begin{equation*}
\sigma(\mathbf{a};p^r) \geq \frac{\# \mathfrak{C}_{\mathbf{a},\mathbf{x}}(p^{e+1};p^r)}{p^{rn}},
\end{equation*}
where, for $c \in \{1, \dots, r \}$, we have set
\begin{equation*}
\mathfrak{C}_{\mathbf{a},\mathbf{x}}(p^c;p^r) = \left\{ \mathbf{b} \in \mathfrak{R}_{n+1}(p^r) :
\begin{array}{l l}
\mathbf{b} \equiv \mathbf{x} \bmod{p^c} \\
f_{\mathbf{a}}(\mathbf{b}) \equiv 0 \bmod{p^r}
\end{array}
\right\}.
\end{equation*}

We first handle the case where $e \in \{\lfloor r/2 \rfloor, \dots, r-1 \}$. For any
$\mathbf{b} \in \mathfrak{R}_{n+1}(p^r)$ such that $\mathbf{b} \equiv \mathbf{x} \bmod{p^{e+1}}$, we see that we have
\begin{equation*}
f_{\mathbf{a}}(\mathbf{b}) \equiv f_{\mathbf{a}}(\mathbf{x}) + \langle \nabla f_{\mathbf{a}}(\mathbf{x}), \mathbf{b}-\mathbf{x} \rangle \bmod{p^{2e+2}}.
\end{equation*}
Therefore, the conditions $f_{\mathbf{a}}(\mathbf{x}) \equiv 0 \bmod{p^r}$ and
$v_p(\nabla f_{\mathbf{a}}(\mathbf{x}))=e$ together with the facts that $\mathbf{b} \equiv \mathbf{x} \bmod{p^{e+1}}$ and $r \leq 2e+1$ imply that $f_{\mathbf{a}}(\mathbf{b}) \equiv 0 \bmod{p^{r}}$. It follows that
\begin{equation*}
\mathfrak{C}_{\mathbf{a},\mathbf{x}}(p^{e+1};p^r) = \left\{ \mathbf{b} \in \mathfrak{R}_{n+1}(p^r) :
\mathbf{b} \equiv \mathbf{x} \bmod{p^{e+1}} \right\}.
\end{equation*}
Hence we have $\# \mathfrak{C}_{\mathbf{a},\mathbf{x}}(p^{e+1};p^r) = p^{(r-e-1)(n+1)}$ and thus
\begin{equation*}
\sigma(\mathbf{a};p^r) \geq \frac{p^{r-e-1}}{p^{(e+1)n}},
\end{equation*}
which is satisfactory since $r-e-1 \geq 0$.

Finally, in the case where $e \in \{0, \dots, \lfloor r/2 \rfloor-1 \}$ we use Hensel's lemma (in the form of
\cite[Lemma~$3.3$]{MR3742196} with $\# \mathfrak{C}_{\mathbf{a},\mathbf{x}}(p^{e+1};p^s) = \# R_e(p^s,0;p^{e+1})$ for example) to deduce that for any $s \geq 2e+2$, we have
\begin{equation*}
\frac{\# \mathfrak{C}_{\mathbf{a},\mathbf{x}}(p^{e+1};p^s)}{p^{sn}} =
\frac{\# \mathfrak{C}_{\mathbf{a},\mathbf{x}}(p^{e+1};p^{s-1})}{p^{(s-1)n}}.
\end{equation*}
Applying this equality $r-(2e+1)$ times, we derive
\begin{equation*}
\sigma(\mathbf{a};p^r) \geq \frac{\# \mathfrak{C}_{\mathbf{a},\mathbf{x}}(p^{e+1};p^{2e+1})}{p^{(2e+1)n}},
\end{equation*}
which completes the proof since we have $\# \mathfrak{C}_{\mathbf{a},\mathbf{x}}(p^{e+1};p^{2e+1})=p^{e(n+1)}$ by the previous case.
\end{proof}

We now have the tools at hand to establish Proposition~\ref{Proposition non-Archimedean}.

\begin{proof}[Proof of Proposition~\ref{Proposition non-Archimedean}]
Recall the respective definitions \eqref{Definition singular series}, \eqref{Definition W} and \eqref{Definition w} of the non-Archimedean factor $\mathfrak{S}_V(B)$, the integer $W$ and the quantity $w$, where we take $B = A \phi(A)$. It is convenient to set
\begin{equation*}
\mathscr{F}_{\phi}(A) = \frac1{\# \mathbb{V}_{d,n}^{\mathrm{loc}}(A)} \cdot
\# \left\{V \in \mathbb{V}_{d,n}^{\mathrm{loc}}(A) : \mathfrak{S}_V(A\phi(A)) < \frac{C}{\phi(A)^{1/6}} \right\}.
\end{equation*}
Recall also the definition \eqref{Definition Floc} of the set $\mathbb{F}_{d,n}^{\mathrm{loc}}(Q)$ for given $Q \geq 1$. Breaking the summation over $\mathbf{a} \in \mathfrak{R}_{N_{d,n}}(W)$ into residue classes modulo $W$, we see that
\begin{equation*}
\mathscr{F}_{\phi}(A) = \frac1{\# \mathbb{V}_{d,n}^{\mathrm{loc}}(A)}
\sum_{\substack{\mathbf{a} \in \mathbb{F}_{d,n}^{\mathrm{loc}}(W) \\ \sigma(\mathbf{a};W) < C/\phi(A)^{1/6}}}
\# \left\{ V \in \mathbb{V}_{d,n}^{\mathrm{loc}}(A) : \mathbf{a}_V \equiv \mathbf{a} \bmod{W} \right\}.
\end{equation*}
The upper bound \eqref{Upper bound W} and the assumption $\phi(A) \leq A$ ensure that $W \ll A$. We thus have
\begin{equation*}
\# \left\{ V \in \mathbb{V}_{d,n}^{\mathrm{loc}}(A) : \mathbf{a}_V\equiv \mathbf{a}\bmod{W} \right\} \ll
\left( \frac{A}{W} \right)^{N_{d,n}}.
\end{equation*}
Since the lower bound \eqref{Lower bound loc} implies in particular that
$\#\mathbb{V}_{d,n}^{\mathrm{loc}}(A) \gg A^{N_{d,n}}$, we deduce
\begin{equation*}
\mathscr{F}_{\phi}(A) \ll \frac1{W^{N_{d,n}}} \cdot
\# \left\{ \mathbf{a} \in \mathbb{F}_{d,n}^{\mathrm{loc}}(W) : \sigma(\mathbf{a};W) < \frac{C}{\phi(A)^{1/6}} \right\}.
\end{equation*}

In addition, we remark that it follows from the equality \eqref{Equality multiplicativity} that $\sigma(\mathbf{a};W) > 0$ whenever $\mathbf{a} \in \mathbb{F}_{d,n}^{\mathrm{loc}}(W)$. We now let $\kappa \in (0,1/n)$ and we use the standard trick
\begin{equation*}
\#\left\{ \mathbf{a} \in \mathbb{F}_{d,n}^{\mathrm{loc}}(W) : \sigma(\mathbf{a};W) < \frac{C}{\phi(A)^{1/6}} \right\} \leq
\sum_{\mathbf{a} \in \mathbb{F}_{d,n}^{\mathrm{loc}}(W)} \left(\frac{C}{\phi(A)^{1/6} \sigma(\mathbf{a};W)}\right)^{\kappa}.
\end{equation*}
Therefore, we deduce from the equality \eqref{Equality multiplicativity} that
\begin{equation*}
\mathscr{F}_{\phi}(A) \ll \frac1{W^{N_{d,n}} \phi(A)^{\kappa/6}}
\sum_{\mathbf{a} \in \mathbb{F}_{d,n}^{\mathrm{loc}}(W)} \ \prod_{p^r \| W} \frac1{\sigma(\mathbf{a};p^r)^{\kappa}}.
\end{equation*}
As a result, an application of the Chinese remainder theorem yields
\begin{equation*}
\mathscr{F}_{\phi}(A) \ll \frac1{W^{N_{d,n}} \phi(A)^{\kappa/6}} \ \prod_{p^r \| W} \ 
\sum_{\mathbf{a} \in \mathbb{F}_{d,n}^{\mathrm{loc}}(p^r)} \frac1{\sigma(\mathbf{a};p^r)^{\kappa}}.
\end{equation*}
In order to estimate the sum over $\mathbf{a} \in \mathbb{F}_{d,n}^{\mathrm{loc}}(p^r)$ we need to argue differently depending on whether or not $\sigma(\mathbf{a};p^r)$ is particularly small. We thus let
\begin{equation*}
\Sigma_{>}^{(\kappa)}(p^r) =
\sum_{\substack{\mathbf{a} \in \mathbb{F}_{d,n}^{\mathrm{loc}}(p^r) \\ \sigma(\mathbf{a};p^r) > 1/2}} \frac1{\sigma(\mathbf{a};p^r)^{\kappa}},
\end{equation*}
and
\begin{equation*}
\Sigma_{\leq}^{(\kappa)}(p^r) =
\sum_{\substack{\mathbf{a} \in \mathbb{F}_{d,n}^{\mathrm{loc}}(p^r) \\ \sigma(\mathbf{a};p^r) \leq 1/2}} \frac1{\sigma(\mathbf{a};p^r)^{\kappa}},
\end{equation*}
so that
\begin{equation}
\label{Upper bound separation cases}
\mathscr{F}_{\phi}(A) \ll \frac1{W^{N_{d,n}} \phi(A)^{\kappa/6}} \ \prod_{p^r \| W}
\left( \Sigma_{>}^{(\kappa)}(p^r) + \Sigma_{\leq}^{(\kappa)}(p^r) \right).
\end{equation}

We start by handling the sum $\Sigma_{>}^{(\kappa)}(p^r)$. In order to do so, we employ the estimate 
\begin{equation*}
\frac1{\sigma(\mathbf{a};p^r)^{\kappa}} =
1 - \kappa \left( \sigma(\mathbf{a};p^r) - 1 \right) + O \left( \left( \sigma(\mathbf{a};p^r) -1 \right)^2 \right),
\end{equation*}
where the implied constant depends at most on $\kappa$. We obtain
\begin{align*}
\Sigma_{>}^{(\kappa)}(p^r) & =
\sum_{\substack{\mathbf{a} \in \mathbb{F}_{d,n}^{\mathrm{loc}}(p^r) \\ \sigma(\mathbf{a};p^r) > 1/2}} 1 -
\kappa \sum_{\substack{\mathbf{a} \in \mathbb{F}_{d,n}^{\mathrm{loc}}(p^r) \\ \sigma(\mathbf{a};p^r) > 1/2}}
 \left( \sigma(\mathbf{a};p^r) - 1 \right) +
O \left( \sum_{\mathbf{a} \in \mathfrak{R}_{N_{d,n}}(p^r)} \left( \sigma(\mathbf{a};p^r) - 1 \right)^2 \right) \\
& \leq \sum_{\mathbf{a} \in \mathfrak{R}_{N_{d,n}}(p^r)} 1 -
\kappa \sum_{\mathbf{a} \in \mathfrak{R}_{N_{d,n}}(p^r)} \left( \sigma(\mathbf{a};p^r) - 1 \right) +
O \left( \sum_{\mathbf{a} \in \mathfrak{R}_{N_{d,n}}(p^r)} \left( \sigma(\mathbf{a};p^r) - 1 \right)^2 \right).
\end{align*}
On appealing to Lemmas \ref{Lemma first moment sigma} and \ref{Lemma second moment sigma} and to the equality \eqref{Equality cardinality B}, we therefore conclude that 
\begin{equation}
\label{Upper bound Sigma geq}
\Sigma_{>}^{(\kappa)}(p^r) \leq p^{r N_{d,n}} \left( 1 + O \left( \frac1{p^{n-1}} \right) \right).
\end{equation}

We now consider the sum $\Sigma_{\leq}^{(\kappa)}(p^r)$. Recall the definition \eqref{Definition Re} of the set
$\mathfrak{R}_{N_{d,n}}^{(e)}(p^r)$ for given $e \in \{0, \dots, r \}$. It follows from the equality
\eqref{Partition non-Archimedean} that
\begin{equation}
\label{Definition Skappa}
\Sigma_{\leq}^{(\kappa)}(p^r) \leq \sum_{e=0}^r S^{(\kappa)}(e;p^r),
\end{equation}
where, for $e \in \{0, \dots, r\}$, we have set
\begin{equation*}
S^{(\kappa)}(e;p^r) = \sum_{\substack{\mathbf{a} \in \mathfrak{R}_{N_{d,n}}^{(e)}(p^r) \\ \sigma(\mathbf{a};p^r) \leq 1/2}} \frac1{\sigma(\mathbf{a};p^r)^{\kappa}}.
\end{equation*}

We first handle the case where $e \in \{0, 1 \}$. Applying Lemma~\ref{Lemma lower bound sigma}, we see that
\begin{equation*}
S^{(\kappa)}(e;p^r) \leq p^{\kappa(e+1)n}
\sum_{\substack{\mathbf{a} \in \mathfrak{R}_{N_{d,n}}(p^r) \\ \sigma(\mathbf{a};p^r) \leq 1/2}} 1.
\end{equation*}
Using the fact that $1 \leq 4 (\sigma(\mathbf{a};p^r) - 1)^2$ whenever $\sigma(\mathbf{a};p^r) \leq 1/2$, we get
\begin{equation*}
S^{(\kappa)}(e;p^r) \leq 4 p^{\kappa(e+1)n}
\sum_{\mathbf{a} \in \mathfrak{R}_{N_{d,n}}(p^r)} \left(\sigma(\mathbf{a};p^r) - 1 \right)^2.
\end{equation*}
Therefore, Lemma~\ref{Lemma second moment sigma} gives
\begin{equation*}
S^{(\kappa)}(e;p^r) \ll p^{rN_{d,n}-n+1+\kappa (e+1) n},
\end{equation*}
from which it eventually follows that
\begin{equation}
\label{Upper bound Skappa 1}
S^{(\kappa)}(0;p^r) + S^{(\kappa)}(1;p^r) \ll p^{rN_{d,n} -n+1 +2 \kappa n}.
\end{equation}

We now treat the case where $e \in \{2, \dots, r\}$. Dropping the condition $\sigma(\mathbf{a};p^r) \leq 1/2$ and applying Lemma~\ref{Lemma lower bound sigma} we obtain
\begin{equation*}
S^{(\kappa)}(e;p^r) \leq p^{\kappa (e+1) n} \cdot \# \mathfrak{R}_{N_{d,n}}^{(e)}(p^r). 
\end{equation*}
Since $\kappa < 1/n$, we deduce from Lemma~\ref{Lemma upper bound Re} that
\begin{equation}
\label{Upper bound Skappa 2}
\sum_{e=2}^r S^{(\kappa)}(e;p^r) \ll p^{rN_{d,n}-2+3\kappa n}.
\end{equation}

Combining the inequality \eqref{Definition Skappa} and the upper bounds \eqref{Upper bound Skappa 1} and \eqref{Upper bound Skappa 2}, we see that
\begin{equation*}
\Sigma_{\leq}^{(\kappa)}(p^r) \ll p^{rN_{d,n}} \left( \frac1{p^{n-1-2 \kappa n}} + \frac1{p^{2-3\kappa n}} \right).
\end{equation*}
Moreover, we have $n \geq 3$ by assumption so the choice $\kappa = 1/4n$ yields
\begin{equation}
\label{Upper bound Sigma leq}
\Sigma_{\leq}^{(\kappa)}(p^r) \ll p^{rN_{d,n} - 5/4}.
\end{equation}

Putting together the upper bounds \eqref{Upper bound separation cases}, \eqref{Upper bound Sigma geq} and
\eqref{Upper bound Sigma leq}, we see that we have proved that
\begin{equation*}
\mathscr{F}_{\phi}(A) \ll \frac1{\phi(A)^{1/24n}} \ \prod_{p^r \| W} \left( 1 + O \left( \frac1{p^{5/4}} \right) \right).
\end{equation*}
The product in the right-hand side is convergent so we see that this completes the proof of
Proposition~\ref{Proposition non-Archimedean}.
\end{proof}

\subsection{The Archimedean factor is rarely small}

\label{Section Archimedean}

Our goal in this section is to prove Proposition~\ref{Proposition Archimedean}. We shall follow the traces of our argument in the non-Archimedean setting. To begin with, we recall that the purpose of
Lemmas~\ref{Lemma first moment sigma} and \ref{Lemma second moment sigma} was to allow us to handle several non-Archimedean places simultaneously, which is of course irrelevant here so we will not need analogues of these results. However, we will establish direct analogues of Lemmas~\ref{Lemma upper bound Re} and
\ref{Lemma lower bound sigma}, and we will also need some preparatory work in order to apply these results.

For any vector $\mathbf{a} \in \mathbb{R}^{N_{d,n}}$ we let $f_{\mathbf{a}}$ denote the form of degree $d$ in $n+1$ variables which has coefficient vector $\mathbf{a}$. The set of non-zero vectors $\mathbf{a} \in \mathbb{R}^{N_{d,n}}$ such that the form $f_{\mathbf{a}}$ has a non-trivial real point will be of primary importance in the proof of
Proposition~\ref{Proposition Archimedean}. We thus define the set
\begin{equation}
\label{Definition Iloc}
\mathbb{I}_{d,n}^{\mathrm{loc}} =
\left\{ \mathbf{a} \in \mathbb{R}^{N_{d,n}} \smallsetminus \{ \boldsymbol{0} \} :
\exists \mathbf{x} \in \mathbb{S}^n \ f_{\mathbf{a}}(\mathbf{x}) = 0 \right\},
\end{equation}
where, for given $N \geq 1$, we have introduced the $N$-dimensional hypersphere
\begin{equation*}
\mathbb{S}^N = \left\{ \mathbf{x} \in \mathbb{R}^{N+1} : ||\mathbf{x}|| = 1 \right\}.
\end{equation*}

Our first task will be to establish an upper bound for the number of integral vectors
$\mathbf{a} \in \mathbb{I}_{d,n}^{\mathrm{loc}}$ having norm at most $A$ and lying close to the boundary of the region
$\mathbb{I}_{d,n}^{\mathrm{loc}}$. In order to do so, for $\mathbf{a} \in \mathbb{Z}^{N_{d,n}}$ we define the neighbourhood
\begin{equation*}
\mathcal{N}(\mathbf{a}) = \left\{ \mathbf{y} \in \mathbb{R}^{N_{d,n}} :
\mathbf{y} - \mathbf{a} \in \mathcal{B}_{N_{d,n}}(1) \right\},
\end{equation*}
and we set
\begin{equation*}
\mathscr{U}_{d,n}(A) =
\left\{ \mathbf{a} \in \mathbb{Z}^{N_{d,n}} \cap \mathcal{B}_{N_{d,n}}(A) \cap \mathbb{I}_{d,n}^{\mathrm{loc}} : \mathcal{N}(\mathbf{a}) \not \subset \mathbb{I}_{d,n}^{\mathrm{loc}} \right\}.
\end{equation*}
Heuristically, given an integer $N \geq 1$ and a real hypersurface embedded in $\mathbb{R}^N$ it is natural to expect that the number of integral vectors of norm at most $A$ and whose distance to the hypersurface is at most $1$ should have order of magnitude $A^{N-1}$. The following result shows that these elementary heuristics apply in our setting.

\begin{lemma}
\label{Lemma upper bound boundary}
Let $d \geq 2$ and $n \geq 3$. We have
\begin{equation*}
\# \mathscr{U}_{d,n}(A) \ll A^{N_{d,n}-1}.
\end{equation*}
\end{lemma}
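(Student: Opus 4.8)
The plan is as follows. Write $N=N_{d,n}$. First I would show that, apart from the $O(1)$ vectors $\mathbf{a}$ with $||\mathbf{a}||\leq 1$, every $\mathbf{a}\in\mathscr{U}_{d,n}(A)$ lies within Euclidean distance $1$ of the set
\[
\Sigma=\left\{\mathbf{a}\in\mathbb{R}^{N}:\exists\,\mathbf{x}\in\mathbb{S}^n,\ \nabla f_{\mathbf{a}}(\mathbf{x})=\boldsymbol{0}\right\},
\]
and then I would bound the number of integral vectors in $\mathcal{B}_{N}(A)$ within distance $1$ of $\Sigma$ by a covering argument. Note that, by Euler's identity for the homogeneous form $f_{\mathbf{a}}$, the condition $\nabla f_{\mathbf{a}}(\mathbf{x})=\boldsymbol{0}$ already forces $f_{\mathbf{a}}(\mathbf{x})=0$ since $d\geq 2$.

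For the first step, the key point is that $\Sigma$ contains the boundary of $\mathbb{I}_{d,n}^{\mathrm{loc}}$. Indeed $\mathbb{I}_{d,n}^{\mathrm{loc}}\cup\{\boldsymbol{0}\}$ is closed, its complement in $\mathbb{R}^{N}\smallsetminus\{\boldsymbol{0}\}$ — the set of $\mathbf{a}$ for which $f_{\mathbf{a}}$ is definite on $\mathbb{S}^n$ — is open, and the condition that $f_{\mathbf{a}}$ be indefinite on the compact set $\mathbb{S}^n$ is likewise open. Hence a form $f_{\mathbf{a}'}$ with $\mathbf{a}'\in\partial\mathbb{I}_{d,n}^{\mathrm{loc}}$ is neither definite nor indefinite, so it is semidefinite with a real zero $\mathbf{x}'\in\mathbb{S}^n$; such a zero is a global extremum of $f_{\mathbf{a}'}$ and thus a critical point, whence $\nabla f_{\mathbf{a}'}(\mathbf{x}')=\boldsymbol{0}$, i.e.\ $\mathbf{a}'\in\Sigma$. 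Given $\mathbf{a}\in\mathscr{U}_{d,n}(A)$ with $||\mathbf{a}||\geq 2$, there is $\mathbf{y}\in\mathcal{N}(\mathbf{a})$ with $\mathbf{y}\notin\mathbb{I}_{d,n}^{\mathrm{loc}}$; the segment $[\mathbf{a},\mathbf{y}]$ lies in $\mathbb{R}^{N}\smallsetminus\{\boldsymbol{0}\}$, joins the closed set $\mathbb{I}_{d,n}^{\mathrm{loc}}$ to its open complement, and therefore meets $\partial\mathbb{I}_{d,n}^{\mathrm{loc}}\subset\Sigma$ at a point at distance at most $1$ from $\mathbf{a}$.

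For the second step, I would use that for fixed $\mathbf{x}\neq\boldsymbol{0}$ the linear map $L_{\mathbf{x}}:\mathbb{R}^{N}\to\mathbb{R}^{n+1}$, $\mathbf{a}\mapsto\nabla f_{\mathbf{a}}(\mathbf{x})$, is surjective — if $x_i\neq0$ the gradients at $\mathbf{x}$ of the forms $x_i^{d}$ and $x_i^{d-1}x_j$ for $j\neq i$ span $\mathbb{R}^{n+1}$ — so that $W_{\mathbf{x}}:=\ker L_{\mathbf{x}}$ has constant dimension $N-n-1$, and the orthogonal projection onto $W_{\mathbf{x}}$ depends smoothly on $\mathbf{x}\in\mathbb{S}^n$. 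Covering $\mathbb{S}^n$ by $O(1)$ charts and trivialising, one obtains smoothly varying linear isometries $P_{\mathbf{x}}:\mathbb{R}^{N-n-1}\to W_{\mathbf{x}}$, and then $\Sigma\cap\mathcal{B}_{N}(A)=\bigcup_{\mathbf{x}\in\mathbb{S}^n}\bigl(W_{\mathbf{x}}\cap\mathcal{B}_{N}(A)\bigr)$ is the image of $\{(\mathbf{x},\mathbf{t}):\mathbf{x}\in\mathbb{S}^n,\ ||\mathbf{t}||\leq A\}$ under $(\mathbf{x},\mathbf{t})\mapsto P_{\mathbf{x}}(\mathbf{t})$, a map whose partial derivatives satisfy $||\partial_{\mathbf{t}}||\ll1$ and $||\partial_{\mathbf{x}}||\ll A$. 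Partitioning each chart into $\ll A^{n}$ cubes of side $\asymp1/A$ and $\mathcal{B}_{N-n-1}(A)$ into $\ll A^{N-n-1}$ unit cubes produces $\ll A^{N-1}$ product pieces, each mapped into a set of diameter $\ll1$; consequently the unit neighbourhood of $\Sigma\cap\mathcal{B}_{N}(A)$ is covered by $\ll A^{N-1}$ balls of bounded radius, each containing $\ll1$ integral points, and $\#\mathscr{U}_{d,n}(A)\ll A^{N_{d,n}-1}$.

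I expect the main obstacle to be the first step: making precise that $\partial\mathbb{I}_{d,n}^{\mathrm{loc}}$ is captured by the singular-real-zero locus $\Sigma$, and that $\Sigma$ has codimension at least one with a parametrisation by $\mathbb{S}^n$ whose geometry is controlled uniformly in $A$ (so that the covering count is genuinely of order $A^{N-1}$ and not $A^{N}$); the lattice point estimate itself is then a routine Lipschitz-covering argument. As an alternative to the explicit parametrisation one could observe that $\Sigma$ is semialgebraic of dimension $N_{d,n}-1$, cut out by equations of degree bounded independently of $A$, and invoke a standard bound for integral points near such a set, but the fibration over $\mathbb{S}^n$ seems the most transparent route.
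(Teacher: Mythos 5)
Your argument is correct, and while its first half runs parallel to the paper's, the counting step is genuinely different. For the reduction, the paper also moves along the segment from $\mathbf{a}$ towards a point $\mathbf{b}\notin\mathbb{I}_{d,n}^{\mathrm{loc}}$ and shows, by an explicit quadratic perturbation together with the intermediate value theorem, that the last point $\mathbf{c}$ of the segment lying in $\mathbb{I}_{d,n}^{\mathrm{loc}}$ has only singular real zeros; your topological version (a boundary point of $\mathbb{I}_{d,n}^{\mathrm{loc}}$ is semidefinite with a zero on $\mathbb{S}^n$, and such a zero is critical because Euler's identity kills the Lagrange multiplier) reaches the same conclusion more cleanly, and the only degenerate case, $d$ odd, is vacuous since then $\mathbb{I}_{d,n}^{\mathrm{loc}}=\mathbb{R}^{N_{d,n}}\smallsetminus\{\boldsymbol{0}\}$. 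Where you diverge is in bounding the integral points near the singular locus. The paper thickens the critical zero $\mathbf{x}$ into a ball of radius $1/A$, converts the count into an integral over a spherical shell of volume $\ll 1/A$, and for each fixed $\mathbf{y}$ in the shell counts the $\mathbf{a}$ with $||\nabla f_{\mathbf{a}}(\mathbf{y})||\ll 1$ by isolating the $n+1$ coefficients of the monomials $x_0^{d-1}x_i$, giving $A^{n+1}\cdot A^{-1}\cdot A^{N_{d,n}-n-1}$. You instead exhibit the locus as the total space of the rank-$(N_{d,n}-n-1)$ kernel bundle of $\mathbf{a}\mapsto\nabla f_{\mathbf{a}}(\mathbf{x})$ over $\mathbb{S}^n$ and run an anisotropic Lipschitz covering, with boxes of side $\asymp 1/A$ in the base and side $1$ in the fibre. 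Both proofs hinge on the same surjectivity of $L_{\mathbf{x}}$, which you verify correctly; yours makes the provenance of the exponent $N_{d,n}-1=n+(N_{d,n}-n-1)$ more transparent, at the modest cost of justifying a uniform local trivialisation of the kernel bundle (harmless, by constancy of the rank and compactness of $\mathbb{S}^n$), whereas the paper's shell argument stays entirely within elementary volume and lattice-point estimates.
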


\begin{proof}
Given $\mathbf{a} \in \mathscr{U}_{d,n}(A)$ we let
$\mathbf{b} \in \mathcal{N}(\mathbf{a}) \smallsetminus \mathbb{I}_{d,n}^{\mathrm{loc}}$ and we define
\begin{equation*}
M_{\mathbf{a}} =
\max \left\{ t \in (0,1] : \mathbf{a} + t (\mathbf{b}- \mathbf{a}) \in \mathbb{I}_{d,n}^{\mathrm{loc}} \right\}.
\end{equation*}
We also set $\mathbf{c} = \mathbf{a} + M_{\mathbf{a}} (\mathbf{b}- \mathbf{a})$ and we check that for any
$\mathbf{x} \in \mathbb{S}^n$ satisfying $f_{\mathbf{c}}(\mathbf{x}) = 0$ we have
$\nabla f_{\mathbf{c}}(\mathbf{x}) = \boldsymbol{0}$. Indeed, for $\rho \in (0,1/A^2)$ and
$\mathbf{y} \in \mathcal{B}_{n+1}(\rho)$ we have
\begin{align*}
f_{\mathbf{c} + \rho^2 (\mathbf{b}-\mathbf{a})}(\mathbf{x} + \mathbf{y}) & =
f_{\mathbf{c} + \rho^2 (\mathbf{b}-\mathbf{a})}(\mathbf{x}) +
\langle \nabla f_{\mathbf{c} + \rho^2 (\mathbf{b}-\mathbf{a})}(\mathbf{x}), \mathbf{y} \rangle +
O(A \rho^2) \\
& = f_{\mathbf{c}}(\mathbf{x}) + \langle \nabla f_{\mathbf{c}}(\mathbf{x}), \mathbf{y} \rangle + O(\rho^{3/2}) \\
& = \langle \nabla f_{\mathbf{c}}(\mathbf{x}), \mathbf{y} \rangle + O(\rho^{3/2}).
\end{align*}
Let us assume that $\nabla f_{\mathbf{c}}(\mathbf{x}) \neq \boldsymbol{0}$ and let $\mathbf{y}_0 \in \mathbb{S}^n$ satisfying $\langle \nabla f_{\mathbf{c}}(\mathbf{x}), \mathbf{y} \rangle \neq 0$. For $|u| \leq \rho$ we thus have
\begin{equation*}
f_{\mathbf{c} + \rho^2 (\mathbf{b}-\mathbf{a})}(\mathbf{x} + u \mathbf{y}_0) =
u \cdot \langle \nabla f_{\mathbf{c}}(\mathbf{x}), \mathbf{y}_0 \rangle + O(\rho^{3/2}).
\end{equation*}
We now see that if $\rho$ is chosen sufficiently small then the intermediate value theorem shows that there exists
$u_0 \in \mathbb{R}$ such that $f_{\mathbf{c} + \rho^2 (\mathbf{b}-\mathbf{a})}(\mathbf{x} + u_0 \mathbf{y}_0) = 0$, which contradicts the maximality of $M_{\mathbf{a}}$. We have thus proved that
\begin{equation*}
\# \mathscr{U}_{d,n}(A) \ll
\# \left\{ \mathbf{a} \in \mathbb{Z}^{N_{d,n}} \cap \mathcal{B}_{N_{d,n}}(A) :
\exists \mathbf{c} \in \mathcal{N}(\mathbf{a}) \ \exists \mathbf{x} \in \mathbb{S}^n \ \nabla f_{\mathbf{c}}(\mathbf{x})=\boldsymbol{0} \right\}.
\end{equation*}

Next, we note that given $\mathbf{a} \in \mathcal{B}_{N_{d,n}}(A)$, if $\mathbf{c} \in \mathcal{N}(\mathbf{a})$ and
$\mathbf{x} \in \mathbb{S}^n$ satisfy $\nabla f_{\mathbf{c}}(\mathbf{x})=\boldsymbol{0}$ then for any
$\mathbf{y} \in \mathbb{R}^{n+1}$ such that $||\mathbf{y}-\mathbf{x}|| \leq 1/A$ we have
$||\nabla f_{\mathbf{a}}(\mathbf{y})|| \ll 1$. Indeed, the triangle inequality gives
\begin{align*}
||\nabla f_{\mathbf{a}}(\mathbf{y})|| & \leq ||\nabla f_{\mathbf{a}-\mathbf{c}}(\mathbf{y})|| +
||\nabla f_{\mathbf{c}}(\mathbf{y}) - \nabla f_{\mathbf{c}}(\mathbf{x})|| + ||\nabla f_{\mathbf{c}}(\mathbf{x})||\\
& \ll ||\mathbf{a}-\mathbf{c}|| \cdot ||\mathbf{y}||^{d-1} +
||\mathbf{c}|| \cdot ||\mathbf{y}-\mathbf{x}|| \cdot \max \left\{ ||\mathbf{x}||, ||\mathbf{y}|| \right\}^{d-2}.
\end{align*}
We thus get $||\nabla f_{\mathbf{a}}(\mathbf{y})|| \ll 1$ as wished. Since we have in addition
\begin{equation*} 
\vol \left( \left\{ \mathbf{y} \in \mathbb{R}^{n+1} : ||\mathbf{y} - \mathbf{x}|| \leq \frac1{A} \right\} \right) \gg
\frac1{A^{n+1}},
\end{equation*}
it follows that
\begin{align}
\nonumber
\# \mathscr{U}_{d,n}(A) & \ll A^{n+1} \sum_{\mathbf{a} \in \mathbb{Z}^{N_{d,n}} \cap \mathcal{B}_{N_{d,n}}(A)}
\vol \left( \left\{ \mathbf{y} \in \mathbb{R}^{n+1} :
\begin{array}{l l}
1-1/A \leq ||\mathbf{y}|| \leq 1+ 1/A \\
||\nabla f_{\mathbf{a}}(\mathbf{y})|| \ll 1
\end{array}
\right\} \right) \\
\label{Upper bound shell}
& \ll A^{n+1} \int_{\mathcal{H}_{n+1}(A)} \# \left\{ \mathbf{a} \in \mathbb{Z}^{N_{d,n}} \cap \mathcal{B}_{N_{d,n}}(A) :
||\nabla f_{\mathbf{a}}(\mathbf{y})|| \ll 1 \right\} \mathrm{d} \mathbf{y},
\end{align}
where we have introduced the hyperspherical shell
\begin{equation*}
\mathcal{H}_{n+1}(A) = \mathcal{B}_{n+1} \left(1+\frac1{A} \right) \smallsetminus \mathcal{B}_{n+1} \left(1-\frac1{A} \right).
\end{equation*}
Given $\mathbf{y} \in \mathcal{H}_{n+1}(A)$ we may clearly assume without loss of generality that $|y_0| \geq 1/n$. For
$\mathbf{a} \in \mathbb{Z}^{N_{d,n}} \cap \mathcal{B}_{N_{d,n}}(A)$ and $i \in \{0, \dots, n \}$, we let
$c_{\mathbf{a}}^{(i)} \in \mathbb{Z}$ be the coordinate of $\mathbf{a}$ corresponding to the monomial $x_0^{d-1} x_i$. Recall the definition \eqref{Definition ga} of $g_{\mathbf{a}}$ and the equality \eqref{Equality nabla fa}. Setting
$\mathbf{c}_{\mathbf{a}} = \left( c_{\mathbf{a}}^{(0)}, \dots, c_{\mathbf{a}}^{(n)} \right)$ and estimating first the number of $c_{\mathbf{a}}^{(0)} \in \mathbb{Z}$, we deduce from the assumption $|y_0| \geq 1/n$ that
\begin{equation*}
\# \left\{ \mathbf{c}_{\mathbf{a}} \in \mathbb{Z}^{n+1} :
\begin{array}{l l}
y_0^{d-2} \left( d c_{\mathbf{a}}^{(0)} y_0 + (d-1) \left( c_{\mathbf{a}}^{(1)} y_1 + \dots + c_{\mathbf{a}}^{(n)} y_n \right) \right) + \dfrac{\partial g_{\mathbf{a}}}{\partial x_0}(\mathbf{y}) \ll 1 \\
c_{\mathbf{a}}^{(i)} y_0^{d-1} + \dfrac{\partial g_{\mathbf{a}}}{\partial x_i}(\mathbf{y}) \ll 1,
\ i \in \{1, \dots, n\}
\end{array} \!
\right\} \ll 1.
\end{equation*}
This yields 
\begin{equation*}
\# \left\{ \mathbf{a} \in \mathbb{Z}^{N_{d,n}} \cap \mathcal{B}_{N_{d,n}}(A) :
||\nabla f_{\mathbf{a}}(\mathbf{y})|| \ll 1 \right\} \ll A^{N_{d,n}-n-1}.
\end{equation*}
Recalling the upper bound \eqref{Upper bound shell} and noting that we clearly have
\begin{equation*}
\vol \left( \mathcal{H}_{n+1}(A) \right) \ll \frac1{A},
\end{equation*}
we see that this finishes the proof.
\end{proof}

For $\lambda > 0$, we introduce the set
\begin{equation}
\label{Definition Blambda}
\mathcal{B}_{N_{d,n}}^{(\lambda)} = \left\{ \mathbf{a} \in \mathcal{B}_{N_{d,n}}(1) :
\exists \mathbf{x} \in \mathbb{S}^n \
\begin{array}{l l}
f_{\mathbf{a}}(\mathbf{x}) = 0 \\
\lambda ||\mathbf{a}|| < ||\nabla f_{\mathbf{a}} (\mathbf{x})|| \leq 2 \lambda ||\mathbf{a}||
\end{array}
\right\}.
\end{equation}
It will be crucial for our purpose to note that
\begin{equation}
\label{Partition Archimedean}
\mathcal{B}_{N_{d,n}}(1) \cap \mathbb{I}_{d,n}^{\mathrm{loc}} =
\bigcup_{\ell=1}^{\infty} \mathcal{B}_{N_{d,n}}^{(M_{d,n}/2^{\ell})},
\end{equation}
where we have set
\begin{equation}
\label{Definition Mdn}
M_{d,n} =
\max \left\{ ||\nabla f_{\mathbf{a}} (\mathbf{x})|| : (\mathbf{a}, \mathbf{x}) \in \mathbb{S}^{N_{d,n}-1} \times \mathbb{S}^n \right\}.
\end{equation}

The following result is the Archimedean analogue of Lemma~\ref{Lemma upper bound Re} and gives an upper bound for the volume of the set $\mathcal{B}_{N_{d,n}}^{(\lambda)}$.

\begin{lemma}
\label{Lemma upper bound Blambda}
Let $d \geq 2$ and $n \geq 3$. For $\lambda \in (0, M_{d,n})$, we have
\begin{equation*}
\vol \left( \mathcal{B}_{N_{d,n}}^{(\lambda)} \right) \ll \lambda^2.
\end{equation*}
\end{lemma}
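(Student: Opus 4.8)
The plan is to fatten the conditions defining $\mathcal{B}_{N_{d,n}}^{(\lambda)}$, which are measure-zero for a single $\mathbf{x}$, by spreading them over a small spherical cap, and then to estimate the resulting quantity by Fubini. We may assume throughout that $\lambda$ is smaller than any fixed constant, since otherwise the claimed bound is trivial in view of $N_{d,n}\geq 3$.

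The first step is the \emph{spreading} observation. Suppose $\mathbf{a}\in\mathcal{B}_{N_{d,n}}^{(\lambda)}$ and let $\mathbf{x}_0\in\mathbb{S}^n$ be a witness point, so that $f_{\mathbf{a}}(\mathbf{x}_0)=0$ and $\|\nabla f_{\mathbf{a}}(\mathbf{x}_0)\|\leq 2\lambda\|\mathbf{a}\|\leq 2\lambda$. A first-order Taylor expansion of $f_{\mathbf{a}}$ about $\mathbf{x}_0$, together with the uniform bound on the Hessian of $f_{\mathbf{a}}$ valid for $\mathbf{a}\in\mathcal{B}_{N_{d,n}}(1)$ and $\mathbf{x}$ in a neighbourhood of $\mathbb{S}^n$, shows that for every $\mathbf{x}\in\mathbb{S}^n$ with $\|\mathbf{x}-\mathbf{x}_0\|\leq\lambda$ one has $|f_{\mathbf{a}}(\mathbf{x})|\ll\lambda^2$ and $\|\nabla f_{\mathbf{a}}(\mathbf{x})\|\ll\lambda$. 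Since the cap $\{\mathbf{x}\in\mathbb{S}^n:\|\mathbf{x}-\mathbf{x}_0\|\leq\lambda\}$ has $n$-dimensional measure $\gg\lambda^n$, there is a constant $C$ depending at most on $d$ and $n$ such that, for each $\mathbf{a}\in\mathcal{B}_{N_{d,n}}^{(\lambda)}$,
\[
\int_{\mathbb{S}^n}\mathbf{1}\big[\,|f_{\mathbf{a}}(\mathbf{x})|\leq C\lambda^2,\ \|\nabla f_{\mathbf{a}}(\mathbf{x})\|\leq C\lambda\,\big]\,\mathrm{d}\mathbf{x}\ \gg\ \lambda^{n}.
\]
Integrating over $\mathbf{a}\in\mathcal{B}_{N_{d,n}}(1)$ and applying Fubini then reduces the lemma to proving that for each fixed $\mathbf{x}\in\mathbb{S}^n$ the set of $\mathbf{a}\in\mathcal{B}_{N_{d,n}}(1)$ satisfying $|f_{\mathbf{a}}(\mathbf{x})|\leq C\lambda^2$ and $\|\nabla f_{\mathbf{a}}(\mathbf{x})\|\leq C\lambda$ has measure $\ll\lambda^{n+2}$.

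For this slice estimate I would argue much as in the proof of Lemma~\ref{Lemma upper bound Re}, by partitioning $\mathbb{S}^n$ according to which coordinate of $\mathbf{x}$ is largest in absolute value; after relabelling we may assume $|x_0|\geq(n+1)^{-1/2}$. Writing $\mathbf{a}=(c_{\mathbf{a}}^{(0)},\dots,c_{\mathbf{a}}^{(n)},\mathbf{g}_{\mathbf{a}})$, where $c_{\mathbf{a}}^{(i)}$ is the coefficient of $x_0^{d-1}x_i$ and $\mathbf{g}_{\mathbf{a}}$ collects the remaining $N_{d,n}-(n+1)$ coordinates, the identities \eqref{Definition ga} and \eqref{Equality nabla fa} give $\nabla f_{\mathbf{a}}(\mathbf{x})=M_{\mathbf{x}}\mathbf{c}_{\mathbf{a}}+\nabla g_{\mathbf{a}}(\mathbf{x})$ with $M_{\mathbf{x}}$ triangular, whose diagonal entries are among $dx_0^{d-1}$ and $x_0^{d-1}$, hence invertible with $\|M_{\mathbf{x}}^{-1}\|\ll 1$, and they also give $f_{\mathbf{a}}(\mathbf{x})=x_0^{d-1}\langle\mathbf{c}_{\mathbf{a}},\mathbf{x}\rangle+g_{\mathbf{a}}(\mathbf{x})$. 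For $\mathbf{g}_{\mathbf{a}}$ fixed in the bounded range forced by $\mathbf{a}\in\mathcal{B}_{N_{d,n}}(1)$, the condition $\|\nabla f_{\mathbf{a}}(\mathbf{x})\|\leq C\lambda$ confines $\mathbf{c}_{\mathbf{a}}$ to a ball of radius $\ll\lambda$ in $\mathbb{R}^{n+1}$, while the additional condition $|f_{\mathbf{a}}(\mathbf{x})|\leq C\lambda^2$ confines the unit linear functional $\langle\mathbf{c}_{\mathbf{a}},\mathbf{x}\rangle$ to an interval of length $\ll\lambda^2$; intersecting such a ball with such a slab gives volume $\ll\lambda^{n}\cdot\lambda^2$. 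Integrating over the bounded range of $\mathbf{g}_{\mathbf{a}}$ yields the slice bound $\ll\lambda^{n+2}$, and summing the $n+1$ pieces of the partition completes the argument.

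The point requiring genuine care — the analogue of the gain by $p^{-e}$ in Lemma~\ref{Lemma upper bound Re} — is the presence of \emph{two} distinct scales. Euler's relation only forces $|f_{\mathbf{a}}(\mathbf{x})|\ll\lambda$ from the smallness of the gradient, so that bound alone leads merely to $\vol(\mathcal{B}_{N_{d,n}}^{(\lambda)})\ll\lambda$, which is no more than the naive heuristic that $\mathbf{a}$ lies within distance $\sim\lambda$ of the boundary of $\mathbb{I}_{d,n}^{\mathrm{loc}}$ would give (compare Lemma~\ref{Lemma upper bound boundary}). The quadratic improvement $|f_{\mathbf{a}}(\mathbf{x})|\ll\lambda^2$ on the whole cap, which comes from the witness point being an exact zero, is what upgrades the estimate to the required $\ll\lambda^2$; carrying both constraints faithfully through the Fubini step — rather than discarding the value constraint — is the only real subtlety.
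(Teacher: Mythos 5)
Your argument is correct and is essentially the proof given in the paper: the same spreading of the witness point over a spherical cap of measure $\gg\lambda^n$ via first-order Taylor expansion, the same Fubini reduction, and the same slice estimate obtained by isolating the $n+1$ coefficients of the monomials $x_0^{d-1}x_i$ (the gradient constraint contributing $\lambda^n$ and the value constraint the extra factor $\lambda^2$). The only difference is presentational — the paper pins $c_{\mathbf{a}}^{(0)}$ to an interval of length $\ll\lambda^2$ and each $c_{\mathbf{a}}^{(i)}$, $i\geq 1$, to an interval of length $\ll\lambda$, whereas you describe the same region as a ball of radius $\ll\lambda$ intersected with a slab of width $\ll\lambda^2$ — and your closing remark correctly identifies why the value constraint must be retained through the Fubini step.
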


\begin{proof}
For $\mathbf{a} \in \mathcal{B}_{N_{d,n}}(1)$ we let
\begin{equation*}
\mathcal{D}_{\mathbf{a}}(\lambda) = \left\{ \mathbf{x} \in \mathbb{S}^n :
\begin{array}{l l}
|f_{\mathbf{a}}(\mathbf{x})| \leq \lambda^2 \\
||\nabla f_{\mathbf{a}} (\mathbf{x})|| \leq 2 \lambda
\end{array}
\right\},
\end{equation*}
and we observe that
\begin{equation*}
\vol \left( \mathcal{B}_{N_{d,n}}^{(\lambda)} \right) \leq
\vol \left( \left\{ \mathbf{a} \in \mathcal{B}_{N_{d,n}}(1) :
\mathcal{D}_{\mathbf{a}}(\lambda) \neq \emptyset \right\} \right).
\end{equation*}
Given $\mathbf{x} \in \mathcal{D}_{\mathbf{a}}(\lambda/2)$, it follows from the estimates
\begin{equation*}
f_{\mathbf{a}}(\mathbf{y}) = f_{\mathbf{a}}(\mathbf{x}) + \langle \nabla f_{\mathbf{a}}(\mathbf{x}), \mathbf{y} - \mathbf{x} \rangle + O \left( ||\mathbf{y} - \mathbf{x}||^2 \right),
\end{equation*}
and
\begin{equation*}
||\nabla f_{\mathbf{a}}(\mathbf{y})|| = ||\nabla f_{\mathbf{a}}(\mathbf{x})|| + O \left( ||\mathbf{y} - \mathbf{x}|| \right),
\end{equation*}
that there exists an absolute constant $K>0$ such that if $\mathbf{y} \in \mathbb{S}^n$ and
$||\mathbf{x}-\mathbf{y}|| \leq K \lambda$ then $\mathbf{y} \in \mathcal{D}_{\mathbf{a}}(\lambda)$. Since we have
\begin{equation*}
\vol \left( \left\{ \mathbf{y} \in \mathbb{S}^n : ||\mathbf{x}-\mathbf{y}|| \leq K \lambda \right\} \right) \gg \lambda^n,
\end{equation*}
we deduce that
\begin{equation*} 
\vol \left( \mathcal{B}_{N_{d,n}}^{(\lambda)} \right) \ll \int_{\mathcal{B}_{N_{d,n}}(1)}
\frac{\vol \left( \mathcal{D}_{\mathbf{a}}(\lambda) \right)}{\lambda^n} \mathrm{d} \mathbf{a}.
\end{equation*}
Therefore we have
\begin{equation}
\label{Upper bound Blambda}
\vol \left( \mathcal{B}_{N_{d,n}}^{(\lambda)} \right) \ll \frac1{\lambda^n} \int_{\mathbb{S}^n}
\vol \left( \left\{ \mathbf{a} \in \mathcal{B}_{N_{d,n}}(1) :
\begin{array}{l l}
|f_{\mathbf{a}}(\mathbf{x})| \leq \lambda^2 \\
||\nabla f_{\mathbf{a}} (\mathbf{x})|| \leq 2 \lambda
\end{array}
\right\} \right) \mathrm{d} \mathbf{x}.
\end{equation}
Given $\mathbf{x} \in \mathbb{S}^n$ we may clearly assume without loss of generality that $|x_0| \geq 1/n$. For
$\mathbf{a} \in \mathcal{B}_{N_{d,n}}(1)$ and $i \in \{0, \dots, n \}$, we let $c_{\mathbf{a}}^{(i)} \in \mathbb{R}$ be the coordinate of $\mathbf{a}$ corresponding to the monomial $x_0^{d-1} x_i$. Recall the definition \eqref{Definition ga} of $g_{\mathbf{a}}$ and the equality \eqref{Equality nabla fa}. Setting
$\mathbf{c}_{\mathbf{a}} = \left( c_{\mathbf{a}}^{(0)}, \dots, c_{\mathbf{a}}^{(n)} \right)$ and estimating first the number of $c_{\mathbf{a}}^{(0)} \in \mathbb{R}$, we deduce from the assumption $|x_0| \geq 1/n$ that
\begin{equation*}
\vol \left( \left\{ \mathbf{c}_{\mathbf{a}} \in \mathbb{R}^{n+1} :
\begin{array}{l l}
\left| c_{\mathbf{a}}^{(0)} x_0^d + x_0^{d-1} \left( c_{\mathbf{a}}^{(1)} x_1 + \dots + c_{\mathbf{a}}^{(n)} x_n \right) + g_{\mathbf{a}}(\mathbf{x}) \right| \leq \lambda^2 \\
\left| c_{\mathbf{a}}^{(i)} x_0^{d-1} + \dfrac{\partial g_{\mathbf{a}}}{\partial x_i}(\mathbf{x}) \right| \leq \lambda,
\ i \in \{1, \dots, n\}
\end{array}
\right\} \right) \ll \lambda^{n+2}.
\end{equation*}
It follows that 
\begin{equation*}
\vol \left( \left\{ \mathbf{a} \in \mathcal{B}_{N_{d,n}}(1) :
\begin{array}{l l}
|f_{\mathbf{a}}(\mathbf{x})| \leq \lambda^2 \\
||\nabla f_{\mathbf{a}} (\mathbf{x})|| \leq 2 \lambda
\end{array}
\right\} \right) \ll \lambda^{n+2}.
\end{equation*}
Recalling the equality \eqref{Upper bound Blambda} we see that this completes the proof.
\end{proof}

Given $N \geq 1$, recall the definition \eqref{Definition tau} of the quantity $\tau(\mathbf{a};\gamma)$ for
$\mathbf{a} \in \mathbb{R}^{N_{d,n}}$ and $\gamma > 0$. The following result is the Archimedean analogue of
Lemma~\ref{Lemma lower bound sigma} and provides us with a lower bound for the quantity $\tau(\mathbf{a};\gamma)$ for $\mathbf{a} \in \mathcal{B}_{N_{d,n}}^{(\lambda)}$.

\begin{lemma}
\label{Lemma lower bound tau}
Let $d \geq 2$ and $n \geq 3$. Let also $\gamma > 0$. For $\lambda \in (0, M_{d,n})$ and
$\mathbf{a} \in \mathcal{B}_{N_{d,n}}^{(\lambda)}$, we have
\begin{equation*}
\tau(\mathbf{a}; \gamma) \gg \lambda^{n+1} \cdot \min \left\{ \gamma, \frac1{\lambda^2} \right\}.
\end{equation*}
\end{lemma}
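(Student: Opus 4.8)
\textbf{Proof plan for Lemma~\ref{Lemma lower bound tau}.} The plan is to mimic closely the proof of the non-Archimedean analogue Lemma~\ref{Lemma lower bound sigma}, replacing the counting of residue classes by an estimate for a volume. By definition \eqref{Definition Blambda}, since $\mathbf{a} \in \mathcal{B}_{N_{d,n}}^{(\lambda)}$ we may fix $\mathbf{x}_0 \in \mathbb{S}^n$ with $f_{\mathbf{a}}(\mathbf{x}_0) = 0$ and $\lambda ||\mathbf{a}|| < ||\nabla f_{\mathbf{a}}(\mathbf{x}_0)|| \leq 2\lambda ||\mathbf{a}||$. The idea is to produce a ball $\mathcal{B}_{n+1}(\mathbf{x}_0; \rho)$, for a suitable radius $\rho$ depending on $\lambda$ and $\gamma$, all of whose unit-normalised points $\mathbf{u}$ satisfy $\mathbf{a} \in \mathcal{C}_{\nu_{d,n}(\mathbf{u})}^{(\gamma)}$; that is, $|\langle \mathbf{a}, \nu_{d,n}(\mathbf{u}) \rangle| \leq ||\mathbf{a}|| \cdot ||\nu_{d,n}(\mathbf{u})||/(2\gamma)$, which is just $|f_{\mathbf{a}}(\mathbf{u})| \leq ||\mathbf{a}|| \cdot ||\nu_{d,n}(\mathbf{u})||/(2\gamma)$. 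Since $||\nu_{d,n}(\mathbf{u})|| \asymp 1$ for $\mathbf{u}$ near $\mathbb{S}^n$, it suffices to arrange $|f_{\mathbf{a}}(\mathbf{u})| \ll ||\mathbf{a}||/\gamma$. Feeding this into the definition \eqref{Definition tau} of $\tau(\mathbf{a};\gamma)$, the volume of the cone over such a ball in $\mathcal{B}_{n+1}(1)$ is $\gg \rho^{n+1}$, giving $\tau(\mathbf{a};\gamma) \gg \gamma \rho^{n+1}$.

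The analysis then splits into two regimes exactly as in Lemma~\ref{Lemma lower bound sigma}. For $\mathbf{y}$ with $||\mathbf{y} - \mathbf{x}_0|| \leq \rho \leq 1$ we have the Taylor expansion $f_{\mathbf{a}}(\mathbf{y}) = f_{\mathbf{a}}(\mathbf{x}_0) + \langle \nabla f_{\mathbf{a}}(\mathbf{x}_0), \mathbf{y} - \mathbf{x}_0 \rangle + O(||\mathbf{a}|| \cdot ||\mathbf{y} - \mathbf{x}_0||^2)$, so using $f_{\mathbf{a}}(\mathbf{x}_0) = 0$ and $||\nabla f_{\mathbf{a}}(\mathbf{x}_0)|| \leq 2\lambda ||\mathbf{a}||$ we get $|f_{\mathbf{a}}(\mathbf{y})| \ll ||\mathbf{a}|| (\lambda \rho + \rho^2)$. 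If $\gamma \geq 1/\lambda^2$, i.e. $\lambda \geq \gamma^{-1/2}$, I take $\rho \asymp 1/(\gamma \lambda)$; then $\lambda \rho \asymp 1/\gamma$ and $\rho^2 \asymp 1/(\gamma^2 \lambda^2) \leq 1/\gamma$ (using $\lambda^2 \gamma \geq 1$), so $|f_{\mathbf{a}}(\mathbf{y})| \ll ||\mathbf{a}||/\gamma$ as required (one should also check $\rho \leq 1$, which holds when $\gamma\lambda \gg 1$; the case of small $\gamma\lambda$ is handled below or absorbed into the other regime). This yields $\tau(\mathbf{a};\gamma) \gg \gamma \rho^{n+1} \asymp \gamma \cdot (\gamma\lambda)^{-(n+1)} = \gamma^{-n} \lambda^{-(n+1)}$; multiplying and dividing appropriately one rewrites this as $\lambda^{n+1} \cdot \gamma/(\gamma\lambda)^{n+1} \cdot \lambda^{-(n+1)}\cdots$—more cleanly, in this regime the claimed bound $\lambda^{n+1} \min\{\gamma, \lambda^{-2}\} = \lambda^{n-1}$ should match after recomputing; I would double-check the bookkeeping so that $\gamma\rho^{n+1}$ indeed dominates $\lambda^{n+1}\lambda^{-2}$. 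If $\gamma < 1/\lambda^2$, I take $\rho \asymp \lambda$; then $\lambda\rho + \rho^2 \asymp \lambda^2$, so $|f_{\mathbf{a}}(\mathbf{y})| \ll ||\mathbf{a}|| \lambda^2 \leq ||\mathbf{a}||/\gamma$, and $\tau(\mathbf{a};\gamma) \gg \gamma \lambda^{n+1} = \lambda^{n+1}\min\{\gamma,\lambda^{-2}\}$, as wanted.

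The main obstacle is getting the constants and the two threshold cases to line up cleanly so that in every regime $\gamma\rho^{n+1}$ is bounded below by $\lambda^{n+1}\min\{\gamma,\lambda^{-2}\}$, and in particular verifying that $\rho \leq 1$ throughout (which may force a further subdivision when $\gamma$ is very small, but there $\min\{\gamma,\lambda^{-2}\} = \gamma$ and one can simply take $\rho$ a fixed small constant times $1$, using $|f_{\mathbf{a}}(\mathbf{y})| \ll ||\mathbf{a}|| \lambda \ll ||\mathbf{a}|| \leq ||\mathbf{a}||/\gamma$ once $\gamma \lesssim 1$—care is needed since $\gamma$ need not be small). A secondary point to handle carefully is the passage from $||f_{\mathbf{a}}(\mathbf{y})|| \ll ||\mathbf{a}||/\gamma$ for $\mathbf{y} \in \mathcal{B}_{n+1}(\mathbf{x}_0;\rho)$ to the condition $\mathbf{a} \in \mathcal{C}_{\nu_{d,n}(\mathbf{u})}^{(\gamma)}$ for the \emph{normalised} vectors $\mathbf{u} = \mathbf{y}/||\mathbf{y}||$ appearing in \eqref{Definition tau}: one uses that $\mathcal{C}_{\nu_{d,n}(t\mathbf{z})}^{(\gamma)} = \mathcal{C}_{\nu_{d,n}(\mathbf{z})}^{(\gamma)}$ (the scaling invariance recorded in \eqref{Equality Calpha invariant}), so the cone of all $\mathbf{u} \in \mathcal{B}_{n+1}(1)$ lying along rays through $\mathcal{B}_{n+1}(\mathbf{x}_0;\rho)$ has volume $\gg \rho^{n+1}$, completing the estimate. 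Everything else is a routine Taylor-expansion computation patterned directly on the non-Archimedean argument.
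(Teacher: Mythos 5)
There is a genuine gap in the regime $\gamma > 1/\lambda^2$, and it is exactly the point you flag as needing a "double-check of the bookkeeping": the bookkeeping does not work out. Your strategy is to find a full ball $\mathcal{B}(\mathbf{x}_0;\rho)$ on which $|f_{\mathbf{a}}| \ll ||\mathbf{a}||/\gamma$, which via the Taylor bound $|f_{\mathbf{a}}(\mathbf{y})| \ll ||\mathbf{a}||(\lambda\rho+\rho^2)$ forces $\rho \ll 1/(\gamma\lambda)$ when $\gamma\lambda^2 \geq 1$. This gives $\tau(\mathbf{a};\gamma) \gg \gamma\rho^{n+1} \asymp \gamma^{-n}\lambda^{-(n+1)}$, whereas the target in this regime is $\lambda^{n+1}\cdot\lambda^{-2} = \lambda^{n-1}$; the ratio is $(\gamma\lambda^2)^{-n}$, which is $<1$ and can be arbitrarily small. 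Passing to the cone over the ball (volume $\asymp\rho^n$ rather than $\rho^{n+1}$) does not rescue the estimate either. The structural reason is that near $\mathbf{x}_0$ the set $\{|f_{\mathbf{a}}|\leq \epsilon\}$ with $\epsilon \asymp 1/\gamma$ is not a ball but a slab: it has extent $\asymp\lambda$ in the $n$ directions along the zero locus of $f_{\mathbf{a}}$ and thickness $\asymp \epsilon/||\nabla f_{\mathbf{a}}|| \asymp 1/(\gamma\lambda)$ in the normal direction, so its volume is $\asymp \lambda^n\cdot 1/(\gamma\lambda)$, much larger than the volume $(\gamma\lambda)^{-(n+1)}$ of the inscribed ball your argument captures.

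To prove the lemma you must exploit this slab, and that is where the lower bound $||\nabla f_{\mathbf{a}}(\mathbf{x}_0)|| > \lambda||\mathbf{a}||$ from the definition of $\mathcal{B}_{N_{d,n}}^{(\lambda)}$ enters — your argument only ever uses the upper bound $||\nabla f_{\mathbf{a}}(\mathbf{x}_0)|| \leq 2\lambda||\mathbf{a}||$. Concretely (this is what the paper does): normalise $||\mathbf{a}||=1$, rescale $\mathbf{x}_0$ to have norm $1/2$ so that the ball of radius $\asymp\lambda$ around it stays inside $\mathcal{B}_{n+1}(1)$, assume the largest partial derivative is $\partial f_{\mathbf{a}}/\partial x_0$, and for each transverse displacement $\mathbf{w}\in\mathcal{B}_n(L\lambda)$ use the intermediate value theorem (the gradient lower bound guarantees the linear term dominates the quadratic remainder on this scale) to locate a zero $\omega(\mathbf{w})$ of $f_{\mathbf{a}}$ along the $x_0$-fibre; the mean value inequality then shows the fibre contributes a set of $v_0$ of measure $\gg \min\{\lambda^2,1/\gamma\}/\lambda$. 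Integrating over $\mathbf{w}$ gives volume $\gg \lambda^{n-1}\min\{\lambda^2,1/\gamma\}$ and hence $\tau(\mathbf{a};\gamma) \gg \gamma\lambda^{n-1}\min\{\lambda^2,1/\gamma\} = \lambda^{n+1}\min\{\gamma,\lambda^{-2}\}$ in both regimes at once. Your computation in the regime $\gamma \leq 1/\lambda^2$ (take $\rho\asymp\lambda$) is correct and coincides with the degenerate case of the slab, but the other regime needs the fibration argument, not a larger ball.
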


\begin{proof}
We may assume without loss of generality that $||\mathbf{a}|| = 1$. In addition, since by assumption
$\mathbf{a} \in \mathcal{B}_{N_{d,n}}^{(\lambda)}$ we may select $\mathbf{x} \in \mathbb{R}^{n+1}$ such that
$||\mathbf{x}|| = 1/2$ and satisfying the conditions $f_{\mathbf{a}}(\mathbf{x}) = 0$ and
\begin{equation}
\label{Inequalities nabla}
\frac{\lambda}{2^d} < ||\nabla f_{\mathbf{a}}(\mathbf{x})|| \leq \frac{\lambda}{2^{d-1}}.
\end{equation}
We have
\begin{equation*}
\tau(\mathbf{a};\gamma) = \gamma \cdot \vol \left( \left\{ \mathbf{u} \in \mathcal{B}_{n+1}(1) :
|f_{\mathbf{a}}(\mathbf{u})| \leq \frac{||\nu_{d,n}(\mathbf{u})||}{2\gamma} \right\} \right).
\end{equation*}
Since $||\nu_{d,n}(\mathbf{u})|| \geq ||\mathbf{u}||^d$, we see that
\begin{equation*}
\tau(\mathbf{a};\gamma) \geq \gamma \cdot \vol \left( \left\{ \mathbf{u} \in \mathcal{B}_{n+1}(1) \smallsetminus \mathcal{B}_{n+1} \left(\frac1{4} \right) :
|f_{\mathbf{a}}(\mathbf{u})| \leq \frac1{2^{2d+1} \gamma} \right\} \right).
\end{equation*}
We note that if $||\mathbf{u} - \mathbf{x}|| \leq \lambda/4M_{d,n}$ then $1/4 < ||\mathbf{u}|| < 3/4$ and thus
\begin{equation*}
\tau(\mathbf{a};\gamma) \geq \gamma \cdot
\vol \left( \left\{ \mathbf{v} \in \mathcal{B}_{n+1}\left( \frac{\lambda}{4 M_{d,n}} \right) :
|f_{\mathbf{a}}(\mathbf{x} + \mathbf{v})| \leq \frac1{2^{2d+1} \gamma} \right\} \right).
\end{equation*}
It follows that
\begin{equation}
\label{Lower bound tau}
\tau(\mathbf{a};\gamma) \geq \gamma \int_{\mathcal{B}_n(\lambda/8M_{d,n})}
\mathcal{W}_{\mathbf{a}, \mathbf{x}}(\mathbf{w}; \lambda, \gamma) \mathrm{d} \mathbf{w},
\end{equation}
where $\mathbf{w}=(v_1, \dots, v_n)$ and
\begin{equation*}
\mathcal{W}_{\mathbf{a}, \mathbf{x}}(\mathbf{w}; \lambda, \gamma) =
\vol \left( \left\{ v_0 \in [-\lambda/8M_{d,n}, \lambda/8M_{d,n}] :
|f_{\mathbf{a}}(\mathbf{x} + (v_0,\mathbf{w}))| \leq \frac1{2^{2d+1} \gamma} \right\} \right).
\end{equation*}

The inequalities \eqref{Inequalities nabla} imply that we can clearly assume without loss of generality that 
\begin{equation}
\label{Inequalities partial derivative}
\frac{\lambda}{2^dn} < \left| \frac{\partial f_{\mathbf{a}}}{\partial x_0}(\mathbf{x}) \right| \leq \frac{\lambda}{2^{d-1}}.
\end{equation}
We can thus make the change of variables
\begin{equation*}
v_0 = - \left( \frac{\partial f_{\mathbf{a}}}{\partial x_0}(\mathbf{x}) \right)^{-1}
\left( \frac{\partial f_{\mathbf{a}}}{\partial x_1}(\mathbf{x}) v_1 + \cdots + \frac{\partial f_{\mathbf{a}}}{\partial x_n}(\mathbf{x}) v_n - w_0 \right).
\end{equation*}
The upper bound \eqref{Inequalities nabla} shows that there exists an absolute constant $L>0$ such that if
$\mathbf{w} \in \mathcal{B}_n(L \lambda)$ and $|w_0| \leq L \lambda^2$ then
$|v_0| \leq \lambda/8M_{d,n}$. Using the upper bound \eqref{Inequalities partial derivative} and the assumption $f_{\mathbf{a}}(\mathbf{x}) = 0$ we thus deduce that for $\mathbf{w} \in \mathcal{B}_n(L \lambda)$, we have
\begin{equation*}
\mathcal{W}_{\mathbf{a}, \mathbf{x}}(\mathbf{w}; \lambda, \gamma) \gg \frac1{\lambda} \cdot
\vol \left( \left\{ w_0 \in [-L \lambda^2, L \lambda^2] :
|w_0 + P_{\mathbf{a}, \mathbf{x}}(w_0, \mathbf{w})| \leq \frac1{2^{2d+1} \gamma} \right\} \right),
\end{equation*}
where $P_{\mathbf{a}, \mathbf{x}}(w_0, \mathbf{w})$ is a polynomial free of constant and linear terms in
$(w_0, \mathbf{w})$. As a result, if $L>0$ is chosen sufficiently small and if
$\mathbf{w} \in \mathcal{B}_n(L \lambda)$ then for $w_0 \in [L \lambda^2/2, L \lambda^2]$ we have
\begin{equation*}
w_0 + P_{\mathbf{a}, \mathbf{x}}(w_0, \mathbf{w}) \geq \frac{w_0}{2},
\end{equation*}
and for $w_0 \in [-L \lambda^2, - L \lambda^2/2]$ we have
\begin{equation*}
w_0 + P_{\mathbf{a}, \mathbf{x}}(w_0, \mathbf{w}) \leq - \frac{w_0}{2}.
\end{equation*}
Given $\mathbf{w} \in \mathcal{B}_n(L \lambda)$, it follows from the intermediate value theorem that there exists
$\omega_{\mathbf{a}, \mathbf{x}}(\mathbf{w}) \in (- L \lambda^2/2, L \lambda^2/2)$ such that
\begin{equation*}
\omega_{\mathbf{a}, \mathbf{x}}(\mathbf{w}) +
P_{\mathbf{a}, \mathbf{x}}(\omega_{\mathbf{a}, \mathbf{x}}(\mathbf{w}), \mathbf{w}) = 0.
\end{equation*}
In addition, if $L>0$ is small enough then for $\mathbf{w} \in \mathcal{B}_n(L \lambda)$ and
$|w_0| \leq L \lambda^2/2$ we have
\begin{equation*}
\left| \frac{\partial P_{\mathbf{a}, \mathbf{x}}}{\partial w_0}(w_0, \mathbf{w}) \right| \leq 1,
\end{equation*}
so the mean value inequality gives
\begin{equation*}
|w_0 + P_{\mathbf{a}, \mathbf{x}}(w_0, \mathbf{w})| \leq 2 |w_0 - \omega_{\mathbf{a}, \mathbf{x}} (\mathbf{w})|.
\end{equation*}
We have thus proved that if $L>0$ is sufficiently small then for $\mathbf{w} \in \mathcal{B}_n(L\lambda)$ we have
\begin{align*}
\mathcal{W}_{\mathbf{a}, \mathbf{x}}(\mathbf{w}; \lambda, \gamma) & \gg \frac1{\lambda} \cdot
\vol \left( \left\{ w_0 \in [-L \lambda^2/2, L \lambda^2/2] :
|w_0 - \omega_{\mathbf{a}, \mathbf{x}}(\mathbf{w})| \leq \frac1{2^{2d+2}\gamma} \right\} \right) \\
& \gg \frac1{\lambda} \cdot \min \left\{ \lambda^2, \frac1{\gamma} \right\}.
\end{align*}
Recalling the lower bound \eqref{Lower bound tau} we eventually derive
\begin{equation*}
\tau(\mathbf{a};\gamma) \gg \frac{ \gamma}{\lambda} \cdot \min \left\{ \lambda^2, \frac1{\gamma} \right\} \cdot
\vol \left( \mathcal{B}_n(L \lambda) \right),
\end{equation*}
which completes the proof.
\end{proof}

We are now ready to establish Proposition~\ref{Proposition Archimedean}.

\begin{proof}[Proof of Proposition~\ref{Proposition Archimedean}]
We follow closely the lines of the proof of Proposition~\ref{Proposition non-Archimedean}. Recall the respective definitions \eqref{Definition singular integral} and \eqref{Definition alpha} of the Archimedean factor $\mathfrak{J}_V(B)$ and the quantity $\alpha$, where we take $B = A \phi(A)$. It is convenient to set 
\begin{equation*}
\mathscr{I}_{\phi}(A) = \frac1{\#\mathbb{V}_{d,n}^{\mathrm{loc}}(A)} \cdot
\# \left\{ V \in \mathbb{V}_{d,n}^{\mathrm{loc}}(A) : \mathfrak{J}_V(A\phi(A)) < \frac{C}{\phi(A)^{1/6}} \right\}.
\end{equation*}
Recall also the definition \eqref{Definition Iloc} of the set $\mathbb{I}_{d,n}^{\mathrm{loc}}$. The lower bound \eqref{Lower bound loc} yields in particular $\# \mathbb{V}_{d,n}^{\mathrm{loc}}(A) \gg A^{N_{d,n}}$ so we see that
\begin{equation*}
\mathscr{I}_{\phi}(A) \ll \frac1{A^{N_{d,n}}} \cdot
\# \left\{ \mathbf{a} \in \mathbb{Z}^{N_{d,n}} \cap \mathcal{B}_{N_{d,n}}(A) \cap \mathbb{I}_{d,n}^{\mathrm{loc}} :
\tau(\mathbf{a};\alpha) < \frac{C}{\phi(A)^{1/6}} \right\}.
\end{equation*}
It follows from Lemma~\ref{Lemma upper bound boundary} that
\begin{equation}
\label{Upper bound Iphi}
\mathscr{I}_{\phi}(A) \ll \frac1{A^{N_{d,n}}} \cdot
\# \left\{ \mathbf{a} \in \mathbb{Z}^{N_{d,n}} \cap \mathcal{B}_{N_{d,n}}(A) :
\begin{array}{l l}
\mathcal{N}(\mathbf{a}) \subset \mathbb{I}_{d,n}^{\mathrm{loc}} \\
\tau(\mathbf{a};\alpha) < \dfrac{C}{\phi(A)^{1/6}}
\end{array}
\right\} + \frac1{A}.
\end{equation}

We now show that if $\mathbf{a} \in \mathbb{R}^{N_{d,n}}$ satisfies $||\mathbf{a}|| \geq 8 \alpha$ then for any
$\mathbf{y} \in \mathcal{N}(\mathbf{a})$, we have
\begin{equation}
\label{Upper bound tau y}
\tau(\mathbf{y}; 2\alpha) \leq 2 \cdot \tau(\mathbf{a}; \alpha).
\end{equation}
Let $\mathbf{u} \in \mathcal{B}_{n+1}(1)$ be such that $\mathbf{y} \in \mathcal{C}_{\nu_{d,n}(\mathbf{u})}^{(2 \alpha)}$, that is
\begin{equation*}
| \langle \nu_{d,n}(\mathbf{u}), \mathbf{y} \rangle | \leq \frac{||\nu_{d,n}(\mathbf{u})|| \cdot ||\mathbf{y}||}{4 \alpha}.
\end{equation*}
Since $\mathbf{y} \in \mathcal{N}(\mathbf{a})$ the Cauchy--Schwarz inequality gives
$|\langle \nu_{d,n}(\mathbf{u}), \mathbf{y}-\mathbf{a} \rangle | \leq ||\nu_{d,n}(\mathbf{u})||$. We thus see that
\begin{equation*}
| \langle \nu_{d,n}(\mathbf{u}), \mathbf{a} \rangle | \leq \frac{||\nu_{d,n}(\mathbf{u})|| \cdot ||\mathbf{y}||}{4 \alpha} +
||\nu_{d,n}(\mathbf{u})||.
\end{equation*}
We have assumed that $||\mathbf{a}|| \geq 8 \alpha$ so we have in particular $||\mathbf{y}|| \leq 3||\mathbf{a}||/2$. We deduce that
\begin{equation*}
| \langle \nu_{d,n}(\mathbf{u}), \mathbf{a} \rangle | \leq \frac{3 ||\nu_{d,n}(\mathbf{u})|| \cdot ||\mathbf{a}||}{8 \alpha} +
||\nu_{d,n}(\mathbf{u})||.
\end{equation*}
Our assumption $||\mathbf{a}|| \geq 8 \alpha$ now yields
\begin{equation*}
| \langle \nu_{d,n}(\mathbf{u}), \mathbf{a} \rangle | \leq \frac{||\nu_{d,n}(\mathbf{u})|| \cdot ||\mathbf{a}||}{2 \alpha},
\end{equation*}
which shows that $\mathbf{a} \in \mathcal{C}_{\nu_{d,n}(\mathbf{u})}^{(\alpha)}$, and the upper bound
\eqref{Upper bound tau y} follows.

Recalling the upper bound \eqref{Upper bound Iphi} we note that the inequality \eqref{Upper bound tau y} gives
\begin{align*}
\mathscr{I}_{\phi}(A) & \ll \frac1{A^{N_{d,n}}} \cdot
\# \left\{ \mathbf{a} \in \mathbb{Z}^{N_{d,n}} \cap \left( \mathcal{B}_{N_{d,n}}(A) \smallsetminus \mathcal{B}_{N_{d,n}}(8 \alpha) \right) :
\begin{array}{l l}
\mathcal{N}(\mathbf{a}) \subset \mathbb{I}_{d,n}^{\mathrm{loc}} \\
\tau(\mathbf{a};\alpha) < \dfrac{C}{\phi(A)^{1/6}}
\end{array}
\right\} + \frac1{A} \\
& \ll \frac1{A^{N_{d,n}}} \sum_{\mathbf{a} \in \mathbb{Z}^{N_{d,n}} \cap \mathcal{B}_{N_{d,n}}(A)}
\vol \left( \left\{ \mathbf{y} \in \mathcal{N}(\mathbf{a}) \cap \mathbb{I}_{d,n}^{\mathrm{loc}} :
\tau(\mathbf{y}; 2\alpha) < \frac{2C}{\phi(A)^{1/6}} \right\} \right) + \frac1{A}.
\end{align*}
Swapping the summation over $\mathbf{a}$ and the integration over $\mathbf{y}$ we obtain
\begin{align*}
\mathscr{I}_{\phi}(A) & \ll \frac1{A^{N_{d,n}}} \cdot
\vol \left( \left\{ \mathbf{y} \in \mathcal{B}_{N_{d,n}}(A+1) \cap \mathbb{I}_{d,n}^{\mathrm{loc}} :
\tau(\mathbf{y}; 2\alpha) < \frac{2C}{\phi(A)^{1/6}} \right\} \right) + \frac1{A} \\
& \ll \vol \left( \left\{ \mathbf{y} \in \mathcal{B}_{N_{d,n}}(1) \cap \mathbb{I}_{d,n}^{\mathrm{loc}} :
\tau(\mathbf{y}; 2\alpha) < \frac{2C}{\phi(A)^{1/6}} \right\} \right) + \frac1{A}.
\end{align*}

We are now in position to make use of a trick analogous to the one used in the non-Archimedean setting. It is clear that
$\tau(\mathbf{y}; 2\alpha) > 0$ whenever $\mathbf{y} \in \mathbb{I}_{d,n}^{\mathrm{loc}}$. We let
$\kappa \in (0, 2/n)$ and we write
\begin{equation}
\label{Upper bound Iphi 2}
\mathscr{I}_{\phi}(A) \ll \frac1{\phi(A)^{\kappa/6}} \int_{\mathcal{B}_{N_{d,n}}(1) \cap \mathbb{I}_{d,n}^{\mathrm{loc}}} 
\frac{\mathrm{d} \mathbf{y}}{\tau(\mathbf{y}; 2 \alpha)^{\kappa}} + \frac1{A}.
\end{equation}
Recall the respective definitions \eqref{Definition Blambda} and \eqref{Definition Mdn} of the set
$\mathcal{B}_{N_{d,n}}^{(\lambda)}$ for given $\lambda>0$ and the quantity $M_{d,n}$. We deduce from the equality \eqref{Partition Archimedean} that
\begin{equation*}
\int_{\mathcal{B}_{N_{d,n}}(1) \cap \mathbb{I}_{d,n}^{\mathrm{loc}}} 
\frac{\mathrm{d} \mathbf{y}}{\tau(\mathbf{y}; 2 \alpha)^{\kappa}} \leq
\sum_{\ell=1}^{\infty} \int_{\mathcal{B}_{N_{d,n}}^{(M_{d,n}/2^{\ell})}}
\frac{\mathrm{d} \mathbf{y}}{\tau(\mathbf{y}; 2 \alpha)^{\kappa}}.
\end{equation*}
It thus follows from Lemma~\ref{Lemma lower bound tau} that
\begin{equation*}
\int_{\mathcal{B}_{N_{d,n}}(1) \cap \mathbb{I}_{d,n}^{\mathrm{loc}}} 
\frac{\mathrm{d} \mathbf{y}}{\tau(\mathbf{y}; 2 \alpha)^{\kappa}} \ll
\sum_{\ell=1}^{\infty} \left( \frac{2^{\ell(n+1)}}{\alpha} + 2^{\ell(n-1)} \right)^{\kappa}
\vol \left( \mathcal{B}_{N_{d,n}}^{(M_{d,n}/2^{\ell})} \right).
\end{equation*}
Appealing to Lemma~\ref{Lemma upper bound Blambda} and taking $\kappa=1/n$, we obtain
\begin{equation*}
\int_{\mathcal{B}_{N_{d,n}}(1) \cap \mathbb{I}_{d,n}^{\mathrm{loc}}} 
\frac{\mathrm{d} \mathbf{y}}{\tau(\mathbf{y}; 2 \alpha)^{\kappa}} \ll
\frac1{\alpha^{1/n}} \sum_{\ell=1}^{\infty} \frac1{2^{\ell(n-1)/n}} + \sum_{\ell=1}^{\infty} \frac1{2^{\ell(n+1)/n}}.
\end{equation*}
Recalling the upper bound \eqref{Upper bound Iphi 2}, we therefore conclude that
\begin{equation*}
\mathscr{I}_{\phi}(A) \ll \frac1{\phi(A)^{1/6n}} + \frac1{A}.
\end{equation*}
But by assumption we have $\phi(A) \leq A$ and we thus see that this completes the proof of
Proposition~\ref{Proposition Archimedean}.
\end{proof}

\bibliographystyle{amsplain}
\bibliography{biblio}

\end{document}